\newtheorem {note}	{Замечание}
\newtheorem {stat}	{Утверждение}
\newtheorem {defn}	{Определение}
\newtheorem {sgn}	{Обозначение}
\newtheorem {lem}	{Лемма}
\newtheorem {prop}	{Свойство}
\newtheorem {cor}	{Следствие}
\newtheorem {quest}	{Вопрос}
\newtheorem {con}	{Гипотеза}
\newcommand{\mi}{\raisebox{0.5pt}{\scalebox{1.5}[0.75]{{\pmb-}}}}
\newcommand{\pl}{\raisebox{0.5pt}{\scalebox{0.75}[0.75]{{\pmb+}}}}
\newcommand{\ze}{\raisebox{0.5pt}{\scalebox{1.0}[0.75]{{\pmb0}}}}
\newcommand{\lcev}[1]{\reflectbox{\ensuremath{\vv{\reflectbox{\ensuremath{#1}}}}}}
\newcommand*{\vv}[1]{\vec{\mkern0mu#1}}
\newcommand{\rc}{\vec{c}}
\newcommand{\lc}{\lcev{c}}
\newcommand{\nc}{\overline{c}}
\newcommand{\rb}{\vec{b}}		% \accentset\rightharpoondown	% \accentset\multimapdot	% \accentset{-{\bullet}
\newcommand{\lb}{\lcev{b}}		% \accentset\leftharpoondown	% \accentset\multimapdotinv	% \accentset{{\bullet}-}
\newcommand{\nb}{\overline{b}}					% \overline
\newcommand{\rbc}{\vec{bc}}	% {\accentset{-{-}{\bullet}}{bc}}
\newcommand{\lbc}{\lcev{bc}}	% {\accentset{{\bullet}{-}-}{bc}}
\newcommand{\nbc}{\overline{bc}}				% \accentset{-\circ-}
\def\a     {{\bf a}}
\def\b     {{\bf b}}
\def\c     {{\bf c}}
\def\v     {{\bf v}}
\def\x     {{\bf x}}
\def\y     {{\bf y}}
\def\z     {{\bf z}}
\def\F     {{\bf F}}
\def\A     {{\bf A}}
\def\C     {{\bf C}}
\def\D     {{\bf D}}
\def\E     {{\bf E}}
\def\I     {{\bf I}}
\def\P     {{\bf P}}
\def\S     {{\bf S}}
\def\L     {{\bf L}}
\def\N     {{\bf N}}
\def\R     {{\bf R}}
\def\T     {{\bf T}}
\def\U     {{\bf U}}
\def\V     {{\bf V}}
\def\W     {{\bf W}}
\def\p     {{\bf p}}
\def\l     {{\bf l}}
\def\r     {{\bf r}}
\def\u     {{\bf u}}
\def\v     {{\bf v}}
\def\w     {{\bf w}}
\def\pref  {{\bf pref}}
\def\suff  {{\bf suff}}
\def\PREF  {{\bf PREF}}
\def\SUFF  {{\bf SUFF}}
\def\subw  {{\bf subw}}
\def\per   {{\sf per}}
\def\ex    {{\sf ex}}
\def\exp   {{\sf exp}}
\def\lexp  {{\sf lexp}}
\def\RT    {{\sf RT}}
\def\eps   {{\varepsilon}}
\def\mN     {{\mathbb N}}
\def\mR     {{\mathbb R}}
\def\mV     {{\mathbb V}}
\def\mZ     {{\mathbb Z}}
\def\contr  {{$\triangleright\triangleleft$}}
\def\ss     {{\subset}}
\def\sse    {{\subseteq}}
\def\letterText{\textbf}
\def\tx{\tilde{\letterText{x}}}
\def\ty{\tilde{\letterText{y}}}
\def\tz{\tilde{\letterText{z}}}
\def\sm{\setminus}
\def\dt    {{\delta}}
\def\cRe    {\color{red}}
\def\cGr    {\color{gray}}
\def\Anz	{$A^0_n$}
\def\Anm	{$A^-_n$}
\let\orgdescriptionlabel\descriptionlabel
\renewcommand*{\descriptionlabel}[1]{%
	\let\orglabel\label
	\let\label\@gobble
	\phantomsection
	\edef\@currentlabel{#1}%
	\let\label\orglabel
	\orgdescriptionlabel{#1}%
}
\title{
	Кольцевые Граничные Слова и Другие\\Усиления Граничной Теоремы
	%Circular Threshold Words and Stronger Versions
	%and Exponential Grow
	%\\
	%on a 5-letter alphabet or larger
	%(над 5-ю буквами и более)%
	%for the first cases containing 5 letters or more
	%for the first cases with 5-letter alphabets or more%$\ge5$%.\\Based on 2011y and 2013y student's diplomas
}
\author{Тунёв Игорь Николаевич
	\footnote{Уральский Федеральный Университет}
}
\date{itnvi@mail.ru}
\begin{document}
	\maketitle
	
	\begin{abstract}
		Данный манускрипт основан на студенческих работах автора
		\footnote{Дипломы бакалавра 2011 и магистра 2013 годов}.
		В работе 2011 года автором предлагается новый метод доказательства частных случаев
		теоремы Дежан (окончательно доказанной в 2009) с использованием компьютера за полиномиальное время (от размера алфавита).
		Там же предлагается общая конструкция для первых нечётных случаев (число букв алфавита) начиная с пяти букв,
		позволяющая доказать теорему Дежан для этих случаев (с использованием компьютера).
		Предложенные граничные слова (далее ГС) так же являются циркулярными/кольцевыми (т.е. любой циклический сдвиг является ГС).
		
		В работе 2013 года улучшается метод доказательства (предложенный автором в 2011) с сокращением необходимых условий.
		Предлагается общая конструкция для первых чётных случаев (с чилом букв не менее 6).
		А так же предлагается метод построения ГС на экспоненциально растущем дереве ГС.
		% сильным ограничением на экспоненты факторов
		Точнее, существуют ГС у которых все достаточно длинные факторы имеют экспоненту сколь угодно близкую к 1.
		% такя формулировка была предложена Шуром А.М. во время защиты магистерской автором.
		
		Здесь мы представим отредактированную версию этих методов с некоторыми улучшениями.
	\end{abstract}
	
	%Key words: Dejean conjecture, circular threshold words, exponential growth, algorithm, permutations
	Ключевые слова: гипотеза Дежан, граничная теорема, циркулярные граничные слова, алгоритм, перестановки

\setcounter{section}{-1}

\section{Введение}

\subsection{История}
{
Букву в позиции $i$ слова $w$ будем обозначать $w(i)$.
Длину слова $w$ обозначим $|w|$.
Периодом слова $w$ называется наименьшее число $p$,
для которого $w(i) = w(p+i)$, для всех $i=1,...,|w|-p$, и будем обозначать $per(w)$.

Граничное слово (ГС) над алфавитом из $n\ge5$ букв --- слово,
у которого для любого его фактора (подслова) $v$ выполняется неравенство $\frac{|v|}{\per(v)}\le\frac{n}{n-1}$.
Множество ГС над $n$ буквами обозначается $\T_n$.

Гипотеза о бесконечности множества ГС над любым алфавитом с $n\ge5$ буквами была предложена в 1972 году Дежан~\cite{Dej}.
Доказательство последних частных случаев было получено в 2009 г,
через 37 лет после формулировки этой гипотезы (теперь теоремы).
Доказательство потребовало много времени машинных вычислений для многих частных случаев различными методами.
Эти результаты для $k\ge5$ опубликованы в работах математиков:
Мулен-Оланье~\cite{M}, Мохаммада-Нури и Карри~\cite{MC}, Карри и Рамперсада~\cite{CR1}, \cite{CR2}, Рао~\cite{Rao}, Карпи~\cite{C}.

Но появляются новые вопросы, например,
о числе ГС для многих алфавитов.
Или о существовании циркулярных /кольцевых/циклических ГС (ЦГС) --- циклический сдвиг которых так же является ГС.
В данной работе предлагается авторская методика построения ГС со свойством цикличности т.е. ЦГС,
даже с усиленными ограничениями на экспоненту.
А так же построение экспоненциального множества бесконечных ГС (БГС).
Так же, предлагается доказательство существования ГС, у которых все длинные факторы имеют <<почти>> единичную экспоненту,
только на основе факта существования равномерно экспоненциально растущего дерева ГС.

%Комбинаторной сложностью языка $L\subseteq\Sigma^*$ называется функция $C_L(n)=|L\cap\Sigma^n|$.
}

\newpage

\hypertarget{contents}
\tableofcontents
%\label{contents}

\newpage

\subsection{Подробнее о работе}

%Для начала, сразу же определим понятие (предложенное гуглом или его ИИ):
%<<"Редакция доказательства теоремы"\ может означать изменение или улучшение существующего доказательства,
%например, путем его упрощения, добавления новых деталей, или представления в более наглядной форме,
%чтобы сделать его более понятным и убедительным, а также создание нового, альтернативного доказательства той же теоремы>>.

В данной работе описываются основные методы решения нескольких вопросов о свойствах граничных слов.
Методы были разработаны в дипломных работа автора в 2011 и 2013 года.
В 2011 году автором были разработаны новые методы построения множества граничных слов c экспоненциальным ростом 
для частных случаев (с несколько небрежными формулировками и черновыми доказательствами ключевых утверждений).
Используя эти методы, построена предположительно общая конструкция для алфавитов из нечётного числа букв не меньше 5,
и доказательство её корректности сведена к компьютерной проверке за полином.
Эти методы были обсуждены на семинаре и описаны (в сыром виде)
в нашей первой дипломной работе (ДР1) в июне 2011 года.
\footnote{ДР1, текст лекций для семинара и дальнейшие обобщения метода проверки
	выложены в 1-й версии (т.е. в этой) данной серии версий.}

В первой части данной работы мы разберём основные <<результаты>> нашей 1-й дипломной работы (ДР1).
\footnote{Хотя, в строгом смысле результатами они, возможно, не являются, но будем уловно называть их так.}
Точнее, разберём 3 основные леммы.
Формализуем доказательство экспоненциальности в более строгом смысле
на основе схемы, предложенной в ДР1,
что позволит полностью доказать главный результат.
Для доказательства главного результата (более строгой --- теоремы  о равномерно растущем дерве ГС (РРДГС)),
формально потребуется экспоненциально растущее дерево с линейным показателем в экспоненте (равномерный эксп.рост бесконечного слова),
что будет доказанно в 1-й части.

%Во 2-й части опишем конструкцию построения ГС для чётных и сведём их проверку на граничность к полиномиальной.
Во 2-й части опишем основные результаты из ДР2.
Точнее, улучшение схемы построения РРДГС с сокращением достаточного числа слов (с $3n+1$ как в ДР1 до $2n$).
А так же приведём доказательство из ДР2 существования ГС (для частных случаев),
у которых <<все достаточно длинные факторы имеют экспоненту сколь угодно близкую к 1>>
\footnote{На сколько помнит автор, такую формулировку предложил наш НР прямо во время защиты автором ДР2},
только на основе факта существования экспоненциального растущего множества ГС с линейным показателем.
Точнее, экспоненциальный рост должен быть представим как дерево,
с удваивающимся (или более) числом веток не более чем через некоторое константное число следующих букв
(т.е. длина веток в дереве без <<бифукаций>> (или более) ограничена константой сверху, где длина ветки это число букв).

В 3-й части опишем конструкцию для нечётных из ДР1 и конструкцию для чётных из 2-й дипломной работы (ДР2).
Опишем общий подход сведения проверки частных случаев этих конструкций к полиномиальной с доказательством корректности.

В 4-й части разберём эффективный алгоритм проверки слова $w$ на граничность, работающий за $O(|w|\log{|\w|})$

%==============

В общем, ДР1 можно считать черновым вариантом основных результатов,
где всё сведено к несложным утверждениям, но, возможно, рутинным.
Оригиналы студенческих работ автора, разбираемые здесь, выложены (если удалось) в исходниках данной работы.

%==============
%
%Понятно, что можно было всё доказать и короче и проще, но автор результатов
%не силён в интерпретациях,
%а для более стандартного решения ему необходимо вникать глубже в рутинные свойства изучаемых объектов
%т.к. не имеет достаточного опыта в данной области.
%Поэтому, придётся набраться терпения.
%%Может быть когда-нибудь у автора будет время и настрой для улучшения методов доказательств.
%==============

\hyperlink{contents}{$\upuparrows$}

\paragraph{Неформальное дополнение.}% \subcaption*{Неформальное дополнение}

Чтобы не быть голословным о датах разбираемых здесь работ (вспомогательный текст для 2-х лекций автора на семинаре, ДР1 и ДР2),
автор выложил в общий доступ к письмам из переписок (с нашим нуачным руководителем (НР) Шуром А.М. на то время) с этими работами \ref{links}.
В исходниках данной работы выложены оригинальные файлы, скачанные из этих писем (кроме папки с файлами *.docx):

{\bf(1)} Dejean.docx --- вспомогательный текст для 2-х лекций семинара.

{\bf(2)} Папка с ДР1 (скачано из переписки 24.06.11 с др. человеком, файлы в формате *.docx для удобства чтения).
% ссылка есть в закомментированной части с др.ссылками на переписки.
% в письме был прикреплён архив <<Diplom DOC.rar>>.
% Скачал, разархивировал удалил лишние файлы (видимо, скрытые были перед архивированием и посылкой письма)
% Заархивировал зипом полученную папку и зааплоадил в архив.орг.

{\bf(3)} 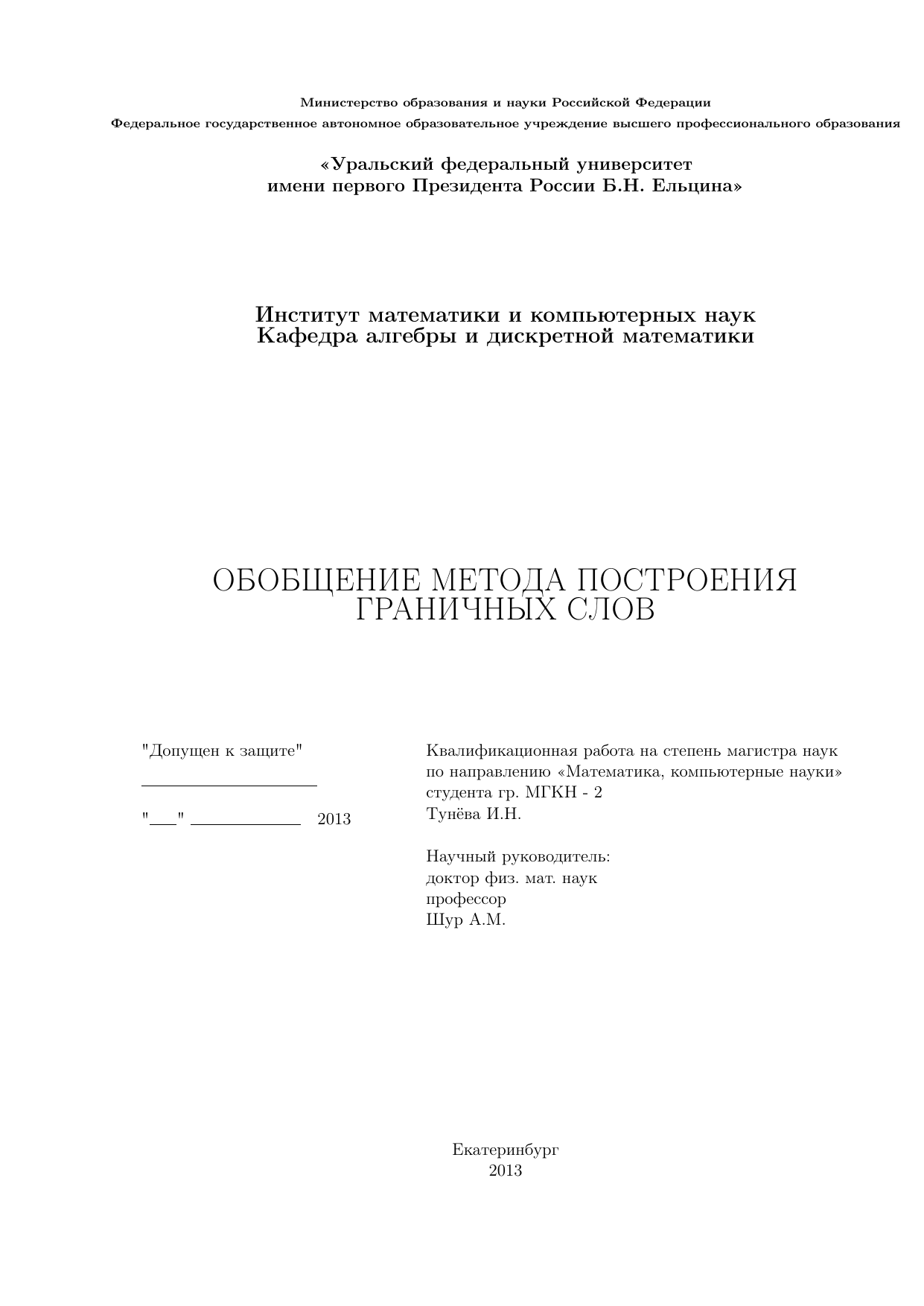 --- ДР2 (скачано из представленной переписки и переименовано).

Файлы *.docx можно смотреть через сервис в mail.ru:
% создаёте себе почтовый ящик на mail.ru, если ещё нет аккаунта
отправляем файл себе на почту в mail.ru и открываем его там же.

В 1-й части работы мы подробно разберём главные <<результаты>> из ДР1.
%########################
% судя по, несколько неприятной, реакции НР, результатов в ДР1 нет, поэтому в кавычках.
%########################
Мы более полно формализуем основные конструкции и идеи с доказательством их корректности,
предложенных в ДР1.
Предложим некоторые вариации их усиления и обобщения.
Хоть в авторстве статьи и обратный порядок фамилий, но это не объясняет вклад каждого автора.
Поэтому, здесь несколько прояснится ответ и на этот вопрос.

В исходниках выложены файлы (если удалось) с оригинальным текстом с леммами,
которые были рассмотрены на семинаре нашего научного руководителя (Шура А.М.. Далее НР,
хоть формально он уже не научный руководитель автора, условно будем называть его так) в конце марта 2011г.
%########################
% лекции были по понедельникам (это точное воспоминание).
% Первая лекция была через 2 недели после того как подошёл к НР с заявлением о решении (тоже в понедельник, на сколько помню),
% на что он выделил мне пару лекций на своём семинаре.
%########################
\footnote{Файл взят из переписки с НР, отправленная ему для ознакомления 07.03.2011 перед 1-й лекцией.
%Название семинара, возможно, несколько другое
}
На первой лекции автором была разобрана лемма о 3-хзначнй подстановке
\footnote{На лекциях и в ДР1 автор некорректно называл эту подстановку морфизмом}.
На 2-й лекции была разобрана лемма про циклические граничные слова (ЦГС).
Про ЦГС автор достаточно полно объяснил (основные идеи и) ключевые утверждения в доказательстве леммы, % (на сколько помнит автор),
кроме, возможно, некоторого утверждения в конце текста доказательства о ЦГС (из лекций).
%########################
%вспоминается (но не уверен --- боюсь наврать из-за ложных воспоминаний из возможных схожих случаев),
%что в конце я не объяснил переход к последнему утверждению,
%противоречивость которого и влечёт доказательство.
%Но воспоминание устойчивое.
%########################

Доказательства основных лемм довольно небрежное
(с незначительным, по мнению автора, но легко поправимыми ошибками и недочётами),
но в них можно вычленить ключевые утверждения и условия,
на основе которых не сложно доработать доказательство до презентабельного вида.

В формальной части ДР1 доказательства основных 3-х лемм опускаются достаточно простые случаи
(часто невозможные, которые тогда автор считал их не сложными, но рутинными).
\footnote{Они будут описываться в сносках внизу.}
В доказательстве были проверены все части, требующие аккуратности
(когда все неравенства сходятся точно на границе допустимых значений). %{\cRe ТУДУ}).
Но есть и пробелы,
когда не описаны даже некоторые обозначения о которых можно понять только по контексту.
И даже отсутствует 1 пояснение о свойстве (хоть и несложно доказываемое), неявно используемом в дальнейшем.
%########################
%Не помню, почему так много пробелов по различным частным случаям,
%видимо, из-за спешки в связи с выпускным семестром
%(т.к. писалось всё сразу в <<чистовик>> опуская некоторые случаи (в основном несуществующие),
%которые мне тогда казались очевидными).
%########################
Поэтому, можно рассматривать как черновое доказательство с разбором всех нетривиальных случаев
\footnote{Под <<тривиальным>> автор подразумевает техническое/рутинное или идейно не сложное.
Под силу специалисту с достаточным опытом.}
с небрежной формализацией ключевых этапов доказательства (сами этапы доказываются почти очевидно,
но автор попытался формализовать строго и наспех,
поэтому много недочётов и, даже, арифметических ошибок, но легко поправимых).
В любом случае, (в одиночку) разбор предложенного метода займёт не более 37 лет у любого терпеливого новичка в комбинаторике слов.

%########################
%Вообще, сам автор обычно не привык детально читать чужие доказательства,
%т.к. многие части доказательства уже понятны и не требуется доп.пояснений.
%Даже если в доказательстве есть семантические ошибки, автору это не важно
%т.к. главное по инструкциям можно уже доказать в любой удобной форме.
%Вот и здесь доказательство в удобном для автора стиле доказательства без деталей (тривиально доработываемых).
%########################

Автор считает, что людям интеллектуальной профессии достаточно объяснить
решение более общими приёмами без подробностей когда б$\acute{\text{о}}$льшая детализация уже тривиальна
(даже если в обяснении есть ошибки, но легко поправимые).
Поэтому, по некоторым критериям можно считать, что решение уже было представлено в ДР1.
Учитывая, что данная задача --- не тривиальный результат, а задача с прохождением <<зоны смерти>>,
%(даже без <<кислорода>> т.е. решение полностью собственным способом, не считая общеизвестных инструментов математики,
%полученых автором во время учёбы),
то доработку, темболее, можно считать тривиальной (т.е. решаема опытным специалистом).
И, в общем, по мнению автора, теорию лучше воспринимать как творческий
путь развития решения с различными его (решения) вариациями.
Как минимум этот этап, когда доработки/поправки являются уже тривиальными, должен быть выделен в решениях.
%это, так же, может способствовать развитию ИИ для решения мат проблем, требующих выхода за рамки стандартных решений,
%а не только технических задачек(на уровне расопзнавания).
А вот промышленное применение теории уже должно быть отлажено точно во избежание проблем на практике.

При этом, формат доказательства почти на чистом формальном языке (+симпл русский) ---
так автор, самоуверенно, считал такой язык наиболее подходящим для научных результатов
(т.к. формальный язык почти однозначно интерпретируется).
При этом, такой формат ещё и легче переводить на другие языки.

Не каждый разберётся в предложеном автором доказательстве основных лемм.
Но достаточно терпеливый специалист способен разобраться с доказательством,
а достаточно опытный и доказать корректность методов своим способом.
Научные работы, всётаки, пишутся для думающих людей.
В крайнем случае можно было попросить переоформить доказательства ключевых лемм
(с достаточным временем вне учебного времени --- работа достаточно большая и нетривиальная).
%########################
% Только не просить меня приехать из другого города
% (около 5 часов на дорогу туда и обратно, не считая денежных затрат)
% из-за пары-тройки мелких вопросов, которые можно было спросить по ICQ (хотя, тогда ещё у меня его, возможно, не было).
%%Первый вопрос я помню (когда я неверно интерпретировал следствие как импликацию, но это не грубая ошибка).
%%А дальнейшие вопросы не помню (но есть догадки, что это были 2-3 вопроса про публикацию статьи)
%%т.к. не воспринял их всёрьёз, ассоциируя с чем-то коммерческим и уж точно не первоисточником знаний.
%########################

В доказательстве, как оказалось, так же присутствуют смысловые ошибки (опечатки) и избыточные утверждения.
\footnote{Но все они поправимы. О них так же будет в сносках.}
А по прошествии более 10 лет почти все детатали доказательства выветрились из памяти автора.

Будучи студентом, слабым в вопросах организации, была стойкая уверенность,
что университетские научные работы уровня диплома и выше,
являются более фундаментальными для определения авторства работы (первоисточником для науки),
чем газетно-журнальные издательства, которые могут и закрытся
(Ведь не редакции называют <<храмом науки/знаний>>).
И, вообще, трудно было даже сломать догму о том,
что университетские научные работы (дипломы, диссертации) это пыль для науки
по сравнению со статьями (НАУЧНЫМИ).
% даже дополнение <<научная>> ещё не даёт понимание всей серьёзности --- ну статья про научные результаты и что?
% не очевидно, что это полноценная научная работа с полноценными доказательствами.
% даже в этом случае, неочевидно, что они не ссылаются на первоисточники (университетские работы) как ненаучные статьи.
И уж, тем более, дико было предположить, что универские работы уровня дипломов стирают из истории.
% о чём я узнал, чисто по случайности, после защиты магистерской.
% А если в них есть результат, которому ещё не пришло время?
% пусть, даже, тривиально вытекающий из работы, но подразумеваемый.

%================

%В данной рботе под <<мы>> подразумевается автор данной работы.

%Под <<тривиальным>> мы подразумеваем техническое/рутинное или идейно достаточно простое.
%Т.е. когда достаточно опытный специалист способен разобрать решение или доработать до презентабельного вида.
%Т.е. не каждый разберётся в предложеном автором доказательстве основных лемм.
%Но если читать последовательно,
%то по контексту можно разобрать всё решение корректируя ошибки и восполняя пробелы (а так же, игнорируя неверные утверждения).

\hyperlink{contents}{$\upuparrows$}

\subsection{Общие определения и обозначения по умолчанию}

Во всей работе по умолчанию $n,k\in\mN, \eps\in\mR$, если не оговорено другое.
Обозначение $\mN_i$ означать множество всех целых чисел не менее $i\in\mZ$.
$\A_n=\{a_1,...,a_n\}$ --- алфавит (кортеж), над которым изучаются слова в данной работе.
$\A_n^k$ --- все слова над $\A_n$ длины $k$.
По умолчанию $n\ge5$ и используется для обозначения числа букв в алфавите.

Термины, введёные здесь, являются стандартными, но мы определим их, возможно с некоторыми изменениями.

\begin{defn}
	Длиной слова $v$ называется количество букв в нём и обозначается как $|v|$.
\end{defn}

\begin{defn}
	Степень $k$ слова $v$ называется $k$ последовательных копий $v$
	%
	%\hfill
	и обозначается как $v^k=vv...v, |v^k|=k\cdot|v|$.
\end{defn}

\begin{defn}
	Фактором $u$ слова $v$ является подпоследовательность подряд идущих букв в слове $v$,
	сохраняющая порядок букв и обозначается как $u\subseteq v$.
	Множество всех факторов слова $w$ обозначим $\F(w)$.
\end{defn}

\begin{defn}
	Префиксом слова $v$ является фактор слова $v$, содержащий первую букву слова $v$
	и обозначается $\pref(v)$.
\end{defn}

\begin{defn}
	{Корнем слова $v$ назовём кратчайший его фактор $x$ (для определённости --- префикс) такой,
	что $v$ является фактором слова $x^{|v|}$.}
\end{defn}

\begin{defn}
	Периодом слова назовём длину его корня и обозначим $|v|-\per(v)$.
\end{defn}

\begin{defn}
	Повтором слова $v$ называется его префикс длины $|v|-\per(v)$
	(он же и суффикс слова $v$).
\end{defn}

\begin{defn}
	Слово $v$ называется простым, если $|v|=\per(v)$, т.е. слово с пустым повтором.
\end{defn}

\begin{defn}
	Экспонентой $\exp(v)$ слова $v$ называется отношение его длины к длине корня
	т.е. $\exp(v)=\frac{|v|}{\per(v)}$.
\end{defn}

\begin{defn}
	Локальной экспонентой $\lexp(v)$ слова $v$ называется верхняя граница экспонент всех его непустых факторов
	т.е. $\lexp(v)=\sup\{\exp(u):u\subseteq v\}$.
\end{defn}

\begin{defn}
	Слово $v$ избегает экспоненту $\beta$ ($\beta^+$), если $\lexp(v)<\beta$ ($\lexp(v)\le\beta$).
\end{defn}

\begin{defn}
	Экспонента $\beta$ ($\beta^+$) избегаема в алфавите с $k$ буквами,
	если существует бесконечно много слов над этим алфавитом, избегающих $\beta$ ($\beta^+$).
\end{defn}

\begin{defn}
	Слова избегающие экспоненту $\beta$ ($\beta^+$) обзначаются $\beta${--}free ($\beta^+${--}free).
\end{defn}

\begin{defn}
	Границей повторяемости называется функция $\RT(n)$ от $n\in\mN$
	$$
	\RT(n) = \inf\{\beta > 1: \text{ существует бесконечно много } \beta^+\text{{--}free слов над } \A_n\}
	\bigg(=\frac{n}{n-1}\text{ при }n\ge5\bigg)
	$$
\end{defn}

\begin{defn}
	Слово $w$ над $\A_n$ называется граничным, если $\lexp(w)\le\RT(n)$.
	Множество всех таких граничных слов обозначим $\T_n$.
\end{defn}

\begin{defn}
	$|u|_a$ --- количество букв $a$ в слове $u$.
\end{defn}

\begin{sgn}
	Символом \contr\ будем обозначать противоречие.
	Т.е. его можно заменить фразой <<что противоречит>> или <<что противоречит тому, что>>.
\end{sgn}

\hyperlink{contents}{$\upuparrows$}

\subsection{Критерий граничной теоремы (ГТ) о бесконечном граничном слове (БГС)}

%Из того, что множество слов бесконечно, не следует, что в нём есть бесконечное слово
%(например, во множестве натуральных чисел нет бесконечного числа).
Здесь мы докажем, что существует БГС,
из того, что существует бесконечное число конечных граничных слов (ГС).
Даже не только для ГС, но и для всех факториальных языков.

Граничная теорема утверждает, что $|\T_n|=\infty$

\begin{defn}
	{\bf Бесконечное ГС} (БГС) вправо (по умолчанию все БГС в работе считаем БГС вправо) --- бесконечное вправо слово,
	все факторы которого являются ГС.
	Критерий БГС вправо --- любой префикс являеется ГС.
\end{defn}

\begin{stat}
	$|\T_n|=\infty$ если и только, если существует БГС. %$\omega\in\T_n$ такое, что $|\omega|=\infty$.
\end{stat}
\begin{proof}
	Очевидно, что если $\omega\in\T_n$, то любой её префикс из $\T_n$ (в силу факториальности $\T_n$).
	
	Тогда, если $|\omega|=\infty$, то множество всех перфиксов слова $\omega$ бесконечно(счётно),
	и является подмножеством в $\T_n$.
	Значит из $|\omega|=\infty$ следует что $|\T_n|=\infty$.
	
	Теперь докажем в другую сторону.
	Достаточно доказать,
	что существует (бесконечная) последовательность конечных ГС $u_1,u_2,...,u_i,...$, %$U=\{u_1,u_2,...,u_i,...\}$,
	где $|u_i|<u_{i+1}$ и $u_i=\pref_{|u_i|}(u_{i+1})$.
	Тогда пределом и будет бесконечное (вправо) слово.
	%Последовательность букв можно рассматривать как число из отрезка $[0,1]$ в $n$-арной системе счисления,
	%тогда пределом будет число на вещественном отрезке,
	%где каждое число однозначно определяет слово.

	Разобьём $\T_n$ на подмножества по длинам слов.
	Обозначим как
	$$\U_{n, m} = \big\{u\in\T_n: |u|=m\big\}$$
	Тогда для любых $m\in\mN$ группа $\U_{n, m}$ конечна т.к. ограничена числом $n^m$.
	Тогда, т.к. $\T_n$ бесконечно, то таких непустых групп бесконечно много
	т.е. для любых $m\in\mN$ группа $\U_{n, m}$ не пуста.
	
	Заметим, что любое слово $u_2\in\U_{n, m_2}$ является удлинением некоторго слова из $u_1\U_{n, m_1}$
	(т.е. $u_1$ --- префикс слова $u_2$) при любых $m_1>m_2\in\mN$
	(в силу факториальности $\T_n$).
	Обозначим это отношение как $\U_{n, m_2}\Subset\U_{n, m_1}$.
	Если слово $u\in\U_{m_1}$ не удлинняется ни до какого слова из $\U_{m_2}$
	Тогда докажем, что существует последовательность $u_1,u_2,...,u_m,...$ такая,
	что $u_i=\pref_{|u_i|}(u_j)$ при любых $i<j\in\mN$ и $u_i\in\U_{n, i}$ при любых $i\in\mN$.
	
	Докажем ОП.
	Предположим, что такой (бесконечной) последовательности удлинняющихся префиксов не существует.
	Тогда, никакое $u\in\U_{n,1}$ не удлинняется неограниченно.
	Но т.к. $\U_{n,1}$ конечно, то существует такое $m\in\mN$,
	что никакое $u\in\U_{n,1}$ не удлинняется до $\U_m$.
	Тогда, $\U_m\not\Subset\U_1$, что противоречит факториалььности $\T_n$.
	
	Т.е. существует неограниченная последовательность удлиняющихся слов из $\T_n$,
	где каждое следующее слово содержит все предыдущие (в виде префиксов).
	Пределом этой последовательности и будет БГС.
%
%	Предел этой последовательности существует т.к. последовательность числовых аналогов слов $u_m$
%	сходится на отрезке $[0,1]$.
%	Сходится т.к. цифры последовательно фиксируются слева на право этой последовательностью.
% НЕВЕРНО. Т.К. СХОЖДЕНИЕ ЧИСЛОВОГО АНАЛОГА НЕ ВЛЕЧЁТ СХОЖДЕНИЕ САМИХ СЛОВ --- ПРИМЕР --- 0.199...9...=0.200...0...
% Т.Е. ЧИСЛА РАВНЫ, НО ПРЕДЕЛЬНЫЕ СЛОВА НЕТ.
\end{proof}

%Другое доказательство можно попробовать получить для сортированного по длине слов множества $\T_n$,
%оценив длину слов снизу логарифмом.
%Тогда найдётся подпоследовательность неограниченно удлиняющихся префиксов,
%предел которых --- бесконечное граничное слово.
%%Тогда, предполагая конечность любого слова в $\T_n$, 

Если считать, что $\T_n$ содержит пределы любой сходящейся последовательности слов из $\T_n$,
то $\T_n$ содержит БГС.
% последовательность может быть представлена только конечно записанным правилом (и что это значит? каким языком записано?)
% или можно выразить некоторую сходящуюся последовательность бесконечной записью, но не выражаемую конечной записью?

\hyperlink{contents}{$\upuparrows$}

\newpage

\section{Разбор ДР1 (2011) --- построение частных случаев равномерно растущего дерева БГС (РРДГС)}

Равномерно растущим деревом граничных слов будем называть дерево слов,
где каждая (нисходящая) ветка является ГС.
При этом, расстояние между соседними вершинами дерева ограничено константами сверху и снизу.

Здесь мы предложим метод построения такого дерева для частных случаев $n\ge5$.

\subsection{Лемма 1. Метод спуска}

\subsubsection{Дополнительные определения}

Здесь мы введём дополнительные вспомогательные объекты для 1-й Леммы \ref{l1}.
Новые определения будут представлены в терминах множеств как в оригинальной работе.
Через $\mN$ обозначается множество всех натурльных чисел т.е. $\mN=\{1, 2, ...\}$.

Определим слова с более строгими свойствами чем граничные.

\begin{defn}
	Пусть $p, n\in\mN$.
	Обозначим за $\D_{p, n}$ множество слов
	таких, что любой их фактор с корнем $p$ имеет длину не больше $p\cdot n/(n-1)$ т.е.
	$$\D_{p, n} = \Big\{w\in\A_n^*: \forall v\subset w,
	\text{ выполняется импликация } per(v)=p\to \exp(v)\le\RT(n)\Big\}$$
\end{defn}

\begin{defn}
	Пусть $n\in\mN$, $P\subset\mN$.
	Обозначим за $\D_{P, n}$ множество слов таких,
	что любой их фактор с корнем $p$ из $P$ имеет длину не больше $p\cdot n/(n-1)$ т.е.
	$$\D_{P, n} = \bigcap_{p\in P}\D_{p, n}$$
\end{defn}

\begin{note}
	Заметим, что $\T_n=\D_{\mN, n}$ при $n\ge5$.
\end{note}

Тогда, можно сформулировать граничную теорему для алфавитов с 5-ю и более буквами в виде равенства
$|\D_{\mN, n}|=\infty$.

Ведём ещё несколько вспомогательных функций.

\begin{defn}
	Пусть $\V\subset\A_n^*$, а $w\in\A_n^*$, при этом $w$ и все слова в $\V$ имеют конечную длину.
	Тогда:
	\begin{itemize}
		\item
		$\p(w)=|w|-\per(w)$.
		Т.е. $\p(w)$ --- длина повтора слова $w$
		\item
		$\SUFF(w)=\{v\in\A_n^*: \exists u\in\A_n^* \text{ такое, что } uv=w\}$.
		Т.е. $\SUFF(w)$ --- множество всех суффиксов слова $w$.
		\item
		$\PREF(w)=\{u\in\A_n^*: \exists v\in\A_n^* \text{ такое, что } uv=w\}$.
		Т.е. $\PREF(w)$ --- множество всех префиксов слова $w$.
		\item
		$\suff_l(w)=v: v\in\SUFF(w), |v|=l$. Т.е. $\suff_l(w)$ --- суффикс слова $w$ длины $l$.
		\item
		$\pref_l(w)=u: u\in\PREF(w), |u|=l$. Т.е. $\suff_l(w)$ --- префикс слова $w$ длины $l$.
		\item
		$\l(\V)=\max\{m\in\mN: \pref_m(u)=\pref_m(v) \text{ где } u\ne v \text{ и } u, v\in\V\}$.
		Т.е. $\l(w)$ --- длина наибольшего общего префикса среди всех пар различных слов из $\V$.
		\item
		$\r(\V)=\max\{m\in\mN: \suff_m(u)=\suff_m(v) \text{ где } u\ne v \text{ и } u, v\in\V\}$.
		Т.е. $\r(w)$ --- длина наибольшего общего суфикса среди всех пар различных слов из $\V$.
		\item
		Пусть $a_1ua_2va_3ua_4\in w\in\A_n^*$, где $a_1,a_2,a_3,a_4\in\A_n$.
		Назовём повтор $u$ нерасширяемым (в, выделеных жирным, позициях $a_1\u a_2va_3\u a_4$),
		если $a_1\ne a_3$ и $a_2\ne a_4$.
		Повтор $u$ нерасширяем влево[вправо],
		если $a_1\ne a_3$[$a_2\ne a_4$].
	\end{itemize}
\end{defn}

%\begin{defn}
%\end{defn}

\begin{defn}
	Множество всех общих факторов двух слов $u, v$ обозначим как
	$u\cap v=\{w: w\subset v,w\subset u\}$.
\end{defn}

%\subsubsection{Неформальное вступление}

\hyperlink{contents}{$\upuparrows$}

\subsubsection{Лемма 1}

Здесь мы разберём простую схему построения бесконечных граничных слов предпологая,
что уже существует бесконечное грничное слово над алфавитом с б$\acute{\text{о}}$льшим числом букв.
В данной работе мы называем эту схему спусковым морфизмом.

Данная ключевая лемма описанная здесь являтся авторской редакцией этой леммы из ДР1.
Доказательство леммы описанно с более подробными объяснениями каждого случая.
Серым текстом выделены важные или полезные дополнения, пояснения и разбор некоторых (тривиальных) случаев,
не рассмотренных в ДР1.
Поправки текста в ДР1 выделены красным.

\begin{lem}\label{l1}
	Пусть $L, n, k\in\mN$ и $n\ge5$.
	%Рассмотрим бесконечное граничное слово $\dot{\omega}\in\T_{n+k}$
	%и биективный морфизм $f: \A_{n+k}\leftrightarrow\V$.
	Рассмотрим набор из $n+k$ слов $\V\subset\A_n^L$ удовлетворяющий условиям:
	\begin{description}
		\item[(l1.c1)\label{l1c1}]
		$\per(v)=|v|(=L)$ для всех $v\in\V$;
		\item[(l1.c2)\label{l1c2}]
		$\lexp(uv)\le\frac{n}{n-1}$ для любых различных $u, v\in\V$;
		\item[(l1.c3)\label{l1c3}]
		$\max\{l+r: \suff_l(u)=\suff_l(v), \pref_r(v)=\pref_r(w)\}\le\frac{L}{n-1}$
		для любых различных (попарно) слов $u, v, w\in\V$.
	\end{description}
	Тогда $f(\dot{\omega})\in\T_n$ для любого бесконечного слова $\dot{\omega}\in\T_{n+k}$
	и биективнго морфизма $f: \A_{n+k}\leftrightarrow\V$.
\end{lem}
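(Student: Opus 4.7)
The plan is to argue by contradiction: suppose some factor $w\subset f(\dot\omega)$ has $\exp(w)>n/(n-1)$, set $p=\per(w)$ so that $|w|>np/(n-1)$, and analyze $w$ against the block decomposition $f(\dot\omega)=f(x_1)f(x_2)\cdots$ into length-$L$ blocks from $\V$. Since $\dot\omega\in\T_{n+k}$ with $n+k\geq 6$, we have $\lexp(\dot\omega)\leq(n+k)/(n+k-1)<3/2$, so $\dot\omega$ avoids both squares $xx$ and factors $xyx$ with $x\neq y$ (these have exponents $2$ and $3/2$). Consequently any three consecutive letters of $\dot\omega$ are pairwise distinct, and by bijectivity of $f$ any three consecutive blocks $f(x_i),f(x_{i+1}),f(x_{i+2})$ are three distinct elements of $\V$.

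The first reduction is immediate: if $w$ lies inside two consecutive blocks $f(x_i)f(x_{i+1})$, then (l1.c2) gives $\exp(w)\leq\lexp(f(x_i)f(x_{i+1}))\leq n/(n-1)$, contradiction. So $w$ spans at least three consecutive blocks $f(x_\alpha),\ldots,f(x_\beta)$ with $\beta-\alpha\geq 2$, and contains the interior block $f(x_{\alpha+1})$ entirely. I then split on $p$ versus $L$. If $p<L$, the factor $f(x_{\alpha+1})\subset w$ inherits period $p$ from $w$, giving $\per(f(x_{\alpha+1}))\leq p<L$, contradicting (l1.c1). If $p=L$, the $p$-periodicity of $w$ equates shifted block positions: spanning four or more blocks directly forces $f(x_{\alpha+1})=f(x_{\alpha+2})$ by a single shift-by-$L$, hence $x_{\alpha+1}=x_{\alpha+2}$, contradicting square-freeness of $\dot\omega$; spanning exactly three blocks with start offset $s$ in $f(x_\alpha)$ and end offset $e$ in $f(x_{\alpha+2})$ produces common suffix of length $L-s+1$ between $f(x_\alpha),f(x_{\alpha+1})$ and common prefix of length $e$ between $f(x_{\alpha+1}),f(x_{\alpha+2})$, so (l1.c3) applied to these three distinct blocks bounds $(L-s+1)+e\leq L/(n-1)$, while the length equation $|w|=2L-s+1+e>nL/(n-1)$ rearranges to $(L-s+1)+e>L/(n-1)$, contradiction.

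The final case $p>L$ is the hardest. Writing $p=qL+r$ with $q\geq 1$ and $0\leq r<L$, the shift-by-$p$ identifies positions in $f(x_j)$ with positions in $f(x_{j+q})$ (and in $f(x_{j+q+1})$ when $r>0$), yielding shifted equalities between $\V$-words. In the cleanest subcase $r=0$ with enough fully covered interior blocks, these equalities project to a period-$q$ factor of $\dot\omega$ whose exponent equals $\exp(w)>n/(n-1)\geq(n+k)/(n+k-1)$, directly contradicting $\dot\omega\in\T_{n+k}$; when the number of fully covered interior blocks falls short of what this projection requires, one supplements with partial-block prefix-suffix equalities at the endpoints of $w$ and invokes (l1.c2) or (l1.c3) to close the gap.

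The main obstacle is the subcase $r>0$: here the shift by $p$ misaligns block boundaries, so the $p$-periodicity couples a prefix of one $\V$-word to a suffix of another rather than equating whole blocks, and deriving the contradiction demands combining these cross-boundary equalities with (l1.c2), (l1.c3), and a careful count based on how many interior blocks are fully covered by $w$. The delicate bookkeeping needed to show that in every configuration either the induced $\dot\omega$-exponent bound or the combined overlap length in (l1.c3) is violated will be the technical heart of the proof.
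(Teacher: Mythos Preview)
Your case-split and your handling of $p<L$ and $p=L$ match the paper's (l1.p12)--(l1.p13). The genuine gap is in $p>L$: you acknowledge $r>0$ as the hard case but give no mechanism, and even your $r=0$ sketch is too quick. The idea you are missing is an \emph{alignment lemma} (the paper's (l1.p5)): every occurrence of a word $v\in\V$ inside $f(\dot\omega)$ is block-aligned, because a straddling occurrence across two consecutive blocks $u,w'\in\V$ would force a square inside $uv$ (if $v\notin\{u,w'\}$) or a proper period of $v$ (if $v\in\{u,w'\}$), contradicting (l1.c2) or (l1.c1). Combined with the bound---also squeezed out of (l1.c2)---that any common factor of $v_1v_2$ and $v_3v_4$ for four pairwise distinct $v_i\in\V$ has length at most $2L/(n-1)$, alignment gives: whenever the repetition length $|w|-p$ reaches $2L/(n-1)$, necessarily $p\equiv 0\pmod L$. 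This kills $r>0$ for $p\geq 2L$ outright, since then $|w|-p<2L/(n-1)$ and $\exp(w)\leq 1+\frac{2L/(n-1)}{2L}=\frac{n}{n-1}$. Only $L<p<2L$ survives with $r>0$, and the paper disposes of it (its case (l1.p11)) by applying (l1.c2) to the \emph{cross-pairs} $f(x_\alpha)f(x_{\alpha+2})$ and $f(x_{\alpha+2})f(x_{\alpha+1})$ to show the repetition has length at most $L/(n-1)$.

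Your $r=0$ case also needs more than you indicate: the projected factor of $\dot\omega$ does \emph{not} have exponent equal to $\exp(w)$, because partial blocks at both ends of $w$ contribute extra length up to $\l(\V)+\r(\V)$. The paper first derives $\max\{\l(\V),\r(\V)\}\leq L/(n-1)$ from (l1.c2), then splits on the projected period $q$: for $q\geq 2(n+k-1)$ the slack is absorbed arithmetically; for $2\leq q<n+k-1$ the preimage has no repetition at all, so only the slack remains; but the borderline $q=n+k-1$ is tight and is precisely where (l1.c3) must be invoked---indeed it is the only place in the long-period analysis where (l1.c3) is essential. Your plan uses (l1.c3) only at $p=L$ and gives no hint of this second, critical, use.
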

\begin{proof}
	Для начала заметим несколько свойств повторов:
	\begin{description}
		\item[(l1.p1)]
		Из \ref{l1c2} следует, что% $\forall m\in\{\l(\V), \r(\V)\}$
		$\frac{L+\max\{\l(\V), \r(\V)\}}{L}\le\frac{n}{n-1}$.
		А значит
		\begin{equation}\tag{l1.p1}\label{l1p1}
		%\forall m\in\{\l(\V), \r(\V)\} \text{ выполняется неравенство }
		\max\{\l(\V), \r(\V)\}\le\frac{L}{n-1}
		\end{equation}
		\Big(
		Т.к. иначе получим противоречивую цепочку неравенств
		$\frac{n}{n-1}\ge\frac{L+\max\{\l(\V), \r(\V)\}}{L}>\frac{L+\frac{L}{n-1}}{L}=\frac{n}{n-1}$
		\Big)
		
		\item[(l1.p2)]
		Так же, из \ref{l1c2} следует, что $\forall v_1\ne v_2\in \V$ и $\forall v\in v_1\cap v_2$
		фактор $w\subset v_1v_2$ с перфиксом и суффиксом $v$ имеет ограничения на экспоненту
		$\frac{n}{n-1}\ge\exp(w)=\frac{|w|}{|w|-|v|}\ge\frac{2L}{2L-|v|}$.
		А значит
		\begin{equation}\tag{l1.p2}\label{l1p2}
		\forall v_1\ne v_2\in \V \text{ и } \forall v\in v_1\cap v_2 \text{ выполняется неравенство }
		|v|<\frac{2L}{n-1}
		\end{equation}
		\Big(
		Т.к. иначе получим противоречивую цепочку неравенств
		$\frac{n}{n-1}\ge\frac{2L}{2L-|v|}\ge\frac{2L}{2L-\frac{2L}{n-1}}=\frac{n-1}{n-2}>\frac{n}{n-1}$
		\Big)
		
		\item[(l1.p3)]
		Пусть $v_1, v_2, v_3\in \V$ попарно различные
		и $v\in v_1\cap v_2v_3$, где $v\not\subset v_2$ и $v\not\subset v_3$
		Тогда $\exists l_1, l_2>0$ такие,
		что $l_1+l_2=|v|$, $\suff_{l_1}(v_2)=\pref_{l_1}(v)$ и $\pref_{l_2}(v_3)=\suff_{l_2}(v)$.
		А значит, с учётом \ref{l1c2}
		\begin{equation}\tag{l1.p3}\label{l1p3}
		\text{ для попарно различных } v_1, v_2, v_3\in \V \text{ и } v\in v_1\cap v_2v_3,
		\text{ где } v\not\subset v_2 \text{ и } v\not\subset v_3 \text{ выполняется }
		|v|<\frac{2L}{n-1}
		\end{equation}
		\Big(
		Т.к. иначе, с учётом что $\max\{l_1, l_2\}\ge\frac{|v|}{2}$,
		получим противоречивую цепочку неравенств\\
		$\frac{n}{n-1}\ge\max\{\lexp(v_2v_1), \lexp(v_1v_3)\}\ge
		\max\Big\{\frac{L+|v|-l_2}{L-l_2}, \frac{L+|v|-l_1}{L-l_1}\Big\}>
		\max\Big\{\frac{L+l_1}{L}, \frac{L+l_2}{L}\Big\}\ge
		\frac{L+\frac{|v|}{2}}{L}\ge\frac{L+\frac{L}{n-1}}{L}=\frac{n}{n-1}$
		%либо $\lexp(v_2v_1)\ge\frac{L+|v|-l_2}{L-l_2}>\frac{L+l_1}{L}\ge\frac{L+\frac{L}{n-1}}{L}=\frac{n}{n-1}$,
		%либо $\lexp(v_1v_3)\ge\frac{L+|v|-l_1}{L-l_1}>\frac{L+l_2}{L}\ge\frac{L+\frac{L}{n-1}}{L}=\frac{n}{n-1}$.
		\Big)
		
		\item[(l1.p4)]
		Пусть $v_1, v_2, v_3, v_4\in \V$ попарно различные и $v\in v_1v_2\cap v_3v_4$.
		Тогда из \ref{l1c2} следует
		\begin{equation}\tag{l1.p4}\label{l1p4}
		\text{ для попарно различных } v_1, v_2, v_3, v_4\in \V \text{ и }
		v\in v_1v_2\cap v_3v_4 \text{ выполняется неравенство }
		|v|\le\frac{2L}{n-1}
		\end{equation}
		Докажем.
		Случаи, когда $v$ полностью лежит, хотябы, в одном из слов $v_1, v_2, v_3, v_4$
		уже рассмотрены в 2-х предыдущих свойствах повторов.
		При этом, неравенства строгие.
		
		Тогда рассмотрим $v$ детальнее.
		Пусть $v=u_1u_2u_3$,
		при этом БОО\footnote{Без ограничения общности} $u_1u_2\in\SUFF(v_1)$, $u_3\in\PREF(v_2)$ и
		$u_1\in\SUFF(v_3)$, $u_2u_3\in\PREF(v_4)$.
		В случае $|u_2|=0$ наше неравенство вытекает сразу из (\ref{l1p1}),
		где $|u_1|, |u_3|$ ограничены сверху числом $\frac{L}{n-1}$.
		При этом, равенство достигается только в случае равенства $|u_1|=|u_3|=\frac{L}{n-1}$.
		
		Значит достаточно рассмотреть только случай, когда $|u_1|, |u_2|, |u_3|>0$, что невозможно
		т.к. иначе получим противоречивое по \ref{l1c2} неравенство
		$\frac{5}{4}\ge\frac{n}{n-1}\ge\lexp(v_1v_4)\ge\exp(u_2u_2)\ge2$.
	\end{description}

	Докажем некоторые свойства для нашего морфизма $f(\dot{\omega})$:
	\begin{description}
		\item[(l1.p5)\label{l1p5}]
		Пусть $u, v, w\in\V$ и $u\ne v\ne w$,
		тогда по \ref{l1c2} $v\not\in uw$
		т.к. иначе $\frac{5}{4}\ge\frac{n}{n-1}\ge\lexp(uv)\ge2$.
		Тогда
		\begin{equation}\tag{l1.p5.e1}\label{l1p5e1}
		\forall v\in\V \text{ выполняется импликация }
		\big(v=\subw_m^L(f(\dot{\omega}))\big) \to
		\big(m\equiv1\bmod{L}\big)
		\footnote{Для простоты можно считать,
			что $\dot{\omega}$ бесконечно вправо,
			а нумерация букв и в $\dot{\omega}$ и в $f(\dot{\omega})$ начинается с 1.
			В общем случае, можно начинать отсчёт с любой буквы в $\dot{\omega}$,
			а в $f(\dot{\omega})$ с первой буквы образа выбранной буквы в $\dot{\omega}$.
		}
		\end{equation}
		Т.е. любое слово из $\V$, найденое в слове $f(\dot{\omega})$
		может быть только образом одной буквы из $\dot{\omega}$.
		
		Тогда, с учётом (\ref{l1p4})
		%$\big(|v|-\per(v)\ge\frac{2L}{n-1}\big)\to(\per(v)\equiv0\bmod L)$.
		\begin{equation}\tag{l1.p5.e2}\label{l1p5e2}
		\forall v\subset f(\dot{\omega}) \text{ выполняется импликация }
		\Big(|v|-\per(v)\ge\frac{2L}{n-1}\Big)\to
		(\per(v)\equiv0\bmod L)
		\end{equation}
		
		Докажем это.
		Пусть $\p(v)\ge\frac{2L}{n-1}$ (напомним, что $\p(v)=|v|-\per(v)$), тогда, с учётом (\ref{l1p4}), следует,
		что повтор слова $v$ нерасширяем в $f(\dot{\omega})$:
		\begin{itemize}
			\item
			Либо \big(в случае $\p(v)>\frac{2L}{n-1}$\big) содержит хотябы 1 элемент из $\V$.
			Но тогда, по (\ref{l1p5e1}), этот элемент начинается с позиции $1\bmod L$ в слове $f(\dot{\omega})$.
			Тогда и левый и правый повторы слова $v$ начинаются с одинакового смещения по $\bmod L$,
			из чего и следует равенство $\per(v)\equiv0\bmod L$.
			\item
			Либо \big(в случае $\p(v)=\frac{2L}{n-1}$\big) по (\ref{l1p2}) и (\ref{l1p3})
			повтор слова $v$ не содержится ни в оном слове из $\V$,
			и по (\ref{l1p4}) представ$\acute{\text{и}}$м как $v_1v_2$,
			где $|v_1|=|v_2|$ и $v_1\in\SUFF(u)$, $v_2\in\PREF(w)$ для некоторых $v_1, v_2\in\V$.
			Тогда и в левом и вправом повторе слова $v$
			фактор $v_2$ начинается с позиции $1\bmod L$ в слове $f(\dot{\omega})$.
			И, аналогично предыдущему случаю, получаем равенство $\per(v)\equiv0\bmod L$.
		\end{itemize}
		
		Т.е. по (\ref{l1p5e2}), если длина повтора фактора из $f(\dot{\omega})$ больше
		$\frac{2L}{n-1}\ge\frac{L}{2}$,
		то он (повтор) представим в виде конкатенации слов из $\V$
		с добавленными по краям короткими словами длины (по (\ref{l1p1})) не больше
		$\max\{\l(\V), \r(\V)\}\le\frac{L}{n-1}\le\frac{L}{4}$ (каждое).
		Отсюда, в частности, следует,
		что не существует нерасширяемых повторов с длиной из интервала $\big(\frac{2L}{n-1}, L\big)$.%\footnote
		%{следствие верно только для факторов с нерасширяемыми повторами в $f(\dot{\omega})$}.
		
		Из (\ref{l1p5e2}) получаем,
		что для слов с повторами длины $\tau{\color{red}{\ge}}\frac{2L}{n-1}$ эквивалентны проверки условий
		\footnote{В оригинале (т.е. в дипломной работе 2011г.) было ошибочное условие $\tau\le\frac{2L}{n-1}$,
			где $\tau$ обозначалось как <<$lr$>>.
		}
	\begin{equation}\tag{l1.p5}\label{l1p5e3}
		f(\dot{\omega})\in\D_{\{p>0:\ \frac{p+\tau}{p}\le\frac{n}{n-1}\}, n}
		\leftrightarrow
		f(\dot{\omega})\in\D_{\{p>0:\ \frac{p+\tau}{p}\le\frac{n}{n-1}, p\equiv0\bmod L\}, n}
		\end{equation}
		Т.е. чтобы проверить принадлежность $f(\dot{\omega})$ к $\D_{p, n}$ при таких $p>0$,
		что $\frac{p+\tau}{p}\le\frac{n}{n-1}$ и $\tau\ge\frac{2L}{n-1}$,
		достаточно ограничиться случаями когда длина корня $p\equiv0\bmod L$.
		
		Упростим условие для длин корней.
		Множество длин корней $p$, для которых $\frac{p+\tau}{p}\le\frac{n}{n-1}$
		при всевозможных $\tau\ge\frac{2L}{n-1}$,
		эквивалентно множеству всех $p$, когда $p\ge(n-1)\tau\ge(n-1)\frac{2L}{n-1}=2L$.
		Тогда перепишем эквивалентность
		\begin{equation}\tag{l1.p5'}\label{l1p5e4}
		f(\dot{\omega})\in\D_{\{p>0:\ \frac{p+\tau}{p}\le\frac{n}{n-1}, \tau\ge\frac{2L}{n-1}\}, n}
		\leftrightarrow
		f(\dot{\omega})\in\D_{\{p\ge2L\}, n}
		\leftrightarrow
		f(\dot{\omega})\in\D_{p\ge2L: p\equiv0\bmod L\}, n}
		\end{equation}
		
		При этом, если $0\le\tau\le\frac{2L}{n-1}$ и $p\ge2L$, то гарантированно выполняется
		$\frac{p+\tau}{p}\le\frac{2L+\tau}{2L}\le\frac{2L+\frac{2L}{n-1}}{2L}=\frac{n}{n-1}=\RT(n)$.
		Т.е. $\forall v\subset f(\dot{\omega})$ условие $\exp(v)\le\RT(n)$ автоматически выполняется при $\per(v)\ge2L$ и $\p(v)\le\frac{2L}{n-1}$.
		А значит, если условие $\exp(v)\le\RT(n)$ нарушается при $\per(v)\ge2L$,
		то $\p(v)>\frac{2L}{n-1}$, а значит, по (\ref{l1p5e2}), $\per(v)\equiv0\bmod L$.
		Проще говоря, если найдётся $v\subset f(\dot{\omega})$ с корнем $\per(v)\ge2L$ и экспонентой $\exp(v)>\RT(n)$,
		то он имеет и длину повтора $\p(v)>\frac{2L}{n-1}$ и корень кратный $L$.

	\end{description}
	
	\sout{\color{red}
		{
			Значит для проверки $f(\dot{\omega})\in\D_{\{p>0: \frac{p+\l(V)+\r(\V)}{p}\le\frac{n}{n-1}\}, n}$
			достаточно убедиться, что $f(\dot{\omega})\in\D_{\{p>0: p\equiv0\bmod L\}, n}$???
		}
	}
	\footnote{В оригинале данное некорректное утверждение не используется, поэтому его можно игнорировать.
		%Видимо, это была отельная ветка рассуждений.
	}
	%Значит,% при $\tau{\ge}\frac{2L}{n-1}$
	%для проверки $f(\dot{\omega})\in\D_{\{p>0: \frac{p+{\color{red}\tau}}{p}\le\frac{n}{n-1}\}, n}$
	%достаточно убедиться, что $f(\dot{\omega})\in\D_{\{p\ge2L: p\equiv0\bmod L\}, n}$.
	%\footnote{В работе 2011г. вместо $\tau$ было ошибочно подставлено $\l(\V)+\r(\V)$.
		%Подразумевается $p\ge2L$ т.к. пока мы рассматриваем корни только такой длины.
		%В предыдущих эквиваленциях, как в (\ref{l1p5e3}), это обусловлено неравенством $\tau\ge\frac{2L}{n-1}$.
		%Вообще, данное замечание избыточно,
		%и, даже, не имеет смысла т.к. из неравеств $p\ge2L$ и $\l(\V)+\r(\V)\le\frac{2L}{n-1}$,
		%как мы ужевыяснили, следует удовлетворение условию граничности.
	%}
	
	Обозначим
	\begin{itemize}
		\item
		$w_m^M=subw_m^M(f(\dot{\omega}))$;
		\item
		$l_m^M=\max\{l: \per(w_{m-l}^M)=\per(w_m^M)\}$ (т.е. $l_m^M$ это максимальное расширение $w_m^M$ влево в $f(\dot{\omega})$);
		\item
		$r_m^M=\max\{r: \per(w_m^{M+r})=\per(w_m^M)\}$ (т.е. $r_m^M$ это максимальное расширение $w_m^M$ вправо в $f(\dot{\omega})$);
		\item
		$W_m^M=w_{m-l_m^M}^{{\color{red} l_m^M}+M+r_m^M}$.
		\footnote{В оригинале к длине не добавлено левое расширение $l_m^M$.
		}
	\end{itemize}
	Здесь $W_m^M$ это нерасширяемый фактор в $f(\dot{\omega})$,
	содержащий фактор $w_m^M$ с общим корнем т.е. $\per(W_m^M)=\per(w_m^M)$.
	БОО можем считать, что $w_m^M$ уже не расширяем в $f(\dot{\omega})$ т.е. $w_m^M=W_m^M$ и $l_m^M=r_m^M=0$.
	\footnote{В оригинале об этом допущении не сказано, но нигде больше не используются ни $l_m^M$ ни $r_m^M$.
	}
	
	\begin{description}
	\item[-]
		Разберём случаи,
		%когда длины повторов факторов из $f(\dot{\omega})$ не меньше $\frac{2L}{n-1}$ т.е. 
		когда длины корней факторов из $f(\dot{\omega})$ не меньше $p\ge2L$.
		Нарушение условия $f(\dot{\omega})\in\D_{\{p\ge2L\}, n}$,
		как мы уже выяснили, возможно только тогда,
		когда длина повтора не меньше $\frac{2L}{n-1}$.
		Поэтому, будем рассматривать только такие факторы из $f(\dot{\omega})$.
		
		Для б$\acute{\text{о}}$льшей определённости возьмём произвольный нерасширяемый фактор $\dot{w}$ в $\dot{\omega}$.
		Возмём $m$ и $M$ такие, что $w_m^M=f(\dot{w})$.
		Тогда по свойству морфизма $f$ получим $|w_m^M|=|\dot{w}|L\equiv0\bmod L$.
		Используя свойство (\ref{l1p5e1}) получим $m\equiv1\bmod L$,
		а по свойству (\ref{l1p5e2}) получим $\per(w_m^M)\equiv0\bmod L$.
		
		Обозначим $K_1=|\dot{w}|, K_2=\per(\dot{w})$.
		Т.к. $\dot{w}$ нерасширяем в $\dot{\omega}$ влево [вправо],
		то $w_m^M$ расширяемо влево [вправо] в $f(\dot{\omega})$ не более чем на $\l(\V)$ [на $\r(\V)$].
		
		%Тогда выполняется неравенство $|W_m^M|={\color{red}l_m^M}+M+{\color{red}r_m^M}\le\r(\V)+K_1L+\l(\V)$.
		%Тогда выполняется неравенство $|W_m^M|$\sout{={\color{red}$l_m^M+$}$M${\color{red}$+r_m^M$}}$\le\r(\V)+K_1L+\l(\V)$.
		Тогда выполняется неравенство $|W_m^M|=M\le\r(\V)+K_1L+\l(\V)$.
		А так же $\per(W_m^M)=\per(w_m^M)=K_2L$.
		
		\begin{description}
			\item[(l1.p6)\label{l1p6}]
			Случай, когда $K_2\ge c(n+k-1)$, для вещественных $ c\ge2$
			(здесь $c$ можно рассматривать как длину повтора,
			и достаточно проверить для целых $c$, но это удлинит доказательство из-за необходимости доказать,
			что достаточно рассмотреть получаемое разреженное множество длин корней $K_2$.
			Такая возможность разредить $K_2$ обуславливается тем,
			что, с ростом длины повторов на 1, минимальное требуемое $K_2$ увеличивается с шагом $n+k-1$).
			
			По условию $\dot{\omega}\in\T_{n+k}$, тогда оценим сверху $K_1\le K_2\frac{n+k}{n+k-1}$.
			Т.е. чтобы проверить, что слово $f(\dot{\omega})$ из множества $\D_{K_2L, n}$ достаточно проверить,
			что произвольное слово $W_m^M$ с корнем $\per(W_m^M)=K_2L$,
			и ограниченной сверху длиной $K_2\frac{n+k}{n+k-1}L+\l(\V)+\r(\V)$,
			имеет экспоненту не больше $\RT(n)$.
			Тогда, с учётом (\ref{l1p1}) и $k\ge1$
			%$K_1\le\frac{K_2(n+k)}{n+k-1}$
			$$
			\frac{|W_m^M|}{\per(W_m^M)}\le
			\frac{K_1L+\l(\V)+\r(\V)}{K_2L}\le
			\frac{K_2L\frac{n+k}{n+k-1}+\frac{2L}{n-1}}{K_2L}\le
			\frac{c(n+k)+\frac{2}{n-1}}{c(n+k-1)}\le
			\frac{2(n+k)+\frac{2k}{n-1}}{2(n+k-1)}=
			\frac{n}{n-1}
			$$
			\begin{equation}\tag{l1.p6}\label{l1p6e}
			\text{Из чего следует, что } f(\dot{\omega})\in\D_{\{p:\ p\ge2L(n+k-1)\}, n}.
			\text{ В частности } \fbox{$f(\dot{\omega})\in\D_{\{L\cdot p:\ p\ge2(n+k-1)\}, n}$}.
			\end{equation}
			
			\item[(l1.p7)\label{l1p7}]
			Рассмотрим случай $K_2\ge n+k$ и $K_1=K_2+1$.
			И снова, с учётом (\ref{l1p1}) и $k\ge1$
			$$
			\frac{|W_m^M|}{\per(W_m^M)}\le
			\frac{K_1L+\l(\V)+\r(\V)}{K_2L}\le
			\frac{K_2+1}{K_2}+\frac{2L}{(n-1)K_2L}\le
			\frac{n+k+1}{n+k}+\frac{k+1}{(n-1)(n+k)}=
			\frac{n}{n-1}
			$$
			Т.е. для $\dot{w}$, с длиной повтора $1$ и корнях $\ge n+k$,
			его расширеный образ $W_m^M$ имеет корректную экспоненту.
			При этом, $\dot{w}$, в силу условия $\dot{w}\in\T_{n+k}$,
			не может иметь длину повтора больше $1$ при $K_2<2(n+k-1)$.
			%Т.е. при длине повтора $\rho\ge2$ необходимо чтобы $\frac{K_2+\rho}{K_2}\le\frac{n+k}{n+k-1}$ или
			%$K_2(n+k)-K_2+\rho(n+k-1)\le K_2(n+k)$ откуда $2(n+k-1)\le\rho(n+k-1)\le K_2$.
			\begin{equation}\tag{l1.p7}\label{l1p7e}
			\text{Это доказывает, что } \fbox{$f(\dot{\omega})\in\D_{\{L\cdot p:\ n+k\le p<2(n+k-1)\}, n}$}.
			\end{equation}
			
			\item[(l1.p8)\label{l1p8}]
			Теперь, пусть $K_2=n+k-1$, тогда по условию $\dot{\omega}\in\T_{n+k}$ получим $K_1\le n+k$.
			Заметим, что это последний нерассмотренный случай,
			когда у нашего прообраза $\dot{w}$ имеется непустой повтор.
			При этом, длина повтора равна $1$, и все буквы между повторами различны.
			
			Тогда, с учётом (\ref{l1p5e1}), для прообраза фактора $w_m^M$ найдётся позиция $i$,
			что $m=(i-1)L+1$ т.е. $\dot{w}=\dot{\omega}[i..i+n+k]$. % и $w_m^M=f(\dot{\omega})[m..m+M]$.
			Тогда $\dot{\omega}[i]=\dot{\omega}[i+n+k]$ и $\dot{\omega}[i-1]=\dot{\omega}[i+n+k+1]$
			(докажите это).
			
			Обозначим $a=\dot{\omega}[i-1], b=\dot{\omega}[i], c=\dot{\omega}[i+1], d=\dot{\omega}[i+n+k-1]$.
			Тогда $w_m^M=bc\dot{\omega}[i+2...i+n+k-2]db$ и $W_m^M\subset f(abc\dot{\omega}[i+2...i+n+k-2]dba)$
			при различных $a, b, c, d\in\A_{n+k}$ (докажите, что они различны).
			
			Тогда, для попарно различных $a, c, d\in\A_{n+k}$ получим различные $f(a), f(c), f(d)\in\V$.
			Тогда оценим $|W_m^M|\le K_1L+\max\{l+r: \pref_l(f(c))=\pref_l(f(a)), \suff_r(f(a))=\suff_r(f(d))\}$
			Тогда, с учётом \ref{l1c3} и $k\ge1$
			\footnote{Заметьте, что при $k\ge2$, условие \ref{l1c3} можно не использовать для этого случая ($K_2=n+k-1$){\color{red}}
			}
			$$
			\frac{|W_m^M|}{\per(W_m^M)}\le
			\frac{K_1L+\frac{kL}{n-1}}{K_2L}=
			\frac{(n+k)(n-1)+k}{(n+k-1)(n-1)}=
			\frac{n}{n-1}
			$$
			\begin{equation}\tag{l1.p8}\label{l1p8e}
			\text{Тогда получаем, что } \fbox{$f(\dot{\omega})\in\D_{L(n+k-1), n}$}.
			\end{equation}
			
			\item[(l1.p9)\label{l1p9}]
			Рассмотрим оставшиеся случаи когда корни $p\ge2L$. Т.е $2\le K_2<n+k-1$,
			тогда, по условию $\dot{w}\in\T_{n+k}$, получаем $K_1=K_2$.
			{\cGr Тогда из (\ref{l1p4}) слдует, что $|W_m^M|-\per(W_m^M)\le\frac{2L}{n-1}$.}
			Тогда оценим $\exp(W_m^M)$
			$$
			\frac{|W_m^M|}{\per(W_m^M)}\le
			\frac{K_1L+\frac{2L}{n-1}}{K_2L}=
			\frac{K_2(n-1)+2}{K_2(n-1)}\le
			\frac{2(n-1)+2}{2(n-1)}=
			\frac{n}{n-1}
			$$
			\begin{equation}\tag{l1.p9}\label{l1p9e}
			\text{Получаем } \fbox{$f(\dot{\omega})\in\D_{\{L\cdot p:\ 2\le p<n+k-1\}, n}$}
			\end{equation}
			
			В дополнение к этому случаю приведём ещё одну идею доказательства.
			Как мы уже выяснили, у нерасширяемого $W_m^M$ нет повторов с длиной из интервала $(\frac{2L}{n-1}, L)$
			а при длине повтора у $W_m^M$
			не больше  $\frac{2L}{n-1}$ и длине корня не меньше $2L$ всегда $\exp(W_m^M)\le\RT(n)$.
			При этом, в силу неравенства $\l(\V)+\r(\V)<L$,
			случай, когда длина повтора у $W_m^M$ не меньше $L$,
			невозможен т.к. он требует чтобы $K_2<K_1$, что противоречит условию $\dot{w}\in\T_{n+k}$.
		\end{description}
		
		Из (\ref{l1p6e}), (\ref{l1p7e}), (\ref{l1p8e}) и (\ref{l1p9e}) слеует
		$f(\dot{\omega})\in\D_{\{L\cdot p:\ p\ge2\}, n}$,
		\begin{equation}\tag{l1.p10}\label{l1p10e}
		\text{тогда по (\ref{l1p5e4}) получим } \fbox{\fbox{$f(\dot{\omega})\in\D_{\{p:\ p\ge2L\}, n}$}}
		\end{equation}
		
		\item[-]
		Теперь осталось разобрать случаи,
		когда длины корней факторов из $f(\dot{\omega})$ меньше $2L$.
		
		Пусть $p_m^M=\per(W_m^M)<2L$.
		По условию $\dot{w}\in\T_{n+k}$ получим $|w_m^M|\le p_m^M\cdot\RT(n+k)$, тогда, с учётом (\ref{l1p1}) и $n\ge5$
		$$|W_m^M|\le|w_m^M|+\l(\V)+\r(\V)\le
		p_m^M\frac{n+k}{n+k-1}+\frac{2L}{n-1}<
		%2L\frac{n(n+k)-n-k+n+k-1}{(n+k-1)(n-1)}=
		2L\bigg(1+\frac{1}{n+k-1}+\frac{1}{n-1}\bigg)<
		2L\bigg(1+\frac{1}{4}+\frac{1}{4}\bigg)=
		3L$$
		Т.е. $W_m^M$ лежит в конкатенации не более чем 4-х слов $v_1, v_2, v_3, v_4\in\V$.
		БОО пусть $W_m^M\subseteq v_1v_2v_3v_4\subset f(\dot{\omega})$.
		При этом, т.к. $n+k>4$ и прообраз фактора $v_1v_2v_3v_4$ из $\T_{n+k}$,
		то эти слова попарно различны.
		{\cGr Случай, когда $W_m^M$ не полностью лежит в конкатенации 3-х из этих слов исключается т.к. иначе,
		в силу $p_m^M<2L$ и условия \ref{l1c2},
		получим противоречие $\RT(n)\ge\lexp(v_3v_2)\ge2$.}
		\footnote{В оригинале об этом не сказано, но неявно этот случай исключается, при рассмотрении $m_L+M-1>2L$ (в оригинале).
		}
		
		Обозначим $m_L\equiv((m{\color{red}-1})\bmod L){\color{red}+1}$.
		\footnote{В оригинале не учтено смещение на 1 для индексов, начинающихся с 1.}
		Т.е. $m_L$ это позиция первой буквы $W_m^M$ некоторого $\V${--}образа.
		
		Если $(m_L-1)+M\le2L$, то найдутся попарно различные  $u, v\in\V$, что $W_m^M\subseteq uv$.
		Тогда, по условию \ref{l1c2} получим $\lexp(W_m^M)\le\lexp(uv)\le\RT(n)$.
		
		Тогда считаем, что $(m_L-1)+M>2L$.
		Тогда найдутся попарно различные $u, v, w\in\V$ такие, что $W_m^M\subseteq uvw$.
		При этом, первая буква $W_m^M$ лежит в $u$, а последняя в $w$.
		{\cGr БОО считаем, что $uvw=x'W_m^Mx''$, где $|x''|\le|x'|=m_L-1$.}
		\footnote{В оригинале не сказано, но из 2-х вариантов только такой полностью соответствует дальнейшим рассуждениям.
		}
		Тогда докажем оставшиеся случаи для длин корней:
		\begin{description}
			\item[(l1.p11)\label{l1p11}]
			$p_m^M>L$ ($p_m^M<2L$).
			Тогда $m_L+p_m^M>2L$,
			т.е. (с учётом $|x'|\ge|x''|$) оба повтора слова $W_m^M$ не могут одновременно пересекаться с $v$
			(т.к. иначе получим $\lexp(uw)\ge2$, что противоречит \ref{l1c2}).
			Случай, когда оба повтора из $W_m^M$ не пересекаются с $v$ тривиален т.к. в этом случае
			$\exp(W_m^M)<\lexp(uw)\le\RT(n)$.
			\footnote{В оригинале этот случай неявно рассмотрен при разборе случая $b\le0$,
				где $b$ определённо далее.
			}
			
			Тогда можно представить $u.v.w=x'y_0.y_1z_0.z_1y_0y_1x''$,
			где $y_0y_1$- повтор слова $W_m^M$, а $|y_0|, |y_1|>0$.
			
			Обозначим:
			\footnote{В оригинале автор забыл пояснить смысл параметров $a,b,c,d$, но,
				используя контекст их использования (в неравенствах),
				здесь мы их свяжем с правильно разбитым $u.v.w$ на факторы.}
			\begin{itemize}
				\item
				%$a=L-m_L\text{\sout{\color{red}{+1}}>0}$.
				$a=L-(m_L-1)>0$.
				%\footnote{В оригинале инкремент на 1 лишний.}
				%Т.е. это длина суффикса слова $u$, который совпаает с префиксом слова $W_m^M$.
				%Т.е. $a=|y_1z_0|>0$
				Т.е. $a=|x'y_0|-|x'|=|y_0|>0$.
				\item
				$b=M-p_m^M-a>0$.
				%Т.е. это длина префикса слова $v$, который совпадает с концом левого повтора слова $W_m^M$.
				%Т.е. $b=|y_0|$
				Т.е. $b=\p(W_m^M)-|y_0|=|y_0y_1|-|y_0|=|y_1|>0$.
				%Если предположить, что $b\le0$, то $\lexp(uw)?$
				\item
				$c=m_L+p_m^M-2L>0$.
				%Т.е. это длина префикса слова $w$, который не содержит равый повтор слова $W_m^M$.
				%Т.е. $c=|y_1|$
				Т.е. $c=|x'|+per(W_m^M)-2L=|x'y_0y_1z_0z_1|-2L=|z_1|$.
				При этом $|z_1|>0$ т.к. иначе $\lexp(uw)=\lexp(x'y_0y_0y_1x'')\ge2$, что противоречит \ref{l1c2}.
				\item
				$d=L-a-b-c\ge0$.
				Т.е. $d=L-(a+b+c)=L-(M-p_m^M+m_L+p_m^M-2L)=3L-(m_L+M)=3L-|x'y_0y_1z_0z_1y_0y_1|=|x''|\ge0$.
				%Т.е. это максимальная длина суффикса слова $w$, который не содержит слово $W_m^M$.
			\end{itemize}
		
			Т.к. $lexp(uw)\le\RT(n)$, то $\frac{2a+c}{a+c}\le\frac{n}{n-1}$, тогда $a\le\frac{c}{n-2}$.
			Т.к. $lexp(wv)\le\RT(n)$, то $\frac{2b+d}{b+d}\le\frac{n}{n-1}$, тогда $b\le\frac{d}{n-2}$.

			Тогда $L=a+b+c+d\le\frac{c+d}{n-2}+c+d=\frac{(c+d)(n-1)}{n-2}$.
			Откуда $c+d\ge L\frac{n-2}{n-1}$.
			
			Тогда длина повтора $M-p_m^M=a+b=L-(c+d)\le L-L\frac{n-2}{n-1}=\frac{L}{n-1}$.
			%Учтём, что при непустом повторе длина корня $p_m^M$ должна быть не менее $n-1$.
			Из чего, с учётом $p_m^M>L$, получаем
			
			$$
			\exp(W_m^M)=\frac{p_m^M+a+b}{p_m^M}
			%=\bigg[\cdot\frac{n-1}{p_m^M}\bigg]
			=\frac{n-1+\frac{(a+b)(n-1)}{p_m^M}}{n-1}
			{\color{red}\le}\frac{n-1+\frac{L}{p_m^M}}{n-1}
			<\frac{n-1+1}{n-1}
			=\RT(n).
			\footnote{В оригинале вместо одного из неравенств ошибочно стоит равенство.
			}
			$$
			\begin{equation}\tag{l1.p11}\label{l1p11e}
			\text{Получаем } \fbox{$f(\dot{\omega})\in\D_{\{p:\ L<p<2L\}, n}$}
			\end{equation}
			
			\item[(l1.p12)\label{l1p12}]
			$p_m^M=L$. %, $m_L+M\ge2L$
			Тогда левый и правый повторы $W_m^M$ начинаются в одинаковых смещениях $l$ от правых концов в $u$ и $v$ соответственно.
			Аналогично, заканчиваются они в одинаковых смещениях $r$ от левых концов в $v$ и $w$.
			А значит длина повтора слова $W_m^M$ равна $l+r$.
			Тогда, с учётом \ref{l1c3}, для наших, попарно различных $u, v, w$
			$$
			\exp(W_m^M)
			=\frac{L+l+r}{L}
			\le
			1+
			\max_{
				\begin{array}{c}
				\scriptstyle u,v,w\in\V,\\
				\scriptstyle|\{u,v,w\}|=3
				\end{array}
			}
			\bigg\{
			\frac{l+r}{L}:
			\begin{array}{c}
			\suff_l(u)=\suff_l(v),\\
			\pref_r(v)=\pref_r(w)
			\end{array}
			\bigg\}
			\le\frac{L+\frac{L}{n-1}}{L}
			=\frac{n}{n-1}
			\footnote{В 25 ноябрьском оригинале не полностью скопирована функция $\max\{\dots\}$.%2010
				Но в 24 июньском оригинале она корректна.%2011
			}
			$$
			\begin{equation}\tag{l1.p12}\label{l1p12e}
			\text{Получаем } \fbox{$f(\dot{\omega})\in\D_{L, n}$}
			\end{equation}
			
			\item[(l1.p13)\label{l1p13}]
			$0<p_m^M<L$.
			Пусть $l$, $r$ --- длина пересечения $W_m^M$ с $u$ и $w$ соответственно,
			а $l'$ --- длина пересечения правого повтора в $W_m^M$ с $v$.
			Тогда $l<l'$ т.к. левый конец (самая левая буква) левого повтора лежит в $u$,
			а расстояние между левыми концами левого и правого повторов в $W_m^M$ равно $p_m^M<L=|v|$.
			
			Тогда, длина общего префикса и суффикса в $v$ равна $\p(W_m^M)-(l+r)=(l'+r)-(l+r)=l'-l>0$.
			Но это невозможо т.к. иначе нарушается условие \ref{l1c1}.
			\begin{equation}\tag{l1.p13}\label{l1p13e}
			\text{Значит } \fbox{$f(\dot{\omega})\in\D_{\{p:\ 0<p<L\}, n}$}
			\end{equation}
			
		\end{description}
	\end{description}
	
	Объединяя факты (\ref{l1p10e}), (\ref{l1p11e}), (\ref{l1p12e}) и (\ref{l1p13e}),
	получим $f(\dot{\omega})\in\D_{\mN, n}$, что и требовалось доказать.
\end{proof}

\subsubsection{Замечания и дополнения}
Лемма \ref{l1} может быть легко обобщена,
если допустить возможность отображать букву не в единственный образ из $\V$,
а некоторый альтернативный не равный ни одному из зарезервированных образов для $n+k$ букв.
Другими словами, число образов одной буквы может быть больше 1.
%Т.е. размер $\V$ должен быть более $n+k$.
Это обобщение позволит проще доказывать корректность экспоненциального роста ГС с дополнительным образом.

\hyperlink{contents}{$\upuparrows$}

\subsection{Лемма 2. Метод контекстнозависимой (КЗ) подстановки}

\subsubsection{Дополнительные определения}

Усилим условия граничности слов

\begin{defn}
	Пусть $p, n\in\mN$, и $\eps\in\mR$.
	Обозначим за $\D_{p, n}^\eps$ множество слов
	таких, что любой их фактор с корнем $p$ имеет длину не больше $p\cdot n/(n-1)-\eps$ т.е.
	$$\D_{p, n}^\eps = \bigg\{w\in\A_n^*: \forall v\subset w,
	\text{ выполняется импликация } per(v)=p\to\frac{|v|+\eps}{\per(v)}\le\frac{n}{n-1}\bigg\}$$
\end{defn}

\begin{defn}
	Пусть $n\in\mN$, $P\subset\mN$, и $\eps\in\mR$.
	Обозначим за $\D_{P, n}^\eps$ множество слов таких,
	что любой их фактор с корнем $p$ из $P$ имеет длину не больше $p\cdot n/(n-1)-\eps$ т.е.
	$$\D_{P, n}^\eps = \bigcap_{p\in P}\D_{p, n}^\eps$$
\end{defn}

\begin{defn}
	Пусть $v'\subset f(uavbw)$, такое что $f(v)\subseteq v'$, $f(vb)\not\subseteq v'$ и $f(av)\not\subseteq v'$,
	тогда $v$ назовём целым прообразом слова $v'$.
\end{defn}

\begin{defn}
	$\exp^\eps_P(v)=\frac{|u|+\eps}{\per(u)}$ при $\per(u)\in P$
	и $\exp^\eps_p(v)=\frac{|u|}{\per(u)}$ в остальных случаях.
	По умолчанию $P=\{p\ge3n-3\}$.
\end{defn}

\begin{defn}
	$\lexp_P^\eps(v)=\max\Big\{\lexp(v),
	\sup\Big\{\exp^\eps(v):\ u\subseteq v, \per(u)\in P\Big\}\Big\}$.
	По умолчанию $P=\{p\ge3n-3\}$.
\end{defn}

\begin{note}\label{n1}
	$\lexp_P^\eps(v)\le\RT(n)\leftrightarrow v\in\D_{P,n}^\eps\cap\T_n$.
	В частности $\lexp_{\ge3n-1}^\eps(v)\le\RT(n)\leftrightarrow v\in\D_{\ge3n-1,n}^\eps\cap\T_n$.
\end{note}

\begin{note}\label{n2}
	$\exp^\eps_P(v')\le\lexp_P^\eps(v)$, для любых $v'\subseteq v$.
\end{note}

\begin{defn}
	Произвольное слово из $\V$ назовём $\V${--}образом.
\end{defn}

\begin{note}\label{nt_3nm1}
В ГС не может быть 2 повтора с периодом менее $2n-1$ и 3 повтора с периодом менее $3n-1$ (для $n\ge5$).
\end{note}

Что очевино т.к. для 2 повтора не может быть периода $2n-2$ т.к. для этого нужно
чтобы 2 раза 2 соседние буквы сдвигались влево кодом Пансьё, но это нарушит граничность.
Аналогично, для 3 повтора --- не может быть 3 повтора с периодом $3n-2$ нужно,
чтобы 3 соседние буквы хотябы по 2 раза смещались влево за 3 хода, это неизбежно приведёт (по принципу дирихле)
к необходимости за 1 ход сместить 2 соседние буквы влево, а это нарушит граничность.
Т.е. получаем

\begin{note}\label{nt_D3nm1}
	$\D_{\ge3n-3,n}^\eps\cap\T_n=\D_{\ge3n-1,n}^\eps\cap\T_n$.
\end{note}

Заметьте, что граница $\eps$ сверху растёт линейно от длины периода при одинаковой длине повтора.
Или в общем --- при любой длине повтора $r$ для слов с длинами периодов $r(n-1)+c_1$ и $r(n-1)+c_2$
граница для $\eps$ сверху зависит только от разницы $c_2-c_1$, при этом линейно (т.е. не зависимо от $r$).

%А вот 3-хбуквенный фактор граничного слова на расстоянии $3n-3$
%никогда не повторяется при достаточно большом $n$ (докажите сами).
Более того, при более длинных повторах, даже если достигается граница повторяемости,
то количество букв в повторе между повторами должно быть меньше на 1 числа кратного 3-м (для нарушения свойства граничности).
%В частности, при тех больших $n$ если и достигается фактор длины $k$ на расстоянии $kn-k$,
%то некоторые буквы мжду ними встречаются разное количество раз.
Поэтому, получаем естественную многозначную подстановку, предложенную в ДР1 (и тексте лекций).

%Возможно, несложно доказать индукцией по длине повтора $p$, что для $n$ буквенного алфавита
%не может быть фактора с повтором $p\le n/2$ и периодом $p(n-1)$ для всех больших $n$ и $p$.

\begin{sgn}
	Обозначим $\D_{\ge r(n-1),n}^\eps\cap\T_n$ как $\D_{r,n}^\eps${--}ГС.
\end{sgn}

\hyperlink{contents}{$\upuparrows$}

\subsubsection{Лемма 2}

\paragraph{\large Неформальное предисловие.}%\subsubsection{Неформальное предисловие}

Обычный (т.е. контекстносвободный (КС)) одинаково удлиняющий
(т.е. длины образов букв одинаковы и больше 1) морфизм $f:\A_n\to\V$,
скорее всего, не позволяет построить
граничное слово (ГС) на основе другого ГС длины не менее $n+1$, даже с более сильными свойствами.
Невозможность объясняется наличием хотябы 1-го $n${--}фактора с повтором длины 1 в любой строке из $n+1$ буквы,
что гарантирует достижение границы повторяемости $\RT(n)$ в достаточно длинных словах.
А значит, из-за добавления общих префиксов или суффиксов (если они не пусты)
от образов соседних букв по краям, будет нарушаться граничность.
Как избежать этой проблемы?

Одно из направлений --- сокращение общих префиксов и суффиксов образов.
Например, как-то сократить $\l(\V)$ и $\r(\V)$ до 0, что не так просто найти, хотябы для каких-то пар в наборе (если, вообще, существуют).
Понятно, что, если в наборах больше $n$ слов, то неизбежно $\l(\V),\r(\V)>0$.
Но по нашему методу построения в Лемме \ref{l3} всегда $\l(\V),\r(\V)\ge n$.

Другой подход более реалистичен --- разбиения длинных повторов в образе
за счёт периодической подмены образа, хотябы, одной буквы в повторе прообраза.

В оригинальной работе формулировка подходящей подстановки оказалась не полностью корректной и, даже, неоднозначной.
Но на 1-й нашей лекции семинара в марте 2011г. алгоритм КЗ подстановки была изложена достаточно полно т.к.
вся лекция была посвящена этой лемме (на сколько помнит автор).
В добавок, на 2-й лекции один из присутствующих (Сенчёнок Т.А., не присутствовшая на 1-й лекции) % по её собственным словам в тот момент
достаточно ясно пересказал нам этот алгоритм для уточнения --- верно ли люди поняли.
В общем-то тогда и автор сам начал понимать, что он рассказывал на 1-й лекции %(для формалистов --- это конечно же шутка).
%########################
% Помню одного слушателя на 1-й лекции.
% Помню, что на 1-й лекции я рассказывал утверждение о КЗП (КЗ подстановки).
% И помню, что не вдавался в подробности цепочки неравенств,
% но указывал на них (на отображении проектором), что там необходимое неравенство выполняется,
%% с чем согласился (судя по реакции) один из слушателей на одной из первых парт (со стороны окна) на КАДМе.
%% автор довольно редко выступал с докладами, поэтому спутать эту ситуацию с др.выступлениями почти невозможно.
%########################

Формальное определение нашей КЗ подстановки оказалось требует б$\acute{\text{о}}$льшей аккуратности.
Покажем некоторые варианты из них.
Например, определим КЗ подстановку для конечных слов (действующую справа на лево):
$$
f(ua)=f(u)f(a)=f(u)v_{a, i}, \text{ где }
i=\big(\big(|u|_a-1\big)\bmod{3}\big)+1, v_{a, i}\in\V
$$

Определим его для бесконечных справа слов:% (прообразов):
$$
f_u(av)=v_{a,i}f_{ua}(v), \text{ где }
i=\big(\big(|ua|_a-1\big)\bmod{3}\big)+1, v_{a, i}\in\V,
f_w(\lambda)=\lambda,
\forall w\in \A_n^*
$$
Индекс $u$ в обозначении $f_u$, можно рассматривать как первый аргумент подстановки.
Он нужен только для сохранения контекста --- количество пройденых букв $a$ слева, для каждого $a\in\A_n$.
Тогда, для вычисления образа слова $\dot{\omega}$ нужно взять $f_\lambda(\dot{\omega})$.

Для определения подстановки над бесконечным в обе стороны словом,
достаточно обозначить промежуток между любыми 2-мя буквами как точку отсчёта,
и определить нашу подстановку для правого подслова от точки отсчёта как для бесконечных справа слов. 
А для левого так же (можно обе подстановки обозначать разными переменными),
только отсчёт должен быть в обратную сторону (т.е. отрицательные числа).
Только индекс $i$ всегда определять в $\{1,2,3\}$ с сохранением класса эквивалентности по $(\bmod{3})$.

Для полноты добавим ещё определение через КЗ грамматики:

Для каждого $a\in\A_n$ единожды поставим по 3 своих нетерминальных символа $S_{a,i}$, $\forall i\in\{1,2,3\}$.
Тогда зададим КЗ подстановку $f:\A_n^*\to\A_n^*$ над словом $w\in\A_n^*$.

1) $f(w)\to VS_{s_1,1}S_{s_2,1}...S_{s_n,1}w$, где $s_1,s_2,...,s_n\in\A_n$ и все различны

2) $S_{a,i}a\to a_iS_{a,(i\pmod 3) + 1}$, $\forall a\in\A_n$

3) $S_{a,i}\bar{a}\to \bar{a}S_{a,i}$, $\forall\bar{a}\not=a$ % $\forall\bar{a}\in\A_n\setminus\{a\}$

4) $S_{a,i}\lambda\to\lambda$

5) $Va_i\to v_{a,i}V$, $\forall a\in\A_n$, $\forall i\in\{1,2,3\}$, где $v_{a,i}$ --- $i$-й $\V${--}образ буквы $a$

6) $V\lambda\to\lambda$

Заметим, что наша подстановка строит слово так, что начала 2-х одинаковых $\V${--}образов $u$
находятся на расстоянии не менее $3(n-1)L$
(где $L$ --- длина каждого $\V${--}образа).
При этом, если $u$ содержится в конкатенации 2-х других $\V${--}образов $v, w$,
то $\max\{\lexp(vu), \lexp(uw)\}\ge2>\RT(n)$ при $n>2$.
Т.е. чтобы избегать случаев $u\subset vw$ для факторов с периодом менее $3(n-1)L$,
достаточно добавить условие граничности
для любой пары различных $\V${--}образов.
Таким образом, добавляя конечное число проверок для нашего множества $\V$,
мы облегчаем задачу проверки граничности факторов с короткими периоами, полученных нашей подстановкой.
%Понятно, что из-за этого дополнения следует, что $u\not\subset vw$,
%т.к. иначе, либо $vu$, либо $uw$ не ГС.

Отметим, что в оригинальном доказательстве для длин периодов фактора образа менее $3(n-1)L$
не отмечено замечание, что повторы образа не содержат в себе $\V${--}образов
(но во всех подслучаях рассматриваются только такие повторы).
%Но это лекго понять для достаточно больших $n$.
%Т.е., если предположить обратное, что фактор образа с периодом менее $3(n-1)L$
%в своих повторах имеет целый $\V${--}образ $u$.
Это одно из основных свойств, которое разделяет факторы с длинными повторами и короткими
по аналогии с замечанием в Лемме \ref{l1} об отсутствии слов с повторами определённых длин.
Это связано с тем, что повторы (образа) имеют значения, близкие к числам, кратным $L$.

В оправдание этого упущения добавим, что (наверняка) на 1-й лекции
достаточно подробно были рассмотрены все нетривиальные моменты в доказательстве.
Но и этот случай, наверняка был разобран, хоть он и не сложный но рутинный.
%########################
% Припоминается (смутно), даже, что я ещё уточнял, что об этом не написал в тексте лекции
% (но подробно объяснил).
% И то, что условие \ref{l2c3} было подогнано под решение частного случая.
% Лекции тогда были по 1час.20мин., на сколько помню, а доказательство 2-й леммы было относительно коротким,
% а доказательство "не содержания $\V${--}образов" было тогда рутинным (идейно)
% (а времени на формализацию очевидного но рутинного решил не тратить,
%  т.к. неизвестно сколько ещё потребуется на оставшееся решение),
% поэтому-то, видимо, и решил рассказать о них только на лекции, как и подробное объяснение КЗ подстановки.
%########################
По воспоминаниям, у автора ещё небыло короткой интерпретации этого замечания, %
но его доказательство вытекало разными способами ---
например, через равенство периода и длины целого прообраза произвольного фактора $v\subset f_\lambda(w)$ при $\per(v)<3L(n-1)$.
%Даже термин <<повтор>> ещё не использовался автором % по воспоминаниям
%(из-за малой осведомлённости в этой теме),
%из-за чего и повторы обозначались через более общие конструкции (как разница длины и периода) % (через подслова и префиксы),
Но, разбор этого простого свойства был довольно рутинным (по смутным воспоминаниям).
%При этом, это свойство достаточно простое.

%########################
% А когда проблемы с долгами, госами, дипломом и вступительными в магистратуру были решены, из памяти вылетело всё,
% что нужно было доделать.
% Даже забыл, про важные (хотя тривиальные) пробелы.
%%и, вообще, старался не зацикливаться на достигнутом, пока были идеи.
%%А летом уже погрузился в новые исследования переоткрытых многомерных когерентных конфигураций для задачи изоморфизма
%%(о переоткрытии я понял только через несколько лет после лекций Ильи Пономаренко на лекториуме 2014),
%%как говорится <<лучший отдых это смена занятия>>.
%########################

Следующая лемма использует более сильные ограничения на $\V${--}образы и,
возможно поэтому, проще в доказательстве чем Лемма \ref{l1}.
Так же, нужно отметить, что стрелки в ДР1 разного типа имели несколько различное назначение ---
двойная стрелка <<$\Rightarrow$>> использовалась как полноценный вывод из условий лемм,
<<$\rightarrow$>> как импликация с доп.условием.
Но это правило автором было принято
не сразу,
поэтому в первых формализациях <<$\rightarrow$>> использовалась как глобальное следствие (т.е. как <<$\Rightarrow$>>).
Доказательство Леммы \ref{l2} формализовалось первым (по видимому) т.к. это самая важная лемма для наших целей.
%########################
% а так же на это указывает то, что импликация корректно <<устаканилась>> только в Лемме \ref{l1}.
% когда летом обсуждался первый вопрос с моим НР (это было первое утверждение (двойное) в доказательстве Леммы \ref{l1})
% я сначала не смог быстро объяснить её и предположил, что двойную стрелку по началу использовал как импликацию,
% но как оказалось всё было правильно.
% Но было ощущение, что ответил полностью.
% Только вот не помню, после перерыва я ответил на этот вопрос полностю верно или как на импликацию.
%########################

\hyperlink{contents}{$\upuparrows$}

\begin{lem}\label{l2}
	Пусть $L, n, k\in\mN$, $n\ge5, L\ge6(n-1)$ --- константы.
	
	Рассмотрим набор из $3n$ различных слов $\V\subset\A_n^L$ удовлетворяющий условиям \Big(обозначим $\eps=\frac{\l(\V)+\r(\V)}{L-1}$\Big):
	\begin{description}
		\item[(l2.c1)\label{l2c1}]
		$\per(v)=|v|(=L)$ для всех $v\in\V$;
		\footnote{Это условие эквивалентно условию $\pref_l(v_i)\not=\suff_l(v_i), \forall l,i\in\mN$ как в оригинальной работе.}
		\item[(l2.c2)\label{l2c2}]
		$uv\in\D_{3, n}^{\eps}\cap\T_n$ для любых различных $u, v\in\V$;
		\footnote{Напомним, что $\D_{3, n}^\eps=\D_{\{p:\ p\ge3n-3\}, n}^\eps$.}
		\item[(l2.c3)\label{l2c3}]
		$\eps+\max\{l+r: \pref_l(u)=\pref_l(v), \suff_r({\cRe v'})=\suff_r(w)\}\le\frac{L}{n-1}$
		% не доработка --- условие автоматически из-за схожести было скоировано из леммы \ref{l1}.
		% Это рутинное условие, поэтому, автор и поленился перепроверить его корректность в тексте лекций и в ДР1.
		% Интересно, а в ДР2 ослабленные условия более аккуратны?
		для $\forall u\ne w\in\V\setminus\V_a$ и $v, {\cRe v'}\in\V_a$, $\forall a\in\A_n$.
		\footnote{В 25 ноябрьском оригинале не полностью скопирована/отображена функция $\max\{\dots\}$.
			Но в 24 июньском оригинале она отображена.
			Но мы несколько усилим это условие.
		}
	\end{description}

	Пусть слово $u\in\A_n^*$ и КЗ подстановка $f_*:\A_n^*\to\A_n^*$ определена правилом:
	
	$$
	%f(ua)=f(u)f(a)=f(u)v_{a, i}, \text{ где }
	%i=\big(\big(|u|_a-1\big)\bmod{3}\big)+1, v_{a, i}\in\V
	%\footnote{В оригинале не корректно определена подстановка.}
	\begin{array}{lcr}
		\forall a\in\A_n, u,v\in \A_n^* &f_u(av)=v_{a,i}f_{ua}(v), &\text{ где }
		i=\big(\big(|ua|_a-1\big)\bmod{3}\big)+1,
		v_{a, i}\in\V\\
		\forall w\in \A_n^* &f_w(\lambda)=\lambda &
	\end{array}	
	$$
	
	Т.е. на каждую букву $a\in\A_n$ найдётся ровно 3 (уникалных) образа $v_{a,1},v_{a,2},v_{a,3}\in\V$,
	которые используются КЗ подстановкой $f_*$ в порядке циклической очереди.
	При этом, разные буквы не имеют общих образов.

	Тогда, для любого слова $\dot{w}\in\D_{3, n}^\eps\cap\T_n$
	КЗ подстановка $f_\lambda(\dot{w})\in\D_{3, n}^\eps\cap\T_n$
\end{lem}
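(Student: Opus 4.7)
My plan is to mirror the three-regime case analysis of Lemma~\ref{l1}, splitting factors $v\subset f_\lambda(\dot{w})$ by their period $p=\per(v)$ into a very-long regime, an intermediate regime, and a short regime, but with two adaptations: tracking the $\eps$-slack throughout (since the target is $\D_{3,n}^\eps\cap\T_n$) and handling the context-sensitivity of $f_*$, whose image of a letter $a$ cycles through $v_{a,1},v_{a,2},v_{a,3}$. First I would establish analogues of the preliminary properties \ref{l1p1}--\ref{l1p5}: from condition \ref{l2c2} deduce $\max\{\l(\V),\r(\V)\}\le L/(n-1)-\eps$, that a common factor of two distinct $\V$-images has length strictly less than $2L/(n-1)$, and that any factor of $f_\lambda(\dot{w})$ whose repeat exceeds $2L/(n-1)$ has period divisible by $L$ (so in particular a full $\V$-image can occur in $f_\lambda(\dot{w})$ only at positions $\equiv 1\pmod L$). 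The identity $\eps=(\l(\V)+\r(\V))/(L-1)$ is the unique self-consistent choice that closes these preliminary bounds tightly.

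For a factor $v\subset f_\lambda(\dot{w})$ with $p\ge(3n-3)L$, I write $p=K_2 L$ with $K_2\ge 3n-3$. The maximally extended version of $v$ has a preimage $\dot{v}\subset\dot{w}$ of period $K_2$, and since $\dot{w}\in\D_{3,n}^\eps$ we have $|\dot{v}|\le K_2\cdot n/(n-1)-\eps$. Lifting through $f_*$ and adding at most $\l(\V)+\r(\V)=\eps(L-1)$ on each side yields $|v|+\eps\le K_2 L\cdot n/(n-1)$, as required. For $2L\le p<(3n-3)L$ only the plain bound from $\dot{w}\in\T_n$ is needed, and the computations are direct analogues of \ref{l1p6e}--\ref{l1p9e}, with the cycling assignment $i=((|ua|_a-1)\bmod 3)+1$ merely constraining which such periods actually arise (without affecting the inequality when they do). For $p<2L$ the factor $v$ lies in at most four consecutive $\V$-images, and the argument proceeds as in \ref{l1p11e}--\ref{l1p13e}: \ref{l2c2} handles the two- or three-image configurations, while \ref{l2c3} handles the critical $p=L$ case.

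The hard part will be this last case. In Lemma~\ref{l1} the period-$L$ configuration produced three fixed, pairwise distinct $\V$-images $u,v,w$, to which \ref{l1c3} applied directly; here the cycling of $f_*$ means that the two shifted copies of $v$ can contain \emph{different} elements of $\V_a$ in the middle slot (both images of the same letter $a$), while the outer slots hold images of possibly different non-$a$ letters. This is precisely why \ref{l2c3} quantifies $v$ and $v'$ independently over $\V_a$ and fixes $u,w\in\V\setminus\V_a$ to be distinct. The non-trivial sub-step is verifying that any genuine period-$L$ factor of $f_\lambda(\dot{w})$ really falls into this configuration, which uses $\dot{w}\in\T_n$ to exclude letter-$a$ coincidences on the flanks. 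A secondary obstacle is the $\eps$-bookkeeping: the identity $\eps=(\l(\V)+\r(\V))/(L-1)$ must be invoked at each closing estimate so that $(|v|+\eps)/\per(v)\le n/(n-1)$ meets the bound exactly on the boundary, rather than losing slack at intermediate steps.
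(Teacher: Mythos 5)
Your overall architecture (split by period into a long regime, an intermediate regime $2L\le p<3(n-1)L$, and a short regime $p<2L$; handle the long regime by lifting the preimage's $\D_{3,n}^\eps$-bound; isolate $p=L$ as the place where \ref{l2c3} enters) matches the paper, and your treatment of the long regime and of the $p=L$ configuration is essentially the paper's. But there is a genuine gap in the intermediate regime. You claim that for $2L\le p<(3n-3)L$ ``only the plain bound from $\dot{w}\in\T_n$ is needed'' and that the cyclic assignment $i=((|ua|_a-1)\bmod 3)+1$ ``merely constrain[s] which such periods actually arise (without affecting the inequality when they do).'' This is exactly backwards. Take a preimage factor of period $K_2=n-1$ with a one-letter repeat $a$: it has exponent exactly $\RT(n)$, which $\T_n$ permits. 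If its image kept the full repeat, the image factor would have repeat $L+\l(\V)+\r(\V)$ over period $(n-1)L$, i.e.\ exponent $\RT(n)+\tfrac{\l(\V)+\r(\V)}{(n-1)L}>\RT(n)$, and the lemma would be false. What saves it is precisely the context-sensitivity: the two occurrences of $a$ are mapped to \emph{different} elements of $\V_a$ (their $a$-counts differ by $1\bmod 3$), so the image repeat is broken down to at most $\l(\V)+\r(\V)$. Also note that the ``direct analogues of (l1.p6e)--(l1.p9e)'' you invoke do not transfer: those estimates in Lemma~\ref{l1} draw their slack from the alphabet reduction ($\dot\omega\in\T_{n+k}$ with $k\ge1$, compared against $\RT(n)$), and that slack is absent here since the substitution maps $\A_n$ to $\A_n$.

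The missing structural fact — which the paper singles out as the key property separating long and short periods, and proves by a separate argument — is that for $\per(v)<3(n-1)L$ the repeats of $v$ contain no whole $\V$-image: the cyclic queue forces two equal $\V$-images to start at distance at least $3(n-1)L$, and a short case analysis (equal offsets of the leftmost $\V$-images in the two repeats, else $\lexp\ge2$ contradicting \ref{l2c2}) rules out the alternative. Once the repeats are known to straddle at most one image boundary each, the intermediate regime splits into: both repeats inside single $\V$-images (reduce to a factor of $v_iv_j$ and apply \ref{l2c2} with the monotonicity of $\exp^\eps$ in the period), repeats straddling a boundary (then $\per(v)\equiv0\pmod L$ and the repeat is at most $\l(\V)+\r(\V)$, closed by the $\eps$-identity), and a reduction of the rest to $L<\per(v)<2L$. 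Without this step your intermediate case does not close, so you should add it before the final write-up; the rest of your plan is sound.
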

\begin{proof}
	Считаем, что $\p(v)>0$ (т.к. случай $\p(v)=0$ тривиален).
	
	Пусть $u$ и $v$ различные $\V${--}образы такие, что их максимальный общий префикс равен $\l(\V)$.
	Т.к. $uv$ гранично по \ref{l2c2} и $|u|=|v|=L\ge3(n-1)$.
	Тогда, используя условие \ref{l2c2} $uv\in\D_{3, n}^{\eps}$
	(ограничения на экспоненты факторов слова $uv$ с корнями $u$ и $v$), установим неравенства
	$\frac{L+\l(\V)+\eps}{L}\le\frac{n}{n-1}$.
	Аналогично установим, что $\frac{L+\r(\V)+\eps}{L}\le\frac{n}{n-1}$.
	Тогда оценим
	
	\begin{equation}\tag{l2.p0}\label{l2p0e}
	\frac{2L+\eps L}{L}
	=\frac{L+\frac{\l(\V)L}{L-1}}{L}
		+\frac{L+\frac{\r(\V)L}{L-1}}{L}
	=\frac{L+\l(\V)+\frac{\l(\V)}{L-1}}{L}
		+\frac{L+\r(\V)+\frac{\r(\V)}{L-1}}{L}
	\le\frac{L+\l(\V)+\eps}{L}
		+\frac{L+\r(\V)+\eps}{L}
	\le\frac{2n}{n-1}
	\end{equation}
	
	Откуда $\eps\le\frac{2}{n-1}$. Т.е. в лемме допустимо ограничение $\eps\in[0,\frac{2}{n-1}]$ без потери общности.
	
	Возмём произвольный фактор $v\subseteq f_\lambda(\dot{w})$.
	%Тогда достаточно доказать, что $v\in\D_{\{p:\ p\ge3n-3\}, n}^\eps\cap\D_{\mN, n}$.
	Тогда, для доказательства леммы достаточно доказать, что $\exp(v)\le\RT(n)$
	и, при $\per(v)\ge3n-3$, выполняется $\exp^\eps(v)\le\RT(n)$.
	БОО считаем, что $v$ нерасширяем в $f_\lambda(\dot{w})$.
	
	Докажем для всех значений корней:
	\begin{description}
		\item[(l2.p1)\label{l2p1}]
		Случай $\per(v)\ge3(n-1)L$.
		
		Тогда найдутся такие $a, b$, что $aL\le|v|\le aL+\l(\V)+\r(\V)$ и $\per(v)=b\ge3(n-1)$.
		\footnote{В оригинале забыто огрничение на $|v|$ снизу.
		в оригинальной формулировке, видимо, имеется ввиду:
		какими бы ни были корень и повтор <<целого>> прообраза $v$, всегда выполняется неравенство.
		Тогда автор ещё не использовал понятия <<корень>> и <<повтор>> фактора, а использовал только <<период>> и <<длина>> подслова,
		поэтому приходилось делать утверждения для всех периодов.
		%Доказательство Леммы\ref{l2} формализовалась первым (скорей всего) т.к. это самая важная лемма,
		%а так же это подкрепляется тем, что импликация корректна только в остальных леммах.
		}
		Понятно, что $a$ и $b$ --- длина корня и длина повтора целого прообраза $v$ соответственно.
		Тогда по свойству прообраза $\frac{a+\eps}{b}\le\frac{n}{n-1}$.
		\footnote{В оригинале, видимо, стрелкой <<$\to$>> обозначено просто следствие,
			когда импликация в Лемме \ref{l1} обозначалась в скобках с этой стрелкой.
		}
		Откуда получаем, требуемое для этого случая, неравенство
		\begin{equation}\tag{l2.p1}\label{l2p1e}
		\frac{|v|+\eps}{\per(v)}
		\le\frac{aL+\l(\V)+\r(\V)+\frac{\l(\V)+\r(\V)}{L-1}}{bL}
		\le\frac{aL+\eps L}{bL}
		\le\frac{n}{n-1}
		\end{equation}
		
	\end{description}
	
	{\cGr%\color{teal}
		По правилу нашей КЗ подстановки понятно,
		что в любом факторе $f(\dot{w})$ из $3n-3$ $\V${--}образов все $\V${--}образы различны
		%(т.к. в ГС из $3n-3$ букв каждая буква встречается не более 3-х раз).
		
		Докажем, что повторы в $v$ не содержат целого $\V${--}образа для остальных случаев т.е. для $\per(v)<3(n-1)L$.
		\footnote{В оригинале об этом замечании этот случай опущен,
			но во всех подслучаях рассматриваются только такие повторы.
		}

		{\color{teal}
			%В этом случае длина целого прообраза $\dot{v}$ слова $v$ не больше $3n-4$.
			%\big(Т.к. иначе, в повторе $v$ содержится некоторый $v_i\in\V$ (самый близкий к центру $v$),
			%который ещё и отличается от всех $\V${--}образов во 2-м повторе.
			%А значит $v_i\subset v_jv_k$ для некоторых различных $v_j, v_k\in\V$ из 2-го повтора,
			%тогда $\lexp(v_jv_iv_k)\ge2$ что противоречит \ref{l2c2}\big)
			
			%А значит каждая буква $\dot{v}$  встречается не более 3-х раз.
			%Значит, поправилу нашей КЗ подстановки, все образы из $\V$, которые всецело входят в $v$, различны.
			%А значит, соседние буквы с $\dot{v}$ в контексте $\dot{w}$ входят в $\dot{v}$ не более 2-х раз.
			%Но сами соседи с $\dot{v}$ могут совпадать,
			%а значит только $\V${--}образы, содержащие крайние буквы слова $v$,
			%могут быть одинаковыми.
		}

		ОП. Пусть $v_i, v_j$ --- левейшие $\V${--}образы в левом и правом повторах.
		Тогда (непустой) суффикс одного из них равен префиксу другого.
		Если у них различные смещения от левых краёв своих повторов, то получим 
		либо $\lexp(v_iv_j)\ge2$, либо $\lexp(v_jv_i)\ge2$ \contr\ \ref{l2c2}.
		Значит смещения одинаковы, а значит $v_i, v_j$ равны.
		Тогда $\per(v)$ кратен $L$, а значит $\per(v)\le(3n-4)L$,
		а значит $v_i, v_j$ входят в фактор из $3n-3$ $\V${--}образов,
		а значит не могут быть равны (противоречие).
		\Big[Т.е. $\p(v)<2L$, а значит случай,
		когда $\per(v)=2nL\ge(3n-5)L$ не интересен, т.к. в этом случае экспонента хорошая
		$$
		\frac{|v|+\eps}{\per(v)}
		=\frac{\per(v)+\p(v)+\eps}{\per(v)}
		<\frac{2nL+2L+\eps}{2nL}
		\le\frac{(n+1)L+\frac{\l+\r}{2L-2}}{nL}
		\le\frac{n+1+\frac{1}{2(L-1)(n-1)}}{n}
		=\frac{n^2-1+\frac{1}{2(L-1)}}{n(n-1)}
		<\frac{n}{n-1}
		$$
		А значит, можно считать, что $v$ пересекает только различные $\V${--}образы в $f(\dot{w})$.
		\Big]
		\footnote{Это дополнительное свойство для лучшего понимания, но мы им не воспользуемся.
		}
	}

	\begin{description}
		\item[(l2.p2)\label{l2p2}]
		Случай $2L\le\per(v)<3(n-1)L$.
		Докажем отдельно для подслучаев:
		%Тогда $\per(v)\ge12(n-1)\ge3(n-1)$.
		
		\begin{description}
			\item[-]\label{l2p2.1}
			Случай, когда оба повтора в $v$ полностью лежат в некоторых $v_i, v_j\in\V$.
			%т.е. $v[1...\p(v)]\subseteq v_i$ и
			%$v[\per(v)+1...\per(v)+\p(v)]\subseteq v_j$ (имеется ввиду,
			%что правый конец $v$ лежит именно в $v_i$, а правый в $v_j$).
			
			Если $v_i\ne v_j$, то по \ref{l2c2} $v_iv_j, v_jv_i\in\D_{\ge 3n-3, n}^\eps$.
			Тогда, с учётом что $L\ge3(n-1)$ и замечания \ref{n1}, получим
			\begin{equation}\tag{l2.p2}\label{l2p2e}
			\frac{|v|+\eps}{\per(v)}
			\le\max\{\lexp_{\ge3n-3}^\eps(v_iv_j), \lexp_{\ge3n-3}^\eps(v_jv_i)\}
			\le\frac{n}{n-1}
			\footnote{В оригинале $\lexp_{\ge3n-3}^\eps$ обозначалось другим способом, но без определения.
				Но по контексту понять можно.
			}
			\end{equation}
			Если же $v_i=v_j$, то если повторы в разных позициях $v_i$,
			то, используя замечания \ref{n1} и \ref{n2}, условие \ref{l2c2} и то что $|v|\ge\per(v)\ge2L>|v_i'|$
			для $v_i'\subseteq v_i$ с этими повторами, получим
			$\exp^\eps(v)\le\exp^\eps(v_i')\le\RT(n)$.
			Если же повторы в одинаковых позициях, то, по условию нерасширяемости $v$, повтор должен содержать и сам $\V${--}образ $v_i$,
			что невозможно.
			\footnote{В оригинале случай $v_i=v_j$ не рассмотрен.
			}
			
			\item[-]\label{l2p2.2}
			Случай, когда повтор в $v$ не лежит полностью ни в каком $\V${--}образе.
			%Случай, когда ни один повтор в $v$ не лежит полностью ни в каких $v_i, v_j\in\V$
			%т.е. $v[1...\p(v)]\not\subseteq v_i$ и
			%$v[\per(v)+1...\per(v)+\p(v)]\not\subseteq v_j$.
			
			Тогда пусть попарно различные $v_i,v_j,v_k,v_l$ такие, что $v[1...\p(v)]\subset v_iv_j$ и
			$v[\per(v)+1...\per(v)+\p(v)]\subset v_kv_l$.
			
			Тогда $\per(v)\equiv0(\bmod{L})$ % (Т.к. иначе найдутся $v_i\ne v_j\in\V$, что $\lexp(v_iv_j)\ge2$).
			(т.к. иначе, либо $\lexp(v_iv_l)\ge2$, либо $\lexp(v_kv_j)\ge2$, \contr\ \ref{l2c2}).
			
			Тогда, используя (\ref{l2p0e}) получим требуемое неравенство
			\begin{equation}\tag{l2.p3}\label{l2p3e}
			\frac{|v|+\eps}{\per(v)}
			\le\frac{\per(v)+\l(\V)+\r(\V)+\eps}{\per(v)}
			\le\frac{\per(v)+\eps L}{\per(v)}
			\le\frac{2L+\eps L}{2L}
			\le\frac{n}{n-1}
			\end{equation}
			
			\item[-]\label{l2p2.3}
			Оставшиеся случаи сведём к проверке $v$ при $L<\per(v)<2L$.
			
			Пусть попарно различные $v_i, v_j, v_k\in\V$.
			БОО считаем, что левый повтор $v$ содержится в $v_i$, а правый в $v_jv_k$.
			Выделим факторы $u_1\subset v_iv_jv_k$ и $u_2\subset v_jv_kv_i$ с точно теми же повторами, что и у $v$.
			Тогда $\per(u_1)+\per(u_2)=3L$.
			Тогда, либо $\per(u_1)<2L$, либо $\per(u_2)<2L$.
			
			БОО пусть $\per(u_1)<2L$.
			А значит, по общности нашей подстановки
			(т.е. любые перестановки из $n$ образов из $\V$ могут оказаться в образе),
			можем считать, что $v_iv_jv_k\subseteq f(\dot{w})$.
			
			Т.к. $\per(u_1)\le2L<\per(v)$ и $\p(u)\ge\p(v)$,
			то $\exp^\eps(u)\ge\exp^\eps(v)$.
			
			Т.е. для проверки $\exp^\eps(v)\le\RT(n)$ достаточно проверить $\exp^\eps(u)\le\RT(n)$
			при $\per(u)<2L$.
			Заметим, что левый повтор $u_1$
			начинается с более сильным смещением от правого края $v_i$ чем правый повтор от правого края $v_j$,
			а значит $\per(u_1)>L$.
			\footnote{Это избыточный факт, но для формальности проверено утверждение из оригинала.
			}
			
		\end{description}
		
		Пусть $v'$ --- повтор слова $v$.
		
		Для остальных случаев (т.е. при $\per(v)<2L$) установим, что 
		$v\subset v_iv_jv_k$ для некоторых различных $v_i,v_j,v_k\in\V$.
		%Тогда выполняется импликация:
		%Если $v$ не содержится ни в какой конкатенации 2-х $\V${--}образов $v_i,v_j$,
		%то существуют $v_i,v_j,v_k\in\V$, содержащие $v\subset v_iv_jv_k$.
		\footnote{В оригинале это утверждение написано в форме импликации.
			Доказательство как в Лемме \ref{l1} для случая $p_m^M<2L$.
		}
		%А так же, никакой $\V${--}образ не содержится в $v'$
		%т.к. иначе $\p(v)=|v'|\ge L$, тогда с учётом $|v|\le|v_iv_jv_k|=3L$,
		%получим $\lexp(v)\ge\frac{3}{2}>\frac{n}{n-1}$ при $n\ge4$ \contr\ \ref{l2c2}. НЕВЕРНОЕ ДОКАЗАТЕЛЬСТВО
		
		\item[(l2.p3)\label{l2p3}]
		$L<\per(v)<2L$.
		
		Хотябы один повтор слова одержится в некотором $\V${--}образе.
		Т.е. либо $v'\sse v_i$, либо $v'\sse v_k$.
		Докажем это от противного --- предположим, что $v'\not\subseteq v_i$ и $v'\not\subseteq v_k$.
		Понятно, что ни один повтор не содержит $v_j$ т.к. иначе,
		они пересекался бы и тогда $\lexp(v_jv_k)>2$ или $\lexp(v_iv_j)>2$ \contr\ \ref{l2c2}.
		Тогда $v'\ss v_iv_j$ и $v'\ss v_jv_k$.
		Т.к. $\per(v)>L$, то пересечение левого повтора с $v_i$ длиннее пересечения правого повтора с $v_j$,
		а значит часть суффикса в $v_i$ совпадает с некоторым префиксом в $v_k$.
		Т.е. $\lexp(v_iv_k)\ge2$ \contr\ \ref{l2c2}.
		
		БОО пусть $v'\subseteq v_i$ (т.е. левый повтор полностью лежит в $v_i$).
		
		Пусть $a$ и $b$ --- длины пересечения правого повтора с $v_j$, $v_k$ соответственно.
		Тогда $a+b=|v'|$.
		Пусть $a_1$ и $b_1$ --- максимальные длины префикса и суффикса в $v_i$,
		не пересекающиеся с левым повтором $v$.
		Тогда $a_1+b_1=|v_i|-|v'|$.
		
		Получаем $a+a_1+b+b_1=|v_i|=L$.
		Тогда, либо $a_1+a\ge3(n-1)$, либо $b_1+b\ge3(n-1)$.
		Тогда:
		\begin{description}
			\item[-]
			либо, т.к. $v_jv_i\in\D_{\ge3n-3,n}^\eps$, получим $\frac{a_1+2a+\eps}{a_1+a}\le\frac{n}{n-1}$,
			\big(т.е. $\frac{a+\eps}{a_1+a}\le\frac{n}{n-1}-1=\frac{1}{n-1}$\big),
			откуда $a\le\frac{a_1+a}{n-1}-\eps$.
			\item[-]
			либо, т.к. $v_iv_k\in\D_{\ge3n-3,n}^\eps$, аналогично получаем $b\le\frac{b_1+b}{n-1}-\eps$.
		\end{description}
		
		При этом, по \ref{l2c2} $v_jv_i\in\T_n$ получим $\frac{a_1+2a}{a_1+a}\le\frac{n}{n-1}$,
		откуда $a\le\frac{a_1+a}{n-1}$.
		Аналогично получаем для $b\le\frac{b_1+b}{n-1}$.
		
		Тогда в обоих случаях $a+b\le\frac{a_1+a+b_1+b}{n-1}-\eps=\frac{L}{n-1}-\eps$.
		
		Наконец, получаем требуемое
		\begin{equation}\tag{l2.p4}\label{l2p4e}
		\frac{|v|+\eps}{\per(v)}
		=\frac{\per(v)+a+b+\eps}{\per(v)}
		\le\frac{\per(v)+\frac{L}{n-1}-\eps+\eps}{\per(v)}
		<\frac{L+\frac{L}{n-1}}{L}
		=\frac{n}{n-1}
		\end{equation}
		
		\item[(l2.p4)\label{l2p4}]
		$\per(v)=L$.
		Если $v$ пересекает все 3 $\V${--}образа.
		Тогда оба повтора в $v$ пересекают и $v_j$.
		Тогда, пусть $l,r$ --- длины пересечения левого повтора с $v_i,v_j$ соответственно.
		Тогда, учитывая условие \ref{l2c3} получим требуемое
		\begin{equation}\tag{l2.p5}\label{l2p5e}
		\frac{|v|+\eps}{\per(v)}
		=\frac{\per(v)+l+r+\eps}{L}
		\le
			1
			+\max_{
				\begin{array}{c}
				\scriptstyle u\ne w\in\V\setminus\V_a,\\
				\scriptstyle v,v'\in\V_a, a\in\A_n
				\end{array}
			}%,|\{u,v,v',w\}|\ge3
				\bigg\{
				\frac{l+r}{L}: 
				\begin{array}{lcr}
				\pref_l(u) & = & \pref_l(v),\\
				\suff_r(v') & = & \suff_r(w)
				\end{array}
				\bigg\}
			+\frac{\eps}{L}
		\le\frac{L+\frac{L}{n-1}}{L}
		=\frac{n}{n-1}
		\end{equation}
		
		Если же $v\sse v_iv_j$ или $v\sse v_jv_k$, то по \ref{l2c2} получаем требуемое $v\in\D_{\ge3n-3, n}^\eps\cap\T_n$.
		\footnote{В оригинале этот случай не описан.
		}
		
		\item[(l2.p5)\label{l2p5}]
		$\per(v)<L$.
		Тогда, сразу получаем требуемое,
		либо $v\ss v_iv_j\in\D_{\ge3n-3, n}^\eps\cap\T_n$,
		либо $v\ss v_jv_k\in\D_{\ge3n-3, n}^\eps\cap\T_n$.
		Т.к. иначе, по аналогии со случаем \ref{l1p13} в Лемме \ref{l1},
		получим противоречие с \ref{l2c1}.
		
		А значит $v\in\D_{\ge3n-3, n}^\eps\cap\T_n$ т.е. $v$ является ГС
		и, при $\per(v)\ge3n-3$, имеет $\exp^\eps(v)\le\RT(n)$.
		%и $\exp(v)\le\RT(n)$.
	\end{description}
	В результате, получаем, что любой фактор $v\subseteq f(\dot{w})$ имеет $\exp(v)\le\RT(n)$
	и, при $\per(v)\ge3n-3$, имеет $\exp^\eps(v)\le\RT(n)$.
	
	Откуда и получаем требуемое $f(\dot{w})\D_{\ge3n-3, n}^\eps\cap\T_n$.
\end{proof}

\hyperlink{contents}{$\upuparrows$}

\subsubsection{Замечания и дополнения}

Для лучшего понимания поясним, что $\eps$ в \ref{l2c2} характеризует некоторую свободу (резерв
\footnote{Или <<зазор>> как было предложено нашим НР на лекции автора})
в экспоненте для применения нашей КЗ подстановки.
Точнее, определяет минимальную требуемую добавку
к экспоненте любого фактора с периодом не менее $3n-3$, чтобы превысить $\RT(n)$.
Значение этой добавки зависит от длины периода фактора и $\eps$.
В нашем случае (для леммы \ref{l2}) достаточно, чтобы $\eps$ было константой.

\begin{note}
	Если лемма \ref{l2} верна при $\eps=C$ в \ref{l2c2}, то она верна и при любом $\eps\in[0, C]$.
\end{note}

Т.е., в силу монотонности $\eps_p$ от периода $p$, достаточно взять максимальное занчение $\eps=\frac{2}{n-1}$.
%Т.е. достаточно рассмотреть максимальное необходимое значение для $\eps$ и считать, что $\eps=\frac{2}{n-1}$.
Т.е. если для $\eps=C$ лемма работает, то она работает и для любого $\eps\in[0,C]$.
%диапазон $\eps\in[0,C]$ --- <<безопасная зона>> для экспонент всех факторов.

Заметьте, что условие \ref{l2c3} может быть легко ослаблено,
где максимум достаточно выбрать только среди троек различных $\V${--}образов,
не являющихся образами общей буквы.
Это возможно т.к. это условие необходимо только в пункте \ref{l2p5},
когда оцениваются образы 3-х соседних букв, которые всегда различны при $n\ge4$.
Условие \ref{l2c3} 
При достаточно большом отношении $L/n$ можно добиться достаточно малых $\l(\V)$ и $\r(\V)$,
благодаря чему можно добиться выполнения условия \ref{l2c3}.

% СЛЕДУЮЩИЙ ЗАКОММЕНТИРОВАННЫЙ БЛОК НЕ ДОРАБОТАН (ДАЖЕ ИДЕЙНО)
%А так же, может быть ослабленно и условие \ref{l2c2}
%для пар $\V${--}образов одинаковой буквы.
%Точнее $\forall a\in\A_n$, $i\in\{0,1,2\}$:
%\begin{description}
%	\item[1]
%	$u=u_{a,i}$ и $v=u_{a,(i+1)\bmod{3}}$
%	достаточно проверить $\l(u,v)+\r(\V\setminus\V_a)\le \frac{L}{n-1}???$
%	и $\r(u,v)+\l(\V\setminus\V_a)\le \frac{L}{n-1}???$.
%	\item[2]
%	$u=u_{a,i}$ и $v=u_{a,(i-1)\bmod{3}}$
%	сводится к случаю 1 т.к. $u$ и $v$, просто, поменяны местами.
%\end{description}
%Даже 1-е условие можно ещё ослабить, если учитывать,
%что расстояние между началами $u$ и $v$ равно $L(n-1)$,
%только если справа от них стоят образы разных букв.
%
%Аналогично с условием 2, только симметрично.
%
%ТУДУ.
%
%Чтобы понять это, достаточно проследить в доказательстве,
%что используется это условие только для пар $\V${--}образов от разных букв прообраза.

\hyperlink{contents}{$\upuparrows$}

\subsubsection{Снятие ограничения на длину $\V${--}образов с помощью условий \ref{l1c2} и $|\V|\ge3n$ при $n\ge5$}
Снятие ограничения на длину $\V${--}образа не обязательно
\footnote{Стоит отметить, что первым это сделал наш НР,
	в передоказанных (т.е. отредактированных) основных авторских Леммах из ДР1.}
т.к., очевидно, что проверить длину можно за $O(6(n-1))$ т.е. за полином.
Но для красоты решения добавим это свойство, которое вытекает даже из одного условия \ref{l1c2} Леммы \ref{l1}.

{\cGr
Ниже решение (в лоб) не перепровереное автором. В следующей версии этого манускрипта планируется переделать.
%########################
% по-моему так я и доказывал это свойство в 2011,
% после того как НР прислал первую собственную версию на проверку (уже отправленную на конференцию по его словам),
% в которой он уже избавился от этого ограничения.
% Вот и мне стало интересно доказать это самостоятельно.
%########################
Идея грубого решения --- по принципу Дирихле из $n$ вариантов бинарных последовательностей (т.е. $b$ и $c$)
найдётся пара с общей длиной (как минимум) порядка $bc${--}префикса log(n) с округлением вверх до целого.
Но, так же по принципу Дирихле, либо достаточно много префиксов с более длинным $c$ кодом, либо слишком много с $b$ кодом,
но это требует более детального разбора.
}

Докажем, что $L\ge6(n-1)$.
По принципу Дирихле, среди $3n$ $\V${--}образов над $n$ буквами,
найдутся 2 образа $v_1,v_2\in\V$, с одинаковой первой буквой $a_1$.
Тогда по условию \ref{l1c2} $v_1v_2\in\T_n$, а значит $|v_1|$ не менее $n-1$.

Если предположить, что длина ровно $n-1$, то наши слова состоят только из разных букв.
Тогда есть только 2 буквы. продолжающие последовательность ГС --- либо $a_1$, либо недостающая --- пусть $a_n$.
Т.е. каждый $\V${--}образ начинается с одной из 2-х возможных букв.

Возмём большинство (как минимум $\lceil(3n-1)/2\rceil$) $\V${--}образов с общей первой буквой.
\begin{itemize}
	\item
	Если это буква $a_1$, то эти слова начинаются с префикса $a_1a_n$ (т.е. они начинаются с $b$ в $bc${--}коде)
	Тогда, существует хотябы 2 $\V${--}образа с общим префиксом длины 2.
	А значит, по условию граничности пары этих слов, их длина $L$ не менее $2(n-1)$.
	Тогда как минимум у $\lceil\lceil(3n-1)/2\rceil/2\rceil$ $\V${--}образов (т.е. минимум у 4-х при $n\ge5$)
	\begin{itemize}
		\item
		с $b$ или $c$ кодом в префиксе (определение $bc${--}кодов здесь \ref{l3_def}).
		
		Тогда, как минимум у 4, общий префикс длины 4. ... т.е. $L\ge4(n-1)$
		Аналогично на след-м уровне получим $L\ge6(n-1)$ (хоть при $b$ хоть при $c$ коде)
	\end{itemize}
	\item 
	Если же это буква $a_n$, то следующий целый $bc${--}код начинается со второй буквы.
	\\
	Тогда как минимум у $\lceil\lceil(3n-1)/2\rceil/2\rceil$ $\V${--}образов (т.е. минимум у 4-х при $n\ge5$)
	\begin{itemize}
		\item
		с $b$ кодом.
		
		Тогда либо, как минимум у 5 (с $b$ кодом), общий префикс длины 3. ... т.е. $L\ge3(n-1)$.
		
		Либо, как минимум у 3 (с $c$ кодом), общий префикс длины 3. ... т.е. $L\ge4(n-1)$.
		
		Тогда на след-м уровне,
		либо у 3 ... $L\ge5(n-1)$,
		либо у 2 ... $L\ge6(n-1)$.
		
		Остаётся 1-й вариант --- у 2 ... $L\ge7(n-1)$.
		\item
		с $c$ кодом.
		
		Тогда, как минимум у 4, общий префикс длины 4. ... т.е. $L\ge4(n-1)$
		
		Аналогично на след-м уровне получим $L\ge6(n-1)$ (хоть при $b$ хоть при $c$ коде)
	\end{itemize}
\end{itemize}

\hyperlink{contents}{$\upuparrows$}

\subsection{Лемма \ref{l3}. Циркулярные/кольцевые $D_{3,n}^\eps${--}ГС ($D_{3,n}^\eps${--}ЦГС)}

\subsubsection{Дополнительные определения и замечания}\label{l3_def}

Здесь мы введём вспомогательные конструкции, которые были использованы в ДР1 и лекциях.
И опишем их несложные фундаментальные свойства,
чего не было сделано ни в ДР1 ни на лекциях, кроме некоторых по необходимости на лекциях (когда требовалось).
Автор считал это очевидными свойствами для специалистов в комбинаторике слов.
%########################
%По всей видимости, далеко не все знали тему граничных слов достаточно глубоко, на что автор не расчитывал на лекциях.
%########################

\begin{defn}
	$l${--}суффикс[$l${--}префикс,$l${--}фактор] слова это суффикс[префикс,фактор] длины $l$ этого слова.
\end{defn}

$A_n$ --- слово длины $n$ содержащее все буквы из $\A_n$, либо $n-1$ букву из $\A_n$,
где одинаковые буквы стоят по краям (в первой и последней позициях) в $A_n$.
Для простоты, можно считать, что буквы в $A_n$ упорядочены в лексикографическом порядке.

\begin{defn}
	Слова, у которых все циклические сдвиги это $D_{r_p,n}^{\eps_p}${--}ГС, назовём
	циклическими/циркулярными/\\кольцевыми $D_{r_p,n}^{\eps_p}${--}ГС и обозначим как $D_{r_p,n}^{\eps_p}${--}ЦГС.
\end{defn}

Введём аналог кода Пансьё, где <<\mi>> и <<\pl>> совпадают с <<0>> в коде Пансьё, а <<\ze>> с <<1>>:

Под расстоянием между буквами подразумевается разница позиций этих букв (или расстояние между геом.центрами).

\begin{defn}\ 
	Коды <<\ze>>, <<\mi>>, <<\pl>> уточняют код Пансьё.
	В позиции стоит знак в зависимости от ближайшей соответствующей буквы в слове слева:
	\\
	<<\mi>> если в соответствующей позиции слова стоит буква, у которой ближайшая слева такая же находится на расстоянии $n{-}1$;
	\\
	<<\pl>> если в соответствующей позиции слова стоит буква, у которой ближайшая слева такая же находится на расстоянии $n{+}1$;
	\\
	<<\ze>> если в соответствующей позиции слова стоит буква, у которой ближайшая слева такая же находится на расстоянии $n$.
\end{defn}

Далее, под кодом подразумевается уточнёный код Пансьё.
Задание для читателя доказать, что в коде БГС в обе стороны перед <<\pl>> стоит <<\mi>>, а после <<\mi>> стоит <<\pl>>.
Там же докажите, что перед <<\ze>> стоит <<\pl>>, а после <<\ze>> стоит <<\mi>>.

% b --- 2-я буква английского алфавита, поэтому обозначает 2 буквы в коде Пансьё
% c --- 3-я буква английского алфавита, поэтому обозначает 3 буквы в коде Пансьё
\begin{defn}\ 
	
	[Право] $\rbc${--}кодом назовём последовательность букв <<$\rb$>> и <<$\rc$>>.
	
	[Центро] $\nbc${--}кодом назовём последовательность букв <<$\nb$>> и <<$\nc$>>.
	
	[Лево] $\lbc${--}кодами назовём последовательность букв <<$\lb$>> и <<$\lc$>>.
	% назовём последовательность букв <<$b$>>, где $\phi(b)=\mi\pl$, <<$c$>>, где $\phi(c)=\mi\pl\ze$.
	% Аналог кода Пансьё $\mi\pl=00$, $\mi\pl\ze=001$.
	%
	% $cb${--}кодом назовём последовательность букв $\phi(b)=\mi\pl$, $\phi(c)=\ze\mi\pl$.
	% Аналог кода Пансьё $\mi\pl=00$, $\ze\mi\pl=100$.
\end{defn}

\begin{defn}
	Подстановка $\phi$ над $\rbc$, $\nbc$ и $\lbc${--} кодами определяется:
	
%Подстановка над $\orbc${--}кодом $\phi:\{\orb,\orc\}^*\to\{\mi\pl,\mi\pl\ze\}^*$ на основе предпосылки $A_n$ определяется:
$\phi(\rb)=\mi\pl$,
$\phi(\rc)=\mi\pl\ze$. Назовём эти подстановки образами $\rbc${--}кода или $\rbc${--}образами;

%Подстановка над $\onbc${--}кодом $\phi:\{\onb,\onc\}^*\to\{\pl\mi,\pl\ze\mi\}^*$ на основе предпосылки $A_n$ определяется:
$\phi(\nb)=\pl\mi$,
$\phi(\nc)=\pl\ze\mi$. Назовём эти подстановки образами $\nbc${--}кода или $\nbc${--}образами;

%Подстановка над $\olbc${--}кодом $\phi:\{\olb,\olc\}^*\to\{\ze\mi\pl,\mi\pl\}^*$ на основе предпосылки $A_n$ определяется:
$\phi(\lb)=\mi\pl$,
$\phi(\lc)=\ze\mi\pl$. Назовём эти подстановки образами $\lbc${--}кода или $\lbc${--}образами.
\end{defn}

Примечание:
Будем записывать $(\phi(w))^k$ как $\phi(w)^k$.

\begin{note}\label{l3:nt:bc:open}
	$\phi(w^k)=\phi(w)^k$.
\end{note}

\begin{defn}
	Код назовём замощаемым $\rbc$[$\nbc$,$\lbc$]{--}образами, если он представим конкатенацией $\rbc$[$\nbc$,$\lbc$]{--}образов.
\end{defn}

\begin{defn}\ 
	$bc${--}кодом назовём код, если он замощаем $\rbc$, $\nbc$ или $\lbc${--}образами (хотябы одним из них).
	
	%$bc${--}сдвигом назовём циклический сдвиг кода, замощаемый образами $\rbc$, $\nbc$ или $\lbc${--}кода (т.е. это сдвиг любого $bc${--}кода).
	
	%Целым $\lbc${--}сдвигом назовём циклический сдвиг кода, замощаемый $\lbc${--}кодом.
	
	%Целым $\rbc${--}сдвигом назовём циклический сдвиг кода, замощаемый $\rbc${--}кодом.
	
	Целым $bc${--}кодом назовём замощаемый $\rbc$ или $\lbc${--}образами код.
	
	Нецелым $bc${--}кодом назовём код, замощаемый только $\nbc${--}образами.
	
	Целым $bc${--}сдвигом назовём циклический сдвиг кода, являющийся целым $bc${--}кодом.
	
	Нецелым $bc${--}сдвигом назовём циклический сдвиг кода, являющийся нецелым $bc${--}кодом.
\end{defn}

{\bf
Чтобы не путаться, что $bc${--}код (сдвиг и др.) состоит из кодов Пансьё или расширенных кодов, а не из букв <<$b$>> и <<$c$>>
достаточно смотреть на ниличие знака над буквами.
Т.е. у названия кода (не образа) с буквами <<$b$>> и <<$c$>> всегда есть знак вектора или черты
(т.к. <<$b$>> и <<$c$>> требуют конкретизации для подстановки $\phi$ ).
А для расширенного кода Пансьё не требуются такие знаки.
}

Заметьте, что целые $\rbc$ и $\lbc${--}коды покрывают не все {\cRe целые коды}
т.к. код начинающийся и заканчивающийся на \ze\ не может быть замощён ни $\rbc$ ни $\lbc${--}образами.
Но некоторые такие коды (с началом и концом \ze) могут замощаться кодами \{\ze\mi\pl, \mi\pl, \mi\pl\ze\},
при чём все, кроме \ze(\mi\pl\ze)*.% (кажется очевидно, но нужно перепроверить).
Но такие коды нам не потребуются.

\begin{note}\label{BcC-NotBcC}
	Сдвиг, хотябы в одну сторону (влево или вправо) целого $bc${--}кода на 1 букву делает его не целым $bc${--}кодом.
	
	Сдвиг, в любую сторону не целого $bc${--}кода на 1 букву делает его целым $bc${--}кодом.
	
	Но для любого целого $bc${--}кода либо 2-х либо 3-х буквенный сдвиг (в любую сторону) является целым $bc${--}кодом.
\end{note}

\begin{note}\label{l3:nt:bc:sh}
	\begin{description}\ 
		\item[(1)]
		Любой циклический сдвиг $bc${--}кода является $bc${--}сдвигом.
		\item[(2)\label{2sh}]
		Среди 2-х соседних циклических сдвигов $bc${--}кода найдётся хотябы один целый $bc${--}код.
		\item[(3)\label{3sh}]
		Среди 3-х соседних циклических сдвигов $bc${--}кода найдутся замощаемые образами,
		для каждого из $\lbc$, $\nbc$ и $\rbc${--}образов.
\end{description}
\end{note}

Задание для читателя доказать
\begin{note}
	$bc${--}код гарантирует граничность факторов с повторами менее $3$, а так же с периодами менее $3n-3$.
\end{note}
Это удобное замечание для множества $D_{3,n}^\eps${--}ГС (т.е. из $\D_{\ge3n-3, n}^\eps\cap\T_n$).

\paragraph{Предпосылки.}
{
	Т.к. $n${--}префикс (если он есть) любого ГС может иметь несколько возможных кодов,
	то введём дополнительные параметры для однозначности.
	
	Пусть $w$ --- ГС длины более $n$, а $u$ --- некоторый код этого слова такой же длины (т.е. $|u|=|w|$).
	Существует единственное $n${--}слово $A_n$ такое, что действие (т.е. перестановка) $n${--}префикса
	в коде $u$ на $A_n$ однозначно определяет $n${--}префикс в $w$.
	
	Такое $n${--}слово $A_n$ назовём предпосылкой слова $w$ с кодом $u$.
	
Предпосылка $A_n$ для кода --- предшествующее коду $n${--}слово,
по которому кодом определяется $n$ букв в $(n{+}1)${--}префиксе ГС.
Т.е. первые $n-1$ букв (всегда различны) в ГС однозначно определяются этой предпосылкой и $n${--}префиксом кода.
}

Предпосылка может быть 2 видов:
\begin{description}
\item[(\Anz)\label{An0}]
Все $n$ букв различны.
Критерием этого случая в коде Пансьё --- чётность числа <<0>> между предпосылкой и первой (слева) <<1>> в коде;
\item[(\Anm)\label{An-}]
Только 2 буквы совпадают и только по краям $n${--}слова.
Критерием этого случая в коде Пансьё --- нечётность числа <<0>> между предпосылкой и первой (слева) <<1>> в коде.
\end{description}

\begin{note}\label{An-C-NotC}
	\Anm{--}предпосылки применяются если и только, если код начинается с <<\pl>>.%или $bc${--}код
	
	Как следствие, \Anz{--}предпосылки применяются если и только, если код начинается с <<\mi>> или <<\ze>>.%или $bc${--}код
	
	%Т.е. если $bc${--}код целый, то применяются \Anz{--}предпосылки,
	%если $bc${--}код не целый, то применяются \Anm{--}предпосылки.
\end{note}
\begin{proof}
Перед <<\pl>> необходимо применять \Anm{--}предпосылку
(даже, любой $n${--}фактор перед <<\pl>> должен иметь вид \Anm)
т.к. в коде перед <<\pl>> всегда <<\mi>>, а это последняя буква предпосылки.
А значит последняя буква в \Anm{--}предпосылке совпадает с первой.

А первый <<\pl>> в коде необходим для этой предпосылки т.к. только <<\pl>> <<достаёт>> недостающую букву перед предпосылкой.
\end{proof}

Как следствие

\begin{note}\label{An-BcC-NotBcC}
	\Anz{--}предпосылки применяются только к $\rbc$ и $\lbc${--}кодам.
	\Anm{--}предпосылки только к $\nbc${--}кодам.
	Т.е у целого $bc${--}кода ($bc${--}сдвига) только \Anz{--}предпосылки,
	а у не целого $bc${--}кода ($bc${--}сдвига) только \Anm{--}предпосылки.
\end{note}

Первые $n-1$ букв в ГС определяются неоднозначным кодом Пансьё т.к. предпосылки могут быть различными для построения ГС.
Но по предпосылке они определены однозначно.

Т.о. длину Пансьё кода слова можно сравнять с длиной самого слова, однозначно определив по предпосылке.

\begin{defn}
Код Пансьё $u$ будем называть {\cRe$A_n${--}кодом} ГС $w$, если $A_n$ это предпосылка по которой строится $w$ по коду $u$.
Т.е. длина {\cRe$A_n${--}кода} ГС совпадает с длиной самого ГС.
\end{defn}

\begin{defn}
Подстановка $f_{A_n}:\{\mi,\pl,\ze\}^*\to \A_n^*$ переводит {\cRe$A_n${--}код} в слово по предпосылке $A_n$
\end{defn}

\begin{defn}
	Пусть $m\in\mN_0, l,k\in\mN$ и $u'$ --- код Пансьё, где $m,l\le|u'|$.
	Пусть $u'_1u'_2=u'$ такие, что $|u'_1|=m$,
	То обозначим $\w_{u}(m,l)=\pref_l\big(f_{\cRe A'_n}\big(u'_2u'_1\big)\big)$
	\footnote{В ДР1 ошибочно взят суффикс вместо префикса.}.
	Т.е. $\w_{u}(m,l)$ --- фактор в $f_{A_n}(u'_1u'_2u'_1)$ длины $l$ с циклическим сдвигом на $m$ влево,
	где $A'_n$ --- $n${--}суффикс слова $A_n.f_{A_n}(u'_1)$.
	%Т.е. $l$ --- длина фактора $\w_{u}(m,l)$, $m$ --- целый $bc${--}сдвиг кода $\phi(u^k)$ влево.
	
\end{defn}

Заметьте, что можно задать смещение $m>|\phi(u)|$,
это будет эквивалентно смещению всего кода $\phi(u^k)$ влево на $m$.
Т.е. $\w_{u^k}(m,l)\sim\w_{u^k}(m\pmod{|\phi(u)|},l)$

\begin{sgn}
	Фактор слова $w$ в позициях(индексы с нуля) от $i$ до $j$ (включительно)
	обозначим $w[i:j+1]$ как в языке программирования Python.
	
	Так же отрицательным индексом обозначается отступ от конца слова как в Python.
	Т.е. когда аргумент отрицателен, но его модуль не превосходит $|w|$, то к аргументу добавляется $|w|$.
	
	По умолчанию пустой аргумент в начале (перед <<$:$>>) означает 0, а в конце (после <<$:$>>) длину слова.

	Т.е. $w[:-n]$ означает префикс в $w$ длины $|w|-n$ (т.е. занимает позиции $0, ..., |w|-n-1$).
	
	В частности, $w[:]=w$, $w[-n:]$ --- суффикс длины $n$.
\end{sgn}

\begin{defn}
	$bc${--}корнем слова $w$ назовём корень его $bc${--}кода (если он есть т.е. определение не только для ГС).
	%Он всегда есть для ГС, но не всегда $bc$--код является целой степенью этого корня.
\end{defn}

\begin{defn}
	Пусть $k\in\mN$.
	Слово $w$ с $bc${--}кодом назовём $k${-}$bc${--}корневым,
	если $bc${--}код слова $w$ является $k$-й степенью его $bc${--}корня,
	и $n${--}суффикс слова $w$ равен предпосылке его $bc${--}кода.
	При этом, $k$ минимально.
\end{defn}

\begin{defn}
	Слово $w$ с $bc${--}кодом назовём $bc${--}корневым,
	если при некотором $k\in\mN$ оно является $k${-}$bc${--}корневым.
\end{defn}

\begin{note}\label{l3:nt_k-bc-all}
	Любой сдвиг $k${-}$bc${--}корневого слова так же является $k${-}$bc${--}корневым словом.
\end{note}

{\cGr
Избыточное определение, но для тренировки понимания
\begin{defn}
	Пусть $n\in\mN_{\ge2}$, $u\in\{b,c\}^*$, $|\phi(u)|\ge n$.
	Если $w$ --- $D_{r_p,n}^{\eps_p}${--}ГС, чей код представим в виде $\phi(u)^k$, при некотором $k\in\mN_{\ge2}$.
	Т.е. $w=f_{A_n}(\phi(u)^k)$ при некоторой предпосылке $A_n$.
	При этом, $A_n\in\SUFF(w)$.
	То такое $D_{r_p,n}^{\eps_p}${--}ГС будем называть $k${-}$bc${--}корневым.
\end{defn}
}

\paragraph{Свойства кодов:}\ 

(1) Если код начинается с <<\mi>> или <<\ze>>, то предпосылка состоит из $n$ различных букв.

(1.1) ... и заканчивается на <<\pl>> или <<\ze>>, то код называется целым.

(1.1.1) ... начинается или заканчивается не на <<\ze>>, то код замощается $\rbc$ или $\lbc${--}образами.

(1.1.1.1) ... не начинается и не заканчивается на <<\ze>>, то код замощается и $\rbc$ и $\lbc${--}образами.

(1.2) ... и заканчивается на <<\mi>>, то код не циклический.

(2) Если код начинается с <<\pl>>, то предпосылка состоит из $n-1$ различной буквы (т.е. \Anm{--}предпосылка).
Такой код не замощается ни $\rbc$ ни $\lbc${--}образами т.е. не является целым $bc${--}кодом.

\paragraph{Свойства предпосылок:}\ 

(1) $\phi(u)$ действует на предпосылку как перестановка, при $u\in\{b,c\}^*, |\phi(u)|\ge n$.
Т.е. $n${--}суффикс слова $f_{A_n}(\phi(u))$ получается из $A_n$ некоторой перестановкой $\pi$.
В частности $n${--}суффикс слова $f_{A_n}(\phi(u)^k)$ получается из $A_n$ перестановкой $\pi^k$.

\begin{note}\label{l3:nt_bcSuf}
	Пусть $w$ --- $k${-}$bc${--}корневое $D_{r_p,n}^{\eps_p}${--}ГС над $\A_n$, при некоторых $n,k\ge3$ и неубывающем $\eps_p\ge0$.
	$u$ --- $bc${--}корень слова $w$.
	Тогда $A_n\not\in\SUFF(f_{A_n}(\phi(u)^{k'}))$ при любых $0<k'<k$
	%Сразу выясним, что наша предпосылка (пусть $A_n=w[-n:]$) не может совпадать с суффиксом в
	%$f_{A_n}(\phi(u)^{k'})$ ни при каких $0<k'<k$
	т.к. иначе $w$ содержит квадрат, что нарушает условие граничности слова $w$ над $n$ буквенным алфавитом
	(а так же, нарушает минимальность $k$ в определении $k${-}$bc${--}корневого слова $w$).
\end{note}

\begin{note}\label{l3:nt_bcTWfor1,2rep}
	Пусть $w$ --- $D_{3,n}^\eps${--}ГС над $\A_n$, при некоторых $n,k\ge3, \eps\ge0$.
	И код слова $w$ является $bc${--}кодом.
	Тогда любой фактор $v$ слова $w$ при $\per(v)\le3n-3$ имеет экспоненту $\exp(v)\le\RT(n)$
\end{note}

\begin{note}\label{l3:nt:bc:fctr}
	Пусть $w$ слово, построенное по $bc${--}коду, и его фактор $v$ имеет $\exp(v)>1$, то существуют:
	
	\begin{description}
		\item[(1)\label{sh2}]
		и целые и нецелые $bc${--}сдвиги, содержащие (целый) фактор $v$ при любом $n\ge2$.
		\item[(2)\label{sh3}]
		и $\lbc$ и $\nbc$ и $\rbc${--}сдвиги, содержащие (целый) фактор $v$ при любом $n\ge3$;
\end{description}
\end{note}
\begin{proof}
	Т.к. $\exp(v)>1$, то в $v$ есть повторы.
	Между любыми 2-мя повторами не меньше $n-1$ букв, т.к. слово $w$ строится $bc${--}кодом.
	Любой сдвиг $bc${--}кода --- $bc${--}код.
	Представив $w$ в виде кольца, где с противоположной стороны от $v$ есть
	ещё фактор с $n-1$ буквой с $n$ соседними местами для разреза (вне $v$) кольца.
	
	Тогда, по замечанию \ref{l3:nt:bc:sh}.\ref{2sh} при $n\ge2$
	выбираем нужный из 2-х допустимых соседних сдвигов (т.е. не разрезая $v$) для любого (т.е. целого или нецелого) $bc${--}сдвига.
	
	А по замечанию \ref{l3:nt:bc:sh}.\ref{3sh} при $n\ge3$,
	выбираем нужный из 3-х допустимых сдвигов для любого $bc${--}сдвига из \ref{sh3}.
\end{proof}

% СЛЕДУЮЩИЙ ЗАКОММЕНТИРОВАННЫЙ БЛОК НЕ ДОРАБОТАН, НО ОН ИЗБЫТОЧЕН
%{\cRe
%\begin{note}
%	Пусть слово $u=u_1u_2=u_2u_1$ из непустых $u'_1$ и $u'_2$, тогда $u$ представимо в виде целой степени $\ge2$.
%\end{note}
%\begin{proof}
%	Рассмотрим бесконечное слово $...u_1u_2u_1u_2...$.
%	Тогда $u$ в нём встречается с периодом 
%\end{proof}
%}

\begin{note}[Монотонность]
	Допустимая длина повтора в $D_{r_p,n}^{\eps_p}${--}ГС неувеличивается с уменьшением периода $p$.
	Но при условии неуменьшения $\eps_p$ (т.е. неувеличения $\eps_p$ при уменьшении $p$).
\end{note}

\hyperlink{contents}{$\upuparrows$}

\paragraph{\large Неформальное предисловие.}

Как обычно, автор доказывал устно ключевые этапы доказательства (условия и промежуточные выводы).
Но когда доходило до формализации доказательств этих этапов, из-за спешки автор не замечал мелких (например арифметических) ошибок.
Поэтому, здесь в рамках обведены ключевые этапы (утверждения, условия и допущения) из ДР1,
а остальное это рассуждения как к ним прийти, ошибки и недочёты которых несложно (по мнению автора) исправляются.

Как оказалось, в ДР1 обобщенние доказательство для $\eps$ оказалось несколько небрежным,
но оно тривиально исправляется, и
идея обобщения видна и может быть использована (после исправления арифм-й ошибки) для более общего случая $\eps\ge0$.
О возможности доказать для всех $\eps\ge0$ автор говорил своему НР,
что такое возможно за счёт импликации --- формулировка леммы написана в форме импликации,
и, если условие (т.е. предпосылка в импликации) не выполняется при слишком больших $\eps$,
то и в следствии нет смысла доказывать верность.
%########################
% Помнится, как НР в непринуждённой беседе спросил меня как-то так --- "А как Вы там для эпсилон доказывали?"
% (улыбчиво, или с радостным лицом, как буд-то вопрос необязательный).
% Я и подумал, что это чисто дружеский вопрос.
% Так же и ответил --- "да я уже не помню" и, по-моему тогда-то и добавил, то что уже сказано выше.
% Вообще, правильнее было сказать, что обощение на эпсилон вытекает из того,
% что положительное эпсилон только упрощает проверку неравенств (что почти очевидно т.к. только сокращает длину допустимого повтора),
% достаточных для доказательства.
% Видимо, автор уже подзабыл эту причину --- но она достаточно очевидная, когда вникаешь в доказательство.
%########################
Проще говоря, все основные леммы (\ref{l1},\ref{l2},\ref{l3}) были сформулированы по инструкциям автора (с учётом лекций и дополнительных обсуждений <<мимоходом>>),
кроме снятия ограничений снизу на длину $L$ в Лемме \ref{l2}.
Как мы увидим в доказательстве следующей леммы, обобщение легко вытекает из доказательства
(с простыми исправлениями арифметических недочётов) предложенных в ДР1.

Во всей работе по умолчанию рассматриваются только ГС бесконечно расширяемые в обе стороны.
Поэтому, по умолчанию подразумеваются только такие случаи.

Изначально предыдущие леммы (\ref{l1}, \ref{l2}) разработаны автором для более общего случая,
когда длины $\V${--}образов не константны
(формального доказательства автор не писал, а только проверял устно все нетривиальные случаи).
Для этого случая была придумана схема доказательства (но не формализована) для случая,
когда упорядоченные длины слов в наборе могли быть ограничены полиномом порядка $\Theta(n^{p})$
(т.е. сверху и снизу), тогда достаточно было чтобы в наборе было порядка $\Theta(n^{p+1})$ слов.
Это нетрудно понять, если рассмотреть худший случай, когда между длиннейшим $\V${--}образом
стоит $n+k-1$ кратчайших $\V${--}образов для Леммы \ref{l1},
или $3(n-1)$ кратчайших $\V${--}образов для Леммы \ref{l2}.
В частности в Лемме \ref{l1} длины слов $L\in\Theta(n^{0})$,
а количество слов $n+k\in\Theta(n^{1})$ при константном $k$.
%########################
%И, даже, находился набор с нужными свойствами (придётся поверить на слово),
%и это было не так сложно, но это было только для частных случаев $n$.
%########################
Но с идеей с циклическими словами, необходимость в более общем случае отпала.

%########################
%На сколько помню, идея со спускающим морфизмом и понимание её корректности,
%пришли в середине ноября 2010 (когда ждал на Полевском автовокзале 145-й).
%Далее, (грубо говоря) примерно с промежутками неделя-две приходили идеи с КЗ подстановкой и ЦГС
%(но не помню в какой последовательности).
%Есть стойкая уверенность, что сначала КЗП т.к. ЦГС пока было бессмысленно (для полностью собственного решения),
%а КЗП позволяет доказать, хотябы, частный случай.
%Точнее, точно помню, что подбирал на компьютере (на perl) конкретные слова различной длины в достаточном
%количестве и удовлетворяющих условиям локальной экспоненты для пар в лемме 2
%(обобщённому варианту этой леммы для слов различной длины).
%Значит, наверняка, сначала КЗП, а потом ЦГС.
%########################

В ДР1 некорректное обозначение степени слов $w_1$ --
$w^{k|w|}$ имеется ввиду префикс длины $k|w|$ бесконечной копии $w^\infty$
(но об этом сказано в обозначениях в начале 3-й части),
т.е. в корректной интерпретации подразумевается $w^k$.

В ДР1 усиливается разбор для $\D_{3,n}^\eps${--}ГС (с арифметическими недочётами, но не ломающими общий ход доказательства),
где вместо $\eps$ использовалось $lr$ (в программировании переменые можно обозначать словами).
%########################
% В доказательстве данной леммы в ДР1 $\eps$ обозначается, как и раньше, через $lr$.
% Хотя, возможность обобщения для случая $\eps\ge0$ оговаривалось автором в личной беседе с нашим НР,
% когда автор говорил, как-то так --- <<обобщение возможно даже для всех $\eps\ge0$ за счёт импликации>>
% т.к. все 3 леммы в ДР1 и лекциях сформулированы в форме импликации.
%%Т.е. подразумевается, что, если для каких-то $\eps\ge0$ предпосылка не верна, то и следствие можно не проверять.
%########################

Но в ДР1 есть 3 недочёта в формулировке:
1) Усиление неявно обозначено штрихом в неравенстве с $\lexp'$ подразумевая $\lexp^\eps_{\ge3n-3}(w)\le\RT(n)$.
Понять, что это значит можно только по контексту (явного описания автор не нашёл) ---
по формулировке $\D_{\ge3n-3,n}^\eps$ предыдущих лемм (с замечанием \ref{n1}) и в доказательстве данной леммы --- когда используется переменная <<$lr$>>.
2) Не уточняется диапазон значений $\eps$,
но он тривиально (с очевидными поправками) вытекает при разборе всех ключевых утверждений в ДР1,
что мы выясним в авторском разборе чернового доказательства данной леммы об $\D_{3,n}^\eps${--}ЦГС.
3) Не добавлено в формулировку, но сказанное на лекции, условие целого смещения кода Пансьё.
Но это условие можно опустить, добавив второй(полностью дополняющий) вариант суффикса длины $n$
ровно с $n-1$ различной буквой (что и планировал автор решения).

Здесь мы выясним (докажем), что метод доказательства в ДР1 так же работает для всех $\eps\ge0$.
Даже, опустим ограничение снизу до $-\frac{n-2}{n-1}$ (и до $-1$).

%{\color{cyan}
%В ДР1 усиленная граничность ($\D_{\ge3n-3,n}^\eps$) задаётся неявно через $\lexp'$
%(со штрихом), что, как можно догадаться по тексту доказательства,
%и подразумевает усиленную $\lexp$, но без определения $\eps$.
%Т.е. $w\in\D_{\ge3n-3,n}^\eps$ это тоже, что и $\lexp^\eps_{\ge3n-3}(w)\le\RT(n)$
%}

Здесь переменная повтора $p$ из ДР1 заменена на $r$, а $lr$ на $\eps$.

На лекциях доказывалось для случая без $\eps$ (т.е. когда $\eps=0$),
но $\eps$ не усложняет доказательство вхождения $l'+2r$ целиком в 3 $bc${--}корня
т.к. увеличение $\eps$ только сокращает длину допустимого повтора $r$.
Что только облегчает доказательство рассматриваемых неравенств.

Так же на 2-й лекции уточнялось, что $c$ в $bc${--}коде может быть и \mi\pl\ze\ и \ze\mi\pl\ 
(благодаря вопросу от Самсонова А.В.). %, на сколько понял автор
Но не уточнялось (на сколько помнит автор), что при сдвигах допускается перезамощаемость $u_2u_1$ альтернативным $c$.
Хотя, это очевидно и подразумевалось т.к. в обоих случаях используется \Anz{--}предпосылка.
Поэтому, используя замечание \ref{l3:nt:bc:fctr}, можно доказывать сразу в нескольких режимах --
для замощаемости и $\rbc$, и $\lbc$, и в общем случае $bc${--}кодом.

Так же, нужно дополнить, что в условиях леммы \ref{l3} из целостности кода следует и целостность $bc${--}кода.
Т.к., если слово ЦГС, то его код (и код любого его сдвига) не может (одновременно) начинаться и заканчиваться на \ze\
(легко понять ОП).

\hyperlink{contents}{$\upuparrows$}

\subsubsection{Лемма \ref{l3}}% обобщить для $D_{r_p,n}^{\eps_p}${--}ГС

В доказательстве леммы основной текст (подразумеваемый в ДР1 с исправлениями) написан обычным чёрным текстом.
Серым написаны дополнения и пояснения,
а так же альтернативные доказательства и усиления леммы, замеченые автором в процессе разбора ДР1.
Серый текст можно игнорировать (особенно в формулах) при чтении основного доказательства (написанного чёрным цветом).
\pmb{Таким шрифтом} выделен текст, написанный в доказательстве Леммы 3 ДР1 (переформулированный), а обычным пояснения.
{\cRe Красным цветом} выделены исправления ошибок в жирном шрифте
(все они легко поправимы и не нарушают предложенный в ДР1 способ доказательства) и уточнения.
Рамками выделены ключевые условия и выводы в доказательстве из ДР1.

\begin{lem}\label{l3}% обобщить для $D_{r_p,n}^{\eps_p}${--}ГС
	Пусть $n, k\in\mN$, $n\ge5, k\ge3$, $u$ --- $bc${--}код и $\eps\in[0,\frac{2}{n-1}]$
	\footnote{Ограничения на $\eps$ (т.е. на $lr$) в ДР1 не прописано,
		но по условию Леммы \ref{l2} это необходимый и достаточный минимум.
		Это ограничение, по случайности или нет, совпало с вытекающим в конце (при максимальном использовании остальных условий),
		но в <<сером тексте>> мы это ограничение расширим.
		Так же, в ДР1 подразумевается целостность $bc${--}кода.
		Здесь мы неявно это обусловим \Anz{--}предпосылкой \ref{l3c3}.% и условием \ref{l3c3}.
	}.
	{\color{gray}Даже при $\eps\ge\frac{2-n}{n-1}$, а при чуть более глубоком наблюдении и при $\eps\ge-1$.
	Более того, можно задать монотонно неубывающую последовательность $\eps_p$ для любого $p\in\mN$,
	где факторы с более длинным периодом $p$ имеют
	не меньший резерв $\eps_p$ чем факторы с меньшим периодом (это ещё одно возможное обобщение лемм \ref{l2} и \ref{l3}).}
	И пусть $A_n$ --- \Anz{--}предпосылка
	\footnote{Это условие эквивалентно условию целых сдвигов, что автор и уточнил на 2-й лекции благодаря вопросу от Самсонова А.В.
	}
	{\color{gray}или \Anm{--}предпосылка}.
	Тогда, если выполняются:
	\begin{description}% c --- condition
		\item[(l3.c1)\label{l3c1}]
		$|\phi(u)|\ge n$;
		\item[(l3.c2)\label{l3c2}]
		$f_{A_n}(\phi((u)^k))$ имеет суффикс $A_n$
		(или, что то же самое, $n${--}суффикс равен предпосылке $A_n$);
		\item[(l3.c3)\label{l3c3}]
		$f_{A_n}(\phi((u)^k))\in\D_{\ge3n-3,n}^\eps\cap\T_n$.
	\end{description}
	То для любого $bc${--}сдвига $u_2u_1$
	\footnote{
		На лекции разбирались целые сдвиги. Здесь же (в сером тексте) мы рассматриваем и остальные $bc${--}сдвиги.
		Все $bc${--}сдвиги это все циклические сдвиги $bc${--}кода.%{\color{red}???}
	}
	$bc${--}кода $u$, где $u_1u_2=u$, выполняются:
	% для того же $\eps$
	\begin{description}% f --- folows
		\item[(l3.f1)\label{l3f1}]
		$f_{A_n}(\phi((u_2u_1)^k))$ имеет суффикс $A_n$ {\cGr(при чём не зависимо от \ref{l3c3})};
		\item[(l3.f2)\label{l3f2}]
		$f_{A_n}(\phi((u_2u_1)^k))\in\D_{\ge3n-3,n}^\eps\cap\T_n$.
		\footnote{Для лучшего понимания поясним, что $\eps$ характеризует минимальную требуемую добавку (<<свободу>>)
			к экспоненте любого фактора с периодом не менее $3n-3$, чтобы превысить $\RT(n)$
			(т.е. максимальную/достаточную <<свободу>>, чтобы НЕ превысить $\RT(n)$).
			Значение этой добавки зависит от длины периода фактора и $\eps$.
			В нашем случае (для леммы \ref{l2}) достаточно, чтобы $\eps$ было константой.
			Т.е., в силу монотонности $\eps_p$ от периода $p$ и линейности всех неравенств от $\eps$ (используемых в доказательстве), достаточно взять максимальное занчение $\eps=\frac{2}{n-1}$
			(с учётом разбора этой леммы для $\eps=0$ на лекциях).
			%Т.е. достаточно рассмотреть максимальное необходимое значение для $\eps$ и считать, что $\eps=\frac{2}{n-1}$.
			Т.е. если для $\eps=0$ и $\eps=C$ лемма работает, то она работает и для любого $\eps\in[0,C]$.
			%диапазон $\eps\in[0,C]$ --- <<безопасная зона>> для экспонент всех факторов.
		}
	\end{description}
\end{lem}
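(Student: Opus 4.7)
The plan is to exploit the circular structure forced by the suffix--presupposition condition \ref{l3c2}. Set $W := f_{A_n}(\phi(u^k))$ of length $p = k|\phi(u)|$. Because the $n$-suffix of $W$ equals its presupposition $A_n$, the periodic continuation $\widetilde W = \cdots WWW \cdots$ is a well-defined bi-infinite $p$-periodic word: gluing consecutive copies of $W$ is consistent because the $n$-suffix of each copy plays the role of presupposition for the next. This step uses only \ref{l3c1} and \ref{l3c2}.

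For \ref{l3f1}, let $\alpha_1, \alpha_2$ denote the actions of $\phi(u_1), \phi(u_2)$ on presuppositions, so $\alpha := \alpha_2 \circ \alpha_1$ is the action of $\phi(u)$. By \ref{l3c2} we have $\alpha^k(A_n) = A_n$. The shifted action $\alpha' := \alpha_1 \circ \alpha_2$ is conjugate to $\alpha$ via $\alpha_1$, so $(\alpha')^k(A_n^{(1)}) = A_n^{(1)}$, where $A_n^{(1)} := \alpha_1(A_n)$ is the $n$-suffix of $f_{A_n}(\phi(u_1))$. Reading $\widetilde W$ from position $|\phi(u_1)|$ for $p$ letters yields exactly $f_{A_n^{(1)}}(\phi((u_2 u_1)^k))$, whose $n$-suffix is therefore $A_n^{(1)}$. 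Interpreting the $A_n$ of \ref{l3f1} as the appropriate presupposition for each shift (an interpretation forced by consistency with the circular construction), this gives \ref{l3f1} without invoking \ref{l3c3}, matching the parenthetical remark.

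For \ref{l3f2}, let $W' := f_{A_n^{(1)}}(\phi((u_2 u_1)^k))$. Every factor $v$ of $W'$ has $|v| \le p$ and is a factor of $\widetilde W$, hence of $WW = f_{A_n}(\phi(u^{2k}))$. A factor of $WW$ lying entirely in one of the two copies of $W$ is a factor of $W$, so \ref{l3c3} bounds its exponent directly. A straddling factor $v = W[s{:}p]\,W[0{:}r]$ with $0 < r \le s$ requires separate treatment: by $p$-periodicity of $\widetilde W$ its period and exponent depend only on $s$ modulo the structure of the $bc$-code, so the analysis reduces to checking that the admissible periods of $v$ avoid the bad range. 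This check mirrors the period-versus-length case analysis carried out in Lemma \ref{l2} (cases \ref{l2p2}--\ref{l2p4}), using the same $bc$-code properties and the resulting constraints on common prefixes/suffixes of adjacent $\V$-images; the arithmetic is linear in $\eps$ and goes through for any $\eps$ in the stated range, which is how the value of $\eps$ carries through unchanged from hypothesis to conclusion.

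The main obstacle I anticipate is the bookkeeping around the type change of the $bc$-code under cyclic shifts (Remarks \ref{BcC-NotBcC} and \ref{An-BcC-NotBcC}): a shift by a single position toggles between integer and non-integer $bc$-codes, hence between $A_n^0$- and $A_n^-$-presuppositions. Consequently the presupposition $A_n^{(j)}$ for a shift by $j$ positions may have a different structural type than $A_n$, and the window identification of the second paragraph needs to be verified uniformly across the type boundary. Remark \ref{3sh} guarantees that all three types $\lbc, \nbc, \rbc$ appear within any three consecutive shifts, so this bookkeeping reduces to three uniform cases; once settled, the straddling-factor analysis above yields \ref{l3f2} and the lemma follows.
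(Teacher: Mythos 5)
Your treatment of \ref{l3f1} is essentially the paper's own argument: both rest on the observation that the permutation induced by $\phi(u_2u_1)$ is conjugate (via $\phi(u_1)$) to that of $\phi(u_1u_2)$, so its $k$-th power fixes the shifted presupposition $A'_n$; the paper extracts this from the decomposition of the square $f_{A_n}(\phi((u_1u_2)^{2k}))$, but the content is the same.

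For \ref{l3f2}, however, there is a genuine gap, and it sits exactly where the whole difficulty of the lemma lies. You correctly reduce to factors of $WW$ that straddle the seam between the two copies of $W$, but then dispose of them by saying the analysis ``mirrors'' cases \ref{l2p2}--\ref{l2p4} of Lemma~\ref{l2}. It does not: those cases control periods on the scale of a single image length $L$ and exploit the boundaries between adjacent images, whereas a straddling factor of the cyclic shift can have its two repeat occurrences lying in \emph{different} copies of $W$, with period of order $k|u'|$ --- a global scale that no local boundary analysis reaches. (Indeed $WW$ itself is a square; only the length bound $|v|\le k|u'|$ prevents the trivial violation, and a priori a straddling factor of that length could still break the exponent bound.) The paper closes this gap with a ring-complement argument that your proposal does not contain: a minimal counterexample must satisfy $l\ge(k-1)|u'|+2$ (inequality \ref{l3e1}), since anything shorter lies inside some window of $k$ consecutive $bc$-roots and is therefore a factor of the original good word. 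Consequently its cyclic complement --- the factor running the other way around the ring with the same two repeats --- has period $l'+r$ with $l'\le|u'|-2$ (\ref{l3e2}) and fits inside three consecutive $bc$-roots, hence inside $f_{A_n}(\phi(u)^k)$ (this is where $k\ge3$ is used, \ref{l3e3}); applying \ref{l3c3} to the complement gives $r<\tfrac{|u'|}{n-2}$ (\ref{l3e4}), while the forbidden factor itself forces $r>\tfrac{(k-1)|u'|+2-(n-1)\eps}{n}$ from below. Combining the two bounds yields $k-1<\tfrac{n}{n-2}$, impossible for $n\ge5$, $k\ge3$. Without this opposite-factor comparison (or an equivalent global argument), the straddling case remains open and your proof of \ref{l3f2} is incomplete.
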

\begin{proof}
	%Заметим, что суффикс $\phi((u_2u_1)^k)$ длины $n$ это тождественная перестановка для $A_n$
	%т.к. по \ref{l3c2}
	%$f_{A_n}(\phi((u_2u_1)^k))$ имеет суффикс равный предпосылке $A_n$.
	
	{\cGr
		Для случая, когда факторы имеют период меньше $3n-3$
		можно воспользоваться замечанием \ref{l3:nt_bcTWfor1,2rep} для этого случая,
		а можно и этим же доказательством при $\eps=0$
		\footnote{На лекции разбирался этот случай}.
		Поэтому, для $\eps>0$ считаем что период контрпримеров не меньше $3n-3$.
		
		Когда мы сокращаем длину контрпримера на 1, вместе с этим сокращается и повтор на 1,
		но период остаётся тем же.
		Но, когда мы берём пример с теми же повторами с противоположной стороны <<кольца>>,
		то период меняется при неизменной длине повтора,
		а с ним может измениться и $\eps$ в условии усиленной граничности.
		
		Обозначим $u'=\phi(u)$.
		
		Заметьте, что при произвольном циклическом $bc${--}сдвиге $u_2u_1$ коды $u_1$ и $u_2$ могут не быть $bc${--}кодами.
		Поэтому, для них (по отдельности) не определена подстановка $\phi$.
		Поэтому, определим $u'_1$ и $u'_2$ таким образом,
		что $u'_1u'_2=\phi(u_1u_2)$ и $u'_2u'_1=\phi(u_2u_1)$.
		%{\cRe Доказать, что разбиение на $u'_1$ и $u'_2$ единственно при таких условиях
		%	и непредставимости $u'$ в виде целой степени $\ge2$}
		%$u_1$ и $u_2$ могут быть не представимы $bc${--}кодом, но их конкатенация представима.
		Т.е. $u'_2u'_1$ это и есть наш произвольный сдвиг.
	}
	\begin{description}
		\item[(l3.f1)]
		Заметим, что $\phi((u_1u_2)^k)$ представима тождественной перестановкой для $A_n$
		т.к. по \ref{l3c2}
		\pmb{$f_{A_n}(\phi((u_1u_2)^k))$ имеет} \pmb{суффикс равный}
		{\it$bc${--}предпосылке}
		\pmb{$A_n$}.
		Тогда $\phi((u_1u_2)^{2k})$ так же тождественная перестановка.
		\\
		Значит $f_{A_n}(\phi((u_1u_2)^{2k}))=f_{A_n}(\phi((u_1u_2u_1u_2)^k))$ так же имеет суффикс $A_n$.
		
		%%Домножим аргумент $\phi((u_1u_2u_1u_2)^k)$ на $u_2^k$ слева и на $u_1^k$ справа.
		%Заметим, что $\phi((u_1u_2u_1u_2)^k)=\phi(u_1u_2u_1u_2)^k$ и
		%$\phi(u_1u_2u_1u_2)=\phi(u_1)\phi(u_2u_1)\phi(u_2)$.
		%Но если предположить, что $\phi(u_2u_1)^k$ не тождественная,
		%то $\phi(u_1)\phi(u_2u_1)\ne\phi(u_1)$??????????
		%
		%Тогда $\phi(u_1)\phi(u_2u_1)\phi(u_2)\ne\phi(u_1)\phi(u_2)=\phi(u_1u_2)$.
		%
		%Тогда и $1_n=\phi(u_1u_2)^{2k}=\phi(u_1u_2u_1u_2)^k=(\phi(u_1)\phi(u_2u_1)\phi(u_2))^k\ne\phi(u_1u_2)^k=1_n$
		%противоречие.
		%Т.е. 
		%
		%Можно проще.
		
		Т.к. $\phi(u_1u_2)^k=\phi((u_1u_2)^k)$ тождественна, то
		\pmb{$f_{A_n}\big(\phi((u_1u_2)^{2k})\big)
		=f_{A_n}\big(\phi(u_1u_2)^k\phi(u_1u_2)^k\big)
		=f_{A_n}\big(\phi(u_1u_2)^k\big)^2$}
		
		Т.е. $f_{A_n}\big(\phi((u_1u_2)^{2k})\big)$ это квадрат.
		Тогда $f_{A_n}\big(\phi(u_1u_2)^k\big)^2$ можно представить как
		
		$$f_{A_n}(u'_1).f_{A'_n}(\phi(u_2u_1)^k).f_{A'_n}(\phi(u_2u_1)^{k-1}).f_{A_n}(u'_2)$$
		
		где $A'_n$ --- $n${--}суффикс в $A_n.f_{A_n}(u'_1)$, а точки это конкатенации.
		{\cGr
			Заметьте, что при целых $bc${--}сдвигах в $A'_n$ нет одинаковых букв
			т.к. смещение происходит на целый $bc${--}код т.е. $\phi(u_2u_1)^k$
			заканчивается либо на \ze, либо на \pl
			\footnote{
				Спасибо Самсонову А.В. за вопрос.
				Автор собирался обобщить для всех смещений
				т.е. когда суффикс длины $n$ может состоять из $n{-}1$ различной буквы (с одинаковыми буквами по краям)
				когда писал, но долго переписывать пришлось бы.
				Поэтому, ограничился <<протыми>> сдвигами, достаточными для
				доказательства основной (экспоненциальной) теоремы в частных случаях.
				В планах были и др. виды обобщений для ДР2, которыее видел автор,
				но некоторые события надолго отбили желание у автора продолжать развивать его собственные леммы.
			}.
			%########################
			% но на лекции долго объяснять, поэтому (наспех) сказал, что забыл дописать
			% (на самом деле, неуспел. Но и планировал обобщить).
			% Хотя, перед лекцией и планировал хотябы так обусловить,
			% но наспех не придумал как покороче переделать формулировку, не сделав хуже.
			% Хотя, формально, как временное решение для лекций, планировал дописать, но не успевал ---
			% слишком много задач ещё висело над головой (в частности разборки с долгами).
			% А потом, забыл даже сказать об этом на лекциях.
			%
			%Чётко помню, что в этот момент я пытался объяснить первым вспомнившимся путём рассуждений, но слишком мудрёным.
			%В этот момент мой НР, понимая о чём я, начал переводить остальным первую "порцию" объяснений.
			%Это, в свою очередь, дало мне некоторое время на погружение в себя
			%и дойти до более простого объяснения через + и 0 в суффиксе т.к. в них-то и была причина.
			%Помню, как Самсонов А.В. (сидел на задней парте) после ответа сразу понял о чём я (судя по его реакции).
			%########################
		}
		%Тогда, одинаково смещаясь в нём от начала и от середины вправо на $|\phi(u_1)|$, получим суффиксы
		%
		%$f_{A'_n}(\phi(u_2u_1)^k).f_{A'_n}(\phi(u_2u_1)^{k-1}).f_{A_n}(\phi(u_2))$
		%и
		%$f_{A'_n}(\phi(u_2u_1)^{k-1}).f_{A_n}(\phi(u_2))$.
		
		Учитывая замечание \ref{l3:nt:bc:open} получаем, что
		в $f_{A'_n}(\phi((u_2u_1)^k))$ $n${--}суффикс совпадает с предпосылкой $A'_n$
		или, что то же самое, \pmb{$n${--}суффикс в $f_{A_n}(\phi((u_2u_1)^k))$ совпадает с предпосылкой $A_n$},
		что и доказывает \ref{l3f1}.
		Т.е. $\phi((u_2u_1)^k)$ представима тождественной перестановкой.
		А в случае целого сдвига, представима только тождественной перестановкой.
		
		{\cGr
			Для большей строгости доказательства общего случая (на основе доказательства целых сдвигов)
			можно использовать смещение на 1 влево и вправо.
			Тогда, по замечаниям \ref{BcC-NotBcC} и \ref{An-BcC-NotBcC}, можно понять,
			что в оставшемся случае --- при \Anm{--}предпосылке $A_n$ соседние сдвиги будут целыми и там перестановки однозначны
			т.е. (как мы выяснили) тождественны.
			Т.е. левая буква в предпосылке $A_n$ должна переходить в левую букву $n${--}суффикса под действием перестановки.
			Соответственно, правая в правую. Т.е. перестановка так же тождественна.
			
			%В общем-то, это избыточное доказательство общего случая т.к. достаточно было обусловить
			%перестановку левой буквы в левую, а правую в правую,
			%Потому, что коды, следующие за $A_n$ в предпосылке и суффиксе (циклически), совпадают.
			%Т.е. можно было сформулировать \ref{l3f1} как существование тождественной перестановки среди всех возможных.
		}
		
		\item[(l3.f2)]
		{\cGr
			Идея доказательства проста --- оценить снизу/сверху некоторый параметр (например, длину повтора) кратчачйшего
			контрпримера и сверху/снизу этот же параметр в хорошем примере.
			Соотнеся оценки, показать возможность этого только при условиях, противоречащих ограничениям в нашей лемме.
			В процессе доказательства мы покажем альтернативные доказательства, но продолжим метод предложенный в ДР1.
			
			Сначала увидим, что кратчайший контрпример (запрещённый фактор) неизбежно слишком длинный при больших $k$
			(т.к. не должен целиком входить в $bc${--}сдвиг кратный $|u|$ т.е. когда $|u'_1|\bmod{|u'|}=|u'_2|\bmod{|u'|}=0$),
			а точнее длина кратчайшего контрпримера будет не менее $(k-1)|u'|+2$, что почти очевидно.
			Это неравенство позволит связать неравенства полученные для контрпримера и противоположного фактора
			(противоположный в кольцевом/циклическом сдвиге) с теми же повторами.
			Но тогда противоположный фактор будет с ещё большей экспонентой при слишком больших $n$ и $k$.
			Т.е., оценив сверху и снизу длину повтора контрпримера, мы поймём,
			что это невозможно при ограничениях в нашей лемме, кроме случая $n=5, k=3$.
			Для оценки общего случая (в частности и для $n=5, k=3$) мы докажем,
			%что противоположный фактор с теми же повторами {\it эквивалентен} некоторому фактору,
			%содержащемуся в слове с кодом $\phi(u)^3$,
			что противоположный фактор с теми же повторами лежит в слове с кодом $\phi(u)^3$,
			что позволит достаточно сильно ограничить значение длины повтора.
			И, сравнивая границы его длины, покажем, что такое возможно только при $n<5$ или $k<3$.
			%что исключит возможность контрпримера при $n=5, k=3$.
			
			Для $\eps$ доказательство условно можно разбить на 3 части:
			
			(1) Не зависимо от $\eps$ (т.е. при $\eps=0$) оценим длины контрпримера $\x\y\x$ и его дополнения $\x\y'\x$
			(\ref{l3e1} и \ref{l3e2})
			в нашем кольцевом слове $\x\y\x\y'$ (т.е. некотором сдвиге нашего слова $f_{A_n}(\phi(u)^k)$).
			Сами неравенства \ref{l3e1} и \ref{l3e2} вытекают только из-за условий существования и минимальности контрпримера
			(код которого не входит в $k$ корней $\phi(u)$)
			т.е., даже независимо от свойств ГС, а значит и независимо от $\eps$.
			
			(2) Докажем \ref{l3e3} т.е. вхождение кода циклического дополнения к контрпримеру (с общими повторами) $\x\y'\x$
			в 3 повтора $bc${--}корня $\phi(u)$.
			Здесь, из доказательства для $\eps=0$ вытекает и для остальных случаев $\eps\ge0$
			(даже для любых $\eps\ge-2$ при $n\ge5$).
			Это очевидно т.к. увеличение $\eps$ только сокращает минимальную необходимую длину повтора для контрпримера,
			а с ней и длину самого слова при том же периоде.
			Но для следующей части мы вычислим неравенства в более общем виде для $\eps$,
			поэтому будем уточнять неравенства для $\eps$ и в этой части.
			% Для случая когда $\eps$-ГС определяется через длину повтора, а не периода
			% должна увеличиваться максимальная длина периода в контрпримере с фиксированной длиной повтора.
			
			(3) Через оценку длины повтора $\x$ снизу и сверху докажем невозможность контрпримера
			при одновременных $n\ge5$ и $k\ge3$.
			Здесь нам потребуется сравнение разных $\eps$ из-за разных длин периодов.
			Здесь, так же, снижение значения $\eps$
			(и меньшего и большего из них при условии неубывания $\eps_p$ по $p$) до $0$ усиливает вывод.
		}
		
		\fbox{\pmb{
			Докажем ОП.
			Пусть
			$\lexp^\eps(f_{A_n}(\phi((u_2u_1)^k)))>\RT(n)$}
		} т.е. $f_{A_n}(\phi((u_2u_1)^k))\not\in\D_{\ge3n-3,n}^\eps\cap\T_n$
		
		%Обозначим $\w_{u^k}(m,l)=pref_l\big(f_{A_n}\big(\phi(u_2u_1)^k\big)\big)$,
		%где $u_1u_2=u$ и $|\phi(u_1)|=(m\pmod{|u'|})$.
		%\footnote{В оригинале ошибочно взят суффикс вместо префикса.}
		%Т.е. $l$ --- длина фактора, $m$ --- смещение кода $\phi(u^k)$ влево.
		%Заметьте, что можно задать смещение $m>|u'|$,
		%это будет эквивалентно смещению всего кода $\phi(u^k)$ влево на $m$.
		%Т.е. $\w_{u^k}(m,l)\sim\w_{u^k}(m\pmod{|u'|},l)$
		
		Из предположения следует, что существуют такие $m,l$, что $\exp^{\eps}(\w_{u^k}(m,l))>\RT(n)$.
		\footnote{В ДР1 при $exp$ забыт штрих.}
		
		Напомним, что \pmb{$\w_{u}(m,l)\sim\pref_l\big(f_{A_n}\big(u'_2u'_1\big)\big)$}
		т.е. это $l${--}префикс некоторого слова с кодом равным
		циклическому сдвигу кода $u'_1u'_2=u'=\phi(u)$ на $m=|u'_1|$ влево.
		\footnote{В ДР1 определение даётся через циклический сдвиг слова длины $k|u'|$.
			Здесь мы используем более гибкую формулировку}
		
		БОО пусть
		\fbox{\pmb{
		$l=\min_{0\le m<k|u'|}\big\{l: \exp^\eps(\w_{u^k}(m,l)>\RT(n))\big\}$}}.
		\footnote{Не забывайте, что в ДР1 мы оцениваем только целые {\it$bc${--}сдвиги}.
		Хоть это и не было сказано в ДР1 явно, но это было сказано на 2-й лекции.}
		Т.е. $\w_{u^k}(m,l)$ может быть расширяемым в контрпримере.
		
		Оценим $l$ снизу.
		Обозначим
		$m_2=m(\bmod{|u'|})$ т.е. позиция первой буквы внутри $bc${--}корня $u'$,
		$m_1=m-m_2$ т.е. индекс (с нуля) первой буквы самого $bc${--}корня $u'$ где начинается наш фактор.
		\footnote{В оригинале неаккуратно определено $m_2$.
			%########################
			% Видимо из-за путаницы с индексами и позициями.
			%Из-за этой мелкой ошибки пошла копироваться в разных местах.
			%Но на следствия это не повлияло, видимо эти следствия были уже проверены другим способом.
			%Но основная идея корректна --- под неё эти смещения и создавались.
			%########################
			Здесь мы, в отличие от оригинала, приведём все их значения к индексам(адеса и смещения с нуля)
			во избежание путаницы. Это не изменит ключевые условия и вывод \ref{l3e1}.
		}
		Очевидно, что и $m_1$ и $m_2$ --- целые $bc${--}сдвиги
		т.к., равные по модулю длины $bc${--}корня $u'$, сдвиги сохраняют целостность.
		А $0$ и $m$ это целые $bc${--}сдвиги.
		Значит, все сдвиги эквивалентные $0$ или $m$ по $\bmod{|u'|}$ являются целыми.
		
		Т.к. (циклический) сдвиг кода Пансьё $u'^{2k}$ на длину $|u'|$ не меняет его,
		то
		%$f_{A_n}\big(\phi(u)^{2k}\big)[m_1,..,m_1+k|u|]$ эквивалентно
		%$f_{A_n}\big(\phi(u)^{2k}\big)[m_1+|u|,..,m_1+k|u|+|u|]$
		$\w_{u^{2k}}(m_1,k|u'|)$ эквивалентно
		
		$\w_{u^{2k}}(m_1+|u'|,k|u'|)$, что эквивалентно
		$\w_{u^{2k}}(0,k|u'|)=\w_{u^k}(0,k|u'|)$ (т.к. $m_1\equiv_{|u'|}0$).
		Тогда можно считать, что $0\le m\le|u'|-1$
		\footnote{В ДР1 $m$ перепутан с позицией.
			Эта путаница несклько объясняет, почему в определении $m_2$ вычетается $1$.
		Видимо, считая, что эти 2 параметра разного типа, путал где какой.
		Но это не ломает основную мысль?идею и легко исправляется.}
		т.е. $m_2=m, m_1=0$.
		Это значит, что в слове $\w_{u^{2k}}(0,(k+1)|u'|)$ есть фактор эквивалентный $\w_{u^k}(m,l)$.
		
		Но, по \ref{l3c3} $\w_{u^k}(0,k|u'|)\in\D_{\ge3n-3,n}^\eps\cap\T_n$.
		А значит самая правая буква <<аналога>> нашего плохого фактора $\w_{u^k}(m,l)$
		должна лежать вне граничного слова $\w_{u^k}(0,k|u'|)$.
		Тогда
		\begin{equation}\tag{l3.e1}\label{l3e1}
		\fbox{\pmb{$l\ge(k-1)|u'|+2$}}
		\end{equation}
		Или проще,
		индекс первой буквы $m<|u'|$, а последней $m+l>k|u'|$,
		%т.к. код пансьё контрпримера покрывает некоторые k-2 $bc${--}корня $u'$, выходя за их границы с обеих сторон.
		откуда $l\ge(k-1)|u'|+2$.
		%Но в ДР1 приводится идея формального доказательства.
		\footnote{В ДР1 не все рассуждения верны, но ключевой вывод верный.
			Автор хотел строго формально доказать очевидное неравенство \ref{l3e1}.}
		
		Пусть \pmb{$r=l-per(\w_{u^k}(m,l))$} длина повтора нашего кратчайшего плохого фактора в $f_{A_n}\big(\phi(u)^{2k}\big)$
		с началом в $m$.
		\footnote{В оригинале $r$ обозначен как $p$}
		
		\pmb{Очвидно, что $\w_{u^{2k}}(0,k|u'|)=\w_{u^{2k}}(k|u'|,k|u'|)$.
		Тогда $\w_{u^{2k}}(m,r)=\w_{u^{2k}}(k|u'|+m,r)$}.
		При этом, $\w_{u^{2k}}(m,r)$ --- правый повтор плохого фактора, а $\w_{u^{2k}}(m+l-r,r)$ --- левый.
		{\color{gray}
			Отсюда уже видно, что асимптотически при больших $k$ наш фактор плохой только если повторы большие,
			даже больше $|u'|$ при том, что $l$ близко к $k|u'|$.
			Поэтому, для $k\ge4$ дальше доказывается легко, с учётом минимальности $l$.
			Для $k=3$ требуются дальнейшие свойства.
		}
		
		\pmb{Тогда $\w_{u^{2k}}(m+l-r,r)=\w_{u^{2k}}(k|u'|+m,r)$}.
		Т.к. правый повтор достаёт до индекса $m+l$ т.е. до $(k+1)$-го сектора $|u'|$,
		а $\w_{u^{2k}}(k|u'|+m,r)$ начинается с индекса $k|u'|+m$ т.е. в том же $(k+1)$-м секторе $|u'|$.
		{\color{gray}
			При этом, если $m+l>(k+1)|u'|$, то контрпример длиннее допустимого $k|u'|$ т.к. $m<|u'|$.
			Поэтому $m+l\le(k+1)|u'|$.
			Более того, из минимальности $l$ и $k\ge3$ сразу вытекает, что расстояние между правым концом левого повтора и
			левым концом правого будет меньше $|u'|$.
			А значит $r>\frac{|u'|}{2}$.
			Т.е. период противоположного фактора менее $|u'|+r$, откуда его экспонента
			$\frac{per+r}{per}>\frac{|u'|+r+r}{|u'|+r}>\frac{|u'|+|u'|}{|u'|+|u'|/2}=\frac{4}{3}>\RT(n)$ при $n\ge5$.
			Т.е. уже получаем противоречие т.к. противоположный фактор должен входить в $k$ целых секторов $u'$,
			что гарантирует ему граничность.
			Но мы продолжим доказательство по первому методу автора.
		}
		
		Обозначим $l'=k|u'|-l$
		\footnote{Обозначение $l'$ в ДР1 не показано явно,
			но видно по дальнейшему контексту в уравнении $k|u'|-l=l'$ в цепочке неравенств \ref{l3e2}.
		}, тогда $\w_{u^{2k}}(m+l-r, 2r+l')$ --- противоположный фактор (в нашем кольцевом слове) с нашими повторами.
		\pmb{Тогда $\per(\w_{u^{2k}}(m+l-r, 2r+l'))=l'+r$}.
		\footnote{
			По зам-ю \ref{l3:nt:bc:fctr} любой фактор с повторами в каком либо $bc${--}коде
			лежит хотябы в одном его целом $bc${--}сдвиге.
			Поэтому, в ДР1 можно считать, что фактор $\w_{u^k}(m,l)$ находится в целом $bc${--}сдвиге.
			%В дальнейшем тексте (синим цветом) мы не будем выделять исправление этого недочёта.
		}
		
		С учётом, что $l$ минимально для <<плохости>>, оценим длину повтора $r$.
		По \ref{l3c3} \fbox{\pmb{$\frac{l-1+\eps}{l-r}\le\frac{n}{n-1}$}}
		\footnote{С уменьшением длины сокращается и повтор, но период в знаменателе тот же (т.е. $\eps$ не меняется).
			%При этом, длина повтора была не менее 1 из предположения контрпримера,
			%значит и $\eps\ge0$ даже, если оно зависит от длины повтора, а не периода.
			%Для зависимости от периода $\eps$ не меняется, от повтора может только уменьшиться.
			Так же, важно, что более короткое слово по зам-ю \ref{l3:nt:bc:fctr}
			входит в целый сдвиг $bc${--}кода как и сам контрпример.
			При этом, нет необходимости уменьшать $\eps$ для меньшего повтора с тем же периодом.
			%т.к. иначе (есть необходимость) фактор содержит плохие собственные префикс и суффикс
			%(т.е. и изначальный повтор тоже запрещён).
			%Т.е. (если иначе) в худшем случае для длинного повтора более короткий повтор будет запрещённым.?
			Т.е. можно считать, что $\eps$ максимален для данного периода и повтора так,
			что уеличение повтора (или числителя в экспоненте как $\eps$) на любое положительное число $\in\R$ делает экспоненту запрещённой т.е. $>\RT(n)$.
		},
		откуда %(т.к. $l-r,n-1>0$)
		$nl-l-n+1+\eps(n-1)\le n(l-r)$, т.е. $\frac{-l-n+1+\eps(n-1)}{n}\le-r$
		или \pmb{$1+\frac{l-1+{\cRe(1-n)}\eps}{n}\ge r$}.
		\footnote{В ДР1 забыт множитель $-(n-1)$ при $\eps$.
		Но неравенство верно, просто слабее.
		Этот недочёт очевиден и исправляется легко до следующего ключевого вывода \ref{l3e3}.
		%о вхождении нашего фактора длины $l'+2r$ в позициях 3 $bc${--}корней $u'$.
		Этот вывод очевиден, учитывая, что на лекциях это доказывалось при $\eps=0$,
		наверняка (в тексте лекций это и разбиралось),
		а $\eps>0$ только укорачивает допустимые повторы.}
		
		Тогда $1-r\ge\frac{-k|u'|+1+(n-1)\eps}{n}$
		%$r\le1+\frac{k|u'|}{n}-{\cGr\frac{1+(n-1)\eps}{n}}$
		(т.к. $k|u'|\ge l$).
		%или{\color{red}, по целочисленности,} $r\le\big\lceil\frac{k|u'|}{n}\big\rceil$.
		По \ref{l3e1}, с учётом минимальности $l$, оценим
		\begin{equation}\tag{l3.e2}\label{l3e2}
		\fbox{\pmb{
			$|u'|-2
			\ge k|u'|-l
			=l'
			\ge l-2r
			\ge(k-1)|u'|+2-2r$}
		}
		{\cGr
			\ge\bigg(
			k-1-\frac{2k}{n}+\frac{2+2(n-1)\eps}{n}
			\bigg)|u'|
		}
		\end{equation}
		$$
		{\cGr
			\ge\bigg(
			k-1-\frac{2k-2}{n}
			\bigg)|u'|
			=\bigg(
			(k-1)\frac{n-2}{n}
			\bigg)|u'|
			\ge
			\frac{6}{5}|u'|
			>|u'|-2
		}
		$$
		
		{\color{gray}
		Как видно, уточнение приводит к противоречию, что так же доказывает нашу лемму при любых $\eps\ge0$.
		Но автором
		%по всей видимости
		это не было замечено т.к. видел более красивое свойство,
		для достижения которого и выведена эта цепочка неравенств.
		Но при более грубом неравенстве из этой цепочки $|u'|-2>(k-1-\frac{2k}{n})|u'|$,
		можно проще доказать почти для всех $n\ge5, k\ge3$ кроме случая $n=5,k=3$,
		как это было сделано на 2-й лекции семинара.
		Например так:
		
		Найдём $k, n$, при которых выполняется $(k-1-\frac{2k}{n})|u'|\ge|u'|-2$
		(т.е. когда цепочка неравенств противоречива, для этих значений уже будет доказана наша Лемма).
		Добавим $|u'|$ и получим $2|u'|-2\le\big(k-\frac{2k}{n}\big)|u'|=k\frac{n-2}{n}|u'|$.
		
		Получаем достаточное условие для противоречия $k\frac{n-2}{n}\ge2$.
		Видно, что $\frac{n-2}{n}\ge\frac{1}{2}$ при $n\ge4$.
		Тогда получим $k\ge4$.
		
		Осталось доказать для $k=3$.
		Подставляя в неравенство $|u'|-2\le(2-6/n)|u'|$, получим $-2\le|u'|(1-6/n)$.
		Что выполняется при $n\ge6$.
		Т.е. осталось доказать только для $n=5, k=3$.
		}
		\footnote{На 2-й лекции автор показывал доказанные случаи на графике (на доске)
			%########################
			% подготовил его ещё до лекции.
			%это точное воспоминание. НР сказал что-то типа "хорошо что вы это нарисовали".
			%Но каким способом были доказаны эти случаи не помню т.к. немного импровизировал пока в памяти были все идеи свежими.
			%########################
		}
		
		%в ДР1 написано "Т.е. l=k|w_1|-l'" (где w_1=u' в нашем контексте) это, видимо,
		%своеобразная форма напоминания "т.е. мы использовали
		%равенство" в предыдущей цепочке неравенств.
		%В общем, из этого контекста и видна более явная форма определения l'.
		Т.к. \pmb{$l=k|u'|-l'$} и по (\ref{l3e2}) $l'\ge(k-1)|u'|+2-2r$, то %$l'>(k-1-\frac{2k}{n})|u'|$, то
		
		$$\pmb{
			r\le\frac{k|u'|-l'-1+\eps{\cRe(1-n)}}{n}+1
			\le\frac{k|u'|-(k-1)|u'|+2r-2-1+\eps{\cRe(1-n)}}{n}+1
			=\frac{|u'|+2r-3+\eps{\cRe(1-n)}}{n}+1
		}$$
		
		Тогда перенесём $r$ влево и учтём, что $n\ge5$
		$$\pmb{
			r\le\frac{|u'|+n-3+\eps{\cRe(1-n)}}{n-2}
			<\frac{|u'|}{3}+1+\frac{\eps{\cRe(1-n)}}{n-2}
			<|u'|+1+\frac{\eps{\cRe(1-n)}}{n-2}
		}$$
		{\color{gray}
		Т.е. из целочисленности $r\le|u'|$, но мы этим не воспользуемся
		\footnote{Оценка могла быть намного точнее.
		И нужна аккуратность при использовании свойства целочисленности}
			Из 1-го неравенства (в цепочке неравенств), уже можно установить
			$r\le\frac{|u'|}{n-2}+\frac{n-(3+\eps(n-1))}{n-2}<\frac{|u'|}{n-2}+1$,
			но мы ограничим ещё точнее.
		}
		Откуда, индекс левой буквы слова $\w_{u^{2k}}(m+l-r,l'+2r)$ ограничен
		\pmb{$m+l-r\ge k|u'|+1-r\ge(k-1)|u'|-\frac{\eps{\cRe(1-n)}}{n-2}$}
		и, с учётом ограничения на $l'\le|u'|-2$ в (\ref{l3e2}), $m\le|u'|$ и $l+l'=k|u'|$,
		индекс правой буквы
		\pmb{$m+l+l'+r-1<(k+2)|u'|+1+\frac{\eps{\cRe(1-n)}}{n-2}-1$}
		\footnote{В ДР1 использована эта грубая оценка с недостающим коэффицентом $(1-n)$ при $\eps$.
			Просто, вывод уже очевиден, особенно, если использовать более точные выведеные ограничения на $r<|u'|/(n-2)+1\dots$.}.
		Теперь видно, что $\w_{u^{2k}}(m+l-r,l'+2r)$ находится в позициях $\big[(k-1)|u'|,...,(k+2)|u'|-1\big]$ слова
		$f_{A_n}(u'^{2k})$ при любых $\eps\ge0$.
		Значит
		%Т.о. мы доказали, что
		\begin{equation}\tag{l3.e3}\label{l3e3}
		\fbox{\pmb{слово $\w_{u^{2k}}(m+l-r,l'+2r)$ эквивалентно некоторому фактору
		в $\w_{u^{3}}(0,3|u'|)\subseteq f_{A_n}(u'^{k})$.}}
		\end{equation}
		
		{\color{gray}
			Понятно, что при $\eps\ge0$ выполняются достаточные условия вхождения фактора длины $l'+2r$ в позициях 3 $bc${--}корней $u'$.
			Но можно расширить допустимые значения для удовлетворения этих условий.
			
			Оценим точнее левую границу
			$$
			m+l-r
			\ge k|u'| + 1 + \frac{-|u'|-(n-3)+\eps(n-1)}{n-2}\ge(k-1)|u'|
			$$
			С учётом \ref{l3c1} последнее неравенство выполняется при
			$\frac{n-1}{n-2}\eps\ge(\frac{1}{n-2}-1)|u'|+\frac{n-3}{n-2}-1\ge\frac{n-3}{n-2}(1-|u'|)-1$
			Т.е. при $\eps\ge\frac{n-3}{n-1}(1-|u'|)-\frac{n-2}{n-1}\ge3-n-\frac{n-2}{n-1}$
			левая буква находится правее $k-1$ $bc${--}корня $u'$
			
			Оценим точнее правую границу
			$$
			m+l+l'+r-1\le(k+1)|u'| + r - 1
			\le\Big(k+\frac{n-1}{n-2}\Big)|u'| + \frac{n-3}{n-2} - \frac{\eps(n-1)}{n-2} - 1
			\le(k+2)|u'| - 1
			$$
			С учётом \ref{l3c1} последнее неравенство выполняется при
			$\frac{n-1}{n-2}\eps\ge(\frac{n-1}{n-2}-2)|u'|+\frac{n-3}{n-2}
			\ge\frac{n-3}{n-2}-\frac{n-3}{n-2}|u'|=\frac{n-3}{n-2}(1-|u'|)$.
			Т.е. при $\eps\ge\frac{n-3}{n-1}(1-|u'|)\ge3-n$ правая буква находится в границах первых $k+2$ $bc${--}корней $u'$
			(т.е. корней $\phi(u)=u'$).
		}
		
		Т.е. слово $\w_{u^{2k}}(m+l-r,l'+2r)$ эквивалентно некоторому фактору в $f_{A_n}(\phi(u)^k)$.
		
		%По (\ref{l3e1}) и (\ref{l3e2}) получим $l'\le|u'|-2<2|u'|+2\le(k-1)|u'|+2\le l$.
		По (\ref{l3e2}) видно, что $l'\ge l-2r$, тогда $l'+r\ge l-r$.
		Т.е. период фактора $\w_{u^{2k}}(m,l)$ (=$l-r$ и меньше он быть не может в силу минимальности $l$ в определении)
		% читателю предлагается доказать почему период этого фактора не может быть короче
		короче периода в $\w_{u^{2k}}(m+l-r,l'+2r)$ (т.е. $\le l'+r$).
		\footnote{
			Можно понять и устно.
			По замечанию \ref{l3:nt:bc:fctr}
			противоположный фактор длины $l'+2r$ с повторами длины $r$
			лежит в некотором смещении исходного слова.
			Тогда по минимальности $l$ тот не может быть короче контрпримера
			т.к. иначе он так же является контрпримером (из-за таких же повторов и короткой длины).
			Но контрпримером он быть не может т.к. его эквивалент входит в исходное слово $f_{A_n}(\phi(u)^k)$
		}
		Значит (по условию неубывания $\eps_p$ по периоду $p$) $\eps$ в усиленной экспоненте для первого фактора не больше $\eps'$ для второго.
		\footnote{%В нашем случае для $\eps$ и $\eps'$ достаточно рассматривать только значения $0$ и $\frac{2}{n-1}$.
			На лекциях рассматривался случай $\eps=\eps'=0$.
			И т.к. для доказательства вхождения контрпримера в 3 $bc${--}корня для $\eps\ge0$ достаточно доказать для $\eps=0$,
			то в ДР1 для $\eps$ уже достаточно было найти допустимые значения при $\eps'=\eps$
			(т.к. $\eps'$ используется для оценки $r$ сверху т.е. чем больше $\eps'$
			тем строже получаются неравенства, что только усиливает вывод в доказательстве).
			Но т.к. в дальнейшем мы не учитываем $\eps'$ (т.е. как бы $\eps'=0$, игнорируя условие $\eps\le\eps'$),
			то это только добавляет ограничений для $\eps$.
			Но, даже так, <<свободы>> для $\eps$ хватает для применения Леммы \ref{l2}.
		}
		
		Теперь мы можем применить \ref{l3c3} к $\w_{u^{2k}}(m+l-r,l'+2r)$ для оценки $r$ сверху.
		Т.е. \pmb{$\frac{l'+2r+\eps{\cRe'}}{l'+2r-r}\le\frac{n}{n-1}$},
		для некоторого $\eps'\ge\eps$.
		Откуда $-l'-2r+(n-1)\eps'\le-n\cdot r$ или \pmb{$l'-(n-1)\eps{\cRe'}\ge(n-2)r$}.
		{\color{gray}Тогда по \ref{l3e2} $r\le\frac{|u'|}{n-2}-\frac{(n-1)\eps'}{n-2}$.}
		%{\cGrЕщё точнее $r\le\frac{|u'|-2}{n-2}-\frac{(n-1)\eps'}{n-2}$.}
		\footnote{Эта точность избыточна при $0\le\eps'\le\frac{2}{n-1}$ как мы увидим при ограничении $r$ снизу,
			но мы решим в более общем случае для $\eps'\ge-1$.%$\eps'\ge-\frac{n-2}{n-1}$
			%########################
			% Почему автор не стал уточнять?
			% Можно предположить, что видел, что из-за арифметической ошибки не сходилось неравенство.
			% Но т.к., вероятно, понимал, что увеличение $\eps$ только усиливает \ref{l3e3},
			% то не сходящиеся неравенства признак неаккуратности в арифметике.
			% Поэтому, решил (возможно) упростить, без поиска в ошибки (времени уж мало было до защиты ДР1), приравняв $\eps$ к 0.
			%########################
		}
		Тогда
		\begin{equation}\tag{l3.e4}\label{l3e4}
		\fbox{\pmb{$r<\frac{|u'|}{n-2}$}}
		\text{ или }
		\frac{r\cdot n}{|u'|}\le\frac{n}{n-2}{\color{gray}-\frac{n(n-1)\eps'}{(n-2)|u'|}}
		\end{equation}
		
		Чтобы перейти к последнему неравенству в 3-й лемме ДР1 \big(т.е. $k-1<\frac{n}{n-2}$\big),
		достаточно найти ограничение для $r$ снизу.
		\footnote{На сколько помнит автор, на лекции этот переход не был объяснён.
			%########################
			%т.к. забыл к тому моменту
			%-- у доски трудно соображать, когда тебя ждут много людей (свободных мест на партах в аудитории (КАДМ) не было на 2-й лекции, насколько помню, даже по 3 человека за некоторыми партами было --- аудитория КАДМа не большая).
			%########################
			Но все необъяснёные переходы автора имеют несложные объяснения, обычно связано с
			методом ОП как в первых утверждениях в Лемме \ref{l1} в ДР1.
			%########################
			%Т.к. противоречивые допущения всегда должны <<висеть над головой>>.
			%Но это сложнее, когда от тебя ждут ответа <<онлайн>>.
			%########################
		}
		%Вот здесь мы, наконец, воспользуемся ограничением на $r$ снизу
		%по нашему предположению о запрещённости фактора длины $l$.
		Воспользуемся, наконец, ограничением на $r$ снизу
		используя допущение запрещённости фактора длины $l$.
		
		Тогда
		$\frac{l+\eps}{l-r}>\frac{n}{n-1}$,
		или $-l+(n-1)\eps>-nr$,
		откуда $r>\frac{l-(n-1)\eps}{n}$.
		Используя (\ref{l3e1}), получим
		$\frac{(k-1)|u'|+2-(n-1)\eps}{n}<r$, тогда
		$(k-1)+\frac{2-(n-1)\eps}{|u'|}<\frac{r\cdot n}{|u'|}$.
		\footnote{Заметьте, что при $0\le\eps\le\frac{2}{n-1}$ сразу вытекает неравенство $k-1<\frac{n}{n-2}$,
			 которого достаточно для Леммы \ref{l2}}
		%%тогда, в силу целочисленности и ограничения на $\eps$ и $|u'|$,
		Тогда, применяя (\ref{l3e4}), получим
		
		$$k-1+\frac{2-(n-1)\eps}{|u'|}
		<\frac{n}{n-2}{\color{gray}-\frac{n(n-1)\eps'}{(n-2)|u'|}}$$
		
		\begin{equation}\tag{l3.e5}\label{l3e5}
		k-1
		<\frac{n}{n-2}{\cGr-\frac{n(n-1)\eps'}{(n-2)|u'|}}+\frac{(n-1)\eps-2}{|u'|}
		\le\frac{n}{n-2}
		%{\color{gray}, \forall\eps\ge\frac{n-2}{1-n}, n>2, |u'|>0}
		\end{equation}
		
		{\color{gray}
		Уточним, при каких $\eps$ и $\eps'$ выполняется неравенство $-\frac{n(n-1)\eps'}{(n-2)|u'|}+\frac{(n-1)\eps-2}{|u'|}\le0$.
		Учитывая что $|u'|\ge n>2$ получим
		
		$$
		\frac{(n-2)\eps-2\frac{n-2}{n-1}}{|u'|}\le\frac{n\eps'}{|u'|}
		\Longleftrightarrow
		(n-2)\eps-2\frac{n-2}{n-1}\le n\eps'
		\Longleftrightarrow
		-2\frac{n-2}{n-1}\le 2\eps+n(\eps'-\eps)
		\Longleftrightarrow
		\eps\ge-\frac{n-2}{n-1}-dn%\ge-\frac{3}{4}
		$$
		}
		
		%получим $r\ge\frac{(k-1)|u'|}{n}$.
		%$\frac{r\cdot n}{|u'|}\ge(k-1)+\frac{2-(n-1)\eps}{|u'|}$.
		Т.о. мы установили, что контрпример возможен только при
		\fbox{\pmb{$k-1<\frac{n}{n-2}$}} при любых $0\le\eps\le\eps'\le\frac{2}{n-1}$
		{\color{gray}(даже при любых $\eps'\ge\eps\ge max\{3-n, -\frac{n-2}{n-1}\}=-\frac{n-2}{n-1}$)}.
		%Вспомним, что мы использовали ограничение для $n\ge5$ (но достаточно было взять $n\ge3$).
		Видно, что даже при $n\ge4$ необходимо чтобы $k-1<2$
		
		\fbox{\pmb{т.е. $k<3$ при $n\ge5$ \contr\ ограничению $k\ge3$ в условии нашей леммы}}.
		Это противоречие и доказывает нашу лемму.
		
		%{\color{cyan}
		%	Заодно проверим для $n=3$ тогда $k-1<3$, что выполняется только при $k\le3$.
		%	Т.е. над 3-хбуквенным алфавитом достаточно чтобы $k\ge4$.
		%	Вот только над 3-х и 4-х буквенными алфавитами другая граница $\RT(n)$
		%}
		{\color{gray}
			А теперь уточним ещё сильнее допустимые значения $\eps$, используя \ref{l3e5}.
			Нам достаточно, чтобы $k-1<\frac{n-1}{n-3}$, тогда получаем то же противоречие условиям нашей леммы только теперь при $n\ge5$.
			Тогда условия на $\eps$ и $\eps'$
			$$
			\frac{n}{n-2}{\color{gray}-\frac{n(n-1)\eps'}{(n-2)|u'|}}+\frac{(n-1)\eps-2}{|u'|}
			\le\frac{n-1}{n-3}
			$$
			Перегруппируем слагаемые
			$$
			\frac{n}{n-2}-\frac{2}{|u'|}-\frac{n-1}{n-3}
			\le\frac{n(n-1)\eps'}{(n-2)|u'|}-\frac{(n-1)(n-2)\eps}{(n-2)|u'|}
			\Leftrightarrow
			\frac{-3n+n+2n-2}{(n-2)(n-3)}-\frac{2}{|u'|}
			\le\frac{(n-1)(n\eps'-(n-2)\eps)}{(n-2)|u'|}
			\Leftrightarrow
			$$
			$$
			\frac{-2|u'|}{(n-2)(n-3)}-2
			\le\frac{(n-1)(n(\eps'-\eps)+2\eps)}{(n-2)}
			\Leftrightarrow
			\frac{-2|u'|}{(n-1)(n-3)}-2\frac{n-2}{n-1}
			\le n(\eps'-\eps)+2\eps
			$$
			Найдём верхнюю границу возможного значения справа при $|u'|\ge n\ge5$.
			Максимальное значение достигается при $|u'|=n$, тогда
			$$
			\frac{-2|u'|}{(n-1)(n-3)}-2\frac{n-2}{n-1}
			=2\frac{-|u'|-(n^2-5n+6)}{(n-1)(n-3)}
			=-2\frac{n^2-(5n-|u'|)+6}{n^2-4n+3}
			\le-2\frac{n^2-4n+6}{n^2-4n+3}
			\le-2
			$$
			Разберём случай $\eps=\eps'$.
			Тогда, при $|u'|\in o(n^2)$ получим $\sup$ нижних границ для $\eps'$ равна $-1$ (т.е. когда $n\to\infty$).
			При $|u'|\in\Theta(n^2)$ получим снижение $\sup$ на некоторую константу.
			При $|u'|\in\omega(n^2)$ и $n\to\infty$ допустимая нижняя граница на $\eps'$ стремится к $-\infty$.
			Т.е. минимальные равные $\eps$ и $\eps'$ могут быть равны -1 и больше.
			
			%$$
			%\frac{n}{n-2}-\frac{2}{|u'|}-\frac{n-1}{n-3}
			%\le\frac{n(n-1)\eps}{(n-2)|u'|}-\frac{(n-1)(n-2)\eps}{(n-2)|u'|}
			%\Leftrightarrow
			%\frac{-3n+n+2n-2}{(n-2)(n-3)}-\frac{2}{|u'|}
			%\le\frac{2(n-1)\eps}{(n-2)|u'|}
			%\Leftrightarrow
			%\frac{-|u'|}{(n-2)(n-3)}-1
			%\le\frac{(n-1)\eps}{(n-2)}
			%$$
			%Т.е. нижняя граница $\eps$ допускает (с учётом $|u'|\ge n$)
			%Что меньше $-1$ при любых $n\ge5$.
			%Откуда и получаем нижнее допустимое значение $\eps\ge-1$ при $5\le n\le|u'|\in o(n^2)$.
			%Более того, если усилить ограничение на $|u'|$, чтобы от $n$ оно росло не неквадратично быстрее,
			%то с ростом $n$ ограничение на $\eps$ снизу стремится к $-\infty$.
			%Квадратичный рост $|u'|$ даёт возможность опускать допустимое значение для $\eps$ ниже $-1$.
			
			Аналогично, в остальных случаях (напомним, что $\eps'$ привязан к более длинному периоду,
			а с ним и не менее короткому повтору)
			видно, что при $\eps'>\eps$ ограничение снизу на $\eps$ даже снижается, особенно с ростом $n$.
			%Осторожно с парадоксом. Если коэффицент $n(\eps'-\eps)$ перенести влево в неравенстве, то всё сходится.
			%Если же преставить $n(\eps'-\eps)$ как $C\eps$ при некотором положительном $C$,
			%то, в силу положительности $n(\eps'-\eps)$,
			%заметьте, что $\eps$ так же должен быть положителен, тогда и неравенство выполняется.
			%Можно было бы ошибочно подумать, что $n(\eps'-\eps)+2\eps$ представим как $(2+C)\eps$,
			%тогда поделив обе части неравенства на (2+C) получился бы парадокс,
			%что $\eps$ ограничен снизу только числом больше -1 (в худшем случае).
			
			Так же, можно заметить, что $\eps'$ может быть даже немного меньше $\eps$, % похоже, достаточно чтобы $-2/n\le\eps'-\eps$
			но с ростом $n$ эта допустимая разница уменьшается (при условии линейного роста $|u'|$ от $n$).
		}
	\end{description}
%Т.е., видимо, можно доказать несколько более сильное утверждение, допуская любые $n\ge4$ и $k\ge3$.
%А так же допуская любые $n\ge3$ и $k\ge4$.
\end{proof}

\hyperlink{contents}{$\upuparrows$}

Заметим, что возможность отрицательных $\eps$ могут быть связаны с обобщением
циркулярных слов с почти граничными свойствами при $n\ge5$.
А так же с возможностью построения $\D_{3,n}^\eps${--}ЦГС для $n\in\{3,4\}$.

Так же, заметьте, что $\eps'$ могло быть немного, даже больше чем $\eps$,
что несколько позволит расширить условия нашей леммы.

\begin{quest}\label{q1}
	$k|u'|$ не меньше $n^2$? Найти нижнюю границу, хотябы асимптотическую.
\end{quest}

\begin{quest}\label{q2}
	Можно ли в эту лемму включить $k=2$, хотябы при каком-то $n$?
\end{quest}

Предположение автора (по вопросу 2), что нет т.к. даже при $n\to\infty$ нижняя граница $k$ только стремится к $2$.
Но тогда должен существовать контрпример т.е. когда ГС получено по квадрату непериодического $bc${--}кода,
но некоторый сдвиг содержит недопустимы фактор.

\paragraph{Задания и вопросы для читателя:}\ 
%Теперь, перед переходом к доп.свойствам Леммы \ref{l3}, на основе предложенных утверждений в Лемме \ref{l3} попробуйте:

(1) Оценить длины повторов наших $\D_{3,n}^\eps${--}ЦГС.
%докажите что наши ЦГС являются простыми словами одинаковой длины,
%т.е. удовлетворяют условиям \ref{l1c1} и \ref{l2c1}
\footnote{На этот вопрос сразу ответил Шур А.М. в конце 2-й лекции, на сколько помнит автор.
	Попробуйте и вы.}
%########################
%припоминается фраза --- "а не очевидно ли это?" На что автор, по глупости, задал вопрос для уточнения
%(
%	НУ ЗАЧЕМ Я ЭТО СДЕЛАЛ???
%	Из-за подсказки (хоть и очевидного) на лекции автор ещё сильнее боялся заговорить о вопросе авторства.
%).
%в общем-то это очевидное свойство, которое мог предложить любой другой после этой лекции.
%Возможно кто-нибудь из слушателей так же мог его заметить, но промолчал или неуспеть сказать.
%У меня подобное бывало.
%Поэтому, присваивать авторство этого свойства кому-нибудь не совсем корректно, по-моему.
%Тем более, что проверяя это свойство для конкретных конструкций я бы с высокой вероятностью пришёл к такому выводу.
%Возможно, я и сам доходил до ответа, но быстро забыл об этом т.к.
%свойство слишком очевидно после доказательства леммы \ref{l3},
%тем более, что оно необходимое для использования этих $\D_{3,n}^\eps${--}ЦГС в \ref{l2c1}
%
%Странно, что я продолжил строить решение для нечётных, не проверив это необходимое свойство для лемм \ref{l1} и \ref{l2}.
%Возможно из-за очевидности я его не записал, даже, и сразу забыл.
%И как бы я тогда подошёл к НР (тогда он ещё не был моим НР) с утверждением,
%что придумал как доказать эксп.версию гипотезы Дежан для любого нечётного алфавита начиная с 5?
%%Точно не помню, уточнил ли я тогда, что только для частных случаев, а не для всех нечётных.
%Тогда он и выделил мне 2 лекции на семинаре.
%########################

(2) Существуют сопряжённые слова полученные целыми $bc${--}сдвигами?

По второму вопросу в конце 2-й лекции автора, когда озвучивал некоторые вопросы, которые ещё предстояло решить автору,
наш НР увидел красивое аналитическое решение,
но достаточно было свести ответ к полиномиальной проверке, что и показано в ДР1 (но не реализовано).
Самая сложная, по времени, задача --- проверить условие \ref{l2c2},
поэтому автор считал приоритетнее оптимизацию этой проверки,
для чего и был придуман алгоритм поиска запрещённого фактора за $n\log(n)$.

%Что можно сказать о длине $A_n${--}кода $u'$ для ГС при предпосылке $A_n$?

\hyperlink{contents}{$\upuparrows$}

\paragraph{$\D_{3,n}^\eps${--}ЦГС}

Из выполнения условий Леммы \ref{l3} (с учётом серого текста) следует существование $\D_{3,n}^\eps${--}ЦГС.

%Таким образом, 
Можно заметить, что каждое $\D_{3,n}^\eps${--}ЦГС порождает
класс эквивалентности по циклическому сдвигу.
Т.е. любые 2 слова в нём получаются друг из друга через некоторое смещение и <<перестановку букв алфавита>>

\hyperlink{contents}{$\upuparrows$}

\subsubsection{Замечания и дополнения}

Здесь мы используем Лемму \ref{l3} в общем случае т.е. для любых сдвигов (не только для целых) и любых $n\ge5, k\ge3, \eps\ge-1$.
%Заметим классы эквивалентности.

Назовём слова над $\A_n$ {\bf перестановочно эквивалентными} (ПЭ) словами (ПЭС), если одно получается из другого через перестановку букв их общего алфавита.
Это класс эквивалентности, состоящий из $n!$ различных слов.
Т.е. это обычные эквивалентные слова, но для определённости, здесь будем называть их как ПЭ.

Назовём слова над $\A_n$ {\bf циклически эквивалентными} (ЦЭ) словами (ЦЭС), %целые
%если они одинаковой длины с одинаковым $n${--}суффиксом,
%при этом они или эквивалентны или циклическим сдвигом получены из общего слова.
если одно получается из другого циклическим сдвигом.
Слова Линдона (минимальные в лексикографическом порядке) в таких классах можно считать каноническими.
Т.е. класс ЦЭ слов к $w$ (содержащий $w$) состоит из не более чем $|w|$ различных слов.
Не более т.к. в этом классе слова могут быть самосопряжёнными
т.е. слово $w=uv$ из этого класса может быть равно $vu$ при $|u|,|v|>0$ (т.е. являться целой степенью $\ge2$).
Но для не самосопряжённого $w$ этот класс состоит ровно из $|w|$ слов.

\begin{note}\label{nt-k-bc-NSC}% Not Self Conjugates
	Любое $k${-}$bc${--}корневое слово несамосопряжено
	(что вытекает из минимальности корня и $k$).
	% очевидно, что тривиальная самосопряжённость (т.е. без сдвига) всегда верна
	% т.к. эквиваленция объекта самому себе (без преобразования) это основа математики.
	% поэтому здесь мы не говорим об исключении тривиальных случаев т.к. пришлось бы расписывать длиннее, если не бесконечно.
\end{note}

Слова, являющиеся ЦЭ или ПЭ, назовём {\bf циклически-перестановочно эквивалентными} (Ц-ПЭ) словами (Ц-ПЭС).
Т.е. они так же создают класс эквивалентности.

Если слова $u$ и $v$ эквивалентны Ц/П/Ц-П,
то будем говорить, что $u$ ЦЭС/ПЭС/Ц-ПЭС к $v$.

Любой из этих классов эквиваленстности порождается из любого его элемента всевозможными сдвигами и/или перестановками букв алфавита.
Из Леммы \ref{l3} следует

\begin{note}\label{nt-CE}
	Пусть $n,k\in\mN, n\ge5, k\ge3, \eps\ge-1$
	$u_1u_2$ --- $bc${--}код,
	$|\phi(u_1u_2)|\ge n$ и
	$f_{A_n}(\phi(u_1u_2))$ имеет $n${--}суффикс $A_n$.
	Тогда, используя Лемму \ref{l3},
	$f_{A_n}(\phi((u_1u_2)^k))\in\D_{3, n}^\eps\T_n$
	$\Leftrightarrow$
	$f_{A_n}(\phi((u_2u_1)^k))\in\D_{3, n}^\eps\T_n$.
	
	Другими словами, выполняется одновременная $D_{3,n}^\eps${--}граничность ЦЭС над $\A_n$,
	если хотябы одно из них является $k${-}$bc${--}корневым при $k\ge3, n\ge5, \eps\ge-1$
	(в общем-то, тогда они все $k${-}$bc${--}корневые).
\end{note}

\begin{note}\label{nt-PE}
	Среди всех сдвиов любого $k${-}$bc${--}корневого слова $w$ есть ровно $k$ ПЭС к $w$.
\end{note}

Т.е. в классе ЦЭС, порождённого $k${-}$bc${--}корневым словом длины $k\cdot l$, ровно $l$ классов ПЭС.
%Константа $k$ может принимать экспоненциальные значения.

\begin{note}
	Пусть класс Ц-ПЭС порождён $k${-}$bc${--}корневым словом $w$.
	Представим класс Ц-ПЭС как таблицу элементов $(n!)\times(k\cdot l)$,
	где по вертикали все последоваельные (циклические) сдвиги.
	Тогда очевидно, что:
	\begin{enumerate}
		\item Каждая строка таблицы это класс ПЭС порождённый ЦЭС к $w$.
		\item Через каждые $l$ строк $bc${--}корень слова $w$ повторяется в нашей таблице.
		А значит строки с индексами равными по модулю $l$ это равные классы ПЭС, порождённые ЦЭС к $w$.
		\item Операции сдвига и перестановки букв алфавита коммутативны.
		Откуда, любой столбец таблицы это класс ЦЭС, порождённый ПЭС к $w$.
	\end{enumerate}
\end{note}

Поэтому, различных элементов в классе Ц-ПЭС не более $n!\times l$.
Т.е. класс Ц-ПЭС, порождённый $k${-}$bc${--}корневым словом длины $k\cdot l$ по зам-ю \ref{nt-k-bc-NSC},
можно рассматривать как таблицу $n!\times l$,
состоящую из различных $k${-}$bc${--}корневых слов.

Набор слов, построеный по схеме в Лемме \ref{l3} обладает сразу несколькими очевидными свойствами
(кроме доказанных в Лемме), нужными нам для Леммы \ref{l2}:

1) Длины этих слов совпадают. Для этого свойства автором и была раработана схема построения $D_{3,n}^\eps${--}ЦГС.

2) Все они являются простыми т.е. их экспонента равна $1$ (т.е. их период совпадает с их длиной).
Асимптотически это очевидно, используя Лемму \ref{l3}.
Точнее, если предположить обратное,
то при достаточно маленьком повторе можно взять циклическое смещение и получить квадрат \contr\ граничности.
Если же не достаточно маленькие, то само слово не может быть граничным при достаточно большом $n$ (даже при $n\ge3$).

2-е свойство легко сводится к проверке за полином для конкретных конструкций,
как и следующие свойства для доказательства существования РРДГС,
при построении набора по схеме в этой Лемме.
%Это св-во не сложно свести к полиномиальной проверке даже без этого свойства.

Доказательство 2-го свойства и ответ на вопрос \ref{q1} для читателя.

\begin{note}\label{pr1}
	Пусть $n\ge5,k\ge3$ и $bc${--}код $u$ удовлетворяет условиям Леммы \ref{l3} и $f_{A_n}(\phi((u)^k))=v'vv'$.
	Тогда $|v'|=0$.
\end{note}
\begin{proof}
	Иначе сдвиг $v'v'v$ (с квадратом) --- ПЭС к $f_{A_n}(\phi((u_2u_1)^k))$
	(при некоторых $u_1u_2=u$)
	
	\hfill т.е. $f_{A_n}(\phi((u_2u_1)^k))$ тоже содержит квадрат \contr\ \ref{l3f2}.
	\footnote{
		Среди целых сдвигов так же найдётся с квадратом $v'v'$, используя замечание \ref{l3:nt:bc:sh}\ref{2sh}.
	}
	%
	%Другой способ:
	%
	%Докажем ОП.
	%Заметим, что $|v'vv'|\ge kn\ge15$
	%Тогда предположим что $|v'|>0$.
	%
	%Если $2|v'|\ge|v|$, тогда $\exp(v'vv')\ge4/3$, что нарушает граничность слова $v'vv'$ \contr\ \ref{l3c3}.
	%
	%Тогда $2|v'|<|v|$, но тогда существует целый сдвиг $v_2v'v'v_1$ (с квадратом),
	%циклически эквивалентный $v'vv'$ \contr\ \ref{l3c3}.
\end{proof}
это замечание доказывает 2-е свойство
(оно не является необходимым для сведения проверки слов к полиномиальной
т.к. для конкретного набора слов легко, непосредственным анализом их конструкции, как мы покажем).

Следующее замечание взято из текста для лекций --- 1-е свойство (<<свойства слов $w=f_n(A_n,$...>>)

\begin{note}
	Пусть $w=\w_{u^k}(0,u'k)$. %$w\in\D_{\ge3n-3,n}^\eps$
	Для поиска $\lexp^{\cGr\eps}(w)$ достаточно найти максимум $\exp^{\cGr\eps}(v)$ у всех $v\subseteq w$ % на $\exp^{\cGr\eps}(v)\le\RT(n)$,
	где начало $v$ лежит в первом сегменте ($bc${--}корне) $w$.
\end{note}
\begin{proof}
	Заметим, что для любых $0\le c'<k$, $0\le c<|u'|$ и $l>0$ таких, что $c'|u'|+c+l\le k|u'|$
	эквивалентны слова $\w_{u^k}(c,l)$ и $\w_{u^k}(c'|u'|+c,l)$.
	При этом, каждому фактору $v\in w$ найдутся такие $c',c,l$, что $v=\w_{u^k}(c'|u'|+c,l)$.
	
	А значит $\exp^{\cGr\eps}(v)=\exp^{\cGr\eps}(\w_{u^k}(c'|u'|+c,l))=\exp^{\cGr\eps}(\w_{u^k}(c,l))$.
\end{proof}

Из этого замечания следует, что для поиска $\lexp^\eps$ достаточно взять максимальную $\exp^\eps$ среди $|u'|^2k$ факторов.

\begin{note}[следствие зам-я \ref{l3:nt:bc:sh}]\label{nt:_bc_}
	Не каждый циклический сдвиг в $\D_{3,n}^\eps${--}ЦГС $w$ имеет целый $\rbc${--}корень,
	но каждый не более чем 3-й в общем случае имеет $\rbc${--}корень,
	и каждый не более чем 2-й имеет либо $\rbc$, либо $\lbc${--}корень.% (т.е. является целым).
	
	Каждое $\D_{3,n}^\eps${--}ЦГС имеет сдвиг с целым $bc${--}корнем,
	а так же, сдвиг с не целым $bc${--}корнем.
	
	Каждое $\D_{3,n}^\eps${--}ЦГС имеет длину не менее 2.
\end{note}

Так же можно заметить, что длина любого ЦГС $w$ не меньше $n$
т.к. корень в $w^2$ не больше $|w|$ и должен содержать все буквы из $\A_n$.
Но не так очевидно, можно ли ограничить снизу длину $bc${--}корня в $w$ числом $n$?

\hyperlink{contents}{$\upuparrows$}
%\newpage

\subsection{РРДГС --- Равномерно растущее дерево $\D_{3, n}^\eps${--}ГС}
Экспоненциальность тривиальным образом достигается при помощи дополнительного образа (ДО) --
такая методика была озвучена автором ещё во время лекций (на семинаре в марте 2011).
Но в ДР об этом не сказано явно, хотя, это очевидно.
%########################
% видимо, просто, забыл дописать из-за спешки перед защитой бакалаврской.
% или не считал необходимым т.к. рассказывал устно
%########################
Экспоненциальность легко представима через дерево,
как описано у нас в ДР1 с применением Леммы \ref{l1}.
Более того, в дереве удваивается количество веток на каждом ограниченном (хотябы какой-нибудь константой) его отрезке,
что потребуется нам для доказательства ещё более сильного утверждения.

Вариаций порождения экспоненциально растущего множества ГС на основе ДО можно придумать много,
где экспоненциальность вытекает из самой конструкции т.е. из линейного расстояния между соседними заменяемыми образами.
А доказательство граничности может быть рутинным.
[В худшем случае, можно повторить доказательство лемм при рассмотрении частных случаев,
аккуратно доказывая для случаев, когда одинаковые образы становятся различными,
как бы разбивая на 2 подслучая.]

Для решения с помощью Леммы \ref{l1} в ДР1 приведена схема,
красиво связанная схема с предложенным множеством из $3(n+1)$ ГС.
И этого уже достаточно для доказательства экспоненциальности
т.к. граничная теорема доказана.

Пусть $\V$ удовлетворяет условиям подстановки.
Тогда выделим две ветки интерпретации идеи <<дополнительного образа>> для порождения экспоненциальности
(egc --- exponential growth condition):
\begin{description}
	\item[(egc1)\label{ec1}]
	$\exists v'\not\in\V$ такой, что
	$\{v'\}\cup\V\setminus\{v\}$ удовлетворяет условиям подстановки,
	для некоторого $v\in\V$.
	\item[(egc2)\label{ec2}]
	$\exists v'\not\in\V$ такой, что
	$\{v'\}\cup\V\setminus\{v\}$ удовлетворяет условиям подстановки,
	для любого $v\in\V$.
\end{description}
Проще говоря, условие \ref{ec2} говорит,
что любое подмножество из $\{v'\}\cup\V$ мощности $|\V|$ должно удовлетворять условиям подстановки.
Понятно, что из условия \ref{ec2} вытекает и условие \ref{ec1}.

\hyperlink{contents}{$\upuparrows$}

\paragraph{\large Неформальное дополнение.}

Доказательство экспоненциальности в ДР1 получилось самым небрежным
из-за недостатка времени перед сдачей диплома.
Поэтому, доказательство в явной форме там не написано.
Но это очевидное следствие, не требующее детального описания.
И главная мысль была донесена --- получить экспоненциальный рост множества слов за счёт дополнительного образа (ДО).
В ДР1 об этой идее явного упоминания автор не нашёл,
но неявно это видно в формулировке при идее доказательства с использованием Леммы \ref{l2}
(которая оказалась копипастом из текста для лекций на семинаре).
А так же, явно эта идея была озвучена автором в конце 2-й лекции 2011 года (коротко и без подробностей т.к. идея очевидна).
%########################
% Кажется, я формулировал через экспоненциально растущее дерево всевозможных слов.
% Это, достаточно, стойкое воспоминание.
%########################
И, на сколько помнит автор, несколько раз озвучивал эту идею перед НР (в частности, в конце 2-й лекции), так же без подробностей.

%########################
%Какую из них (\ref{ec1} или \ref{ec2}) подразумевать автор даже не думал тогда
%т.к. найденые конструкции для нечётных уже с 9-ти
%удовлетворяли более сильным условиям (там уже был набор из $3\cdot9+1$ образов,
%а для нечётных 11,...,101, как оказалось, вообще все $3(n+1)$).
%Поэтому, в доказательстве будем подразумевать сильное условие \ref{ec2} для ДО.
%########################

Так же в формулировке не уточнено, что дополнительный образ должен быть равноценен остальным
т.е. удовлетворять условиям соответствующих лемм (т.е. удовлетворять условию \ref{ec2}).% перепроверить для \ref{ec2}
Точнее, множество слов $\V$ должно удовлетворять условию нужной леммы (1-й или 2-й),
но число элементов в нём должно быть, хотябы на 1 больше, чем требуется в соответствующей лемме.
Опять же, по контексту нетрудно до этого догадаться.
Так же можно несколько ослабить тем, что $\V$ может быть и требуемой мощности,
а дополнительный образ $v'$ должен подменять некоторый $v\in\V$ так,
чтобы $\{v'\}\cup\V\setminus\{v\}$ тоже удовлетворяло условию той же леммы (т.е. удовлетворять условию \ref{ec1}).

%Можно заметить, что при слишком коротком факторе $w$ $\V${--}образ $v$ в оригинале не повторяется
%(т.е. при $\per(w)<3n-3$), а значит экспонента таких слов <<хорошая>> из предположения.
%А при слишком длинных факторах, чтобы экспонента таких слов стала <<плохой>>,
%необходимо, чтобы повторы былы длинные, а значит содержала, хотябы 1 целый $\V${--}образ,
%а значит, по замечанию (\ref{l1p5e2}) корень фактора кратен длине $\V${--}образа,
%а значит в повторе нет ни $v$ ни $v'$ целиком
%(т.к. иначе длина повтора только уменьшится, а значит экспонента фактора не могла стать <<плохой>> из <<хорошей>>).
%А так же, они не могут быть по краям факторов (нецельные) т.к. иначе, по кратности корня длине $\V${--}образа,
%в оригинале (до порождения экспоненциальности) можно было бы продлить длину повтора.
%А значит для доказательства граничности достаточно рассмотреть только промежуточное множество длин факторов.
%А с ним и промежуток длин корней.
%Возможно, этот промежуток длин рассматривался при рассмотрении схемы экспоненциальности с помощью КЗ подстановки.

%Ещё один способ, позволяющий уже обобщить идею,
%используя свойство, что допускается любая подстановка из $k$ букв в $k$ слов (различных).
%Т.е. таких подстановок ровно $k!$.

\hyperlink{contents}{$\upuparrows$}

\subsubsection{РРДГС (для частных случаев) при выполнении условий Лемм с доп.$\V${--}образом}

Более правильный подход для Леммы \ref{l2} --- доказать не конкретно с циклической подменой $\V${--}образа (с 3-мя образами в цикле),
а с произвольной подменой образа так, чтобы расстояние между буквами (между центрами их позиций, или между их началами)
с одинаковым $\V${--}образом было не менее $3n-3$.
Аналогично для Леммы \ref{l1}, но для неё нужно усилить условия \ref{l1c3} условием \ref{l2c3} но без $\eps$.
Тогда будет сохраняться граничность при нашем порождении экспоненциального множества с помощью ДО.
Эти идеи обобщения были изложены в дополнениях к этим леммам.

Но в данной версии работы мы приведём условное (выполнение условий лемм \ref{l1} и \ref{l2}) доказательство
существования РРДГС в той форме, которая подразумевалась в ДР1.
Доказательство с использованием ДО не сложное, но рутинное и требует аккуратности.
В данной версии доказательство несколько сырое (планируется доработать в следующих версиях).

%И ещё 2 идеи для Леммы \ref{l1} и 1 условная
%без использования усиления условия \ref{l1c3}.
%
%1)
%Достаточно понять, что существует бесконечно много букв,
%которая соседствует с ней же слева на расстоянии (период) не менее $n$ (для них и определить доп.образ),
%а для этого случая не требуется условие \ref{l1c3} вообще.
%НЕВЕРНО
%
%2)
%Достаточно подменять не на каждом появлении буквы с дублирующим образом, а через раз.
%НЕВЕРНО
%
%3)
%Если существует бесконечно много троек соседних одинаковых букв
%(в прообразе) между которыми расстояние больше минимального.
%Тогда средним буквам можно присвоить 2 образа.
%Но нужно, чтобы расстояние между этими тройками было не больше некоторого полинома от номера этой тройки (отсчёт слева).
%
%4)

%\begin{note}
%	Если в Лемме \ref{l1} $k\ge2$, то по замечанию в сноске усиление условия \ref{l1c3} можно не использовать.
%	(перепроверить)
%\end{note}

\begin{stat}\label{exp:any_subseq}
	Если наше множество $\V_n$ удовлетворяет условиям Леммы \ref{l2} [Леммы \ref{l1} с доп.условием \ref{l2c3}]
	и имеет не менее $3n+1$ [$n+k+1$] $\V${--}образов,
	то любой образ, где заменяемый $\V${--}образ заменён <<альтернативным>> $\V${--}образом
	в произвольных местах его появления в образе,
	принадлежит $\D_{\ge3n-3, n}^\eps\cap\T_n$ [$\T_{n+k}$].

	Более строго --- пусть $i_1<i_2<...<i_m$ позиции всех букв прообраза $\w$ с одинаковым образом,
	которые замеяемы подстановкой на <<альтернативный>> $\V${--}образ.
	Тогда, если любое подмножество этих букв заменить на <<альтернативный>> $\V${--}образ,
	то образ $\w'\in\D_{\ge3n-3, n}^\eps\cap\T_n$ [$\T_{n+k}$].
	
	А так же, всевозможные такие замены порождают экспоненциально растущее дерево слов.
	При этом, расстояние между соседними вершинами (места бифукации, центры позиций вершин) дерева
	ограничены сверху и снизу константами (в обеих леммах).
\end{stat}
\begin{proof}
	Экспоненциально растущщее дерево очевидно.
	Расстояние между соседними вершинами в Лемме \ref{l2} не превосходит $3n+3$, а в Лемме \ref{l1} не превосходит $n+k+1$.
	А также, это же расстояние не менее $3n-3$ в Лемме \ref{l2} и $n+k-1$ в Лемме \ref{l1}.
	Т.о. равномерность роста доказана.
	
Пусть множество слов $W$ порождено экспоненциальной подстановкой $f:\A_m\to\V$ над бесконечным словом $\omega$,
где $|\V|=m+1, |v_i|=|v_j|>1 \forall v_i,v_j\in\V$.
$f(a_m)=v_m$ при нечётном номере появления буквы $a_m$ в слове $\omega$
и $f(a_m)=v_{m+1}$ в обратном случае.

Пусть $\V$ удовлетворяет условию x-леммы (где x --- 1 или 2, соответствующие лемме).
И $\{v'\}\cup\V\setminus\{v\}$ тоже удовлетворяло условию x-леммы.
Пусть бесконечное x-граничное слово $\omega$ порождено схемой по x-лемме.
Пусть КЗ подстановка $f:v\to v'$ порождает экспоненциальное множество слов.
Тогда любое полученное слово x-гранично.

ОП:
пусть фактор $u'x'u'\subset\omega'$ для некоторого $\omega'$, порождённого нашей КЗ подстановкой,
имеет максимально <<плохую>> экспоненту во всём слове $\omega'$.
Тогда повтор этого фактора пересекается либо с $v$ либо с $v'$.
Пусть $u'$ --- повтор слова $u'x'u'$.
Заменим обратно $v'$ на $v$ и возмём соответствующий фактор $uxu\in\omega$.
Заметим, что условия Леммы \ref{l1} вытекают из условий 2-й.

%Заметим, что условия \ref{l1c3} и \ref{l2c3} используются только в конце лемм для коротких факторов,
%когда длины повторов короче 1-й буквы.
%А остальные условия не влекут ассимметррию в методе доказательств.
%Т.е. ограничения для параметров слева и справа не зависят друг от друга
%и допускают их максимальные значения независимо друг от друга
%без ущерба следствию в леммах.

\begin{itemize}
	\item Пусть $|u'|\ge2L/(n-1)$, тогда по (\ref{l1p5e2}) $|u'x'|$ кратен $L$.
	Понятно, что те $v$, которые содержатся в оригинальном повторе $u$ целиком, они же или их дубликаты
	содержатся целиком в $u'$, т.е. не меняют экспоненту.
	Т.е. $u'x'u'$ стало запрещённым из-за частичного пересечения $v$ с $u$.
	А замена $v$ на $v'$ удлинила повтор до $|u'|$ за счёт более длинного пересечения $v'$ с другим $\V${--}образом.
	Но проверим, могло ли новое пересечение испортить экспоненту.
	Нам известно, что $\{v'\}\cup\V\setminus\{v\}$ удовлетворяло условию x-леммы,
	а значит замена $v$ на $v'$ даст хорошую экспоненту аналогичного фактора.
	А значит другой конец фактора тоже пересекает $v$, но в $\omega'$ он не поменян на $v'$.
	БОО пусть $v'$ справа, $v$ слева пересекают фактор $u'x'u'$.
	вот здесь и пригодится свойство худшего случая (длиннейшего общего суффикса или префикса).
	Эти ограничения общие и для $v'$ и для $v$.
	Т.е. можно считать что и для случая когда $v$ имеет максимально допустимый общий суффикс и префикс.
	Тогда и суффикс в $v'$ не превосходит допустимого, а значит и фактор не может быть с <<плохой>> экспонентой.
	
	Проверим это аккуратно.
	Здесь $v_{i_j}, v'_{i_j}$ zявляются $\V${--}образами.
	
	Детализируем левый $u'\subset v_{i_1}v_{i_2}...v_{m-1}v_{i_m}$, где $v_{i_2}...v_{i_{m-1}}\subseteq u'$.
	Ну, и правый $u'\subset v'_{i_1}v'_{i_2}...v'_{m-1}v'_{i_m}$, где $v'_{i_2}...v'_{i_{m-1}}\subseteq u'$.
	
	В силу кратности корня слова $u'x'u'$ длине $\V${--}образа, понятно, что $v_{i_2}...v_{i_{m-1}}=v'_{i_2}...v'_{i_{m-1}}$.
	
	Проще говоря, $v_{i_2}...v_{i_{m-1}}$ это образ целого прообраза повтора $u'$.
	А так же, понятно, что целый прообраз повтора $u'$ совпадает с целым прообразом повтора $u$.
	Т.к. по предположению $uxu$ является <<хорошим>> фактором, а $u'x'u'$ <<плохим>>,
	то разница прообразов $u$ и $u'$ в крайних <<неполных пробразах>>.

	Значит в них и находится наша буква (пусть $a$) с 2-мя образами $v$ и $v'$.
	При этом, если среди них буква $a$ отбражается только в одинаковый образ (пусть в $v$),
	то $u'x'u'$ не может быть <<плохим>>
	т.к. при полной замене $v'$ на $v$ в $\omega'$ мы восстановим <<хороший>> $\omega$,
	при этом, $v_{i_2}...v_{i_{m-1}}$ и $v'_{i_2}...v'_{i_{m-1}}$ так же останутся равными,
	а значит экспонента в $u'x'u'$ не поменяется.
	(дополним, что левые(правые) крайние <<неполные пробразы>> левого и правого $u'$ не могут совпадать
	т.к. иначе будет противоречие максимальности экспоненты фактора $u'x'u'$).
	
	Тогда должны существовать хотябы 2 различных $\V${--}образа (т.е. и $v$ и $v'$) среди крайних
	<<неполных пробразов>> $a$ (в контексте $\omega'$).
	При этом, т.к. их всего 4, а среди левых и правых должно быть ровно по 1-му прообразу $a$,
	то БОО можно считать, что оба образа от $a$ пересекаются с левым повтором $u'$
	т.к. они увеличивают экспоненту только за счет величины пересечения $\V${--}образов
	не зависимо от относительного их расположения (т.е. неважно кто слева а кто справа).
	
	Т.е. достаточно оценить длину $u'$, у которого $a$ --- крайние <<неполные пробразы>>,
	чьи $\V${--}образы --- дубликаты $v$ и $v'$.
	
	Обозначим $\V'=\{v'\}\cup\V\setminus\{v\}$.
	По условию \ref{ec1} $\V'$ обладает свойством $\l(\V')\le\frac{L}{n-1}$ и $\r(\V')\le\frac{L}{n-1}$.
	Так же, обоначим за $p$ длину целого прообраза повтора $u'$.
	
	Заметим, что парные (только левые или только правые) крайние образы в $u$
	не могут быть образами одинаковой буквы (точнее буквы $a$ т.к. мы уже выяснили,
	что среди левых(правых) есть хотябы по одному образу прообраза буквы $a$)
	т.к. иначе мы могли бы удлиннить в оригинале $uvu$ длину повтора целым $\V${--}бразом буквы $a$
	(а мы БОО считаем, что крайние образы не цельно лежат в $uvu$).
	Значит, можно считать, что длина повтора $u'$ ограничена сверху
	$pL+\max\{\l(\V), \l(\V')\}+\max\{\r(\V), \r(\V')\}\le pL+\frac{2L}{n-1}$
	\begin{itemize}
		\item
		Случай, когда корень длины не менее $3(n-1)L$
		достаточен для Леммы \ref{l2}
		т.к. одинаковые буквы в факторе прообраза встречаются с корнем длины не менее $3n-3$.
		Повторяя рассуждения случая \ref{l2p1} Леммы \ref{l2},
		получим доказательство для случая $|u'|\ge2L/(n-1)$
		(принадлежности нашего фактора к $\D_{\ge3(n-1)L,n}^\eps$) для схемы в Лемме \ref{l2}.
		\item
		Случай, когда корень длины не менее $n$
		достаточен для Леммы \ref{l1}
		т.к. в предпосылке прообраз это ГС над $n+k\ge n+1$ буквами,
		т.е. корень целого прообраза фактора длины не менее $n$.
		
		Подслучаи:
		\begin{itemize}
			\item
			Случай, когда корень $|u'v'|\ge2nL$ аналогичен \ref{l1p6}.
			По предположению неграничности $u'v'u'$, используя (\ref{l1p1}), получим
			(с учётом равенства корней $u'v'u'$ и $uvu$ и их кратности $L$) противоречие
			$$
			\frac{n}{n-1}
			<\frac{|u'v'u'|}{|u'v'|}
			=\frac{|uv|+|u'|}{|uv|}
			\le\frac{2nL+|u'|}{2nL}
			\le\frac{2nL+pL+\frac{2L}{n-1}}{2nL}
			%\le\frac{n+1}{n}+\frac{\l(\V)+\r(\V)}{mL}
			\le\frac{n+1}{n}+\frac{1}{n(n-1)}
			=\frac{n^2}{n(n-1)}
			=\frac{n}{n-1}
			$$
			%$$
			%\frac{n+1}{n}
			%=\frac{(n-1)(n+1)}{n(n-1)}
			%=\frac{n^2-1}{n(n-1)}
			%=\frac{n}{n-1}-\frac{1}{n(n-1)}
			%<\frac{n+1}{n}
			%$$

			%Откуда $mnL<mL(n-1)+|u'|(n-1)$
			%т.е. $\frac{mL}{n-1}<|u'|$.
			%
			%Т.к. $m\ge2n>n-1$.
			%Если удалить из $|u'|$ все целые $\V${--}образы,
			%то $|u'|-m'L>\frac{mL-m'(n-1)}{n-1}$.
			%С другой стороны $|u'|-m'L\le\l(\V)+\r(\V)\le\frac{2L}{n-1}$ по (\ref{l1p1}).
			%Т.е. $\frac{2L}{n-1}>\frac{mL-m'(n-1)}{n-1}$.
			%Откуда $2+m'(n-1)/L>m\ge2n$.
			%Откуда $2L<m'$.
			
			\item
			Оставшиеся случаи, т.е. когда корень $|u'v'|\ge nL$ (и $<2nL$, а значит $\le(n+2)L$ для ГС над  $\A_{n+k}$),
			аналогичны \ref{l1p7} и \ref{l1p8}.
			В этих случаях $p\le1$.
			
			В случае $|u'v'|\ge n+k$ и $|u'v'u'|=n+k+p$ граничность доказывается как в \ref{l1p7}.
			Здесь необходимое неравенство проверяется, используя только независимо симметрирчные свойства (условия),
			как, например, $\l(\V)$, $\l(\V')$, $\r(\V)$ и $\r(\V')$
			ограничиваются независимо друг от друга (в худшем случае).
			
			Оставшийся случай $|u'v'|=n+k-1$ и $|u'v'u'|\le n+k$ доказывается как в \ref{l1p8}.
			Здесь же, для проверки неравенства на граничность используется симметричное, но зависимое условие $l+r$ в \ref{l1c3}.
			Например, при максимально допустимом $l$ не возможно использовать максимальное $r$.
			
			Поэтому, проверим оставшийся случай детально.
			
			Но усилим условие \ref{l1c3}
			Пусть у буквы $a$ есть 2 образа $v_{a,1}$ и $v_{a,2}$ у остальных хотябы по 1-му.
			Тогда для любых $u\ne w\in\V\setminus\{v_{a,1}, v_{a,2}\}$ и
			$v,v'\in\{v_{a,1}, v_{a,2}\}$ ($v$ и $v'$ могут совпадать)
			$$\max\{l+r: \suff_l(u)=\suff_l(v), \pref_r(v')=\pref_r(w)\}\le\frac{L}{n-1}$$
			
		\end{itemize}
	\end{itemize}
	
	\item Пусть $|u'|<2L/(n-1)$.
	Заметим, что по (\ref{l2p0e}) $\eps\le1/L(n-1)$.
	Тогда, чтобы нарушалось свойство, необходим достаточно короткий фактор $u'x'u'$ т.е.
	{
	%$$
	%\frac{|u'x'u'|+\eps}{|u'x'|}
	%>\frac{n}{n-1}
	%\Leftrightarrow
	%n|u'x'|-|u'x'|+|u'|(n-1)+\eps>n|u'x'|
	%\Leftrightarrow
	%|u'x'|<|u'|(n-1)+\eps
	%$$
	%Тогда $|u'x'|<|u'|(n-1)+\eps < (n-1)2L/(n-1)+\eps = |u'|(n-1)+\eps < 2L+\eps<2L+1/L$.
	%При этом $|u'x'u'|<2L+1/L+L/4<3L$ т.е. <<плохой>> фактор входит в 4 соседних $\V${--}образа.
	}
	$$
	\frac{|u'x'u'|+\eps}{|u'x'|}
	>\frac{n}{n-1}
	\Longleftrightarrow
	n|u'x'u'|-|u'x'u'|+\eps>n|u'x'|
	\Longleftrightarrow
	|u'x'u'|<n|u'|+\eps
	<\frac{3Ln}{n-1}+\eps
	=3L+\frac{2L}{n-1}+\eps
	$$
	Тогда $|u'x'u'|<4L+\eps$ т.е. $|u'x'u'|\le4L$ даже при $n\ge3$, а значит <<плохой>> фактор входит в 5 соседних $\V${--}образов.
	Такие образы различны при $n\ge5$, при этом, даже для Леммы \ref{l1} ($\forall k\ge1$).
	Но, даже по условию \ref{ec1} любой такой фактор <<хороший>> (для любой из 2-х лемм), а значит и все его факторы тоже
	%, как и наш не может быть <<плохим>>
	\contr\ $u'x'u'$ --- <<плохой>> фактор.
\end{itemize}
\end{proof}

Как видно, при слишком коротких повторах сразу доказывается корректность факторов в порождённых словах.
Этот случай почти очвиден т.к. доказывается даже грубыми неравенствами.
И очевиден асимптотически при больших $n$.
А в остальных случаях сразу синхронизируется длина корней с $0$ по модулю $L$.

%Тогда по (\ref{l1p5e2}), если $|u'|\ge2L/(n-1)$, то $|u'x'|$ кратен $L$.
%Тогда $v$ или $v'$ содержатся на концах повтора т.к. иначе 
%Тогда, $u'$ не содержит целиком ни $v$ ни $v'$ т.к. иначе, соответствующий фактор в оригинальном слове $\omega$
%содержит $v$ в тех же позициях.

\hyperlink{contents}{$\upuparrows$}

\newpage

\def\per{\sf per}
\def\Exp{\sf exp}
\def\lexp{\sf lexp}
\def\rt{\sf RT}
\def\gcd{\sf gcd}
\def\lcm{\sf lcm}
\def\lcs{\sf lcs}
\def\lcp{\sf lcp}
\def\lcps{\sf lcps}
\def\F{\bf F}
\def\Su{\sf S}
\def\Pr{\sf P}
\def\ex{\sf ex}
\def\C{\bf C}
\def\E{\mathbf E}
\def\I{\mathbf I}
\def\V{\mathbf V}
\def\U{\mathbf U}
\def\W{\mathbf W}
\def\sE{\mathbb E}
\def\sN{\mathbb N}
\def\sV{\mathbb V}
\def\sZ{\mathbb Z}
\def\0{\mathbf 0}
\def\-{\mathbf -}
\def\+{\mathbf +}

\def\Sg{\Sigma}
\def\Dt{\Delta}
\def\letterText{\textbf}
\def\a{\letterText{a}}
\def\b{\letterText{b}}
\def\ce{\letterText{c}}
\def\d{\letterText{d}}
\def\s{\letterText{s}}
\def\ee{\letterText{e}}

\def\u{\letterText{u}}
\def\v{\letterText{v}}
\def\w{\letterText{w}}
\def\x{\letterText{x}}
\def\y{\letterText{y}}
\def\z{\letterText{z}}
\def\tu{\tilde{\letterText{u}}}
\def\tv{\tilde{\letterText{v}}}
\def\tw{\tilde{\letterText{w}}}
\def\tx{\tilde{\letterText{x}}}
\def\ty{\tilde{\letterText{y}}}
\def\tz{\tilde{\letterText{z}}}

\def\e{\textit{e}}
\def\f{\textit{f}}
\def\g{\textit{g}}
\def\h{\textit{h}}
\def\i{\textit{i}}
\def\j{\textit{j}}
\def\k{\textit{k}}
\def\l{\textit{l}}
\def\m{\textit{m}}
\def\n{\textit{n}}
\def\p{\textit{p}}
\def\q{\textit{q}}
\def\r{\textit{r}}
\def\t{\textit{t}}
\def\Ce{\textit{C}}
\def\Eq{\textit{E}}
\def\Ie{\textit I}
\def\J{\textit{J}}
\def\L{\textit{L}}
\def\N{\textit{N}}
\def\R{\textit{R}}
\def\S{\textit{S}}
\def\al{\alpha}
\def\be{\beta}
\def\ga{\gamma}
\def\dt{\delta}
\def\bfdt{{\pmb\delta}}
\def\sg{\sigma}
\def\om{\omega}
\def\*{\cdot}

\section{Разбор ДР2 (2013) --- БГС с почти единичной экспонентой всех длинных факторов}\label{SW2}

В данной части автор, почти без изменений копирует некоторые части текста из ДР2, но с некоторыми пояснениями.
Важные изменения будут отмечены каким либо образом (например, в сносках или выделением красным/серым цветом или др. способом).
Точнее, красным будут исправления, а серым дополнения.
Дополнительные пояснения представлены обычным текстом.
Обозначения букв и слов будут отличаться от правил обозначения предыдущей части.
Здесь, как и в разборе ДР1 будет некоторый рефакторинг текста
(т.е. изменения без потери смысла, может с некоторой редакцией того, что подразумевается).

\paragraph{\large Неформальное дополнение.}%\subsubsection{Неформальное дополнение}

Главные леммы (называемые теоремами в ДР2) не перепроверены т.к. в их верности у автора нет сомнений.
В следующих версиях автор планирует разобрать эти леммы с пояснениями и обобщениями.

Немного философии обобщения утверждений.
Обобщением утверждения является такое утверждение, из которого вытекает изначальное.
Т.к. любое утверждение можно рассматривать как импликацию (как минимум условиями являются аксиомы),
то обобщением является как ослабление условий так и добавление (усиление) следствий.
В данной работе представлены оба вида обобщений.

\subsection {Определения и обозначения}

\subsubsection{Общие обозначения}

Курсивными буквами $e$, $f$, $i$, $j$, $m$, $n$, $k$, $p$, $s$, $L$ и $E$ (иногда с индексами) обозначим целые числа. $\eps$ --- рациональное число.
Жирными $\a$, $\b$, $\ce$, $\d$ и $\s$ обозначаются (не пустые) буквы.
$\u, \v, \w, \x, \y, \z$ обозанчают слова.
Символом $\lambda$ обозначим пустое слово.
%Символами $\u$, $\v$, $\w$, $\x$, $\y$ и $ \z$ будем обозначать, возможно пустые, слова.
%Символом $\al$ будем обозначать непустое <<переменное>> слово.
%Переменное значит, если $\u=\al$ и $\v=\al$ не означает равенство или неравенство $\u$ и $\v$, или, например, $\w=\al\al$ означает лишь то, что слово $\w$ состоит из не менее чем 2-х букв, а не то, что оно является квадратом.
Символы $\E$, $\V$, $\sV$ и $\Dt$ (иногда с индексами) обозначают некоторые множества или семейства множеств.
$\bfdt$ с индексом будет функцией определяющей множество, $\dt_{-}$ или $\dt$ без индекса --- число.
$\phi$ и $\varphi$ --- отображения.

Буква слова $\w$, стоящая в позиции с номером (или просто в позиции) $\i$, обозначается $\w[\i]$.
%Длина слова $\w$ обозначается как $|\w|$.
Очевидное свойство $|\x|+|\y|=|\x\y|$.
Количество букв $\a$ в слове $\w$ обозначается как $|\w|_\a$.
$\ex(\w)=|\w|-\per(\w)$.
Слова $\u$ и $\v$ будем называть эквивалентными и обозначать $\u\sim\v$,
если существует морфизм (или перестановка букв алфавита) $\varphi\colon\A_\n\to\A_\n$, что $\u=\varphi(\v)$.

\hyperlink{contents}{$\upuparrows$}

\subsubsection {Обобщённые $\D_{3, n}^\eps${--}ГС и их свойства --- $(\L, \eps, \E)${--}ГС}
Пусть $\n\ge5$, $\eps\ge 0$, $\L>1$, $\E\subseteq\sN$.
Индексы в данной части по умолчанию будут начинаться с 1.

\paragraph{$(\L, \eps, \E)${--}граничное слово:}
Обозначим характеристическую функцию множества $\mathbb{N}\setminus\E$ через $\chi_\x=1-\chi_\E(|\x|)$.
Граничное слово $\w\in\A_\n^*$ назовём $(\L, \eps, \E)${--}граничным (обозначим как $(\eps, \E)${--}ГС), если для любого фактора $\x\y\x$ слова $\w$ выполняется следующая импликация
\begin{gather} \label{epsE}
|\x|\notin\E\longrightarrow
\tfrac{|\x\y\x|+\eps}{|\x\y|}\le\tfrac{\n}{\n-1}
\text{ или, что тоже самое, }
\tfrac{|\x\y\x|+\eps\*\chi_\x}{|\x\y|}\le\tfrac{\n}{\n-1}.
\end{gather}

%В частности, $(\eps, \{1, 2\})${--}ГС совпадает с введённым в [1] $\eps${--}граничным словом.
\begin{note}\label{1inE}
	Если $(\eps, \E)${--}ГС $\w$ при $\eps>0$ имеет длину $|\w|>n$, то $1\in\E$ т.к. в таком слове существует подслово имеет длину $\n$ и период $\n-1$.
\end{note}

\begin{note}\label{epsE:per>rep(n-1)}
	Пусть $\w\in\T_n$, $\eps\le\frac{1}{n-1}$ и $\x\y\x$ --- фактор $\w$, тогда, если $\frac{|\x\y\x|+\eps}{|\x\y|}>\rt(\n)$, то $|\x\y|=|\x|\*(\n-1)$.
\end{note}
\begin{proof}
	Т.к. $\w$ --- гранично, то $|\x\y|\ge|\x|\*(\n-1)$.
	Тогда достаточно доказать, что при $|\x\y|\ge|\x|\*(\n-1)+1$ выполняется неравенство $\frac{|\x\y\x|+\eps}{|\x\y|}\le\rt(\n)$.
	$$
	\frac{|\x\y\x|+\eps}{|\x\y|}\le\frac{(\n-1)|\x|+1+|\x|+\frac{1}{\n-1}}{|\x|(\n-1)+1}=\frac{\n|\x|(\n-1)+\n}{(\n-1)(|\x|(\n-1)+1)}=\rt(\n).
	$$
\end{proof}

\begin{note}\label{epsE:E=1}
	Пусть $\w\in\T_n$, $\eps\le\frac{1}{n-1}$ и $\x\y\x$ --- фактор $\w$, $|\x|=2$, тогда $\frac{|\x\y\x|+\eps}{|\x\y|}\le\rt(\n)$.
\end{note}
\begin{proof}
	При $|\x|=2$ любое граничное слово (при $\n\ge5$) имеет период $|\x\y|\ge2(\n-1)+1$.
	Откуда по Замечанию~\ref{epsE:per>rep(n-1)} получаем требуемое.
\end{proof}

Замечание~\ref{epsE:E=1} означает, что множество всех $(\eps, \E)${--}ГС совпадает с множеством
всех $(\eps, \E\setminus\{2\})${--}ГС, при $\eps\le\frac{1}{\n-1}$.

\paragraph{Расширяемый фактор:}
Фактор $\x\a\y\b\x$ с корнем $\x\a\y\b$ назовём расширяемым в $\w$, если
$\b\x\a\y\b\x\subseteq\w$ или $\x\a\y\b\x\a\subseteq\w$.

\paragraph{$(|\x|, \eps)${--}экспоненциальное слово:}
Фактор $\x\y\x$ слова $\w\in\T_n$, %где $|\x|\notin\E$ и
для которого выполняется неравенство
% условие $\eqref{epsE}$
$\frac{|\x\y\x|+\eps}{|\x\y|}\le\frac{\n}{\n-1}$,
будем называть $(|\x|, \eps)${--}экспоненциальным (обозначать $(|\x|, \eps)${--}ЭС).

\begin{note}\label{def:equiv}
	Если фактор $\x\y\x\subseteq\w\in\T_n$ не $(|\x|, \eps)${--}ЭС, где $\eps\le\frac{1}{\n-1}$, тогда\par
	$1$ $\per(\x\y\x)=|\x\y|$ т.е. $\ex(\x\y\x)=|\x|$;\par
	$2$ $\x\y\x$ не расширяем в $\w$ {\cGr(даже при любых $\eps\le1$)};\par
	$3$ $|\x\y|=|\x|\*(\n-1)$;\par
	$4$ Если слово $\u\v\u\in\T_n$ не $(|\u|, \eps)${--}ЭС и $|\x|=|\u|$, то $|\y|=|\v|$.
	\begin {proof}
	Утверждения 1 и 2 легко проверить от противного.
	3-е утверждение вытекает из Замечания~\ref{epsE:per>rep(n-1)}.
	Для доказательства 4 посчитаем разницу длин $\y$ и $\v$. Б.О.О. считаем, что $|\y|\ge|\v|$.
	Пользуясь равенством $|\x|$ и $|\u|$, условием не $(|\y|, \eps)${--}ЭС $\x\y\x$ и граничностью слова $\u\v\u$ получим
	\begin{gather} \notag
	|\y|-|\v|=|\x\y|-|\u\v|<\frac{|\x\y\x|+\eps}{\rt(\n)}-\frac{|\u\v\u|}{\rt(\n)}\le\frac{|\y|-|\v|+\tfrac{1}{\n-1}}{\rt(\n)}
	\end{gather}
	откуда $|\y|-|\v|<1$ т.е. $|\y|=|\v|$.
	
	{\cGr
		Дополним доказательством для пункта (2) ОП.
		БОО пусть $\x\y\x$ расширяем в $\w$, но по условию $\w\in\T_n$ тогда
		$$
		\rt(n)
		\ge\frac{|\x\y\x|+1}{|\x\y|}
		\ge\frac{|\x\y\x|+\eps}{|\x\y|}
		$$
		значит $\x\y\x$ --- $(|\x|,\eps)${--}ЭС, что противоречит условию.
		Т.е. для пункта (2) достаточно чтобы $\eps\le1$.
		
		Доказательство для пункта (1) ОП.
		БОО пусть $\ex(\x\y\x)>|\x|\ge1$ т.е. $\per(\x\y\x)<|\x\y|$, но по условию $\w\in\T_n$ тогда
		необходимо $\per(\x\y\x)\le|\x\y|-|\x|=|\y|$ для $n\ge2$ т.к. иначе $\lexp(\x\y\x)>2$.
		Тогда получим противоречивое неравенство
		$$
		\rt(n)
		\ge\frac{|\x\y\x|}{\per(\x\y\x)}
		\ge\frac{|\x\y\x|}{|\y|}
		>\frac{|\x\y\x|+|\x|}{|\x\y|}
		\ge\frac{|\x\y\x|+\eps}{|\x\y|}
		>\rt(n)
		$$
		
		В доказательстве пункта (4) достаточно обусловить $\eps\le(\rt(n)-1)$, чтобы обобщить его для $n<5$
	}
	\end {proof}
\end{note}

{\cGr
	Для более ясного понимания, что значит $\u$ не $(\ex(\u),\eps)${--}ЭС в контексте $\w\in\T_n$ при $\eps\le\rt(n)-1=\frac{1}{n-1}$,
	из замечания \ref{def:equiv}(3) можно извлечь,
	что, если слово $\v$ с периодом меньше хотябы на 1 (т.е. $\per(\v)\le\per(\u)-1$)
	с тем же повтором (т.е. $\ex(\v)=\ex(\u)\ge1$ и $|\v|\le|\u|-1$),
	то $\v\not\in\T_n$ (т.е. в $\w$ таких факторов нет).
	
	% следующий закомментированный блок был в черновом разборе нашей ДР2 (2025), который автор уже не разбирал до конца.
	%Проверим это свойство.
	%По условию $\frac{|\v|}{\per(\v)}\ge\frac{|\u|-1}{\per(\u)-1}$.
	%Представим $\frac{|\u|-1}{\per(\u)-1}$ как $\frac{|\u|+A}{\per(\u)}$.
	%Тогда $A(\per(\u)-1)=|\u|-\per(\u)$
	%
	%откуда $A=\frac{|\u|-\per(\u)}{\per(\u)-1}=\frac{|\u|-1}{\per(\u)-1}-1\ge\frac{n}{n-1}-1$.
	%Значит $\frac{|\v|}{\per(\v)}\ge\frac{|\u|+A}{\per(\u)}\ge\frac{|\u|+\rt(n)-1}{\per(\u)}\ge\frac{|\u|+\eps}{\per(\u)}>\rt(n)$.
	%Т.е. $\v\not\in\T_n$
	%
	%И обратно,
	%Найдём максимальный $\eps$, при котором $\frac{|\u|+1+\eps}{\per(\u)+1}\le\frac{|\u|}{\per(\u)}=\rt(n)$?
	%Т.е. $\eps\cdot\per(\u)\le|\u|-\per(\u)$
	%откуда $\eps\le\frac{|\u|}{\per(\u)}-1=\rt(n)-1$.
	%Т.е. максимальное $\eps$ из допустимых $\rt(n)-1$.
	
	Если отсортировать факторы по длине корня при одинаковом повторе,
	тогда будет ровно $n$ классов факторов с общей длиной повтора $e$, являющихся граничными.
	Т.е. классы можно отсортировать по длинам факторов и выписать первые $n$ из них
	$$
	en, en+1, en+2,..., en+(n-1)
	$$
	Первый класс из них имеет экспоненту $\frac{en}{en-e}=\rt(n)$.
	Каждый следующий класс имеет длину на $1$ больше предыдцщего при фиксированном $n$.
	Тогда и экспонента, из-за общей длины корня имеет линейный рост с линейным ростом длин факторов,
	увеличиваясь на константу $\frac{1}{n-1}$ в каждом следующем классе.
	
	Заметьте, что следующий $en+n$ это уже наименьшая длина граничного фактора, допускающего длину повтора $e+1$.
	Проверим: $\frac{en+n}{en+n-(e+1)}=\frac{n(e+1)}{(e+1)(n+1)}=\rt(n)$.
	
	{\bf Т.о. мы можем выделить $n$ классов граничных факторов.
		Будем их называть по номерам, где первый класс имеет длину $en$,
		для любого фактора с повтором длины $e$ при любом $e\in\mN_0$.}
	
	{Т.е. если взять $\u\subset\w\in\T_n$ и $0<\eps\le\rt(n)-1$,
		то фактор $\u$ не $(\ex(\u),\eps)${--}ЭС если, и только если $\exp(\u)=\rt(n)$.}
	
	В более общем случае 
	{\bf если $k\in\mN$,
		то фактор $\u$ не $(\ex(\u),k\eps)${--}ЭС если, и только если он из первых $k$ классов.}
	
}

\paragraph{Множество $\E_\w(\eps, \E, m)$ слов:}
Множество факторов $\x\y\x$ слова $\w$, не являющихся $(|\x|, \eps)${--}ЭС,
у которых множество позиций правого повтора $\x$ содержит $m$,
при этом $|\x|\in\E$, обозначим $\E_\w(\L, \eps, \E, m)$ т.е.
%%\begin{flalign*}% \notag
\begin{multline*}
\E_\w(\eps, \E, m)=\{\x\y\x=\w[i{+}1, ..., i{+}|\x\y\x|]\colon|\x|{\in}\E,\ \x\y\x\text{ не }(|\x|, \eps)\text{{--}ЭС},\ %\\
i, m{\in}\sN_0,\ i{+}|\x\y|<m\le i{+}|\x\y\x|\le|\w|\}
%%\shoveleft{\E(\w, \eps, \E, \m)=\{\x\y\x{=}\w[i{+}1, ...i{+}|\x\y\x|]\colon|\x|\in\E,\ \x\y\x\text{ не }(|\x|, \eps){--}exp,}\\%\notag
%%\shoveright{i, \m\in\sN_0\ i<\m-|\x\y|\le i+|\x|\le|\w|-|\x\y|\}}
\end{multline*}
%%\end{flalign*}

{\cGr
	Это множство над $(\eps,\E)${--}ГС для любого $m$ не превосходит $|\E|$.
	Об этом будет более подробное 
}

\begin{prop}\label{def:unic}
	Пусть $\w\in\T_n$ и $\eps\le\tfrac{1}{n-1}$,
	тогда для любого $e\in\E$ существует не более одного элемента $\u\v\u\in\E_\w(\eps, \E, \m)\cup\E_\w(\eps, \E, m+1)$ такого,
	что $|\u|=e$ и $\u\v\u$ не $(|\u|, \eps)${--}ЭС.
	\footnote{
		Условие <<$\u\v\u$ не $(|\u|, \eps)${--}ЭС>> можно избыточно %по всей видимости
		т.к. вытекает из определения $\E_\w(\eps, \E, \m)$ и $\E_\w(\eps, \E, m+1)$.
		А условие $|\u|=e$ можно заменить на условие $|\u\v|=\per(\u\v\u)$,
		но и это избыточно т.к. только единственным образом фактор $\u\v\u\in\T_n$ может быть не $(|\u|, \eps)${--}ГС
		(по зам-ю \ref{def:equiv}.1).
	}
\end{prop}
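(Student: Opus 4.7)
The plan is to pin down both non-$(e,\eps)$-ES factors by their lengths and periods, observe they must occupy almost the same window in $\w$, and derive a contradiction with $\w\in\T_n$ if the windows were distinct.

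First, suppose for contradiction that two distinct factors $\u_1\v_1\u_1,\u_2\v_2\u_2\in\E_\w(\eps,\E,m)\cup\E_\w(\eps,\E,m+1)$ both have $|\u_1|=|\u_2|=e\in\E$ and are not $(e,\eps)${--}ES. Applying Remark~\ref{def:equiv} (items 1 and 3) under the hypothesis $\eps\le\frac{1}{n-1}$, each of them has period exactly $e(n-1)$, hence length $|\u_i\v_i\u_i|=en$; item 4 ensures additionally that $|\v_1|=|\v_2|$.

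Next, let $a_i\in\sN$ be the starting position of the right copy of $\u_i$. By the definition of $\E_\w(\eps,\E,m)$ and $\E_\w(\eps,\E,m+1)$, this copy (of length $e$) contains $m$ or $m+1$, so $a_i\in[m-e+1,\,m+1]$. Without loss of generality $a_1\le a_2$ and set $d=a_2-a_1\in[0,e]$. The goal is to force $d=0$.

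The key step is to consider the combined factor $\w'=\w[a_1-e(n-1)\,..\,a_2+e-1]$, of length $en+d$. It contains $\u_1\v_1\u_1$ at internal offset $0$ and $\u_2\v_2\u_2$ at internal offset $d$; both have period $p=e(n-1)$, and they overlap in $en-d$ positions. Since $d\le e$, we have $en-d\ge e(n-1)=p$, so by Fine--Wilf (or by a direct step-by-step propagation of the period across the overlap, which is all that is needed when the two periods are \emph{equal}) the whole $\w'$ inherits period $e(n-1)$. Hence
$\exp(\w')=\frac{en+d}{e(n-1)}$.
But $\w\in\T_n$ forces $\exp(\w')\le\frac{n}{n-1}$, which rearranges to $d\le 0$; combined with $d\ge 0$ this gives $d=0$, i.e.\ $a_1=a_2$. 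Since both factors then share the same length $en$ and the same endpoint, they coincide as subwords of $\w$, contradicting distinctness.

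The only genuinely delicate point is Step~4: one must check that the shift by $d$ between two period-$p$ windows really does propagate the period across the union, which is why the estimate $d\le e$ (so that $en-d\ge p$) is essential. Everything else — the length $en$, the period $e(n-1)$, the equality $|\v_1|=|\v_2|$ — is already packaged in Remark~\ref{def:equiv}, so no new combinatorial input beyond Fine--Wilf is required.
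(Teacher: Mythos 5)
Your proof is correct and follows essentially the same route as the paper: Remark~\ref{def:equiv} pins down equal periods and lengths, and the overlap of the two right repeats (both forced into the window around $m$, $m+1$) yields the contradiction. The only cosmetic difference is that you close the argument by propagating the period $e(n-1)$ over the union and computing its exponent, whereas the paper invokes non-extendability (item (2) of Remark~\ref{def:equiv}) — which is the same fact, since item (2) is itself proved by exactly that exponent estimate.
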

\begin{proof}
	Допустим существование двух различных элементов $\u_1\v_1\u_1$ и $\u_2\v_2\u_2$ из $\E_\w(\eps, \E, m)\cup\E_\w(\eps, \E, m+1)$ что $|\u_1|=|\u_2|\in\E$ и они не $(|\u_1|, \eps)${--}ЭС.
	Тогда выполняются условия замечания~\ref{def:equiv}.
	По замечанию~\ref{def:equiv}(4) получаем $|\v_1|=|\v_2|$, откуда $|\u_1\v_1|=|\u_2\v_2|$.
	%C учётом замечания 1.1 получаем, что $\per(\v_1\u_1\v_1)=|\v_1\u_1|=|\v_2\u_2|=\per(\v_2\u_2\v_2)$.
	А т.к. факторы $\u_1\v_1\u_1$ и $\u_2\v_2\u_2$ различны и правые $\u_1$ и $\u_2$ пересекаются в позиции $m$ или стоят рядом, то эти слова расширяемы в $\w$, что противоречит замечанию~\ref{def:equiv}(2).
\end{proof}

{\cGr
	Не трудно заметить, что даже если $|\v_1|$ и $|\v_2|$ отличаются на 1,
	то существует квадрат в объединении левых повторов.
	Тогда, достаточно ограничить $\eps$ сверху б$\acute{\text{о}}$льшим числом,
	только нужно доказать аналог замечания~\ref{def:equiv}(4) для случая $||\y|-|\v||=1$.
	%Думаю можно и намного сильнее обобщение сделать для ~\ref{def:equiv}(4).
	
	%\paragraph{Замечане:}
	%$\w$ достаточно быть просто граничным.
	С помощью свойства \ref{def:unic} мы сможем сильно сократить необходимый набор $\V${--}образов
	для построения $(\eps,\E)${--}ГС до $2n$ слов.
	А так же, это свойство поможет породить сразу экспоненциальное множество таких слов.
}

Добавим обозначения образа и нерасширяемого фактора в образе
\begin{itemize}
	\item 
	В контексте разбора ДР2 слово со штрихом будет означать образ этого слова (например,
	$\x'$ или $(\x)'$ --- образы одного и того же слова $\x$).
	Слово $\w'=\varphi(\w)$ будем называть \textit{образом слова} $\w$.
	Через $\varphi_\k(\w)$ обозначим образ $\k$-й буквы слова $\w$, определённого обобщённой подстановкой $\varphi$.
	%, т.е. $\varphi_\k(\w)=\phi(\w)[(\k-1)\L+1, ..., \k\L]$.
	Образ одной буквы назовём \textit{символьным образом}.
	
	Так же, для любого слова $\w=\x\y...\z$ образы его частей $\x$, $\y$, ..., $\z$ будем обозначать  $\x'=\varphi_1(\w)...\varphi_{|\x|}(\w)$, $\y'=\varphi_{|\x|+1}(\w)...\varphi_{|\x\y|}(\w)$, ..., $\z'=\varphi_{|\w|-|\z|+1}(\w)...\varphi_{|\w|}(\w)$.
	
	\item 
	Пусть $\x\y\x\in\F(\w)$\footnote{Напомним, что $\F(\w)$ --- множество всех факторов слова $\w$},
	тогда нерасширяемое слово в $\w'$, содержащее образы (всех) подслов $\x\y\x$ и никакие другие,
	будем обозначать $\tx\ty\tx$ причём $\tx$ содержит только образы (всех) подслов $\x$.
\end{itemize}

{\cGr%\cRe
	Условия Леммы \ref{l2} основаны на $\D^\eps_{3, n}${--}ГС.
	Но $\D^\eps_{\P, n}${--}ГС имеют <<усиленную>> экспоненту в зависимости от длины корня $p$ факторов по всем $p\in\P$.
	В ДР2 мы определяем $(\eps,\E)${--}ГС в зависимости от длины повтора (наибольшего) $e$ факторов по всем $e\in\E$,
	что удобнее для анализа свойств ГС.
	Впервые идея перерйти на зависимость от длины корня к длине повтора
	была предложена нашим НР в его редакции ($\eps${--}граничные слова,
	описанные им в совместной с автором научной статье) авторского метода решения
	на основе текста для лекций, самих лекций и ДР1 автора.% (см. переписки \ref{link_lect} и \ref{link_bachelor_final}).
	
	Строго по определению $\eps${--}ГС не одно и то же, что и слово из $\D_{3,n}^\eps\cap\T_n$.
	Но они эквивалентны при $\eps\in[0,\frac{2}{n-1}]$.
	
	$\D_{3,n}^\eps${--}ГС является $\eps${--}ГС
	т.к. повторы длины $\ge3$ в ГС появляются только в факторах с периодом не менее $3n-3$.
	%т.к. усиленная граничность применяется только для факторов с повторами длины не менее 3.
	
	В обратную сторону.???перепроверить???
	Очевидно, что $\eps${--}ГС из $\T_n$.
	В нашей Лемме \ref{l2} достаточно рассмотреть только $\eps\in[0,\frac{2}{n-1}]$ (что вытекает из \ref{l2p0e}).
	Тогда достаточно доказать, что факторы с повторами длины $\ge3$ имеют период не менее $3n-1$,
	что мы уже выяснили в замечании \ref{nt_3nm1}.
	Тогда $\eps${--}ГС из $\D_{\ge3n-1,n}^\eps$.
	Используя замечание \ref{nt_D3nm1}, получим,
	что $\eps${--}ГС из $\D_{\ge3n-3,n}^\eps\cap\T_n$ при $\eps\in[0,\frac{2}{n-1}]$.
	
	%Т.к. 3 повтор в ГС появляется только при периоде не менее $3n-1$,
	%то по замечанию \ref{nt_D3nm1} это, как раз, позволяет увеличивать $\eps$ от $0$ до $\frac{2}{n-1}$.
	%Т.е. для любых $\eps\in[0,\frac{2}{n-1}]$ из $\eps${--}граничности вытекает принадлежность к $\D_{\ge3n-3,n}^\eps$.
	%% т.к. повторы длины 1 и 2 при периодах более $2n$ 
	
	Поэтому, %(с учётом \ref{l2p0e})
	Лемма \ref{l2} для $\D_{3,n}^\eps${--}ГС эквивалентна Лемме \ref{l2} для $\eps${--}ГС.
	
	%Поэтому, доказательство Леммы \ref{l2} работают и для $\eps${--}ГС.
	%То, что $\D_{3,n}^\eps${--}ГС является частным случаем $\eps${--}ГС
	%не делает аналог Леммы \ref{l2} для $\eps${--}ГС более общим
	%т.к. в Лемме \ref{l2} доказывается получение не менее <<сильного>> $\D_{3,n}^\eps${--}ГС.
	%Но в аналоге ослаблятся условие в импликации, поэтому это 2 разные леммы,
	%если бы не вытекающее следствие $\eps\in[0,\frac{2}{n-1}]$.
}

\hyperlink{contents}{$\upuparrows$}

\subsubsection {Обобщение $\k${--}значной подстановки --- $(\L, \eps, \E, \Dt_\n)${--}подстановка ($(\L, \eps, \E, \Dt_\n)${--}П)}
Добавим вспомогательные объекты
\begin{itemize}
	\item $\V_\s\subset\A_n^\L$ --- множество слов длины $\L$ по всем $s\in\A_n$.
	{\cGr
		Т.е. образы для каждой буквы $s\in\A_n$.
		Условие равенства длинн можно и опустить.
	}
	\item $\sV=\bigcup_{a\in\A_n}\V_\a$.
	{\cGr
		Это, просто, все образы. Здесь нет условия раличия образов для разных букв.
	}
	\item $\sV_n=\{\V_\a\colon a\in\A_n\}$
	\item $\bfdt_\b:\V_\a\to2^{\V_\b}$ по всем $a\ne b\in\A_n$.
	{\cGr
		Можно было бы и не обуславливать $a\ne b$ для общности.
		$\bfdt_\s(\v)$ будет использоваться как подмножество допустимых образов из $\V_\s$,
		которые могут следовать сразу за некоторым образом $\v\in\mV$.
		Т.е. $\bfdt_\s(\v)\subseteq\V_\s$ при любом $\v\in\mV$.
		В нашей 3-хзначной подстановке использовался частный случай $\bfdt_\s(\v)=\V_\s$
		для любого $\v\in\mV\setminus\V_\s$.
		Это обобщение полезно, когда таблица проверки пар на условие \ref{l1c2} и \ref{l2c2} разрежена.
		
		Для лучшего понимания дальнейшего, можно считать,
		что $\bfdt_\b(\v\in\V_\a)=\V_\b$ и $\V_\a\cap\V_\b=\emptyset$ для любых $a\ne b\in\A_n$
	}
	%$\bfdt_\a:\V_{\v[1]}\cdots\V_{\v[|\v|]}\to2^{\V_\a}$ назовём $|\v|${--}контекстным множеством образов буквы $\a\in\A_\n$.
	%Для простоты в этой работе будем использовать только $1${--}контекстные множества (обобщение до $|\v|${--}контекстных будет очевидным).\\
	\item $\Dt_n=\{\bfdt_\a\colon a\in\A_n\}$.
	{\cGr
		Будем считать, что $\bigcup_{\u\in\V_\b, b(\ne a)\in\A_n}\bfdt_\a(\u)=\V_\a$.
		Т.е. для любоо образа $\v\in\V_\a$ найдётся буква $b\ne a$, что для некоторого $\u\in\V_\b$ будет $\v\in\bfdt_\a(\u)$.
	}
	%$\sV=\overset{i=1}{\underset{n}{\bigcup}}\V_i$
	%$\sV=\underset{\V\in\sV_\n}{\bigcup}\V$,
	\item $\dt_{-}=\min\{|\bfdt_\a(\v)|\colon\v\in\sV\setminus\{\V_\a\}, a\in\A_n\}$.
	{\cGr
		В нашей 3-хзначной подстановке $\dt_{-}=3$
	}
\end{itemize}
Добавим условие по умолчанию:
\footnote{
	Данное условие не было добавлено в ДР2.
	% Но в закомментированной части некоторого исходника ("gen.tex" файл 15.03.2019) ДР2 это условие есть.
	% Автор не может этого доказать, по этому можно только поверить на слово.
}
\begin{gather} \label{dt>E}
|\bfdt_\a(\v)|>|\E|\text{, где }\v\in\V_\b\text{ для всех }a\ne b\in\A_n,
\text{ или, что то же самое }
{\cRe \dt_{-}>|\E|}.
\end{gather}
Из этого условия и замечания~\ref{1inE} следует, что мощность $\sV$ не меньше $2n$.

Назовём \textit{обобщенной подстановкой} $\varphi$:

$\varphi(a)\in\V_\a$;

Множества $\V_\a$ для разных $a$ не пересекаются;

$\varphi$ применяется к любому слову побуквенно слева направо;

существует детерминированный алгоритм, выбирающий очередное значение $\varphi(a)$, возможно на основе предыдущих выбранных значений.

%Иногда обобщенную подстановку будем называть классом подстановок, соответствующего классу слов порождённых обобщенной подстановкой.

%подстановку вида $\varphi\colon\A_\n\to\sV$, такую что $\varphi(\a)\in\V_\a$ для любого $\a\in\A_\n$.
%Другими словами, $\V_\a$ является множеством образов только буквы $\a$, и только в них она может отображаться под действием этого подстановки.

Возмём произвольное $(\eps, \E)${--}ГС $\w$.% и обобщённую подстановку $\phi:\A_\n\to\sV$.
Тогда определим
\paragraph{$(\L, \eps, \E, \Dt_\n)${--}подстановка:}
Обобщённую подстановку $\phi$ будем называть $(\L, \eps, \E, \Dt_n)${--}подстановкой ($(\L, \eps, \E, \Dt_n)${--}П),
если выполняются следующие условия: для любых $1<\j\le|\w|$, $1\le r\le|\w|$ и $0\le i\le|\w|-r$
\footnote{Пояснения и ограничения для $i$ и $r$ в ДР2 были пропущены, но не трудно установить их ограничения по контексту.}
\begin{description}
	\item[(M1)\label{m1}]
	$\phi_\j(\w)\in\bfdt_{\w[j]}(\phi_{\j-1}(\w))$,
	где $\bfdt_{\w[j]}\in\Dt_n$.
	\footnote{
		В данном условии в ДР2 $\w[j]$ частично было заменено на $a$.
		Здесь представлен некоторый рефакторинг данного условия.
		Напоминаем, что $j$ это  позиция букв, начинающиеся с 1.
		Одно из подразумеваемых значений этого условия --- непустота $\bfdt_{\w[k]}(\phi_{k-1}(\w))$
	}
	%на сколько помню, это условие существования следующего образа (возможности продолжения).
	%т.е. условие непустоты $\bfdt_{\w[j]}(\phi_{\j-1}(\w))$
	%\parskip 0pt
	\item[(M2)\label{m2}]
	Выполняется импликация ---
	если фактор $\u\v\u=\w[\i+1, ..., \i+r]$ не $(|\u|, \eps)${--}ЭС и $|\u|\in\E$,
	
	\hfill тогда существует позиция
	$m\in\{\i+1, ..., \i+|\u|\}$, в которой $\phi_m(\w)\ne\phi_{m+|\u\v|}(\w)$.
	\footnote{
		Во избежание путаницы уточним,
		что в данном условии $m$ установлено для левого повтора, в отличие от констекста с $\E_\w(\eps, \E, \m)$.
	}
	
	%\item
	%\parindent 10 mm%					отступ слева первого предложения в абзаце
	%%\parskip 0pt
	%%\par(M3) If factor $\u\v\u=\w[\i+1, ..., \i+|\u\v\u|]$ non $(|\u|, \eps){--}exp$ and $|\u|\in\E$, then exists
	%\parindent 0 mm%					отступ слева первого предложения в абзаце
	%\parskip 3 mm%
\end{description}

Идея отображения проста --- если повторяющееся подслово достаточно короткое, то и в образе этого подслова из-за малых общих префиксов и суффиксов символьных образов будет короткий повтор.
Если же повтор слишком длинный, то появляется необходимость <<разбить>> образ повтора для гарантированного сохранения граничности.

{\cGr
	\ref{m1} это условие непустоты $\bfdt_{\w[j]}(\phi_{\j-1}(\w))$
	т.е. возможность продолжать отображение $\w$ в $\phi(\w)$ слева направо
	после отображения $(j-1)$-й буквы в $\w$.
	А так же это условие,
	что образ последующий буквы $s$ определяется функцией/правилом $\bfdt_\s$
	от образа предыдущей буквы.
	В этом условии не учитывется полный контекст,
	а только локально --- соседним слева $\V${--}образом.
	%что соседние $\V${--}образы различны.
	Это условие можно заменить на $\phi_{j}(\w)\in\V_{\w[j]}$,
	если таблица проверки различных пар \ref{l2c2} без пробелов и размера $2n$.
	
	\ref{m2} это гарантия различия образов, хотябы одной пары <<синхронных букв>>
	(находящихся в разных повторах $\u$ с одинаковым отступом ($m-1$) слева в них,
	т.е. разница их позиций в $\w$ равна $|\u\v|$)
	в слишком длинных повторах.
	%Может ли $\w$ быть просто граничным, не написано ли в условии избыточное усиление?
	%Похоже не избыточное т.к. ограничения на $(\eps,\E)${--}ГС только
	%для повторов длины из $\mN\setminus\E$,
	%а <<не $(\eps,\E)${--}ГС>> обуславливает принадлежность длины повтора к $\E$.
	Заметьте, что это условие не исключает случая <<не $(|\u|, \eps)${--}ЭС и $|\u|\not\in\E$>>.
	Оно обуславливает только импликацию.

	%\paragraph{Замечане:}
	$\phi$ определён только на $(\eps,\E)${--}ГС.
	Но достаточно определить и для простых граничных $\w$.
	И само условие \ref{m2} учитывает возможность не $(|\u|, \eps)${--}ЭС при $|\u|\not\in\E$.
}

Для оценки роста числа различных слов, порождаемых обобщённой подстановкой
$(\L, \eps, \E, \Dt_\n)${--}П над словом $\w$,
оценим эту функцию (роста) для некоторого подкласса этих слов. Обозначим
%, которые последовательно фиксируют (определяют) образы каждой буквы слова $\w$ слева направо.
\begin{itemize}
	\item
	$\E_\m=\E_\w(\eps, \E, \m)$ для $\m\in\sN$.
	{\cGr
		$\E_m$ --- множество факторов $\v\subseteq\w$ с предельной экспонентой
		(т.е. $\exp(\v)=\rt(n)$),
		содержащие позицию $m$ в правом повторе
		(позиция, конечно, в глобальном контексте $\w$,
		а не относительно начала $\v$ или его повтора).
		% (все они различной длины, длин корней и повторов).
		Для нас (наших целей) это все факторы в $\w$, содержащие $m$ в правом повторе,
		которые ещё являются $\D_{\ge 3n-3, n}^\eps${--}ГС, но их образ (при 1{--}значной подстановке) уже нет.
	}
	\item
	$\I_\w(\E', \m)$ будет означать множество образов букв\sout{\cRe ы}
	\footnote{
		Не удачная формулировка в оригинале.
	}
	$\w[\m-|\u\v|]$ для каждого не $(|\u|, \eps)${--}ЭС $\u\v\u\in\E_\m\cap\E'$,
	где $\E'$ --- некоторое множество слов.
	{\cGr
		$\I_\w(\E',m)$ это $\V${--}образы букв в левых повторах в позиции $(m-\per(\w))$ для всех $\w\in\E'\subseteq\E_m$.
		%$\E_m\cap\E'$, $\E'$ это просто фильтр т.е. можно считать, что $\E'\subseteq\E_m$.
		%Смысл $\I_\w(\E_m,m)$ в том, что это $\V${--}образы,
		%которые достаточно будет заменить на некоторый из
		%$\bfdt_{\w[m]}(\phi_{m-1}(\w))$,
		%отличный от всех них (если такой есть),
		Смысл $\I_\w(\E_m,m)$ в том, что это <<исключаемые>> $\V${--}образы,
		которые достаточно (но не необходимо) исключить из
		$\bfdt_{\w[m]}(\phi_{m-1}(\w))$,
		а оставшиеся использовать для отображения $m$-й буквы в слове $\w$,
		для сохранения хорошей экспоненты образов всех слов из $\E_m$.
		
		$\I_\w(\E_m,m)$ это $\V${--}образы,
		ВОЗМОЖНО, не допустимые в позиции $m$.
		Но могут быть и допустимы,
		если уже есть другая буква в правых повторах тех же факторов $\E_m$,
		которая отображается в не <<исключаемый>> образ.
		
		% СЛЕДУЮЩИЙ КОММЕНТАРИЙ НЕ РАЗОБРАН, ВОЗМОЖНО, ПОДРАЗУМЕВАЛОСЬ ЧТО-ТО ДРУГОЕ.
		%Заметьте, что $|\I_\w(\E_m,m)|\le\bfdt_{\w[m]}(\phi_{m-1}(\w))$
		%т.к. на каждый фактор $\u$ в $\bfdt_{\w[m]}(\phi_{m-1}(\w))$
		%% не $\u\in\bfdt_{\w[m]}(\phi_{m-1}(\w))$ ????
		%найдётся образ из $\I_\w(\E_m,m)$,
		%в который отображается <<синхронная буква>> слева от $\w[m]$ в левом повторе фактора $\u$.
		%При том, что все факторы в $\bfdt_{\w[m]}(\phi_{m-1}(\w))$ различны,
		%а образы соответствующие $\w[m]$ могут совпадать.
		% ????
	}
\end{itemize}

{\cGr
	\begin{note}\label{nt:IwInIw}
		$\I_\w(\E'_1,m)\subseteq\I_\w(\E'_2,m)$ при любых $\E'_1\subseteq\E'_2$.
		Откуда $\I_\w(\E_k\setminus\E_{k-1},k)\subseteq\I_\w(\E_k\setminus\E',k)$ при любых $\E'\subseteq\E_{k-1}$.
	\end{note}
	
	\begin{note}\label{nt:E>Iw}
		Испольуя свойство \ref{def:unic}, $|\E|\ge|\I_\w(\E_k,k)|$ для любого $k\in\mN$ и $\eps\le\frac{1}{n-1}$.
		Т.е. если $\E=\{1\}$ и $\dt_{-}\ge2$, то $|\bfdt_{\w[k]}(\phi_{k-1}(\w))\setminus\I_\w(\E_k,k)|>0$.
	\end{note}

{
}

}

\hyperlink{contents}{$\upuparrows$}

\subsubsection {Ослабление условий Леммы \ref{l2} из ДР1}

{\cGr
	Здесь мы предложим более слабые граничения на
	%<<$\eps${--}граничный>> аналог множества $\V$, удовлетворяющий
	условия Леммы \ref{l2},
	заменив $\D^\eps_{3, n}${--}ГС на $(\eps,\E)${--}ГС.
	
	Вариаций ослабления данных условий много и они не все совместимы.
	Здесь предлагается один из вариантов обобщения.
	%В статье множество $\V$, удовлетворяющее условиям Леммы \ref{l2} названо $(\L, \eps)${--}согласованным.
}

Пусть $\phi$ --- $(\L, \eps, \E, \Dt_\n)${--}П,
$\x'=\phi(\x)$ и $\y'=\phi(\y)$ для некоторых $(\eps, \E)${--}ГС $\x$ и $\y$,
тогда $\Dt(\eps, \E, \Dt_\n)(\x', \y')$ --- наименьшее число $\k\ge0$,
что существует $(\eps, \E)${--}ГС $\w$ длины $\k+|\x\y|$,
в котором $\x'\in\PREF(\phi(\w))$ и $\y'\in\SUFF(\phi(\w))$ по всем $\phi$.
Если такого числа не существует, то $\k=\infty$.
В контексте разбора ДР2 $\Dt(\eps, \E, \Dt_\n)(\x', \y')$ будем обозначать $\Dt(\x', \y')$.
Например, если $\u\in\V_\a, \v\in\bfdt_\b(\u)$, то $\Dt(\u, \v)=0$.
\\
$\lcp(\u, \v)=\max\{|\x|\colon\x\in\PREF(\u)\cap\PREF(\v)\}$,
$\lcs(\u, \v)=\max\{|\x|\colon\x\in\SUFF(\u)\cap\SUFF(\v)\}$,\\
$\lcp(\sV_\n)=\max\{\lcp(\u, \v)\colon\u\in\V_\a, \v\in\V_\b, \a\ne\b\in$ $\A_\n\}$,\\
$\lcs(\sV_\n)=\max\{\lcs(\u, \v)\colon\u\in\V_\a, \v\in\V_\b, \a\ne\b\in$ $\A_\n\}$,\\
%$\lcps(\sV_\n)=\max\{\lcp(\u_1, \u_2){+}\lcs(\v_1, \v_2)\colon\u_\i\in\V_{\a_\i}, \v_\i\in\bfdt_{\b_\i}(\u_\i), \a_\i{\ne}\b_\i\in$ $\A_\n, \i=1, 2\}$.\\
Напомним $\chi_\x=1-\chi_\E(|\x|)$ --- характеристическая функция.\\
$\sg_{\sV_\n}(\u_1, ..., \u_\m)$ --- минимальное число букв $\a_1, ..., \a_\k\in\A_\n$, что $\{\u_1, ..., \u_\m\}\subseteq\V_{\a_1}\cup...\cup\V_{\a_\k}$.
В контексте разбора ДР2 будем считать, что $\lambda\not\in\F(\x)$ для любого слова $\x\in\A_\n^*$ (в частности $\lambda\not\in\SUFF(\x)\cup\PREF(\x)$).
\smallskip
\parskip -10pt%					Расстояние между абзацами

\paragraph{$(\L, \eps, \E)${--}\textit{согласованные} множества:}
Семейство $\Dt_\n$ назовём $(\L, \eps, \E)${--}\textit{согласованным} (обозначим как $(\L, \eps, \E)${--}С),
если для любых $\a\ne\b\ne\ee\ne\a$ и $\ce\ne\d$ из $\A_\n$ и $\u_1\in\V_\a$, $\v_1\in\bfdt_\b(\u_1)$,
$\w_1\in\bfdt_\ee(\v_1)$, $\u_2\in\V_\ce$ и $\v_2\in\bfdt_\d(\u_2)$ выполняются следующие условия:
\parindent 0 mm%					отступ слева
\parskip 2pt%					Расстояние между абзацами

(uf1.1) Если $\u\in\sV$,
тогда $\per(\u)=|\u|(=\L)$;%\bigcup_{i=1}^\n\V_i
\parskip 5pt%					Расстояние между абзацами

(uf1.2) Если $\u\in\V_\a$, $\v\in\V_\b$,
тогда $\SUFF(\u)\cap\PREF(\v)=\emptyset$;
\parskip 5pt%					Расстояние между абзацами

(uf2.1.1) Если $\u_1=\al\x\y\z\x$, $\v_1=\y\al$,
тогда $\x\y\z\x\y$ --- $(|\x\y|, \eps\*\chi_{\x\y})${--}ЭС;
\parskip 3pt%					Расстояние между абзацами

(uf2.1.2) Если $\sg_{\sV_\n}(\u, \v)=2$, $\u=\al\x\y$, $\v=\z\x\al$,
тогда $\frac{\Dt(\u, \v)\*\L+|\x\y\z\x|+\eps\*\chi_\x}{\Dt(\u, \v)\*\L{+}|\x\y\z|}\le\rt(\n)$;
\parskip 3pt%					Расстояние между абзацами

(uf2.1.3) Если $\u_1=\al\x$, $\v_1=\y\z\x\y\al$, тогда $\x\y\z\x\y$ --- $(|\x\y|, \eps\*\chi_{\x\y})${--}ЭС;
\parskip 3pt%					Расстояние между абзацами

(uf2.2.1) Если $\sg_{\sV_\n}(\u, \v, \w)$=$3$, $\u$=$\al\x$, $\v$=$\y\z_1$, $\w$=$\z_2\x\y\al$,
тогда $\frac{\Dt(\u\v, \w)\*\L+|\x\y\z_1\z_2\x\y|+\eps\*\chi_{\x\y}}{\Dt(\u\v, \w)\*\L+|\x\y\z_1\z_2|}\le\rt(\n)$;
\parskip 3pt%					Расстояние между абзацами

(uf2.2.2) Если $\sg_{\sV_\n}(\u_1, \v_1, \w_1)$=$3$, $\u_1$=$\al\x$, $\v_1$=$\y\z\x$, $\w_1$=$\y\al$,
тогда $\x\y\z\x\y$ --- $(|\x\y|, \eps\*\chi_{\x\y})${--}ЭС;
\parskip 2pt%					Расстояние между абзацами

(uf2.2.3) Если $\sg_{\sV_\n}(\u, \v, \w)$=$3$, $\u$=$\al\x\y\z_1$, $\v$=$\z_2\x$, $\w$=$\y\al$,
тогда $\frac{\Dt(\u, \v\w)\*\L+|\x\y\z_1\z_2\x\y|+\eps\*\chi_{\x\y}}{\Dt(\u, \v\w)\*\L+|\x\y\z_1\z_2|}\le\rt(\n)$;
\parskip 2pt%					Расстояние между абзацами

(uf2.3) Если $\sg_{\sV_\n}(\u_1, \v_1, \u_2, \v_2)$=$4$ и $\u_1$=$\al\x$, $\v_1$=$\y\z_1$, $\u_2$=$\z_2\x$, $\v_2$=$\y\al$,
тогда %\par
%\ \hfill
${\frac{\Dt(\u_1\v_1, \u_2\v_2)\*\L+|\x\y\z_1\z_2\x\y|+\eps\*\chi_{\x\y}}{\Dt(\u_1\v_1, \u_2\v_2)\*\L{+}|\x\y\z_1\z_2|}\le\rt(\n)}$;
\parskip 3pt%					Расстояние между абзацами

(uf3.1) Если $\u_1\ne\u_2$,
тогда $\lcp(\u_1, \u_2)\le\L-\lcs(\sV_\n)-\eps$ и $\lcs(\u_1, \u_2)\le\L-\lcp(\sV_\n)-\eps$;
\parskip 3pt%					Расстояние между абзацами

(uf3.2) Если $\u_1=\al\x$, $\y\z\al=\v_1$, $\u_2=\al\x\y$ и $\z\al=\v_2$, тогда ${\tfrac{\L(\n-1){+}|\x\z|+\eps\*\chi_{\x\y\z}}{\L(\n-1)-|\y|}}\le\rt(\n)$.%\bigcup_{k\ne i}\V_k

\parskip 0pt%					Расстояние между абзацами
\parindent 10 mm%					отступ слева первого предложения в абзаце

\begin{note}\label{uf:12}
	Из условия (uf1.2) следует, что $\V_\a\cap\V_\b=\emptyset$ для различных $\a$ и $\b$ из $\A_\n$.
\end{note}

\begin{prop}\label{uf:blocks}
	Если $\x$ непустой собственный фактор образа слова $\a\y\b\in\T_n$ при действии $(\L, \eps, \E, \Dt_\n)${--}подстановки,
	с началом в позиции $\s_1$ в $\a'$ и заканчивающийся в $\b'$, то при условии,
	что $\x$ фактор образа слова $\ce\z\d\in\T_n$, с началом в позиции $\s_2$ в $\ce'$ и заканчивающийся в $\d'$, имеем:
	\footnote{Напомним, что буквы и слова со штрихом, мы обозначаем образы этих букв и слов}\par
	$(1)$ $\y=\z$;\par
	$(2)$ Если $\y\ne\lambda$, тогда $\s_1=\s_2$;\par
	$(3)$ Если $\s_1\ne\s_2$, тогда $\{\a, \b\}\cap\{\ce, \d\}\ne\emptyset$.
	\begin {proof}
	Рассмотрим случай $\y=\lambda$ т.е. $\x$ --- фактор $\a'\b'$.
	Предположим, что $\z\ne\y$.
	Тогда $|\z'|=\L|\z|>0$ и $\z'$ --- фактор $\x\subset\a'\b'$ откуда $|\z'|<|\a'\b'|$.
	Если $|\z|>1$, то $|\z'|\ge2\L=|\a'\b'|$ что не возможно.
	Значит слово $\z$ можно рассматривать как букву.
	При этом $\z'$ не совпадает ни с $\a'$ ни с $\b'$, по условию начала и конца $\x$.
	Значит $\z'$ начинается в $\a'$ и заканчивается в $\b'$.
	Тогда $\PREF(\z')\cap\SUFF(\a')\ne\emptyset$ и $\SUFF(\z')\cap\PREF(\b')\ne\emptyset$.
	По (uf1.2) $\a'$, $\z'$ и $\b'$ не могут быть образами различных букв.
	Тогда $\a=\z=\b$, что не возможно для граничных слов $\a\b$ над $\n\ge5$ буквами.
	
	Остаётся случай $\y\ne\lambda$. При $\z=\lambda$ случай сводится к случаю при $\y=\lambda$.
	Тогда $\z\ne\lambda$.
	Предположим, что $\s_1\ne\s_2$.
	Б.О.О. считаем, что $\s_1<\s_2$.
	Тогда $\PREF((\z[1])')\cap\SUFF(\a')\ne\emptyset$ и $\SUFF((\z[1])')\cap\PREF((\y[1])')\ne\emptyset$.
	Отсюда, по (uf1.2) получаем, что $\a=\z[1]=\y[1]$, что опять же не возможно для граничного слова $\a\y\b$.
	Из этого противоречия и следует пункт (2).
	
	Значит при $\y\ne\lambda$ в слове $(\a\y\b)'$ фактор $\x$ начинается с той же позиции $\s$,
	что и в слове $(\ce\z\d)'$.
	Тогда либо $\y'\in\PREF(\z')$ либо $\z'\in\PREF(\y')$.
	При этом, по определённому $\s$, диапазон конечных позиций $\x$ в этих словах однозначно определяет их длины, т.е. $|(\a\y\b)'|=|(\ce\z\d)'|$.
	Откуда $\y'=\z'$ что, с учётом Замечания~\ref{uf:12}, возможно только при $\y=\z$. Т.е. выполняется пункт (1).
	
	В случае $\s_1\ne\s_2$ по пункту (2) получим $\y=\z=\lambda$.
	Тогда, т.к. $\x\ne\lambda$, либо $\PREF(\d')\cap\SUFF(\a')\ne\emptyset$, либо $\SUFF(\ce')\cap\PREF(\b')\ne\emptyset$. Отсюда по (uf1.2) вытекает пункт (3).
	\end {proof}
\end{prop}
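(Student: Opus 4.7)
The plan is to reduce everything to condition (uf1.2), which prevents any non-trivial overlap between a suffix of one symbolic image and a prefix of another (and in particular forbids boundary slippage between neighbouring $\V${--}images). Condition (uf1.1) (primitivity, $\per(\u)=L$) will play a secondary role, ruling out self-overlaps inside a single image.

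First I would deal with the degenerate case $\y=\lambda$, so $\x\subseteq\a'\b'$. If $\z$ were non-empty, I would argue on $|\z|$: when $|\z|\ge 2$ the length $|\z'|=L|\z|\ge 2L=|\a'\b'|$ already exceeds the proper factor $\x$, contradiction; when $|\z|=1$ the single-letter image $\z'$ would sit strictly inside $\a'\b'$, forcing both $\PREF(\z')\cap\SUFF(\a')\ne\emptyset$ and $\SUFF(\z')\cap\PREF(\b')\ne\emptyset$; by (uf1.2) this would force $\a=\z=\b$, impossible for a factor $\a\b\in\T_n$ with $n\ge 5$. Hence $\z=\lambda$, so $\y=\z$ in this case.

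Next, assume $\y\ne\lambda$. I want to show $s_1=s_2$ (part (2)). The proof is by contradiction: if $s_1<s_2$ (WLOG), then the image of the first letter of $\z$ would straddle the boundary between $\a'$ and the image of the first letter of $\y$ inside $(\a\y\b)'$, giving $\PREF((\z[1])')\cap\SUFF(\a')\ne\emptyset$ and $\SUFF((\z[1])')\cap\PREF((\y[1])')\ne\emptyset$. Condition (uf1.2) then forces $\a=\z[1]=\y[1]$, contradicting the graniciness of $\a\y\b$ (since then $\a\a\in\F(\w)$ with $\lexp(\a\a)=2>\RT(n)$). So $s_1=s_2$.

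Once the starting offsets coincide and $\y\ne\lambda$, part (1) follows by a length/prefix argument: in $(\a\y\b)'$ and $(\ce\z\d)'$ the factor $\x$ begins at the same symbolic offset $s$ in $\a'$ resp. $\ce'$, so one of $\y',\z'$ is a prefix of the other; since both slots end in $\b'$ resp. $\d'$ at the \emph{same} endpoint offset of $\x$, the total lengths match, giving $\y'=\z'$ and hence $\y=\z$ by Remark~\ref{uf:12}. Finally, for (3), in the remaining case $s_1\ne s_2$ part (2) already forces $\y=\z=\lambda$, and since $\x\ne\lambda$ the two alignments of $\x$ inside $\a'\b'$ vs.\ $\ce'\d'$ differ, so at least one of the overlaps $\PREF(\d')\cap\SUFF(\a')$ or $\SUFF(\ce')\cap\PREF(\b')$ must be non-empty; (uf1.2) then yields $\{\a,\b\}\cap\{\ce,\d\}\ne\emptyset$. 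The only subtle bookkeeping is ensuring the case $\y=\lambda,\z\ne\lambda$ is handled symmetrically to $\z=\lambda,\y\ne\lambda$, which I'd dispatch by swapping roles; no real obstacle, just careful case analysis.
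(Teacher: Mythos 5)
Ваше доказательство по существу совпадает с доказательством в работе: тот же разбор случаев ($\y=\lambda$ с анализом длины $|\z'|$, затем $\y\ne\lambda$ с противоречием через (uf1.2) при $\s_1\ne\s_2$, далее аргумент о префиксах и длинах для пункта (1) и вывод пункта (3) из пункта (2)). Подход и ключевые шаги идентичны, расхождений нет.
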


\hyperlink{contents}{$\upuparrows$}

\subsection {Основые свойства, леммы и их следствия}

\subsubsection {Свойства $(\L, \eps, \E, \Dt_\n)${--}П --- порождение экспоненциального множества слов}

{\cGr
В данной секции мы считаем, что {\cRe$\dt_{-}>|\E|$}.
% Хотя, для свойства \ref{morf:expgr} достаточно, чтобы $\dt_{-}>|\E_k\cup\E_{k+1}|$
Следующее свойство можно считать доказательством индукцией по позиции $k$ в слове $\w$ того факта,
что при условии непустоты $\bfdt_{\a}(\phi_{k-1}(\w))\setminus\I_\w(\E_k\setminus\E_{k-1}, k)$.
Напомним, что номера позиций в ДР2 начинаем с 1 т.е. $k\in\mN$.
}

\begin{prop}\label{morf:step}
	Пусть $\a=\w[k]$ {\cGr при $k\in\mN$} и условия \ref{m1} и \ref{m2} выполняются для всех факторов слова $\w$ из $\E_m$
	%, у которых позиции правого повтора содержат некоторый
	по всем $m<k$
	и пусть $\phi_k(\w)\in\bfdt_{\a}(\phi_{k-1}(\w))\setminus\I_\w(\E_k\setminus\E', k)$, где $\E'\subseteq\E_{k-1}$. Тогда \ref{m1} и \ref{m2} выполняется для факторов из $\E_k$.
\end{prop}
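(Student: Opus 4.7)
The plan is to verify \ref{m1} directly and then establish \ref{m2} by a case split on whether a given factor $\u\v\u\in\E_k$ also lies in $\E_{k-1}$.

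First, \ref{m1} is immediate from the hypothesis: since $\phi_k(\w)\in\bfdt_{\a}(\phi_{k-1}(\w))\setminus\I_\w(\E_k\setminus\E',k)$, in particular $\phi_k(\w)\in\bfdt_{\w[k]}(\phi_{k-1}(\w))$, which is exactly what \ref{m1} requires.

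For \ref{m2}, fix an arbitrary $\u\v\u=\w[i+1,\ldots,i+r]\in\E_k$, so by definition $|\u|\in\E$, $\u\v\u$ is not $(|\u|,\eps)$-EC, and $i+|\u\v|<k\le i+r$. The first step is the index bookkeeping: the boundary case $k=i+|\u\v|+1$ is precisely the case $\u\v\u\in\E_k\setminus\E_{k-1}$ (position $k$ is the very first position of the right copy of $\u$); in every other case $k\ge i+|\u\v|+2$, hence $k-1$ still belongs to the right $\u$ and $\u\v\u\in\E_{k-1}$. In this latter subcase the inductive hypothesis that \ref{m2} holds for every $\E_m$ with $m<k$, applied at $m=k-1$, already furnishes a position $m'\in\{i+1,\ldots,i+|\u|\}$ with $\phi_{m'}(\w)\ne\phi_{m'+|\u\v|}(\w)$; the same $m'$ witnesses \ref{m2} at step $k$.

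It remains to treat the new case $\u\v\u\in\E_k\setminus\E_{k-1}$. Because $\E'\subseteq\E_{k-1}$, this factor does not lie in $\E'$, so $\u\v\u\in\E_k\setminus\E'$. By the very definition of $\I_\w(\E_k\setminus\E',k)$ the image $\phi_{k-|\u\v|}(\w)=\phi_{i+1}(\w)$ belongs to this set, whence the assumption $\phi_k(\w)\notin\I_\w(\E_k\setminus\E',k)$ yields $\phi_k(\w)\ne\phi_{i+1}(\w)$. Choosing $m'=i+1\in\{i+1,\ldots,i+|\u|\}$ we get $\phi_{m'}(\w)\ne\phi_{m'+|\u\v|}(\w)$, as required. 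The only point worth flagging is a consistency check: since $i+1$ and $k=i+|\u\v|+1$ occupy the same position inside the two copies of $\u$, the letters $\w[i+1]$ and $\w[k]$ both equal $\a$, so both $\phi_{i+1}(\w)$ and $\phi_k(\w)$ live in $\V_\a$ and the inequality between them is exactly what the exclusion of $\I_\w(\E_k\setminus\E',k)$ from $\bfdt_{\a}(\phi_{k-1}(\w))$ enforces. Apart from this indexing verification there is no nontrivial mathematical content — the lemma is a clean inductive bookkeeping statement about how $\I_\w$ is designed to capture exactly the new constraints appearing when the image of $\w$ is extended by one letter.
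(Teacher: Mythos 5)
Your proposal is correct and follows essentially the same route as the paper: condition \ref{m1} read off directly from the choice of $\phi_k(\w)$, and condition \ref{m2} handled by splitting $\E_k$ into $\E_k\cap\E_{k-1}$ (covered by the inductive hypothesis) versus $\E_k\setminus\E_{k-1}$ (covered by the exclusion of $\I_\w(\E_k\setminus\E',k)$, using $\E'\subseteq\E_{k-1}$). Your version merely makes explicit the index bookkeeping ($k=i+|\u\v|+1$ characterizes the new factors) and argues the containment $\E_k\setminus\E_{k-1}\subseteq\E_k\setminus\E'$ directly rather than via the monotonicity remark on $\I_\w$ that the paper cites, which is an equivalent reformulation.
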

\begin{proof}
	%Достаточно убедиться в следствии для факторов слова $\w$ с спозицией $\k$ в его (наибольшем) правом повторе.
	По условию, для любого не $(|\u|, \eps)${--}ЭС $\u\v\u\in\E_{k}\cap\E_{k-1}$ при $|\u|\in\E$, уже существует буква, образ которой отличен от образа буквы, стоящей на $|\u\v|$ позиций левее,
	а позиции правого $\u$ содержат некоторый $m<k$.
	Тогда, после фиксации образа буквы $\w[\k]=\a$ любым словом из $\bfdt_\a(\phi_{k-1}(\w))\setminus\I_\w(\E_{k}\setminus\E_{k-1}, k)$,
	выполняется \ref{m2} для любых таких факторов,
	у которых позиции правого повтора содержат номер $k$.
	
	При этом, {\cGr(по зам-ю \ref{nt:IwInIw})} легко понять, что $\bfdt_\a(\phi_{k-1}(\w))\setminus\I_\w(\E_{k}\setminus\E', k)\subseteq\bfdt_\a(\phi_{k-1}(\w))\setminus\I_\w(\E_{k}\setminus\E_{k-1}, k)$
	при любом $\E'\subseteq\E_{k-1}$.
	\ref{m1} выполняется т.к. образ $k$-й буквы выбран из $\bfdt_\a(\phi_{k-1}(\w))$.
\end{proof}

{\cGr
	Для лучшего понимания доказательства следующего свойства \ref{morf:expgr} поясним основные,
	используемые в нём, обозначения и неравенства.

	В свойстве достаточно добавить условие $\dt_{-}>|\E|$.
	Но в этом случае достаточно обусловиться, что $\dt_{-}>|\E_k\cup\E_{k+1}|$ (хотябы периодически для $k$).
	%Но неявно условие выпоняется для первых $m-1$ образов,
	%а равенство $\dt_{-}=|\E_m|$ не опровергает утверждение, просто будет 0 слов длины $m$.
	%Но в доказательстве как-то обуславливается $|\E|<f_\c$.

	$\min\{|\delta_\c(\v)|:\v\in\delta_\b(\v_\a)\}=f_\c$ --- минимально возможный выбор $\V_\c${--}образов,
	среди следующих за некоторым $\v\in\delta_\b(\v_\a)$.

	Заметим, что $f_c\ge\dt_{-}>|\E|$.
	%В определении $\delta$ исключаются $\v$, содержащиеся в $\V_\a$.???
	%Но у нас не исключаются пересечения $\V${--}образов для разных букв.???
	%??? Но можно доказать, что даже так следует, что $f_c\ge\delta_{\min}$
	%??? т.к. после $\v\in\V_\a$ не может следовать $\V_\a${--}образ из $\V_\a$
	%??? т.е. такой $\v$ исключается из множества $\delta_\a(\v)$???

	$(|\bfdt_\b(\v_\a)|-|\I_\w(\E_m,m)|)$ --- возможный(оставшийся) выбор $\V_\b${--}образов, в которые может отобразиться $\w[m]$.

	$f_\c-|\I_\w(\E_{m+1}\sm\E_m,m+1)|$ --- минимально возможный выбор $\V_\c${--}образов,
	в которые может отобразиться $\w[m+1]$.

}

\begin{prop}\label{morf:expgr}
	Пусть $\eps\le\frac{1}{\n-1}${\cGr, $\dt_{-}>|\E|$}, $\w\in\T_n$
	и существует некоторое количество слов, полученных $(\L, \eps, \E, \Dt_\n)${--}подстановкой
	(т.е. удовлетворяющим условиям \ref{m1} и \ref{m2}) над $\m-1$ первыми позициями слова $\w$,
	тогда действие той же (обобщённой) подстановки над следующими двумя буквами $\b\ce=\w[\m, \m+1]$
	увеличивает количество слов как минимум в $(\dt_{-}-|\E_\m|)(|\E_\m|+1)$ раз.
\end{prop}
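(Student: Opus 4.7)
The plan is to apply Proposition~\ref{morf:step} twice in succession, first at position $m$ and then at position $m+1$, with a judicious choice of the auxiliary set $\E' \subseteq \E_{k-1}$ at each step. At position $m$ I would take $\E' = \emptyset$; Proposition~\ref{morf:step} then certifies that any $\phi_m(\w) \in \bfdt_{\w[m]}(\phi_{m-1}(\w)) \setminus \I_\w(\E_m, m)$ is a legitimate continuation, so the number of admissible images is at least
\[
|\bfdt_{\w[m]}(\phi_{m-1}(\w))| - |\I_\w(\E_m,m)| \ge \dt_{-} - |\E_m|,
\]
since $\I_\w(\E_m,m)$ is the image of a set of cardinality $|\E_m|$ under a function on factors and hence has size at most $|\E_m|$. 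This lower bound is strictly positive by the standing hypothesis $\dt_{-} > |\E| \ge |\E_m|$, and supplies the first factor $(\dt_{-} - |\E_m|)$ in the claimed product.

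For the step from $m$ to $m+1$ I would invoke Proposition~\ref{morf:step} a second time, this time with $\E' = \E_m$. Every factor already lying in $\E_m$ has been accounted for at position $m$ (that is precisely what the choice at $m$ achieved), so the only new exclusions come from $\I_\w(\E_{m+1} \setminus \E_m, m+1)$, yielding at least $\dt_{-} - |\E_{m+1} \setminus \E_m|$ valid continuations. The pivotal estimate is
\[
|\E_{m+1} \setminus \E_m| \le |\E| - |\E_m|,
\]
equivalently $|\E_m \cup \E_{m+1}| \le |\E|$, and this is exactly what Proposition~\ref{def:unic} delivers: for each fixed repetition length $e \in \E$ there is at most one non-$(e,\eps)$-ЭС factor $\u\v\u \in \E_m \cup \E_{m+1}$. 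Plugging this in and once more using $\dt_{-} > |\E|$, the number of continuations at position $m+1$ is at least $\dt_{-} - |\E| + |\E_m| \ge |\E_m| + 1$, which is the second factor.

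Multiplying the two independent lower bounds gives the claimed growth factor $(\dt_{-} - |\E_m|)(|\E_m|+1)$. The main obstacle I would anticipate is not algebraic but book-keeping: one has to be careful that Proposition~\ref{def:unic} is indeed applied to the union $\E_\w(\eps,\E,m) \cup \E_\w(\eps,\E,m+1)$ rather than to a single $\E_m$, and that the hypothesis $\eps \le \tfrac{1}{n-1}$ of that proposition is the same hypothesis assumed here. Once those identifications are made, everything reduces to counting the admissible images left in $\bfdt_{\w[\cdot]}(\phi_{\cdot-1}(\w))$ after removing the sets prescribed by condition~\ref{m2} at the two consecutive positions, which is purely mechanical.
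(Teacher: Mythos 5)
Ваше доказательство корректно и по существу совпадает с доказательством в работе: двукратное применение Свойства~\ref{morf:step} (с $\E'=\emptyset$ в позиции $\m$ и $\E'=\E_\m$ в позиции $\m+1$), оценка $|\I_\w(\cdot,\cdot)|$ мощностью соответствующего подмножества $\E$ и ключевое неравенство $|\E_\m\cup\E_{\m+1}|\le|\E|$ из Свойства~\ref{def:unic} с последующим использованием целочисленности. Единственное несущественное отличие --- в работе для второго сомножителя используется $\f_\ce=\min\{|\bfdt_\ce(\v)|\}\ge\dt_{-}$, а вы сразу оцениваете через $\dt_{-}$, что даёт тот же итоговый результат.
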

\begin{proof}
	Можно считать, что выполняется условие свойства \ref{morf:step} до $(\m-1)$-й буквы
	слова $\w$, тогда по этому свойству $\m$-я и $(\m+1)$-я буквы слова $\w$
	могут быть последовательно фиксированы любыми словами $\v_\b$ из
	\\
	$\bfdt_\b(\v_\a)\setminus\I_\w(\E_\m, \m)$ и $\v_\ce$
	из $\bfdt_\ce(\v_\b)\setminus\I_\w(\E_{\m+1}\setminus\E_\m, \m+1)$ соответственно,
	где $\v_\a=\phi_{\m-1}(\w)$, $\b=\w[\m]$ и $\ce=\w[\m+1]$.
	
	Обозначим $\min\{|\bfdt_\ce(\v)|\colon\v\in\bfdt_\b(\v_\a)\}=f_\ce$
	{\cGr ---
		минимально возможный выбор $\V_\c${--}образов,
		среди следующих за некоторым $\v\in\bfdt_\b(\v_\a)$.
		Заметьте, что $f_\c\ge\dt_{-}>|\E|$
	}.
	С учётом $|\I_\w(\E, \s)|\le|\E|$ для любой позиции $\s$ в $\w$, получаем, что число различных вариантов фиксировать образы букв $\w[\m]$ и $\w[\m+1]$, не менее
	\footnote{
		$(|\bfdt_\b(\v_\a)|-|\I_\w(\E_m,m)|)$ --- возможный(оставшийся) выбор $\V_\b${--}образов, в которые может отобразиться $\w[m]$.
		
		$f_\c-|\I_\w(\E_{m+1}\sm\E_m,m+1)|$ --- минимально возможный выбор $\V_\c${--}образов,
		в которые может отобразиться $\w[m+1]$.
	}
	\begin{equation}\notag
	\begin{split}
	E
	&=(|\bfdt_\b(\v_\a)|-|\I_\w(\E_\m, \m)|)\min\{|\bfdt_\ce (\v)|-|\I_\w(\E_{\m+1}\setminus\E_\m,
 \m+1)|\colon\v\in\bfdt_\b(\v_\a)\}\\
	&={\cGr
		(|\delta_\b(\v_\a)|-|\I_\w(\E_m,m)|)(f_\c-|\I_\w(\E_{m+1}\sm\E_m,m+1)|)
	}\\
	&\ge(|\bfdt_\b(\v_\a)|-|\E_\m|)(\f_\ce-|\E_{\m+1}\setminus\E_\m|)
	\ge{\cGr(\dt_{-}-|\E_m|)(f_\c-|\E_{m+1}\sm\E_m|)}
	.\\
	\end{split}
	\end{equation}
	%Если $\E_\m\ne\E_{\m+1}$, тогда заметим,
	%что для любого не $(|\u|, \eps){-}exp$ слова $\u\v\u\in\E_\m$ не существует не $(|\x|, \eps){-}exp$ слова $\x\y\x\in\E_{\m+1}$,
	%что $|\u|=|\x|$ т.к. иначе по замечанию 1.3 $|\v|=|\y|$,
	%а значит $\u\v\u$ и $\x\y\x$ – расширяемые в $\w$, что противоречит замечанию 1.2.
	%Если же $\E_\m=\E_{\m+1}$, то по (C3.1) образ символа $\w[\m]$ элементов из $\E_{\m+1}$ являются $(\eps, \E){-}t$
	%при любом образе символа $\w[\m+1]$ из $\dt_\k(\v_j )$,
	%где $\v_j=\phi_\m(\w)$ и $\k=\sigma(\w[m+1])$.
	
	Используя свойство \ref{def:unic} получаем $|\E_\m\cup\E_{\m+1}|\le|\E|<\f_\ce$.
	Т.к. для любых множеств $A,B$ выполняется $|B\sm A|=|A\cup B|-|A|$.
	
	Тогда $|\E_{\m+1}\setminus\E_\m|=|\E_\m\cup\E_{\m+1}|-|\E_\m |<\f_\ce-|\E_\m|$, используя целочисленность $\f_\ce-|\E_{\m+1}\setminus\E_\m|\ge|\E_\m|+1$.
	
	Тогда $E\ge(|\bfdt_\b(\v_\a)|-|\E_\m|)(|\E_\m|+1)\ge(\dt_{-}-|\E_\m|)(|\E_\m|+1)$.
\end{proof}

\begin{cor}\label{morf:expgr1}
	Пусть $\eps\le\frac{1}{\n-1}$ и $\w\in\T_n$,
	тогда число различных образов, получаемых действием $(\L, \eps, \E, \Dt_\n)${--}П над словом $\w$,
	через каждые 2 буквы (слова $\w$) увеличивается как минимум в $\dt_{-}$ раз
	при выполнении условия $\dt_{-}>|\E|$.
\end{cor}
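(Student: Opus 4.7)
План доказательства. Следствие получается прямым применением свойства \ref{morf:expgr} к произвольной позиции $\m$ слова $\w$. Это свойство даёт нижнюю оценку коэффициента роста числа различных образов через две соседние буквы в виде произведения $(\dt_{-}-|\E_\m|)(|\E_\m|+1)$. Значит, весь шаг сведётся к чисто алгебраической оценке: требуется показать, что $(\dt_{-}-|\E_\m|)(|\E_\m|+1)\ge\dt_{-}$.

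Ключевой момент --- использовать условие $\dt_{-}>|\E|$ совместно со свойством \ref{def:unic}. Из свойства \ref{def:unic} немедленно следует, что $|\E_\m|\le|\E|$, так как в $\E_\m$ для каждого $e\in\E$ найдётся не более одного элемента $\u\v\u$ с $|\u|=e$. Тогда $|\E_\m|\le|\E|<\dt_{-}$ и в частности $|\E_\m|\le\dt_{-}-1$ в силу целочисленности. Полагая $x=|\E_\m|$, раскроем произведение:
$$
(\dt_{-}-x)(x+1)-\dt_{-}=x(\dt_{-}-x-1)\ge0,
$$
поскольку $x\ge0$ и $\dt_{-}-x-1\ge0$. Это и даёт требуемую оценку $(\dt_{-}-|\E_\m|)(|\E_\m|+1)\ge\dt_{-}$.

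Остаётся согласовать применение свойства \ref{morf:expgr} по всей длине $\w$. Для этого нужно индуктивно проверять, что на каждом шаге выполняются предпосылки свойств \ref{morf:step} и \ref{morf:expgr}. Это по сути сама схема $(\L,\eps,\E,\Dt_\n)${--}подстановки: выбор $\phi_\k(\w)$ на предыдущем шаге из множества $\bfdt_{\w[\k]}(\phi_{\k-1}(\w))\setminus\I_\w(\E_\k\setminus\E',\k)$ гарантирует выполнение условий \ref{m1} и \ref{m2} для факторов из $\E_\k$, а значит и предпосылок свойства \ref{morf:expgr} для следующей пары букв. Формально следует брать в качестве $\E'$ текущее <<накопленное>> подмножество $\E_{\k-1}$, для которого условия \ref{m1}, \ref{m2} уже обеспечены предыдущими шагами.

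Основная (невеликая) техническая сложность в записи доказательства --- это именно аккуратная формулировка индуктивного шага: нужно подчеркнуть, что рост <<через каждые 2 буквы>> применяется к непересекающимся парам позиций $(\m,\m{+}1), (\m{+}2,\m{+}3),\ldots$, и что на каждом таком блоке множители не меньше $\dt_{-}$, откуда итоговое число образов растёт геометрически с основанием $\dt_{-}$. Никаких новых содержательных вычислений свыше разобранных в свойствах \ref{morf:step}, \ref{morf:expgr} и \ref{def:unic} не требуется.
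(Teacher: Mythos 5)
Ваше доказательство корректно и по существу совпадает с доказательством в работе: та же ссылка на свойство \ref{morf:expgr} для множителя $(\dt_{-}-|\E_\m|)(|\E_\m|+1)$ и то же алгебраическое тождество $(\dt_{-}-x)(x+1)-\dt_{-}=x(\dt_{-}-x-1)\ge0$ при $x=|\E_\m|\le|\E|<\dt_{-}$. Явная ссылка на \ref{def:unic} для оценки $|\E_\m|\le|\E|$ и оговорка об индуктивном применении по непересекающимся парам позиций лишь проговаривают то, что в тексте работы оставлено неявным.
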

\begin{proof}
	Из замечания~\ref{1inE} следует $|\E|\ge1$,
	тогда по условию {\cGr(и целочисленности)} получаем $\dt_{-}\ge|\E|+1\ge2$.
	Тогда по Свойству~\ref{morf:expgr} получим {\cGr увеличение числа образов не менее чем в} $(\dt_{-}-|\E_\m|)(|\E_\m|+1)$ раз
	{\cGr
		любых 2-х соседних букв слова $\w$ (т.е. при любом $1\le m\le|\w|-1$).
		Т.е. для доказательства следствия достаточно убедиться в неотрицательности
		$(\dt_{-}-|\E_m|)(|\E_m|+1)-\dt_{-}=|\E_m|\dt_{-}-|\E_m|(|\E_m|+1)=|\E_m|(\dt_{-}-(|\E_m|+1))$.
		А т.к. $|\E|\ge|\E_m|$, то $\dt_{-}\ge|\E_m|+1$.
		Откуда и следует требуемое $(\dt_{-}-|\E_m|)(|\E_m|+1)$
	}$\ge\dt_{-}$.
\end{proof}

Получаем, что при последовательном фиксировании образов слева направо через каждые $2$ буквы слова $\w$, можно увеличивать число различных образов экспоненциально при $\dt_{-}>1$, не используя дополнительных образов, как было в наших первых вариантах доказательства экспоненциальной гипотезы.

Заметим так же, что никаких ограничений на $\Dt_\n$ не накладывается.%, кроме оговоренных ранее, т.е. $\dt_{-}>|\E|$.

\begin{prop}\label{morf:k-val}
	Пусть $\eps\le\frac{1}{\n-1}$, $\k\ge2$ и $\E=\{1, ..., \k-1\}$.
	Тогда, при некоторых $\sV_\n$ и $\Dt_\n$, $\k${-}значная подстановка над граничным словом
	является элементом класса $(\L, \eps, \E, \Dt_\n)${--}П (т.е. удовлетворяет свойствам \ref{m1} и \ref{m2}).
\end{prop}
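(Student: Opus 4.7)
The plan is to take the $k$-valued substitution in its natural cyclic form and exhibit an almost trivial choice of $\sV_\n$ and $\Dt_\n$ that realises it as a $(\L,\eps,\E,\Dt_\n)$-П. Concretely, I would fix any family $\sV_\n$ of pairwise disjoint sets $\V_\a=\{\v_{\a,1},\ldots,\v_{\a,\k}\}\subset\A_\n^\L$ (the existence of such $\V_\a$ is not relevant to the statement, only the substitution rule is), and set $\bfdt_\b(\v):=\V_\b$ for every $\v\in\sV\setminus\V_\b$. The $\k$-valued substitution itself is defined, exactly as in Lemma~\ref{l2}, by $\phi_\j(\w)=\v_{\w[\j],\,\i_\j}$ with $\i_\j=((|\w[1..\j]|_{\w[\j]}-1)\bmod\k)+1$. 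Then \ref{m1} holds tautologically, because $\phi_\j(\w)\in\V_{\w[\j]}=\bfdt_{\w[\j]}(\phi_{\j-1}(\w))$ by construction.

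The real content is verifying \ref{m2}. Take any factor $\u\v\u\subseteq\w$ with $|\u|=\e\in\E=\{1,\ldots,\k-1\}$ that is not $(\e,\eps)$-ЭС. Since $\eps\le\tfrac{1}{\n-1}$, Remark~\ref{epsE:per>rep(n-1)} (equivalently Remark~\ref{def:equiv}) forces $\per(\u\v\u)=|\u\v|$ and $|\u\v|=\e(\n-1)$. For any position $\m$ in the left $\u$, writing $\a=\w[\m]=\w[\m+|\u\v|]$, the cyclic rule gives
$\phi_\m(\w)\ne\phi_{\m+|\u\v|}(\w)$ iff the number of occurrences of $\a$ in $\w[\m+1..\m+|\u\v|]$ is not divisible by $\k$; and by periodicity any window of length $|\u\v|$ inside $\u\v\u$ contains exactly $|\u\v|_\a$ copies of $\a$, so the condition reduces to $|\u\v|_\a\not\equiv 0\pmod\k$.

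The key step is then the counting bound $|\u\v|_\a\le\e$, which I would obtain from the boundary property of $\w$ alone: two consecutive occurrences of a fixed letter in any word of $\T_\n$ lie at distance at least $\n-1$ (otherwise the factor between them, together with its two endpoints, would have exponent $>\tfrac{\n}{\n-1}$), so in a window of length $\e(\n-1)$ one has at most $\lfloor(\e(\n-1)-1)/(\n-1)\rfloor+1=\e$ occurrences of any single letter. Combining with $|\u\v|_\a\ge|\u|_\a\ge 1$ for every letter $\a$ that actually occurs in $\u$, I get
$1\le|\u\v|_\a\le\e\le\k-1<\k$,
so $|\u\v|_\a\not\equiv0\pmod\k$. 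Hence \emph{every} position $\m$ in the left $\u$ witnesses the required mismatch, and in fact \ref{m2} holds in the strong form that any such $\m$ works.

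No step is really a serious obstacle; the only thing demanding a little care is making the elementary counting bound on $|\u\v|_\a$ precise, and remembering that Remark~\ref{def:equiv} is what pins $|\u\v|$ down to $\e(\n-1)$ exactly, which is what makes the bound $|\u\v|_\a\le\e\le\k-1$ strict enough to exclude $|\u\v|_\a\equiv0\pmod\k$. Once these two facts are in hand, the conclusion is immediate and the $\k$-valued substitution is seen to be a $(\L,\eps,\E,\Dt_\n)$-П for $\E=\{1,\ldots,\k-1\}$.
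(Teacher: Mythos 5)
Your proposal is correct and follows essentially the same route as the paper: take $\V_\a$ of size $\k$ with $\bfdt_\b(\cdot)\equiv\V_\b$ so that \ref{m1} is automatic, pin down $|\x\y|=|\x|(\n-1)$ for a non-$(|\x|,\eps)$-ЭС factor using $\eps\le\tfrac{1}{\n-1}$, and then bound the number of occurrences of a letter in a window of that length by $|\x|\le\k-1$ via the spacing $\ge\n-1$ between equal letters in a boundary word, so the cyclic index cannot return to itself. The only (harmless) difference is that you note every position of the repeat witnesses the mismatch, whereas the paper contents itself with the first letters.
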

\begin{proof}
	Т.к. $|\E|=\k-1$, то достаточно, чтобы $|\bfdt_\b(\v_\a)|=|\V_\b|=\k$ ($\dt_{-}>|\E|$)
	\footnote{это условие обнаружено в ДР2. Т.е. в контексте ДР2 это недостающее условие можно было восстановить.}
	по всем $\a\ne\b\in\A_\n$.
	Тогда \ref{m1} выполняется по условию $\V_\a\subseteq\bfdt_\a(\v)$ по всем $\a\in\A_\n$ и $\v\in\sV\setminus\V_\a$.
	Если фактор $\x\y\x$ слова $\w$ не $(|\x|, \eps)${--}ЭС и $|\x|\in\E$, то $|\x\y|<(|\x|+\eps)\*(\n-1)\le|\x|(\n-1)+1$, что для граничного слова $\x\y\x$ возможно только при $|\x\y|_\a\le|\x|\le\k-1$ для любой буквы $\a$ из $\x$.
	По правилу $\k${-}значной подстановки получаем, что образы первых букв левого и правого $\x$ в слове $\x\y\x$ различны.
	%. Т.к. слово $\x\y\x$ --- $(\eps, \E){-}t$, то оно $(|\x|, \eps){-}exp$ при $|\x|\notin\E$
	Откуда следует импликация в условии \ref{m2}.
\end{proof}

\hyperlink{contents}{$\upuparrows$}

\subsubsection{Лемма \ref{l4} --- порождении $(\L, \eps, \E)${--}ГС через $(\L, \eps, \E, \Dt_\n)${--}П. РРДГС как следствие леммы}
Здесь мы докажем обобщённый вариант Леммы~\ref{l2}.

\begin{lem}\label{l4}
	Пусть $\eps(\L-1)=\lcs(\sV_\n)+\lcp(\sV_\n)$, $\w$ --- $(\eps, \E)${--}ГС,
	и множество $\Dt_\n$ --- $(\L, \eps, \E)${--}согласованно.
	Тогда любое слово вида $\phi(\w)$, где $\phi$ --- $(\L, \eps, \E, \Dt_\n)${--}подстановка --- $(\eps, \E)${--}ГС.
\end{lem}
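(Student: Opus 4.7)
The plan is to adapt the case-analysis of Lemma~\ref{l2} to the more general $(\L,\eps,\E,\Dt_n)$-framework, with the conditions (uf1.*)--(uf3.*) replacing the concrete conditions \ref{l2c2}, \ref{l2c3}. Fix an arbitrary factor $v\subseteq\phi(w)$ and, without loss of generality, assume $v$ is non-extendable in $\phi(w)$. Writing $v=xyx$ with $p=\per(v)=|xy|$, the task reduces to verifying that $v$ is $(|x|,\eps\cdot\chi_x)$-ES, i.e.\ that the inequality in~(\ref{epsE}) holds.

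\smallskip

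\noindent\emph{Preliminary bound.} I would first unpack the hypothesis $\eps(L{-}1)=\lcs(\sV_n)+\lcp(\sV_n)$: combined with (uf3.1) and (uf1.2), it yields the analog of equation~(\ref{l2p0e}), bounding the excess contributed by common prefixes/suffixes of distinct $\V$-images by $\eps L/(L-1)$. This baseline inequality drives most subsequent computations and pins down the admissible value of $\eps$.

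\smallskip

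\noindent\emph{Case split by period.} I would partition according to $p$ relative to $L$. For large periods $p\ge 3(n-1)L$: invoking Proposition~\ref{uf:blocks} with (uf1.1), (uf1.2) and condition \ref{m1} shows the two occurrences of $x$ must be aligned on $\V$-image boundaries, whence $p\equiv 0\pmod L$; pulling back to $w$ gives a factor $\dot x\dot y\dot x\subseteq w$ of period $p/L$, and the $(\eps,\E)$-boundary property of $w$ together with the preliminary bound yields the desired inequality, as in step~(l2.p1). For medium periods $2L\le p<3(n-1)L$: one again forces $p\equiv 0\pmod L$ (using \ref{m2} plus Proposition~\ref{def:unic} to rule out the remaining non-$(|x|,\eps)$-ES configurations with $|x|\in\E$) and reduces to the previous case, exactly as in steps~(l2.p2)/(l2.p3); where the repeats straddle $\V$-image boundaries, the quantity $\Dt(\cdot,\cdot)\cdot L$ appearing in (uf2.1.2), (uf2.2.1), (uf2.2.3), (uf2.3) supplies precisely the minimum number of letters the substitution inserts between the two prescribed $\V$-images, so each such inequality is exactly what the corresponding geometric configuration requires. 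For $L<p<2L$: apply (uf2.1.*)/(uf2.2.*)/(uf2.3) with $\Dt=0$. For $p=L$: combine (uf3.1) and (uf3.2) as in step~(l2.p4). For $p<L$: by (uf1.1) the left and right $x$'s cannot simultaneously lie inside one $\V$-image (whose period is $L$), and the argument of step~(l1.p13) closes the case.

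\smallskip

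\noindent\emph{The hard part.} The main obstacle is the coupling between condition \ref{m2} and the (uf2.*) inequalities. When the preimage $\dot x\dot y\dot x\subseteq w$ is itself non-$(|\dot x|,\eps)$-ES with $|\dot x|\in\E$, \ref{m2} promises at least one "mismatch" in the corresponding positions of $\phi(w)$; this mismatch has to shift the analysis from a would-be-violated configuration to a surviving (uf*)-configuration whose $\Dt$-buffer is simultaneously at least the buffer demanded by every admissible choice of left- and right-image at the mismatched positions. Proving this uniformly --- across all letters $a\in\A_n$, all choices $\v\in\bfdt_a(\cdot)$, and all ways of placing the mismatch inside the three $\V$-images that carry $v$ --- is the bookkeeping core of the proof; Proposition~\ref{def:unic} is what guarantees the mismatched position is unique and hence the reduction well-posed.
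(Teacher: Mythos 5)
Your plan transplants the period-based case analysis of Lemma~\ref{l2} ($p\ge3(n-1)L$, $2L\le p<3(n-1)L$, $L<p<2L$, $p=L$, $p<L$), but that decomposition does not transfer to the generalized setting, and this is a genuine gap rather than a stylistic difference. In Lemma~\ref{l2} the medium-period cases were controlled by the specific cyclic 3-valued queue, which guarantees that identical $\V$-images recur only at distance $\ge3(n-1)L$ and hence that a repeat of a factor with $\per<3(n-1)L$ cannot contain a whole $\V$-image. A general $(\L,\eps,\E,\Dt_\n)$-substitution gives you no such separation guarantee: \ref{m1} and \ref{m2} say nothing about how soon an image may repeat, and $\E$ is an arbitrary set of repeat lengths not tied to any period threshold. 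So your step "for medium periods one again forces $p\equiv0\pmod L$" has no support, and the threshold $3(n-1)L$ plays no role in Lemma~\ref{l4} at all. The paper instead splits on the \emph{repeat}: either some $\tx$ contains a whole symbol image (then Property~\ref{uf:blocks}(1)--(2) aligns all occurrences and gives $|\tx\ty|=\L|\x\y|$ uniformly for every period, after which the dichotomy is whether the preimage $\x\y\x$ is $(|\x|,\eps)$-ES --- direct computation --- or not, in which case $|\x|\in\E$ and \ref{m2} supplies the mismatch and (uf3.1) closes the estimate), or every $\tx$ lies inside the image of two letters, and then the (uf2.$*$)/(uf3.$*$) conditions are applied configuration by configuration via Property~\ref{uf:blocks}(3). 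This organization is forced by the way the (uf$*$) conditions are stated: they describe positions of the repeats relative to image boundaries, not period ranges.

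A second, smaller error: Proposition~\ref{def:unic} is not what makes the \ref{m2}-reduction well-posed and is not used in the paper's proof of Lemma~\ref{l4} at all (it serves the counting argument in Property~\ref{morf:expgr}). Condition \ref{m2} directly hands you a position $m$ with $\phi_m(\w)\ne\phi_{m+|\u\v|}(\w)$; one then cuts $\a\x\y\x\b$ at that position into $\w_1$ and $\w_2$ and bounds the two resulting non-extendable images separately using (uf3.1) --- no uniqueness of the mismatch is needed. Your "hard part" paragraph correctly senses that the interaction of \ref{m2} with the (uf$*$) inequalities is the core, but the mechanism you propose for exploiting it is not the one that works.
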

\begin{proof}
	%Заметим, что по (C4.?) $\eps\le\frac{1}{\n-1}$.
	Пусть $\tx\ty\tx\subseteq\w’=\phi(\w)$ и $\tx\ty\tx$ не расширяемо в $\w'$.
	Так же считаем, что $|\ty|$ --- наименьшее среди всех $\tx\ty\tx$ с повтором $\tx$.
	Тогда проверим  $\tx\ty\tx$ на условие~\eqref{epsE} т.е. на $(|\tx|, \eps\*\chi_{\tx})${--}ЭС.
	
	Рассмотрим сначала случаи когда хотябы один из факторов $\tx$ содержит образ не менее одной буквы.
	Для сокращения перебора случаев, рассмотрим самый <<худший>>, т.е. будем считать, что\par
	$(**)$ образы букв всегда имеют общие суффиксы и префиксы.\\
	Т.к. слово $\tx\ty\tx$ нерасширяемо, то $\tx$ является фактором образа некоторого $\a\x\b$, с началом в $\a$ и концом в $\b$ и, в нашем случае, $\x\ne\lambda$.
	Тогда по Свойству~\ref{uf:blocks}(1) получаем, что любой $\tx$ содержит этот образ.
	Пусть $\y$ ---  минимальное по длине слово, что $\a\x\y\x\b\subseteq\w$ и $\tx\ty\tx\subseteq\phi(\a\x\y\x\b)$.
	Заметим, что $|\y|\ge1$ т.к. иначе в $\w$ существует квадрат $\x\x\ne\lambda$ что противоречит граничности слова $\w$.
	Из нерасширяемости и $(**)$ следует, что\par
	$(*)$ $\a\ne\y[|\y|]$ и $\b\ne\y[1]$.
	
	Т.к. $\x\ne\lambda$, то $\tx$ находится внутри образа не менее трёх букв. По Свойству~\ref{uf:blocks}(2) любое вхождение $\tx$ в слово $\w'$ начинается с одинаковых позиций образов букв $\a$ и $\y[|\y|]$ слова $\a\x\y\x\b$, тогда $|\tx\ty|=\L|\x\y|$.
	%???????????????????????????????????_
	%По свойству (M2) подстановки $\phi$, $\x\y\x=\x_1...\x_{\n_\k}$ при $|\x_\i|\le(\rt(\k)-1)\*|\x\y|$ и образы $\x_\i[1]$, для $\i=1, ..., \n_\k-1$, в левом и правом $\x$ различны и $|\x_\i|\le(\rt(\k)-1)|\x\y|$ для $\i=1, ..., \n_\k$.
	%Тогда, слово $\tx\ty\tx$ с $|\tx\ty|=\L\*|\x\y|$ имеет $|\tx|\le|\x_\i|\*\L-\L+\lcs\big((\x_{\i, \l}[1])', (\x_{\i, \r}[1])'\big)+\lcp\big((\x_{\i+1, \l}[1])', (\x_{\i+1, \r}[1])'\big)-\eps\le(\rt(\k)-1)\*|\tx\ty|-\eps$ для $\i=1, ..., \n_\k-1$ и $|\tx|\le|\x_\i|\*\L-\L+\lcs\big((\x_{\n_\k, \l}[1])', (\x_{\n_\k, \r}[1])'\big)+\lcp\big((\y[1])', \b'\big)-\eps\le(\rt(\k)-1)\*|\tx\ty|-\eps$.
	%???????????????????????????????????^
	%По свойству (M?) подстановки $\tx\ty\tx=\tx_1...\tx_{\n_\k}\ty\tx'_1...\tx'_{\n_\k}$ при $|\tx'_\i|=|\tx_\i|\le(\rt(\k)-1)\*|\tx\ty|-\eps$ и $\tx_\i[1]\ne\tx'_\i[1]$.
	%
	
	Если $\x\y\x$ является $(|\x|, \eps)${--}ЭС, учитывая $(*)$, получим цепочку неравенств
	%???????????????????????????????????_
	%\begin{equation}\notag
	%\begin{split}
	%\frac{|\tx\ty\tx|+\eps}{|\tx\ty|}
	%	&=\frac{\L|\x_\i\y\x_\j|+\lcs(\a', (\x_{\j-1}[|\x_{\j-1}|])')+\lcp(\b', (\x_{\i+1}[1])')+\eps}{\L|\x\y|}\\
	%	&\le\frac{\L|\x\y\x|+\lcs(\V)+\lcp(\V)+\eps}{\L|\x\y|}=\frac{\L|\x\y\x|+\L\eps}{\L|\x\y|}\le\rt(\n).\\
	%\end{split}
	%\end{equation}
	%???????????????????????????????????^
	\begin{equation}\notag
	\begin{split}
	\frac{|\tx\ty\tx|+\eps}{|\tx\ty|}
	&=\frac{\L|\x\y\x|+\lcs(\a', (\y[|\y|])')+\lcp(\b', (\y[1])')+\eps}{\L|\x\y|}\\
	&\le\frac{\L|\x\y\x|+\lcs(\sV_\n)+\lcp(\sV_\n)+\eps}{\L|\x\y|}=\frac{\L|\x\y\x|+\L\eps}{\L|\x\y|}\le\rt(\n).\\
	\end{split}
	\end{equation}
	
	Если же $\x\y\x$ не $(|\x|, \eps)${--}ЭС.
	Т.к. $\w$ --- $(\eps, \E)${--}ГС, то $|\x|\in\E$.
	Тогда по (M2) существует $\m$ от $1$ до $|\x|$, что образы буквы $\x[\m]$ в левом и правом $\x$ различны.
	Получаем, что образ слова $\a\x\y\x\b$ содержит не расширяемые в $\w'$ подслова получаемых нами образов слов $\w_1=\a\x\y\x[1]...\x[\m]$ и $\w_2=\x[\m]...\x[|\x|]\y\x\b$ с периодом $\L|\x\y|$.
	В максимальном подслове $\w_1$ по $(*)$ различаются буквы $\w[1]=\a$ и $\w[1+|\x\y|]=\y[|\y|]$, а у букв $\w_1[|\w_1|-|\x\y|]$ и $\w_1[|\w_1|]$ различные образы, тогда, пользуясь (uf3.1) для подслова $\tx\ty\tx$ образа $\w_1$ выполняется
	\begin{equation}\notag
	\begin{split}
	\frac{|\tx\ty\tx|+\eps}{|\tx\ty|}
	&\le\frac{\L(|\x\y|+\m'-1)+(\lcs(\a', (\y[|\y| ])')+\lcp((\w[\m-|\x\y|])', (\w[\m])')+\eps}{\L|\x\y|}\\
	&\le\frac{\L|\x\y\x|-\L+(\lcs(\sV_\n)+\L-\lcs(\sV_\n)-\eps)+\eps}{\L|\x\y|}=\frac{|\x\y\x|}{|\x\y|}\le\rt(\n).\\
	\end{split}
	\end{equation}
	Аналогично проверяется неравенство для слова $\w_2$.
	
	Остаётся рассмотреть случаи, когда все $\tx$ содержатся в образе двух букв.
	Если $\tx\ty\tx$ содержится в образе двух букв, то $\tx\ty\tx\in\F(\a\b)$ удовлетворяет~\eqref{epsE} по (uf2.1).
	Теперь пусть $\tx\ty\tx$ содержится в образе слова $\a\y\b$ при $|\y|>0$.
	Рассмотрим случай $\tx\notin\F(\v)$ для любого $\v\in\sV$ т.е. любой $\tx$ начинается в образе одной буквы и заканчивается в образе другой.
	Тогда по Свойству~\ref{uf:blocks}(3) либо $|\tx\ty|\equiv0\pmod\L$ либо оба $\tx$ содержат факторы двух образов одной буквы.
	
	В первом случае достаточно оценить сумму длин общих суффикса и префикса пар образов $\a'$ с $(\y[|\y|])'$ и $(\y[1])'$ с $\b'$ соответственно.
	При $|\y|=1$ слово $\tx\ty\tx$ --- $(|\tx|, \eps\*\chi_{\tx})${--}ЭС по (uf2.2.2).
	При $|\y|>1$ рассмотрим три подслучая.
	
	Первый, когда все прообразы пар различны.
	Т.к. $|\y|-2\ge\Dt(\a'(\y[1])', (\y[|\y|])'\b')$, то по (uf2.3) получаем требуемое условие $(|\tx|, \eps\chi_{\tx})${--}экспоненциальности для слова $\tx\ty\tx$.
	Второй, при равенстве одной из пар прообразов.
	Б.О.О. пусть $\b=\y[1]$ и $\a\ne\y[|\y|]$, расстояние между $\b$ и $\y[1]$ не менее $\n-2$, откуда $|\tx\ty|\ge\L(\n-1)$.
	% |x|<=lcp+L-lcs
	Из нерасширяемости получаем $\b'\ne(\y[1])'$.
	Тогда по (uf3.1) и $|\tx|\le\lcp(\b', (\y[1])')+\lcs(\a', (\y[|\y|])')$ получим $\tx\le\L-\eps$, откуда и $(|\tx|, \eps)${--}экспоненциальность слова $\tx\ty\tx$.
	В третьем подслучае, не трудно проверить, что при равенстве обеих пар прообразов между $\b$ и $\y[1]$ не менее $2\n-1$ букв.
	Т.е. $|\tx\ty|\ge2\*\L\*\n$.
	По (uf3.1) получим $|\tx|<2\*\L-\eps$, что опять же удовлетворяет условию $(|\tx|, \eps)${--}экспоненциальности слова $\tx\ty\tx$.
	
	Во втором случае между одинаковыми прообразами не менее $\n-2$ букв.
	При этом, с учётом $(*)$, сами образы должны иметь вид либо $\a'=\al\u\v$, $(\y[1])'=\z\al$, $(\y[|\y|])'=\al\u$, $\b'=\v\z\al$ либо $\a'=\al\u$, $(\y[1])'=\v\z\al$, $(\y[|\y|])'=\al\u\v$, $\b'=\z\al$.
	Тогда по (uf3.2) $\tx\ty\tx$ --- $(|\tx|, \eps\*\chi_{\tx})${--}ЭС для обоих случаев.
	
	Остаётся рассмотреть случаи, когда $\tx$ может находиться внутри некоторого символьного образа $\a'$.
	Если оба $\tx$ внутри символьных образов $\a'=\al\tx\z_1$ и $\b'=\z_2\tx\al$, с учётом $|\y|\ge\Dt(\a', \b')$, получим $|\tx\ty|\ge\Dt(\a', \b')\*\L+|\tx\z_1\z_2|$.
	Тогда $(|\tx|, \eps\*\chi_{\tx})${--}экспоненциальность слова $\tx\ty\tx$ вытекает из (uf2.1.2).
	Пусть теперь один из $\tx$ находится на стыке двух образов $\ce'\d'$, где $\d'\in\bfdt_\d(\ce')$.
	Пусть $\tx\ty\tx$ начинается в $\a'$ и заканчивается в $\ce'\d'$ образа минимального по включению слова $\a\y\ce\d$, тогда рассмотрим два случая.
	При $\a'=\ce'$ требуемое непосредственно вытекает  из (uf2.1.1).
	При $\a'\ne\ce'$ получим $|\y|\ge\Dt(\a, \ce\d)$, тогда по (uf2.2.3) получаем требуемое.
	Остаётся случай, когда $\tx\ty\tx$ начинается в образе первой буквы и заканчивается в образе последнего слова $\ce\d\y\a$. Аналогично предыдущим двум случаям, доказываем этот, используя (uf2.1.3) и (uf2.2.1).
	
	Значит, любой фактор $\tx\ty\tx$ слова $\w'$ --- $(0, \E)${--}ГС, а при $|\tx|\notin\E$ является $(|\tx|, \eps)${--}ЭС, что удовлетворяет условию $(\eps, \E)${--}ГС $\w'$.
\end{proof}

\begin{cor}\label{cor4}
	Пусть множество $\Dt_\n$ $(\L, \eps, \E)${--}С для некоторого $\E\subset\sN$ и\\
	$\eps(\L-1)=\lcs(\sV_\n)+\lcp(\sV_\n)$, тогда при $\dt_{-}>|\E|$
	\footnote{В ДР2 условие $\dt_{-}>|\E|$ было в расширенной версии, но автор пока не разобрался с этой версией}:\par
	$(1)$ Существует граничное слово бесконечной длины над $n$ буквенным алфавитом\par
	\hfill (аналог доказательства теоремы Дежан для частных случаев).\par
	$(2)$ Если $\eps\le\frac{1}{\n-1}$, экспоненциальная гипотеза над $n$ буквенным алфавитом верна.
\end{cor}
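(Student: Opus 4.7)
}

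Для пункта (1) план таков: взять некоторое тривиальное конечное стартовое $(\eps, \E)$-ГС $\w_0$ (например, достаточно короткое слово длины меньше минимального периода, для которого условие \eqref{epsE} выполняется автоматически), и итеративно применять $(\L, \eps, \E, \Dt_\n)$-подстановку $\phi$. Во-первых, нужно убедиться, что подстановка $\phi$ корректно продолжима на любую позицию слова-прообраза. Это вытекает из Свойства \ref{morf:step}: условие $\dt_{-} > |\E|$ гарантирует, что на каждом шаге множество $\bfdt_{\w[k]}(\phi_{k-1}(\w))\setminus\I_\w(\E_k\setminus\E_{k-1}, k)$ непусто (т.к. $|\I_\w(\E_k\sm\E_{k-1}, k)| \le |\E|$ по Свойству \ref{def:unic}). Во-вторых, по Лемме \ref{l4} каждая итерация $\phi^k(\w_0)$ снова является $(\eps, \E)$-ГС, причём $|\phi^k(\w_0)| = |\w_0| \cdot \L^k \to \infty$. Т.к. любое $(\eps, \E)$-ГС является ГС, получаем $|\T_n| = \infty$, откуда по Утверждению 1 из введения (эквивалентность бесконечности $\T_n$ и существования БГС) следует существование БГС.

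Для пункта (2) план: применить Следствие \ref{morf:expgr1}. Фиксируем $(\eps, \E)$-ГС $\w$ длины $\m$, полученное итерациями из пункта (1). По Следствию \ref{morf:expgr1} каждые $2$ буквы прообраза (при продвижении по $\w$ слева направо с фиксацией образов) количество различных образов умножается как минимум на $\dt_{-}$. Поскольку из условия $\dt_{-} > |\E|$ и Замечания \ref{1inE} вытекает $\dt_{-} \ge 2$, число различных образов слова $\w$ длины $\m$ не менее $\dt_{-}^{\lfloor\m/2\rfloor}$, и все они по Лемме \ref{l4} суть $(\eps, \E)$-ГС длины $\m\L$. Тогда число $(\eps, \E)$-ГС длины $N$ растёт как минимум как $\dt_{-}^{N/(2\L)}$, что и даёт экспоненциальную гипотезу над $\A_\n$.

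Главной технической сложностью я ожидаю аккуратное обоснование двух вещей: (а) существование корректного стартового $(\eps, \E)$-ГС и корректность итеративной подстановки при условии $\eps\le\frac{1}{\n-1}$ (для применения Свойств \ref{def:unic} и \ref{morf:expgr}); (б) проверка того, что различные последовательности выборов в Свойстве \ref{morf:expgr} действительно дают различные слова (без <<случайных склеек>>). Пункт (б) несложен, т.к. по условию \ref{m1} на $k$-й позиции мы фактически фиксируем $\V_{\w[k]}$-образ, а образы разных букв по (uf1.2) не пересекаются, так что по образу однозначно восстанавливается прообраз и сам выбор. Остальные шаги сводятся к прямым применениям уже доказанных лемм и свойств.
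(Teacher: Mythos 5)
Ваше доказательство верно и по существу совпадает с доказательством в работе: пункт (1) получается итерацией $(\L, \eps, \E, \Dt_\n)${--}подстановки с опорой на Лемму~\ref{l4}, а пункт (2) — применением следствия~\ref{morf:expgr1} (свойства~\ref{morf:expgr}) для экспоненциального роста числа образов. Вы лишь чуть аккуратнее оговариваете базу итерации, непустоту множеств выбора через свойство~\ref{morf:step} и различимость получаемых образов, а также используете показатель $\m/2$ вместо $\m/3$ из текста работы --- это уточнения того же пути, а не иной подход.
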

\begin{proof}
	По Лемме~\ref{l4} из любого $(\eps, \E)${--}ГС длины $\m$ под действием\\
	$(\L, \eps, \E, \Dt_\n)${--}П мы получим (только) $(\eps, \E)${--}ГС длины $\L\m$.
	Т.к. $\L>1$, то для любого $\m\in\sN$ можно построить $(\eps, \E)${--}ГС длины $\m$.
	Т.е. можно задать последовательность, удлиняющихся слов, откуда получим (1).
	
	Любое слово в этой последовательности является граничным.
	Тогда выполняются условия следствия~\ref{morf:expgr1} (свойства~\ref{morf:expgr}),
	откуда $(\L, \eps, \E, \Dt_\n)${--}П порождает последовательность из не менее чем $\dt_-^{\frac{\m}{3}}$ слов длины $\L\m$.
	Т.к. $\L$ --- константа, то, пробегая по возрастанию по всем $\m\in\sN$, получим экспоненциальный рост числа таких слов с линейным ростом их длины.
	По Лемме~\ref{l4} все эти слова $(\eps, \E)${--}ГС, а значит граничные откуда получаем (2).
\end{proof}

\hyperlink{contents}{$\upuparrows$}

\subsubsection{Лемма \ref{l5} --- ГС с малыми экспонентами длинных факторов как следствие РРДГС}% <<строгой>>
Здесь мы докажем обобщённый вариант Леммы~\ref{l2}, а так же сформулируем гипотезу о существовании слов с <<сильной>> граничностью, и докажем ослабленный вариант этой гипотезы как следствие экспоненциальной гипотезы.

Сформулируем основную гипотезу этой части о существовании <<сильно>> граничных слов.

\begin{con}\label{con:1}
	$\forall\n\ge5\ \exists\w\in\T_n\cap\A_\n^{\om}\colon\underset{\x\in\F(\w), |\x|\ge\l}{\max}\exp(\x)\xrightarrow{\l\to\infty}1$.
\end{con}

Уточним понятие экспоненциального роста числа элементов от их <<размера>>, вычисляемого в некоторой метрике (в нашем случае от длины слова).
Назовём экспоненциальный рост множества $S$ с метрикой $\rho$ \textit{строгим}, если существует константа $c$, что для любых $\k\in\mathbb{N}$ выполняется неравенство $|S_{\k+c}|\ge2\*|S_{\k}|$, где $S_\k=\{s\colon s\in S, \rho(s)=\k\}$.

Например $\k${--}значной подстановкой над $\n${-}арным алфавитом с 1 дополнительным образом мы можем получить удвоение числа вариантов слов не более чем на каждом $\k(\n+1)$-ом шаге отображаемого слова.
Т.е. достаточно взять $c=\L\*\k\*(\n+1)$, где $\L$ --- длина образов букв.
Если дополнительных образов $\k\*(\n+1)$, то удвоение (иногда и утроение) происходит на каждой отображаемой букве.
Под действием нашей же (обобщённой) подстановки строгий экспонениальный рост происходит без дополнительных образов, например при $\dt_{-}>|\E|$ на каждой второй отображемой букве т.е. $c=2\L$.

Следующее следствие экспоненциальной гипотезы является ослаблением Гипотезы~\ref{con:1}.
{\cGr
Дополним, что формулировка следующей леммы может быть очевидным образом сформулирована в более общих терминах,
где вместо <<обобщённая подстановка со строгим эксп.ростом>> может быть использован множество слов (язык),
предствимое в виде дерева, с удваивающимися ветками на ограниченном расстоянии
(о чём было сказано в отдельной беседе (хотя, скорее монологе автора) с нашим НР, по воспоминаниям автора).
Точнее, расстояние между ближайшими вершинами (бифукациями) не превосходит некоторой (общей) константы.
В таком случае доказательство остаётся неизменным, кроме мест использования подстановки,
которые достаточно заменить на выбор ветки в месте (вершине) удвоения.
Но корректность идеи изменения формулировки ещё должна быть проверена автором т.к. данная лемма не перепроверялась после 2013 г.,
но это одно из утверждений автора из 2013 г., в котором автор уверен наверняка.
}

\begin{lem}\label{l5}
	Пусть обобщённая подстановка над граничным словом из $\T_n$ порождает множество граничных слов из $\T_n$, со строгим экспоненциальным ростом, при этом образы различных букв всегда различны% и$\lcp(\sV_\n)+\lcs(\sV_\n)<\L$
	, тогда выполняется следующее утверждение:
	$$
	\forall\eps>0\ \exists\w\in\T_n\cap\A_\n^{\om}, \exists \N\in\mathbb{N}\colon
	\forall\x\y\x\in F(\w), |\x|\ge \N\longrightarrow
	\exp(\x\y\x)\le1+\eps.
	$$
\end{lem}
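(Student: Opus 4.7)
The plan is to apply a counting argument to the tree $T$ of boundary words generated by the substitution and then extract an infinite good path via König's lemma. First, I would reduce to the nontrivial case $\eps<\tfrac{1}{n-1}$: otherwise every $\w\in\T_n$ already satisfies $\exp(\x\y\x)\le\tfrac{n}{n-1}\le 1+\eps$ and the conclusion holds with $N=1$. Call a factor $\x\y\x$ \emph{$\eps$-bad} if $\exp(\x\y\x)>1+\eps$. For $\w\in\T_n$ any $\eps$-bad factor with $|\x|=L$ has period $p=|\x\y|$ in the window $[(n-1)L,\,L/\eps)$: the lower bound comes from $\w$ being boundary, the upper from the definition of bad. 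Thus the bad factors form a structured but infinite family parametrized by $(i,L,p)$ with $L\ge N$.

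By the strict exponential growth hypothesis there is a constant $c$ such that every segment of length $c$ of a path in $T$ meets at least one bifurcation (a node with $\ge 2$ distinct letter-image children). Equip the infinite paths of $T$ with the uniform branching measure. For fixed $(i,L,p)$, the event that the path has an $\eps$-bad factor at position $i$ with these parameters forces each of the $\ge\lfloor L/c\rfloor$ bifurcations whose position lies in $[i+p,i+p+L-1]$ to produce exactly the block of characters prescribed by the first copy $\x$; since each bifurcation admits at most one ``matching'' child, this event has measure at most $2^{-L/c}$.

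Next, prune $T$ to the subtree $T_N$ consisting of words with no $\eps$-bad factor of $|\x|\ge N$; since any prefix of a path in $T_N$ is again in $T_N$, this is a finitely branching subtree of $T$. By König's lemma it suffices to show $T_N$ contains words of every length. Summing the measure estimate over all triples $(i,L,p)$ with $0\le i\le d$, $L\in[N,d]$, $p\in[(n-1)L,L/\eps)$, the fraction of paths of length $d$ in $T$ that fail to lie in $T_N$ is bounded by $\tfrac{d}{\eps}\sum_{L\ge N}L\cdot 2^{-L/c}$; for each fixed $d$ this is strictly less than $1$ once $N$ is chosen large enough.

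The main obstacle is to eliminate the spurious $d$-factor so that one $N=N(\eps)$ works at every depth simultaneously. I would handle this either by a dependency-aware argument --- observing that events for parameter-triples $(i,L,p)$ and $(i',L',p')$ whose intervals $[i,i+L+p)$ and $[i',i'+L'+p')$ are disjoint in the image word are independent, enabling a local-lemma-style bound on the degree of the dependency graph --- or by the additional structural input that in $\w\in\T_n$ two consecutive occurrences of a given factor of length $L-1$ are separated by at least $(n-1)L$ positions, which replaces the trivial factor $L/\eps$ in the sum over $p$ by an absolute constant depending only on $\eps$ and $n$. Either refinement makes the bad-fraction bound summable and independent of $d$, so that $T_N$ is infinite for a suitable $N=N(\eps)$; König's lemma then yields an infinite branch through $T_N$, which is the desired $\w\in\T_n\cap\A_n^\omega$ whose factors with $|\x|\ge N$ all satisfy $\exp(\x\y\x)\le 1+\eps$.
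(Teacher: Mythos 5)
Your route (a first-moment count over the tree of images, then K\"onig's lemma) is genuinely different from the paper's. The paper argues by induction on repeated applications of the substitution, maintaining the graded invariant \eqref{wrd:cond} (the longer the repeat, the closer its exponent must be to $1$), and at each application it \emph{deterministically} breaks every potentially bad repeat by flipping the image of one letter near the centre of the repeat; the feasibility of doing this for all bad factors simultaneously rests on the observation that the periods of the bad factors attached to one block are pairwise $1.5$-separated, so the position-intervals reserved for the breaks are disjoint and fit into a block of length $\Ie$. Your scheme would, if completed, avoid the grading entirely, and your local ingredients are sound: the reduction to $\eps<\tfrac1{n-1}$, the period window $p\in[(n-1)L,L/\eps)$, the bound $2^{-\Omega(L/c)}$ for a single event, and the K\"onig extraction at the end.

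The gap is in the passage from per-event bounds to a uniform $N$. First, the independence claim behind your local-lemma fix is false as stated: the admissible image of each letter depends on the image chosen for the previous letter (the $\bfdt$-constraint), and if $T$ is the tree of \emph{iterated} applications then every event $A_{i,L,p}$ also depends on the early-generation choices, so the dependency graph is essentially complete rather than of bounded degree. What actually survives is only the statement that the conditional probability of a match at each bifurcation, \emph{given any history}, is at most $1/2$; converting that into an existence statement requires the lopsided local lemma or an entropy-compression argument, and that step --- the heart of the proof --- is not carried out. Second, your alternative fix does not work: bounding the number of admissible periods per position by a constant removes the factor $L/\eps$ from the sum over $p$, but the offending factor $d$ comes from the sum over starting positions $i\le d$, which is untouched; the plain union bound therefore still forces $N\gtrsim\log d$ and cannot give one $N$ for all depths. (A minor further point: at a bifurcation whose image block is only partially contained in the window of the second copy of $\x$, two distinct images may both be consistent with the match, so the count of effective bifurcations must be decreased by the two boundary ones.)
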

\begin{proof}
	Пусть обобщённая подстановка порождает экспоненциальный рост (удваивает число слов) не более чем через каждые $\p$ букв.
	Докажем утверждение теоремы через последовательность выборов подклассов слов
	%(или подстановок, т.к. каждое полученное слово --- результат применения соответствующей подстановки).
	%Точнее, будем фиксировать образы
	
	Применим наша подстановка над словом $\w$ удовлетворяющим условию:
	\begin{equation} \label{wrd:cond}
	\forall\k\in\sN, \u\v\u\in\F(\w), |\u|\ge\L^{\N+\k}\longrightarrow\frac{|\u\v\u|+2}{|\u\v|}\le1+\frac{1}{1.5^{\min\{\m, \k\}}}
	\end{equation}
	где $\N$ зависит от $\m$, а $\m$ определяется $\eps>0$ (вместо основания $1.5$ можно взять любое число из интервала $(1, \Phi)$, где $\Phi=\frac{\sqrt5+1}{2}$ --- пропорция золотого сечения).
	Точнее $\m\colon\frac{1}{1.5^\m}\le\eps$ т.е. можно взять $\m=\lceil-\log_{1.5}\eps\rceil$.
	Тогда $\N$ возьмём такое, что выполняются неравенства:
	\begin{equation} \label{N}
	\frac{2\p(\lceil\log_{1.5}\L^{\N+\m}\rceil+\m)}{0.08\L^\N}<1
	\text{ и }
	\frac{\p(\lceil\log_{1.5}(\L^{\N+\m}+2)\rceil+\m+1)+2}{0.5\L^\N}<1.
	\end{equation}
	Докажем, что можно получить результат (слово) удовлетворяющее тому же условию~\eqref{wrd:cond}.
	Тогда доказательство теоремы будет следовать по индукции.
	В качестве БИ (слово над которым применяется обобщённая подстановка в первый раз) будет одна буква. Очевидно,
	что импликация в \eqref{wrd:cond} выполняется.
	\\
	Ш.И.:
	Разобъём исходное слово $\w$ на непересекающиеся промежутки длины $\Ie=\p\*\lceil(\N+\m)\*\log_{1.5}\L+\m\rceil$.
	Рассмотрим множество слов:
	$$
	\U_\Ce=\big\{\w[\p-\i, ..., \p+\i], \w[\p-\i, ..., \p+\i+1]\colon\p\in\{\Ce\*\Ie, ..., (\Ce+1)\*\Ie-1\}, \i\in\sN, \i\ge\L^\N/2\big\}
	$$
	Понятно, что у всех факторов слова $\w$ с повторами длины не менее $\L^\N$ правый такой повтор попадает ровно в одно $\U_\Ce$.
	%$\W_\Ce$ --- соответствующее $\U_\Ce$ множество слов.
	Оставим в $\U_\Ce$ только те слова $\u$, для которых существует фактор слова $\w$ вида $\u\v\u$
	(с центром правого $\u$ в $\{\Ce\*\Ie, ..., (\Ce+1)\*\Ie-1\}$) таких,
	что условие~\eqref{wrd:cond} для $\tu\tv\tu$ не выполняется.
	Обозначим полученное множество повторов (т.е. $\u$) $\U'_\Ce$ и соответствующее ему множество слов (т.е. $\u\v\u$) $\W'_\Ce$.
	Докажем, что для всех $\u\v\u\in\F(\w)$, у которых правый $\u=\u_1\a\u_2\in\U'_\Ce$, существуют образы $\tu\tv\tu_1$ и $\tu_2\tv\tu$ удовлетворяющие уловию~\eqref{wrd:cond} (т.е. с разными образами $\a$ в левом и правом $\u$) индукцией по $\Ce$.
	Напомним, что словом $\tx\ty\tx$ обозначается нерасширяемое слово в $\w'$, содержащее только образы факторов слова $\x\y\x\in\F(\w)$, причём $\tx$ содержит только образы факторов слова $\x$ (например, для $\tu\tv\tu_1$ имеется ввиду $\tu_1\a\tu_2\tv\tu_1$, где $\tu_1$ --- аналог $\tx$ в этом обозначении).
	\\
	Б.И. при $\Ce=0$ очевидно, что левого $\u$ нет, значит импликация~\eqref{wrd:cond} выполняется.
	Точнее $\W'_\Ce$ и $\U'_\Ce$ пусты.
	\\
	Ш.И.
	Пусть $|\u|\ge\L^{\N+\m}$, тогда по П.И. $\frac{|\u\v\u|+2}{|\u\v|}\le1+\frac{1}{1.5^\m}$.
	Откуда в частности $\frac{|\u_1|}{|\u_1\v_1|}<\frac{1}{1.5^{\m-1}}$.
	Тогда, после применения нашей подстановки к $\u\v\u$ период не может стать меньше $|\u\v|\L$
	т.к. образы разных букв разные и длина нерасширяемого в $\w'$ образа ($\tu\tv\tu$)
	не будет превосходить $(|\u\v\u|+2)\L-2$) для любого $\tu\tv\tu$ при $|\tu|\ge\L^{\N+\m+1}$ выполняется
	$$
	\frac{|\tu\tv\tu|+2}{|\tu\tv|}\le\frac{|\u\v\u|\L+2\L-2+2}{|\u\v|\L}=\frac{|\u\v\u|+2}{|\u\v|}\le1+\frac{1}{1.5^\m}
	$$
	Т.е. если $|\u|\ge\L^{\N+\m}$, то $\u\not\in\U'_\Ce$.
	%Докажем теперь для $|\u|\in\{\L^{\N-1}, ..., \L^{\N+\m}-1\}$.
	
	Пусть $\L^{\N+\k-1}\le|\u|<\L^{\N+\m}, \k=0, ..., \m$ и $\u\in\U'_\Ce$.
	Докажем, что можно образы левого и правого $\u$ <<разбить>> на части отличающихся (длинами) от $|\u|$ не более чем в $1.5$ раза.
	
	Пусть $\u_1\v_1\u_1$ и $\k$ такие, что $\u_1\in\U'_\Ce$
	% и начинается с самой правой позиции (среди всех $\u\in\U'_\Ce$)
	и выполняются неравенства:
	\begin{equation} \label{wrd:cond2}
	1+\frac{1}{1.5^{\k}}<\frac{|\u_1\v_1\u_1|+2}{|\u_1\v_1|}\le1+\frac{1}{1.5^{\k-1}},
	\end{equation}
	что эквивалентно $|\u_1\v_1|<1.5^\k(|\u_1|+2)\le1.5|\u_1\v_1|$.
	Тогда фиксируем (выберем) образ в позиции (слова $\w$) из множества
	%$\{\Ce\Ie+\Ie/2+(-1)^{1}\p\log_{1.5}|\u_1\v_1|, ..., \Ce\Ie+\Ie/2+(-1)^{1}\p(\log_{1.5}|\u_1\v_1|+1)-1\}$
	$\{\Ce\Ie+\p\log_{1.5}|\u_1\v_1|, ..., \Ce\Ie+\p(\log_{1.5}|\u_1\v_1|+1)-1\}$
	образом отличным от образа соответствующей буквы в левом $\u_1$ (т.е. букввы на $|\u_1\v_1|$ позиций левее).
	Такая позиция существует по условию строгого экспоненциального роста (удвоения) через каждые $\p$ букв.
	
	Тогда длина наибольшего из полученных отрезков $\u_{1_1}\a\u_{1_2}=\u_1$, пусть $|\u_{1_1}|$, не превосходит
	$\frac{|\u_1|+2\p(\log_{1.5}|\u_1\v_1|+1)}{2}<\frac{|\u_1|+2\p(\log_{1.5}(1.5^\k(|\u_1|+2))+1)}{2}\le\frac{|\u_1|+2\p(\log_{1.5}(|\u_1|+2)+\m+1)}{2}\le\frac{|\u_1|}{1.5}-2$
	т.е. $|\u_{1_1}|+2<1.5|\u_1|$ с учётом условия~\eqref{N}.
	% для достаточно больших $|\u_1|$ (т.е. $\N$, точнее с ростом $\m$ приходится увеличивать $\N$,
	% чтобы гарантировать полученное неравенство т.к. $|\u_1|\ge\L^\N$).
	Откуда
	$$
	\frac{|\tu_1\tv_1\tu_{1_1}|+2}{|\tu_1\tv_1|}\le\frac{|\u_1\v_1|\L+2\L-2+2+|\u_{1_1}|\L}{|\u_1\v_1|\L}=1+\frac{|\u_{1_1}|+2}{|\u_1\v_1|}\le1+\frac{|\u_1|}{1.5|\u_1\v_1|}<1+\frac{1}{1.5^\k}
	$$
	Теперь, т.к. $1+\frac{1}{1.5^{\k}}<\frac{|\u_1\v_1\u_1|+2}{|\u_1\v_1|}$, то по~\eqref{wrd:cond} такое возможно только если $|\u_1|<\L^{\N+\k}$.
	%Тогда по условию $\lcp(\sV_\n)+\lcs(\sV_\n)<\L$ получим $|\tu_1|<|\u_1|\L+\L\le(\L^{\N+\k}-1)\L+\L=\L^{\N+\k+1}$.
	При этом $|\tu_{1_1}|+\L\le|\tu_{1_1}|+|\a'|\le|\tu_1|<|\u_1|\L+2\L\le(\L^{\N+\k}-1)\L+2\L=\L^{\N+\k+1}+\L$.
	Значит условие~\eqref{wrd:cond} выполняется и для $\tu_1\tv_1\tu_{1_1}$.
	% Предположим, что доказано для $\i-1$-го слова $\u\v\u$ с наибольшими повторами с условиями
	% аналогичными~\eqref{wrd:cond2}, возможно при собсвенных значениях $\k$.
	
	В частности мы доказали, что при такой процедуре (<<разбиения>>), если $\L^{\k-1}\le|\u_1|<\L^{\k}$ и $\frac{|\u_1\v_1\u_1|}{|\u_1\v_1|}\le1+\frac{1}{1.5^{\k-1}}$, то $\L^{\k}\le|\tu_{1_1}|<\L^{\k+1}$ и $\frac{|\tu_1\tv_1\tu_{1_1}|}{|\tu_1\tv_1|}\le1+\frac{1}{1.5^\k}$.
	А так как, если $\u\v\u\in\F(\w)$ и $|\u|\ge\L^{\N+\m}$, то $\u\v\u\not\in\W'_\Ce$, тогда достаточно проделать эту операцию для $\u\v\u\in\F(\w)$ удовлетворяющих условию $\L^{\N-1}\le|\u|<\L^{\N+\m}$ (и $\frac{|\u|}{|\u\v|}\le1$ при $|\u|<\L^\N$, что выполняется в силу граничности $\w$).
	
	Для остальных случаев (с повторами $\u\in\U'_\Ce$) проделываем ту же процедуру, если для них найдётся буква в повторе, близкой к его центру, при помощи которой можно разбить повтор на меньшие части.
	Для этого достаточно доказать, что промежутки в которых фиксируются буквы, не пересекаются для разных $\u\v\u$ из $\W'_\Ce$, подходящих под условие~\eqref{wrd:cond2}.
	
	Возмём пару слов $\u_{\i-1}\v_{\i-1}\u_{\i-1}$ и $\u_\i\v_\i\u_\i$ из $\W'_\Ce$ и соответствующие им $\k_{\i-1}$ и $\k_\i$ удовлетворяющие условию~\eqref{wrd:cond2}, тогда множества\\
	%$\{\Ce\Ie+\Ie/2+(-1)^{\j}\p\log_{1.5}|\u_1|, ..., \Ce\Ie+\Ie/2+(-1)^{\j}(\p(\log_{1.5}|\u_1|+1)-1)\}$\\
	%$\{\Ce\Ie+\Ie/2+(-1)^{\j+1}\p\log_{1.5}|\u_1|, ..., \Ce\Ie+\Ie/2+(-1)^{\j+1}(\p(\log_{1.5}|\u_1|+1)-1)\}$
	$\{\Ce\Ie+\p\log_{1.5}|\u_\i\v_\i|, ..., \Ce\Ie+\p(\log_{1.5}|\u_\i\v_\i|+1)-1\}$ и\\
	$\{\Ce\Ie+\p\log_{1.5}|\u_{\i-1}\v_{\i-1}|, ..., \Ce\Ie+\p(\log_{1.5}|\u_{\i-1}\v_{\i-1}|+1)-1\}$
	не должны пересекаться.
	Для этого достаточно, чтобы либо $\log_{1.5}|\u_\i\v_\i|+1\le\log_{1.5}|\u_{\i-1}\v_{\i-1}|$ т.е. $1.5|\u_\i\v_\i|\le|\u_{\i-1}\v_{\i-1}|$, либо $|\u_\i\v_\i|\ge1.5|\u_{\i-1}\v_{\i-1}|$.
	
	Рассмотрим наибольшее общее пересечение (подслово) $\u$ в правых $\u_{\i-1}$ и $\u_\i$ в контексте $\w$.
	Если оно не совпадает с минимальным (по длине) из них, то длины $\u_{\i-1}$, $\u_\i$ и $\u$ <<соизмеримы>> (при достаточно больших $\N$).
	Точнее, $|\u_\i|\le|\u_{\i-1}|+2\Ie$ откуда по условию~\eqref{N} получим $|\u_{\i-1}|\ge|\u_\i|\*(1-\frac{2\Ie}{|\u_\i|})\ge|\u_\i|\*(1-0.08)$ (независимо от того какой из $|\u_\i|$ и $|\u_{\i-1}|$ максимален).% откуда $\min{|\u_\i|, |\u_{\i-1}|}\ge|\u_\i|\*(1-0.08)$.
	%Точне для любых $\eps_\N$  и $\v, \w\in\{\u, \u_\i, \u_{\i-1}\}$ можно подобрать такое $\N$, что $\Big|\frac{|\v|}{|\w|}-1\Big|<\eps_\N$.
	%Можно так же считать, что последовательность $\u_\i\v_\i\u_\i\in\W'_\Ce$ упорядочена по правилу $\frac{|\u_{\i-1}|}{|\u_{\i-1}\v_{\i-1}|}\le\frac{|\u_\i|}{|\u_\i\v_\i|}$.
	
	%Пусть условие~\eqref{wrd:cond2} выполняется для $\u_{\i-1}\v_{\i-1}\u_{\i-1}$ и $\u_\i\v_\i\u_\i$ при $\k_{\i-1}$ и $\k_\i$ соответственно.
	%Тогда при $\k_{\i-1}-\k_\i>1$ из того же условия, получим $|\u_\i|+2\le\frac{|\u_\i\v_\i|}{1.5^{\k-1}}\le\frac{|\u_{\i-1}\v_{\i-1}|}{1.5^{\k-1}}<\frac{|\u_{\i-1}|+2}{1.5}$.
	
	Предположим, что $1.5^2|\u_\i\v_\i|>1.5|\u_{\i-1}\v_{\i-1}|>|\u_\i\v_\i|$.
	Рассмотрим подслово\\
	$\u\v\u\in\F(\w)$, у которого левый $\u$ стоит в левом $\u_{\i-1}$, а правый в левом $\u_\i$.
	Тогда для $\j=\arg\max\{|\u_\j\v_\j|\colon\j\in\{\i-1, \i\}\}$
	\begin{equation}\notag
	\begin{split}
	\frac{|\u\v\u|+2}{|\u\v|}
	&\ge\frac{\big||\u_{\i-1}\v_{\i-1}|-|\u_\i\v_\i|\big|-\Ie+\min\{|\u_\i|, |\u_{\i-1}|\}-\Ie+2}{\big||\u_{\i-1}\v_{\i-1}|-|\u_\i\v_\i|\big|}
	>1+\frac{\min\{|\u_\i|, |\u_{\i-1}|\}-2\Ie+2}{|\u_\j\v_\j|/3}\\
	&>1+\frac{|\u_\j|(1-0.08)-2|\u_\j|0.08+2}{|\u_\j\v_\j|/3}
	>1+\frac{3(|\u_\j|+2)(1-0.25)}{|\u_\j\v_\j|}
	>1+\frac{3\cdot0.75}{1.5^{\k_\j}}\\
	&=1+\frac{1}{1.5^{\k_\j-2}}
	\end{split}
	\end{equation}
	Т.е. при достаточно больших $\N$ для выполнения условия~\eqref{wrd:cond} необходимо, чтобы
	$|\u|<\L^{\N+\k_\j-2}$.
	При этом, при этих же $\N$ выполняется $\L^{\N+\k_\j-2}>|\u|\ge(1-2\cdot0.08)|\u_\j|\ge(1-2\cdot0.08)\L^{\N+\k_\j-1}\ge(2-4\cdot0.08)\L^{\N+\k_\j-2}>\L^{\N+\k_\j-2}$, что неверно (неравенство $|\u_\j|\ge\L^{\N+\k_\j-1}$ следует из ограничения~\eqref{wrd:cond2} на $\k_\j$ и условия $\W'_\Ce$).
	
	Теперь, пусть $\u_\i$ и $\u_{\i-1}$ <<несоизмеримы>> т.е. $\u$ равно минимальному по длине из них.
	Б.О.О. пусть $\u=\u_\i$.
	Рассмотрим подслово $\u\v\u\in\F(\w)$, где один из повторов лежит в левом $\u_{\i-1}$ другой в левом $\u_\i$ (т.е. совпадает с ним).
	Тогда, либо $|\u\v|+|\u_{\i-1}\v_{\i-1}|=|\u_\i\v_\i|$, либо $|\u\v|+|\u_\i\v_\i|=|\u_{\i-1}\v_{\i-1}|$.
	
	Предположим, что $1.5^2|\u_\i\v_\i|>1.5|\u_{\i-1}\v_{\i-1}|>|\u_\i\v_\i|$.
	Тогда $|\u\v|<0.5|\u_\i\v_\i|$ и $|\u\v|<0.5|\u_{\i-1}\v_{\i-1}|$.
	%И если предположить, что
	%% либо $|\u\v|\ge|\u_\i\v_\i|$, либо 
	%$|\u\v|\ge|\u_\i\v_\i|$, то
	%%либо $2|\u_\i\v_\i|\le|\u_{\i-1}\v_{\i-1}|$, либо
	%$2|\u_\i\v_\i|\le|\u\v|+|\u_\i\v_\i|=|\u_{\i-1}\v_{\i-1}|$.
	При этом $|\u|=|\u_\i|\ge\L^{\N+\k_\i-1}$ откуда по~\eqref{wrd:cond} получим $\frac{|\u|+2}{|\u\v|}\le\frac{1}{1.5^{\k_\i-1}}$.
	%Если же $|\u\v|<|\u_\i\v_\i|$, то
	При этом из нашего предположения
	$\frac{|\u|+2}{|\u\v|}>\frac{|\u_\i|+2}{0.5|\u_\i\v_\i|}>\frac{1}{1.5^{\k_{\i}-1}}$%, что не возможно.
	\ т.е. предположение не верно.
	
	Остаётся доказать, что <<зарезервированных>> (для <<разбиения>>) позиций $\{\Ce\Ie, ..., (\Ce+1)\Ie-1\}$ достаточно для всех элементов из $\W'_\Ce$.
	Т.к. все $\u\in\U'_\Ce$ имеют длину менее $\L^{\N+\m}$ и периоды соответствующих им слов из $\W'_\Ce$ отличаются не менее чем в $1.5$ раза, то по~\eqref{wrd:cond2} для некоторого $\k\le\m$
	$$
	|\W'_\Ce|\le\max_{|\u\v\u\in\W'_\Ce|}\lceil\log_{1.5}(|\u\v|)\rceil\le\lceil\log_{1.5}(1.5^{\k}|\u|)\rceil\le\lceil\log_{1.5}\L^{\N+\m}+\m\rceil.
	$$
	При этом, каждому элементу из $\W'_\Ce$ достаточно ровно $\p$ (подряд идущих) букв.
	Т.е. всем элементам из $\W'_\Ce$ достаточно $\p\lceil\log_{1.5}\L^{\N+\m}+\m\rceil=\Ie=|\{\Ce\Ie, ..., (\Ce+1)\Ie-1\}|$ позиций.
\end{proof}
Заметим так же, что после построения такого слова можно снова применить ту же обобщённую подстановку к полученному слову,
чтобы получить множество граничных слов со строгим экспоненциальным ростом и почти такимже свойством
(экспонента в пределе, если и увеличится, то не более чем в 2 раза <<отклонится>> от 1.
Точнее через нашу подстановку над словом с экспонентой для $1+\eps$ для длинных повторов получим слово с факторами,
у которых повторы в $\L$ раз длиннее, имеющими экспоненту не больше $1+2\eps$).

\hyperlink{contents}{$\upuparrows$}

\newpage

\section{Конструкции $\D_{3,n}^\eps${--}ЦГС для $|\A_n|\ge5$}\label{constructs}

В данном разделе описаны конструкции для $bc${--}корней предположительно удовлетворяющих условиям Леммы \ref{l3}.
Для нечётных было проверено в 2012 году.
В следующей версии автор планирует проверить корректность данных конструкций.

\subsection{Конструкция для нечётных $|\A_n|\ge5$}

Пусть $m=\frac{n-3}{2}$
\begin{description}
	\item[1]
	$b=00, c=001$ --- $bc$-код
	\item[2] Внутренние блоки по всем $0\le i<2m + 7$ чётные (назовём $d${--}кодами/блоками)
	
	$d_i=\phi(cb^{\frac{n+i-5}{2}}c)$
	
	$|d_i|=n+i+1$
	\item[3] Внешние блоки по всем $0\le i<m$ (назовём $q${--}кодами/блоками)
	
	\begin{tabular}{ c c }
		$q_i=d_6d_0d_4d_{2i+8}d_0d_6$, & $q'_i=d_6d_0d_{2i+8}d_4d_0d_6$
	\end{tabular}
	
	$|q_i|=|q'_i|=6n +7+1+5+2i+9+1+7=6n+2i+30$
	
	\item[4.1] Тогда первые $n+1$ $bc${--}корня  слова определяются так
	%	$w_0=q_0q'_0q_0q_1q_2...q_{m-1}q_0q'_0q'_0q'_1q'_2...q'_{m-1}$
	$$w_0=q_0q'_0\Bigg(\prod_{i=0}^{m-1}q_i\Bigg)q_0q'_0\Bigg(\prod_{i=0}^{m-1}q'_i\Bigg)$$
	Остальные получаем циклическими сдвигами $w_0$ влево на целые блоки $q_i,q'_i$.
	%	Т.е
	%	$w_1=q'_0q_0q_1q_2...q_{m-1}q_0q'_0q'_0q'_1q'_2...q'_{m-1}q_0$,
	%	$w_{m+2}=q_0q'_0q'_0q'_1q'_2...q'_{m-1}q_0q'_0q_0q_1q_2...q_{m-1}$ и последний
	%	$w_{2m+3}=q'_{m-1}q_0q'_0q_0q_1q_2...q_{m-1}q_0q'_0q'_0q'_1q'_2...q'_{m-2}$.
	
	$|w_i|=4(6n+2\cdot1+28)+2\big((6n+2\cdot1+28)+...+(6n+2m+28)\big)=6.5n^2+36n+37.5$
	
	\item[4.2]
	Наконец, определим $3n+3$ $bc${--}корней слова.
	Для произвольного $0\le i<n+1$ возмём $w_i=d_6ud_6$, тогда
	
	\begin{tabular}{ c c c }
		$w_i^-=ud_6d_6$ & $w_i^0=w_i$ & $w_i^+=d_6d_6u$
	\end{tabular}
	
	Обозначим
	\begin{tabular}{ c c c }
		$W_n^-=\{w_i^-: i\in\{0,...,n\}\},$ & $W_n^0=\{w_i^0: i\in\{0,...,n\}\},$ & $W_n^+=\{w_i^+: i\in\{0,...,n\}\}$
	\end{tabular}
	
\end{description}

\hyperlink{contents}{$\upuparrows$}

\subsection{Конструкция для чётных $|\A_n|\ge5$}

Здесь мы будем использовать всю индексацию с 0,
в отличие от ДР2, где, например, индекс <<$j$>> начинается с 1.
Это удобнее для реализации программы проверки.
Так же, представленные здесь, конструкции будут несколько отличаться.
Точнее, последовательностью внешних блоков $q_{j,i},q'_{j,i}$ в $bc${--}корнях $w_i$.
Представленные в ДР2 конструкции оказались не подходящими для применения нашей схемы в леммах \ref{l1}, \ref{l2}.
Даже при некоторых $n\ge5$ сами слова (не только пары) не являются граничными.

Новая версия, пердставленная ниже, проверена (на компьютере) не до конца.
% n=6 k=5, n=10 k=12, n=12 k=6, n=14 k=28, n=16 k=15, n=18 k=12, n=20 k=132, n=22 k=56, n=24 k=51, n=26 k=25
Но для $n\in\{6,...,32\}\setminus\{8\}$ (для всех $k\ge5$) сами слова являются $\D_{3, n}^\eps${--}ГС
(на сколько автор успел проверить, Но для $n=8$-ми слово только ГС).
Отличие новой конструкции в основном только в $w_i$ и незначительные изменения в $q_{j,i}${--}кодах,
которые можно было не менять (но, для боолее красивой закономерности, индексы немного изменены).

Пусть $m=\frac{n\pmod{6}}{2}, h=\lfloor n/12\rfloor+1$% ??? т.е. $n=12h-6+2m$ ???
\begin{description}
	\item[1]
	$b=00, c=001$ --- $bc$-код
	\item[2] Внутренние блоки по всем $0\le i\le h$, $0\le j\le3$ (назовём $d${--}кодами/блоками)
	
	$d_{i,j}=\phi(b^{\frac{n+2j-6}{2}}c^{2i+1})$
	Заметим, что для разных пар $i,j$ $d_{i,j}$ всегда различны.
	
	$|d_{i,j}|=n+2j+6i-3$
	\item[3] Внешние блоки по всем $0\le i\le h-1$ (назовём $q${--}кодами/блоками)
	
	%	\begin{tabular}{ c c c }
	%		$q_{0,i}=d_{i,3}d_{i,0}d_{i,1}d_{i,2}d_{i,1}d_{i,0}$ & $q_{1,i}=d_{i,3}d_{i,0}d_{i,1}d_{i,3}d_{i,1}d_{i,0}$ & $q_{2,i}=d_{i,3}d_{i,0}d_{i,2}d_{i,3}d_{i,1}d_{i,0}$\\ 
	%		 $q'_{0,i}=d_{i,3}d_{i,0}d_{i,2}d_{i,1}d_{i,1}d_{i,0}$ & $q'_{1,i}=d_{i,3}d_{i,0}d_{i,3}d_{i,1}d_{i,1}d_{i,0}$ & $q'_{2,i}=d_{i,3}d_{i,0}d_{i,3{\color{red}?}}d_{i,2{\color{red}?}}d_{i,1}d_{i,0}$
	%	\end{tabular}
	%
	%	$q'_{2,i}=d_{i,3}d_{i,0}d_{i,2}d_{i,3}d_{i,1}d_{i,0}$ --- тоже работает для 6 для некоторых пар
	%	
	%	$q_{2,i}=d_{i,3}d_{i,0}d_{i,3}d_{i,2}d_{i,1}d_{i,0}$
	%	$q'_{2,i}=d_{i,3}d_{i,0}d_{i,2}d_{i,3}d_{i,1}d_{i,0}$ --- тоже работает для 6 для некоторых пар
	
	%	\begin{tabular}{ c c c }
	%		$q_{0,i}=d_{i,3}d_{i,0}d_{i,1}d_{i,2}d_{i,1}d_{i,0}$ & $q_{1,i}=d_{i,3}d_{i,0}d_{i,3}d_{i,1}d_{i,1}d_{i,0}$ & $q_{2,i}=d_{i,3}d_{i,0}d_{i,2}d_{i,3}d_{i,1}d_{i,0}$\\ 
	%		$q'_{0,i}=d_{i,3}d_{i,0}d_{i,2}d_{i,1}d_{i,1}d_{i,0}$ & $q'_{1,i}=d_{i,3}d_{i,0}d_{i,1}d_{i,3}d_{i,1}d_{i,0}$ & $q'_{2,i}=d_{i,3}d_{i,0}d_{i,3}d_{i,2}d_{i,1}d_{i,0}$
	%	\end{tabular}
	%	
	%	$|q_{0,i}|=|q'_{0,i}|=6n+2(3+0+2+1+1+0)+6(6i)-18=6n-4+36i$% или $=6n+2(4+1+3+2+2+1)+36i-30=6n-4+36i$
	%	
	%	$|q_{1,i}|=|q'_{1,i}|=6n+2(3+0+3+1+1+0)+6(6i)-30=6n-2+36i$% или $=6n+2(4+1+4+2+2+1)+36i-30=6n-2+36i$
	%	
	%	$|q_{2,i}|=|q'_{2,i}|=6n+2(3+0+3+2+1+0)+6(6i)-30=6n-0+36i$% или $=6n+2(4+1+4+3+2+1)+36i-30=6n-0+36i$
	
	{
		\begin{tabular}{ c c c }
			$q_{0,i}=d_{i,3}d_{0,0}d_{i,1}d_{i,2}d_{i,1}d_{0,0}$ & $q_{1,i}=d_{i,3}d_{0,0}d_{i,3}d_{i,1}d_{i,1}d_{0,0}$ & $q_{2,i}=d_{i,3}d_{0,0}d_{i,2}d_{i,3}d_{i,1}d_{0,0}$\\ 
			$q'_{0,i}=d_{i,3}d_{0,0}d_{i,2}d_{i,1}d_{i,1}d_{0,0}$ & $q'_{1,i}=d_{i,3}d_{0,0}d_{i,1}d_{i,3}d_{i,1}d_{0,0}$ & $q'_{2,i}=d_{i,3}d_{0,0}d_{i,3}d_{i,2}d_{i,1}d_{0,0}$
		\end{tabular}
	}
	
	$|q_{0,i}|=|q'_{0,i}|=6n+2(3+0+2+1+1+0)+6(4i)-18=6n-4+24i$
	
	$|q_{1,i}|=|q'_{1,i}|=6n+2(3+0+3+1+1+0)+6(4i)-30=6n-2+24i$
	
	$|q_{2,i}|=|q'_{2,i}|=6n+2(3+0+3+2+1+0)+6(4i)-30=6n-0+24i$
	
	\item[4.1]
	Тогда $bc${--}корень (из $2((h-1)*6)+10=12h-2$ $q${--}кодов)
	$$
	w_0=
	%\Bigg(\prod_{i=0}^{m-1}q'_{i,h}\Bigg)
	\Bigg(\prod_{i=0}^{h-2}\prod_{j=0}^{2}q_{j,i}q'_{j,i}\Bigg)
	\Bigg(\prod_{j=0}^{1}q_{j,h-1}q'_{j,h-1}\Bigg)
	q_{2,h-1}
	%\Bigg(\prod_{i=0}^{m-1}q_{i,h}\Bigg)
	\Bigg(\prod_{i=0}^{h-2}\prod_{j=0}^{2}q'_{j,i}q_{j,i}\Bigg)
	\Bigg(\prod_{j=0}^{1}q'_{j,h-1}q_{j,h-1}\Bigg)
	q'_{2,h-1}
	$$
	%Т.е q'_{2,h-1} нету, но есть два q_{2,h-1}.
	Остальные получаем циклическими сдвигами $w_0$ влево на целые блоки $q_{i,j},q'_{i,j}$
	(обозначим $n_{12}=\lfloor n/12\rfloor\cdot12$)
	
	Число различных сдвигов по $q${--}кодам равно $|w_0|_q=2((h-1)\cdot3\cdot2+2\cdot2+1)=12h-2=n_{12}+12-2=n_{12}+10$.
	
	$|w_i|
	\ge|w_0|_q\cdot\min\{|q_{j,i}|: i,j\in\mN_0\}
	=(n_{12}+10)(6n-4)
	$
	
	% В СЛЕДУЮЩЕМ ЗАКОММЕНТИРОВАННОМ БЛОКЕ ВЫЧИСЛЯЮТСЯ ЗНАЧЕНИЯ ДЛЯ ДР.КОНСТРУКЦИИ
	%$|w_i|=2\Big(
	%2(6n-4)
	%+\big(3\cdot6nh-6h+3\cdot36\frac{h(h-1)}{2}\big)
	%+\big(
	%m6n
	%-2{\color{red}\big(
	%	6
	%	-\frac{(3-m)((3-m)+1)}{2}
	%	-m
	%	\big)}
	%+m36h\big)
	%\Big)$
	%
	%$
	%|W_n^-| = 2\cdot m? + 2\cdot(h-1)6 + 2\cdot4 + 2\cdot1
	%= 12h + 2(m? + 5 - 1)
	%= 12(n/12 + 1) + 2m? + 4 = 12int(n/12) + 2m? + 16
	%$
	
	Заметьте, что при <<неконстантном>> перераспределении $c$ по краям $d${--}кодов, $bc${--}корни
	не будут циклически эквивалентными, в отличие от конструкции для нечётных $n$.
	\item[4.2]
	Определим $3n+12$ осевых слова.
	Возмём $w_i=d_{i,3}ud_{i,0}$
	
	\begin{tabular}{ c c c }
		$w_i^-=ud_{i,0}d_{i,3}$ & $w_i^0=w_i$ & $w_i^+=d_{i,0}d_{i,3}u$
	\end{tabular}
	
	Обозначим
	\begin{tabular}{ c c c }
		$W_n^-=\{w_i^-: i\in\{0,...,n+3\}\},$ & $W_n^0=\{w_i^0: i\in\{0,...,n+3\}\},$ & $W_n^+=\{w_i^+: i\in\{0,...,n+3\}\}$
	\end{tabular}
	
\end{description}

% ???
%Т.к. $n=6h+2m$, $a\ge1$.
%Тогда количество сдвигов $2(2+3h+m)=4+6h+2m=4+n$

\hyperlink{contents}{$\upuparrows$}

\subsection{Общие свойства конструкций и сведение проверки условий лемм к полиномиальной}\label{poly_proofs}

\begin{note}
	\ref{l1c2} вытекает из \ref{l2c2}.
\end{note}

$q${--}факторами/кодами в $w_i$ будем называть факторы/коды $w_i$ на уровне $q${--}кодов.

$bc${--}факторами/кодами в $w_i$ будем называть факторы/коды $w_i$ на уровне $bc${--}кодов.
%В контексте подразумеваются, просто факторы/коды, но полиномиальная зависимость остаётся тойже и от $bc${--}факторов/кодов

$n^+$ обозначим натуральное число равное или большее $n$.

<<з-е>> будет сокращением от <<замечание>>.

\begin{defn}
	Одинаковые $bc${--}факторы в $w_i$ и $w_j$ синхронны,
	если они расширяемы (т.е. сохраняя одинаковость) в $w_i^\infty$ и $w_j^\infty$ вправо не ограничено.
	% i и j могут быть одинаковы
	
	Факторы в $v_i$ и $v_j$ синхронны, если их $bc${--}коды синхронны в $w_i^k$ и $w_j^k$ ($i$ и $j$ могут быть одинаковы, конечно же).
	% i и j могут быть одинаковы
\end{defn}

\begin{note}\label{nt:conjNotTW}
	Если слова $u$ и $v$ сопряжены (равны при некотором циклическом сдвиге одного из них), то $\lexp(uv)\ge2$.
\end{note}

\begin{note}\label{nt:ifSyncThenConj}
	Если одинаковые $n^+${--}факторы $u$ и $v$ синхронны в $v_i$ и $v_j$, то $v_i$ и $v_j$ сопряжены.
\end{note}

Пусть $k${-}$bc${--}корень $w_i$ ограничен сверху полиномиальным от $n$ (степени $p_1$) числом $q${--}кодов.
Длины $bc${--}кодов в $q${--}кодах ограничены сверху полиномом $O(n^{p_2})$.
Т.е. длина $w_i$ ограничена $O(n^{p_1+p_2})$.
Одинаковые несинхронные $q${--}факторы в $w_i^\infty$ и $w_j^\infty$ имеют длину $O(n^{p_3})$ (количество $q$ в $q${--}факторе).
Обозначим $f_n=\max\{|w_0|,n^{p_3+p_2+1}\}$ ($|w_0|$ представима как $n^{\log_n(|w_0|)}$).

В нашем случае, и для нечётных и для чётных, $p_1=1$, $p_2=1$, $p_3=0$.
Тогда

\begin{note}\label{nt:local_poly_one}
	Длина 2-х одинаковых несинхронных $bc${--}факторов в $w_i^\infty$ и $w_j^\infty$
	не превосходит $O(n^{p_1+p_2})$ и даже $O(n^{p_3+p_2})$.
	Откуда, все <<запрещённые>> (не $\D_{3, n}^\eps${--}ГС, $\eps\ge0$) факторы
	не превосходят длины $O(n^{p_1+p_2+1})$ и даже $O(n^{p_3+p_2+1})$.
	%
	%длина 2-х одинаковых несинхронных $bc${--}факторов в $w_i^\infty$ и $w_j^\infty$ не превосходит $O(n^{p_3+p_2})$.
	%Откуда, все <<запрещённые>> факторы не превосходят длины $O(n^{p_3+p_2+1})$.
\end{note}

Соответственно, по замечанию \ref{nt:local_poly_one} нам достаточно проверить все факторы в $v_iv_j$ длины $O(n^2)$.
Здесь же, можно заметить% при фиксированном правиле построения конструкций как в нашем случае ??? ФОРМАЛИЗОВАТЬ

\begin{note}\label{nt:glob_poly_one}
	Существует такая константа $t\le|v_0|/f_n$, что при отсутствии <<запрещённых>>
	
	\hfill факторов в $\pref_{t\cdot f_n}(v_i)$,
	<<запрещённые>> факторы отстутствуют и в $v_i$ (независимо от $n$).
\end{note}

Из этого замечания следует, что проверка слова $v_0$ на $\D_{3, n}^\eps${--}граничность полиномиальна (точнее $O(n^2)$).
А с подтверждением $v_0$ на $\D_{3, n}^\eps${--}граничность,
по Лемме \ref{l3}, следует и $\D_{3, n}^\eps${--}граничность всех осталных $v_i\in\V$.

Для проверки несопряжённых пар $v_iv_j$ получим аналогичное

\begin{note}\label{nt:poly_pair}
	Для любых несопряжённых $\V${--}образов $v_i$ и $v_j$ cуществует такая константа $t\le|v_0|/f_n$,
	что при отсутствии <<запрещённых>> факторов в $\suff_{t\cdot f_n}(v_i).\pref_{t\cdot f_n}(v_j)$,
	<<запрещённые>> факторы отстутствуют и в $v_iv_j$.% (независимо от $n$).
\end{note}
\begin{proof}
	ОП. Предположим, что есть контрпример в $v_iv_j$ неограниченный длиной $O(f_n)$.
	По з-ю \ref{nt:glob_poly_one} для факторов в $v_i$ и $v_j$ это свойство выполняется.
	Тогда повторы контрпримера пересекают и $v_i$ и $v_j$.
	
	По несопряжённости и з-ю \ref{nt:ifSyncThenConj} $n^+${--}повторы не синхронны,
	а более короткие повторы могут быть только в коротком <<запрещённом>> факторе (ограниченный $O(n^{2})$).
	Тогда по з-ю \ref{nt:local_poly_one} длины повторов ограничены $O(n^{p_3+p_2})$,
	даже когда один из повторов пересекает оба $\V${--}образа.
	Откуда, все запрещённый факторы не длиннее $O(n^{p_3+p_2+1})$.
	
	Тогда, все неэквивалентные запрещённые факторы в $v_iv_j$ на ходятся в факторе длины $|w_0|+O(n^{p_3+p_2+1})$.
	Откуда следует достаточность проверки центрального фактора длины $O(f_n)$ в конкатенации $v_i.v_j$.
\end{proof}

{\bf
	Для проверки на $\D_{3, n}^\eps${--}граничность всех $\V${--}образов
	из з-я \ref{nt:glob_poly_one} и Леммы \ref{l3} следует достаточность проверки полиномиального префикса одного $\V${--}образа.
	
	Для проверки на $\D_{3, n}^\eps${--}граничность любой несопряжённой пары $\V${--}образов
	Из з-я \ref{nt:poly_pair} следует достаточность проверки центра полиномиальной длины в конкатенации этой пары.
	
	Конкатенация любой сопряжённой пары не является $\D_{3, n}^\eps${--}ГС по з-ю \ref{nt:conjNotTW}
}

Проверка пары $\V${--}образов полиномиальна.
Об этом есть в ДР1 (в параграфе \$8), а так же в тексте лекций (на последних страницах).
В данной работе в разделе/параграфе \ref{conjIsPoly_Algo} описан полиномиальный алгоритм поиска <<канонического>> слова
для любой пары $\V${--}образов (версия ДР2).
Раздел \ref{conjIsPoly_Algo}

\hyperlink{contents}{$\upuparrows$}

\subsection{Проверка конструкций на условия лемм \ref{l1} и \ref{l2} кроме \ref{l1c2} и \ref{l2c2} (в след.версии)}\label{check1}

Данная часть в процессе разработки.
Но метод проверки не отличается от метода проверки для нечётных, предложенного в ДР1.
Потому, читатель может попробовать самостоятельно проверить условия.
Хоть и доказано в общем случае, что проверка сводится к полиномиальной (здесь \ref{poly_proofs}).
Но автор  пока не может утверждать,
какой (конкретной) длины факторы в $v_iv_j$ достаточно провериь для полной проверки частных случаев
(а с ними и доказательства этих случаев).

Для нечётных в ДР1 было установлено,
что остаточно проверить фактор длины $5|w_0|$ для проверки $\D^\eps_{3, n}${--}граничности самих слов $w_i$.
А для проверки пары $\V${--}образов $v_i, v_j$ на $\D^\eps_{3, n}${--}граничность
достаточно проверить центральный фактор их конкатенации с длиной $8|w_0|$.

Введём условие на множестве $\V${--}образов при $\eps=\frac{\l(\V)+\r(\V)}{L-1}$
\begin{equation}
\tag{C3}\label{c3}
\eps+\l(\V)+\r(\V)\le\frac{L}{n-1}
\end{equation}
Заметим, что это неравенство эквивалентно неравенству $\frac{\l(\V)+\r(\V)}{L-1}\le\frac{1}{n-1}$,
как и неравенству $\eps\le\frac{1}{n-1}$.

Для начала 
\begin{note}
	Если $|\V|$ в Лемме \ref{l1} не превосходит $3n$, то выполнение остальных условий Леммы \ref{l1}
	вытекает из выполнения условий Леммы \ref{l2} и условия (\ref{c3}).
	Условие \ref{l2c3} вытекает из условия (\ref{c3}).
\end{note}
\begin{proof}
	По условию Леммы \ref{l2} $|\V|\ge3n$, а знаит $|\V|$ достаточно большое для Леммы \ref{l1}.
	
	Условие \ref{l1c1} эквиалентно условию \ref{l2c1}.
	
	Условие \ref{l1c2} вытекает из \ref{l2c2} при любых $\eps\ge0$.
	
	Возмём произвольные $v,v'\in\V$ и $u\ne w\in\V\setminus\{v,v'\}$.
	Тогда $l$, при котором $\pref_l(u)=\pref_l(v)$, не превосходит $\l(\V)$.
	Аналогично, $r$, при котором $\suff_r(v')=\suff_r(w)$, не превосходит $\r(\V)$.
	Тогда, используя (\ref{c3})
	\parskip -7pt
	
	$$
	\eps+\max\{l+r: \pref_l(u)=\pref_l(v), \suff_r(v')=\suff_r(w)\}
	\le\eps+\l(\V)+\r(\V)% почему-то не было добавлено "\eps+" в начале
	\le\frac{L}{n-1}
	$$
	\parskip -3pt
	
	Понятно, что это условие усиливает условие \ref{l2c3}
	(т.к. в (\ref{c3}) $u,w$ могут быть ещё и $\V${--}образами общей буквы, хоть между собой, хоть с $v$ или с $v'$.
	При этом, $v$ и $v'$ не обязаны быть $\V${--}образами общей буквы).
	Т.е. условие \ref{l2c3} вытекает из (\ref{c3}).
	При этом, когда $v=v'$ и $\eps=0$ получим условие \ref{l1c3}.
	Т.е. условие \ref{l1c3} вытекает из (\ref{c3}) при любых $\eps\ge0$.
\end{proof}
Т.е. достаточно доказать выполнение условий \ref{l2c1}, \ref{l2c2}, (\ref{c3}) и ограничение на $L\ge6(n-1)$.

Т.е. в данной секции \ref{check1} докажем \ref{l2c1}, (\ref{c3}).

Заметьте, что для чётных каждый $q${--}код в $w_i$ дублируется по 1-му разу,
но не дублируются даже пары соседних $q${--}кодов в $w_0^\infty$, в отличие от конструкции для нечётных.
Тогда, с учётом, что $|q_i|=|q'_i|$ и $|q_{i,j}|=|q'_{i,j}|$ можно заметить, что

\begin{note}
	Длина одинаковых несинхронных факторов в произвольных $w_i^\infty,w_j^\infty$ не превосходит $|q'_{m-1}q_0q'_0q_0|$ для нечётных,
	и $|q'_{1,h-1}q_{1,h-1}q'_{2,h-1}|$ для чётных. {\bf\cRe перепроверить}
\end{note}

\begin{description}
	\item [(|V|$\ge$3n)\label{prV}]
	Для нечётных $|\V|=3(n+1)>3n$.
	
	Для чётных $|\V|=3(\lfloor n/12\rfloor\cdot12+10)\ge3n$
	При этом, при любых $n\not=10\pmod{12}$ %$n\not\equiv_{12}10$
	количество $|\V|>3n$.
	
	\item [(L$\ge$6n-6)\label{prL}]
	Для нечётных $n\ge5$ оценим длину
	$L\ge3(6.5n^2+36n+37.5)>6(n-1)$.
	
	Для чётных $n\ge5$ длина 
	$L\ge3|w_0|>|q_{j,i}|\ge6n-4>6(n-1)$ при любых $i$ и $j\in\{0,1,2\}$.
	%$L\ge3(9n^2+4n-14)>6(n-1)$
	
	\item [(l2.c1)\label{prc1}]
	Очевидно, что все наши $\D_{3, n}^\eps${--}ЦГС одинаковой длины при одинаковом $n$
	т.к. эквивалентны некоторым циклическим сдвигам друг друга.
	А простота слов сразу вытекает из замечания \ref{pr1}. %{\cRe замечания \ref{n1}???}.

	\item [(C3)\label{prc3}]
	Рассмотрим подмножества $\V^-,\V^0,\V^+$ по отдельности для поиска $\l(\V)$ и $\r(\V)$.
	Данный пункт {\bf\cRe требует доработки},
	но проверка условий не критична для доказательства корректности конструкций
	т.к. задача уже сведена к кончной компьютерной проверке, а данный пункт только позволяет оптимизировать проверку.
	
	Для начала зметим, что все $d${--}коды различны при различных индексах.
	Т.е. достаточно находить различие $d${--}кодов по раличным индексам (с учётом, что длина $d${--}кода меньше при меньшем индексе).
	
	{\bf$\bullet$} В общем случае, для $\l(\V)$ сравнивать достаточно $\W^-_n,\W^0_n,\W^+_n$
	т.к. общий префикс слова заканчивается вместе с общим префиксом его кода.
	А для $\r(\V)$ аналогично, но необходимо учитывать предпосылку перед общим суффиксом кода
	(т.е. добавлять $n$ --- убедиться, что не больше).
	
	{\bf Т.е. для $\l(\V)$ и $\r(\V)$ ищем общие префикс и суффикс кодов, но к суффиксу добавляем $n$.}
	
	Можно заметить, что общие префиксы и суффиксы кодов из различных $\W^*_n, \W^{\not *}_n$ короче чем из общего $\W^*_n$,
	уже за счёт общих $q${--}кодов у вторых.
	
	{\bf(1)} Несинхронные пары из различных $\W^*_n, \W^{\not *}_n$.
	%$\W^x_n, \W^y_n, x\not=y\in\{-,+,0\}$.
	
	Нечётные $\l(\W^*_n, \W^{\not *}_n), \r(\W^*_n, \W^{\not *}_n)\le n+|d_0d_6|$
	% $\le n+|d_0d_6|\le3n+6+2=3n+8$
	
	Чётные преф. $\l(\W^*_n, \W^{\not *}_n)\le|d_{0,0}d_{i,3}d_{0,0}|$ (при $q^-_{1,i}$ и $q'^+_{2,i}$)
	% $\le|d_{0,0}d_{i,3}d_{0,0}|\le3n+2(0+3+0)+6(0+i+0)-9=3n+6i-3$
	
	Чётные суфф. $\r(\W^*_n, \W^{\not *}_n)\le n+|d_{i,1}|$
	% $\le n+|d_{i,1}|\le2n+2+6i-3=2n+6i-1$
	
	{\bf(2)} Для несинхронных пар из общего $\W^*_n$ получим
	
	Нечётные преф. $\l(\W^*_n)\le$ + $|q_6|-|q_0|$ + $|q_0q'_0|$ + $|d_6d_6d_0d_{2i+8}|$ при $i=m-2=(n-7)/2$ и $\V^+$
	
	Нечётные суфф. $\l(\W^*_n)\le$ + $n$ + $|d_0d_{2i+8}d_6d_6|$ + $|q_0q'_0|$ + $|q_6|-|q_0|$ при $i=m-2=(n-7)/2$ и $\V^-$
	
	Чётные суфф. $\l(\W^*_n)\le$ + $n$ + $|d_{i,1}d_{i,1}d_{0,0}|$ + $|q_{2,h-1}|$ + $|d_{i,3}|$ при $i=h-2$ и $\V^-$
	
	%Можно заметить, что если в Лемме \ref{l1} $k\ge2$,
	%то по замечанию в сноске усиление условия \ref{l1c3} можно не использовать.
\end{description}

\hyperlink{contents}{$\upuparrows$}

\subsection{Сведение к полиномиальной проверке условий \ref{l1c2} и \ref{l2c2} (в след.версии)}
Это самый рулинный раздел, но не требующий высшей математики.
Так же, как и предыдущий раздел требует доработки.

\subsubsection{Случай несопряжённых $u\not=v\in\V$ --- сведение к полином-й проверке $uv\in\D_{\ge3n-3,n}\cap\T_n$}

%Аналогично оценке $\l(\V), \r(\V)$ 
Можно заметить, что для нечётных пар из различных $\W^*_n, \W^{\not *}_n$
при индексах $i$ и $j=(i+|\W^0|/2)\pmod{|\W^0|}$ код на стыке $w_i.w_j$,
либо влево, либо вправо не может расширяться ни на какой целый $q${--}код.

Аналогично и для чётных.

Пусть различные $v_i,v_j\in\V$ не сопряжены.
Тогда любой повтор $v'=v_iv_j[c_1,..,c_1+|v'|]=v_iv_j[c_2,..,c_2+|v'|]$ (БОО $0<c_1<c_2$ и $c_2+|v'|<|v_i|=|v_j|$)
имеет длину $|v|-n\le18n+86$ т.к. $|v|-n$ это длина кода Пансьё,
который должен совпадать для достаточно длинного фактора $v$,
но нет 2-х $bc'${--}кореней нашей конструкции с общим кодом длины больше $18n+86$.
Проверим это.

Заметим, что $d_{i,j}$ синхронизируются только если одинаковы.
Т.е. если в $bc'${--}коде найден код совпадающий с $d_{i,j}$, то это он и есть
т.к. $b$ ограничено $c$-шками и однозначно определён его количеством, зависящим от $i$.

А значит максимальное пресечение $v_i$ и $v_j$ возможно при максимальном количестве состыковок $d_{i,j}$.
Ну а последовательность пары $d_{i,j}$-х однозначно синхронизируют $q_{m,i}$ по краям.
Аналогично и $q_{m,i}$ синхронизируются только если одинаковы.

{\bf Найдём максимальные <<запрещённые>> повторы $v_i$ и $v_j$, ни один из которых не содержит их стык.}
Важная тонкость при проверке лок.эксп. $\V${--}образа,
что $bc'${--}корень не превосходит полинома от $n$,
т.к. нам необходимо проверять короткие(полиномиальной длины факторы) по всей длине $bc'${--}корня.

{\bf для $n\equiv_2 1$}
Когда по $q_i$ коду стыков максимум т.е. $q'_mq_1q'_1q_1$ и $q_mq_1q'_1q'_1$.
Тогда $|v|\le|d_5d_1d_7|+2(6n+2+28)+|d_7d_1d_5|=2(6n+30+(3n+7+1+5))=18n+86$.

{\bf для $n\equiv_2 0$} {\cRe(в след.версии)}
%Когда по $q_{m,i}$ коду стыков максимум т.е. $q'_{1,h}q_{1,0}q'_{1,0}q_{1,0}$ и $q_{1,h}q_{1,0}q'_{1,0}q'_{1,0}$.
%Тогда
%$|v|\le|d_{h,2}d_{h,2}d_{h,1}|+2(6n-4+0)+|d_{0,4}d_{0,1}d_{0,3}|
%=(n+6+6h-5 + n+4+6h-5 + n+2+6h-5)+2(6n-4)+(n+8-5 + n+2-5 + n+6-5)
%=(6n-3)+2(6n-4)+(3n+1)=21n-10$.
%
%Но эти случаи только в 2-х местах каждой копии $bc'${--}корня,
%что позволяет сильно оптимизировать алгоритм проверки \ref{l2c2}

{\bf Найдём максимальные <<запрещённые>> повторы $v_i$ и $v_j$, один из которых находится и на их стыке.}

{\bf СЛУЧАЙ 1} $v_i$ содержит повтор $v$ полностью
(т.к. $bc'${--}корни не симметричны, то нужно отдельно рассматривать случай $v\subset v_i$).
Т.е. можно представить $v=v_lv_r$, где $v_l$ --- непустой суффикс в $v_i$, а $v_r$ --- непустой префикс в $v_j$.

Заметим сразу, что $|v_l|$ и $|v_r|$ не превосходят <<запрещённых>> повторов внутри $v_i$ и $v_j$.
Т.к. иначе это приводит к синхронизации по $bc${--}корням,
что приводит либо к сопряжённости $v_i$ и $v_j$ (в случае слишком длинного $|v_r|$),
либо к представлению $v_i$ в виде целой степени (>1) некоторого его фактрора.
И то и другое противоречит условиям несопряжённости
\footnote{В ДР1 ссылка на условие (6.1), видимо имеется ввиду условие в названии главы 6.2,
	а глава 6.1, виимо, была добавлена позже, но ссылка на (6.1) так и осталась неисправленой.}
и, уже проверенному за полином, условию хоорошей экспоненты любого $\V${--}образа.
В этом и есть достаточная идея сведения к полиномиальной проверке.

{\bf для $n\equiv_2 1$}
Заметим, что на стыке $v_i$ и $v_j$ есть пара $d_7d_7$ только при $v_i,v_j\in\V^x$ с общим $x\in\{-,0,+\}$.
При этом на стыке всегда есть либо $d_7$ либо  $d_1d_1$.
Т.е. если в $v_i$ нашёлся длинный $bc'$ повтор (для $v$),
то он должен содержать $d_7$ ($d_1d_1$ не содержится в наших $bc${--}корнях),
но <<внутри>> $bc'${--}кода любого $\V${--}образа встречается парой $d_7d_7$.
А значит наш случай (с целым $q^{-0+}${--}кодом в $v_l$ и $v_r$) возможен только при общем $x\in\{-,0,+\}$.
БОО рассмотрим только $x=0$, остальные отличаются только $d_7$ кодом на краях.

Т.к. $v_i\ne v_j$, то на стыке слева и справа от $d_7d_7$ продолжения могут совпадать целыми $q^{-0+}${--}кодами,
только при $v_i,v_j\in\{v_1,v_2,v_3,v_4,v_{m+3},v_{m+4},v_{m+5},v_{m+6}\}$
т.к. только у них в префиксе или суффиксе есть повторяющиеся $q${--}коды в $bc'${--}корне (причём дважды).
Если слева или справа нет полного совпадения по $q^{-0+}${--}коду, то,
даже при общем суффиксе и префиксе $q_m$ и $q_{m-1}$ длина общего $bc'${--}кода не превышает порядка $4n$,
когда уже для совпадения $q_1$ порядок $6n$.

Т.е. только пары, содержащие $q_1$ и $q'_1$ на стыке имеют наиболее длинный $bc'${--}кода общий с внутренним $w_i^k$.
В такие пары в циклическом слове $w_i^k$ кроме $q_1$ и $q'_1$ входят
$q_m,q'_m$, стоящие первыми в парах и
$q_2,q'_2$, стоящие вторыми в парах.
Но $q_m,q'_m$ и $q_2,q'_2$ входят в $bc'${--}корень по 1-му разу,
а значит однозначно находятся в циклическом $w_i$.
И, если совпадение по целому $q\in\{q_m,q'_m\}$, то есть синхронизация по $w_i$
(т.е. расстояние между совпадениями кратно $|w_i|$),
а значит $v_i$ это степень больше 1 \contr\ правилом построения ЦГС.
Если же $q\in\{q_2,q'_2\}$, то синхронизация будет $w_i$ с $w_j$ (т.е. левый повтор будет до самого конца $v_i$),
а значит $v_i$ будет сопряжено с $v_j$ \contr\ с нашим условием.

Тогда сгруппируем (см. конструкцию $w_i$ для нечётных)

$w_1,w_3,w_{m+3}$ начинаются на $q_1$,
а
$w_2,w_{m+4},w_{m+5}$ начинаются на $q'_1$

$w_2,w_4,w_{m+4}$ заканчиваются на $q_1$,
а
$w_3,w_{m+5},w_{m+6}$ заканчиваются на $q'_1$

%$w_1$ и $w_{m+3}$ заканчиваются на $q'_m$ и $q_m$

Тогда

(1) пары $\{w_2,w_4,w_{m+4}\}\times\{w_2,w_{m+4},w_{m+5}\}$ дают $...q_1.q'_1...$ на стыке $v_i.v_j$.

(2) Пары $\{w_3,w_{m+5},w_{m+6}\}\times\{w_1,w_3,w_{m+3}\}$ дают $...q'_1.q_1...$.
%
%(3) Пары $\{w_1\}\times\{w_1,w_3,w_{m+3}\}$ дают $...q'_mq_1...$.
%
%(4) Пары $\{w_{m+3}\}\times\{w_2,w_{m+4},w_{m+5}\}$ дают $...q_mq'_1...$.
%
%Из этих пар нужно исключить те, которые продолжаются (влево или вправо) более чем на $18n+86$.
%Это все пары в (3) и (4) т.к. в любом $bc'${--}корне единственное вхождение $q_mq'_1$
%и если они совпадают, то совпадает и всё остальное слева или справа,
%что приводит к .
%
%Из этих пар нужно исключить, если суффикс $bc'${--}корня $w_i$ левого слова $v_i$
%имеет циклическое продолжение $q${--}кодом, равного первому $q${--}коду в $bc'${--}корне $w_j$,
%так же должны быть исключены т.к. это случай представимости $v_i$ в виде целой степени больше 1,
%что исключено по правилу построения наших ЦГС.
%Т.е. левые $w_2,w_{m+4}$ в (1) и $w_3,w_{m+5}$ в (2).
%
%Симметрично исключаем, если префикс $bc'${--}корня $w_j$ слова $v_j$
%имеет циклически предшествующий $q${--}код, равного последнему $q${--}коду в $bc'${--}корне $w_i$.
%Т.е. правые $w_2,w_{m+4}$ в (1) и $w_3$ в (2).
%
%Т.е. остаются пары $w_4w_{m+5}$ в (1) и $w_{m+6}w_3$, $w_{m+6}w_{m+3}$ в (2).
\\
Пусть $n\ge7$, тогда $m\ge2$.
Это не повлияет на полиномиальность проверки, но упростит сведение к ней,
используя то, что теперь $m\ne1$.

Из этих пар нужно исключить:

1) С одинаковыми $bc${--}корнями (т.к. в этом случае $v_i=v_j$)
т.е. $w_2w_2, w_{m+4}w_{m+4}$ в (1) и $w_3w_3$ в (2).

2) Которые приводят к противоречию с условиями несопряжённости $v_i$ и $v_j$,
и правилу построения наших ЦГС (что $v_i$ и $v_j$ не являются целыми степенями больше 1).

2.1) Например,
если на стыке $q'_1.q_1$, то существует единственное соответствие в любом $bc'${--}корне,
а конкретно там где (циклически) перед $q'_1q_1$ стоит $...q'_mq_1$ (т.е. некоторый суффикс из $w_2^k$).
Тогда, любая пара $\{w_3\}\times\{w_1,w_3,w_{m+3}\}$ в (2), при совпадении $q'_1q_1$ в $w_3^k$,
даёт "неограниченное" совпадение $bc'${--}кода влево от стыка $v_3.v_j$ при любом $j\in\{1,3,m+3\}$,
из чего следует нарушение правила построения наших ЦГС.
Т.е. в (2) исключаются все пары $bc${--}корней, где слева $w_3$.

2.2)
Так же, где (циклически) после $q'_1q_1$ стоит $q_2...$ (т.е. некоторый префикс из $w_4^k$).
Тогда, любая пара $\{w_3,w_{m+5},w_{m+6}\}\times\{w_3\}$ в (2), при совпадении $q'_1q_1$ в $w_3^k$,
гарантирует сопряжённость слов $v_i$ и $v_3$ при любом $i\in\{3,m+5,m+6\}$,
из чего следует нарушение условия несопряжённости.
Т.е. в (2) исключаются все пары $bc${--}корней, где справа $w_3$.

Т.е. (2) сокращается до проверки 4-х пар $\{w_{m+5},w_{m+6}\}\times\{w_1,w_{m+3}\}$.

2.3) Если на стыке $q_1.q'_1$,
то существует ровно 2 соответствия в любом $bc'${--}корне.

2.3.1) Где (циклически) перед $q_1q'_1$ стоит $...q_m$ (т.е. некоторый суффикс из $w_{m+3}^k$).
В этом случае, аналогично 2.1)
В (1) исключаются все пары $bc${--}корней, где слева $w_{m+4}$.
В случаях $w_2$ и $w_4$, слева от $q_1.q'_1$ стоит $q'_m$ и $q'_1$ (соответственно),
с которыми максимальный общий $bc'${--}код слева от стыка (для $w_4$) не превосходит $|d_5d_1d_7|+|q_1|=9n+43$

Для правого $v_j$ рассмотрим 2 случая

2.3.1.1) Где (циклически) после $q_1q'_1$ стоит $q'_1...$ (т.е. некоторый префикс из $w_{m+5}^k$).
В этом случае, аналогично 2.2)
В (1) исключаются все пары $bc${--}корней, где справа $w_{m+4}$.
В случаях $w_2$ и $w_{m+5}$, справа от $q_1.q'_1$ стоит $q_1$ и $q'_2$ (соответственно),
с которыми максимальный общий $bc'${--}код справа от стыка (для $w_{m+5}$) не превосходит $|q'_1|+|d_7d_1d_9|=9n+47$
(т.е. максимум $v_4v_{m+5}$)

2.3.1.2) Где (циклически) после $q_1q'_1$ стоит $q_1...$ (т.е. некоторый префикс из $w_3^k$).
В этом случае, аналогично 2.2)
В (1) исключаются все пары $bc${--}корней, где справа $w_2$.
В случаях $w_{m+4}$ и $w_{m+5}$, справа от $q_1.q'_1$ стоит $q'_1$ и $q'_2$ (соответственно),
с которыми максимальный общий $bc'${--}код справа от стыка не превосходит $|q'_1|+|d_7d_1d_5|=9n+43$
(т.е. максимум $v_4v_{m+4}$ и $v_4v_{m+5}$)

2.3.2) Где (циклически) перед $q_1q'_1$ стоит $...q'_m$ (т.е. некоторый суффикс из $w_1^k$).
В этом случае, аналогично 2.1)
В (1) исключаются все пары $bc${--}корней, где слева $w_2$.
В случаях $w_4$ и $w_{m+4}$, слева от $q_1.q'_1$ стоит $q'_1$ и $q_m$ (соответственно),
с которыми максимальный общий $bc'${--}код слева от стыка (для $w_4$) не превосходит $|d_9d_5d_1d_7|+|q_1|=10n+52$

Для правого $v_j$ рассмотрим 2 случая

2.3.2.1) Где (циклически) после $q_1q'_1$ стоит $q'_1...$ (т.е. некоторый префикс из $w_{m+5}^k$).
В этом случае, аналогично 2.2)
В (1) исключаются все пары $bc${--}корней, где справа $w_{m+4}$.
В случаях $w_2$ и $w_{m+5}$, справа от $q_1.q'_1$ стоит $q_1$ и $q'_2$ (соответственно),
с которыми максимальный общий $bc'${--}код справа от стыка (для $w_{m+5}$) не превосходит $|q'_1|+|d_7d_1d_9|=9n+47$
(т.е. максимум $v_4v_{m+5}$)

2.3.1.2) Где (циклически) после $q_1q'_1$ стоит $q_1...$ (т.е. некоторый префикс из $w_3^k$).
В этом случае, аналогично 2.2)
В (1) исключаются все пары $bc${--}корней, где справа $w_2$.
В случаях $w_{m+4}$ и $w_{m+5}$, справа от $q_1.q'_1$ стоит $q'_1$ и $q'_2$ (соответственно),
с которыми максимальный общий $bc'${--}код справа от стыка не превосходит $|q'_1|+|d_7d_1d_5|=9n+43$
(т.е. максимум $v_4v_{m+4}$ и $v_4v_{m+5}$)

Получаем максимум для (2) при $v_4.v_{m+5}$, где в $v_4$ повтор имеет $bc'${--}код входит в $q'_mq_1.q'_1q'_1$,
а левый в $q'_1q_1.q'_1q'_2$.

Остаётся проверить (1), где мы свели проверку для 4-х пар $\{w_{m+5},w_{m+6}\}\times\{w_1,w_{m+3}\}$.

ТУДУ

Остаётся рассмотреть случаи, когда слева и срава от стыка $q${--}коды различны у повторов.

Заметьте, что случай 2-х $q${--}совпадений имеет конечное число случаев,
а значит для них можно искать плохие факторы отдельно,
а остальным достаточно проверить для более коротких факторов,
что несколько снижает старший коэффицент в полиноме проверок.

В любом случае, либо $|v_l|$ не превосходит максимально длинного общего суффикса различных $q${--}кодов,
либо $v_r$ не превосходит максимально длинного общего перфикса различных $q${--}кодов.
Это можно доказать в общем случае для нашего <<вида>> конструкций ОП.

Для $w_4w_{m+5}$ получим $|v_r|\le|q'_1|$

\hyperlink{contents}{$\upuparrows$}

\subsubsection{Случай сопряжённых $u\not=v\in\V$ (раздел \ref{conjIsPoly_Algo})}
Проверка произвольных $k${-}$bc${--}корневых пар $u\not=v$ на сопряжённость сводится к полиномиальной в разделе \ref{conjIsPoly_Algo}.

\hyperlink{contents}{$\upuparrows$}

\newpage

\section {Эффективная проверка условий}

В данной части, как и в разборе ДР2 \ref{SW2}, автор, почти без изменений копирует некоторые части текста из ДР2,
основанных на ДР1.
Здесь, так же будет некоторый рефакторинг текста
(т.е. изменения без потери смысла, может с некоторой редакцией того, что подразумевается).
%В следующих версиях данной работы планируется детальнее разобрать этот раздел.
В данном разделе представлены некоторые оптимизационны алгоритмы проверки слов на $\D_{3, n}^\eps${--}граничность,
а так же, предложена полиномиальная схема алгоритма проверки на сопряжённость пары слов из нашего набора $\V$,
построенного по схеме нашей Леммы \ref{l3} (т.е. $\D_{3, n}^\eps${--}ЦГС).

Заметим, что $\bfdt_\a(\v)$ можно рассматривать как булеву функцию --- может ли слово $\u\in\V_\a$ следовать за словом $\v$.
При этом отношение этих символьных образов не коммутативно, что так же ослабляет условия $(\L, \eps){-}$согласованности множества.
%Можно заметить так же, что условия (C3.2) и (C4.2) могут быть ослаблены, разбивая на несколько случаев,
%когда между образами находятся образы как минимум $k$ символов, вычислив $k$ для каждой пары символьных образов.

Для вычисления значений $\Dt(\u, \v)$ достаточно вычислить расстояния между вершинами орграфа, где вершинами являются символьные образы, а существование дуги, например от $\v_\a\in\V_\a$ к $\v_\b\in\V_\b$, определяются значением булевой функции: $(\v_\b\in\bfdt_\b(\v_\a))?$.
Тогда значение $\Dt(\v_\a, \v_\b)$ будет определяться минимальной длиной пути от $\v_\a$ к $\v_\b$.% и от $\v_\b$ к $\v_\a$.
\ Очевидно, что построение орграфа и поиск расстояний полиномиальны, но с увеличением количества условий усложняется доказательство, как и программа проверки этих условий.
%Смысл этого усложнения имеется только для решения специфических свойств граничных слов.
%Как, например, в нашем случае мы ослабили условия для того,
%чтобы они подошли к предложенному нами решению для нечётных ($\ge5$) алфавитов.
\begin{note} \label{epsE-tr-enought}
	Если $\w$ --- $(\eps', \E)${--}ГС, то $\w$ --- $(\eps, \E)${--}ГС для любого $\eps\le\eps'$.
\end{note}
\parindent 0 mm%					отступ слева первого предложения в абзаце

Т.е., в нашем случае, для проверки на $(\eps, \E)${--}ГС достаточно проверить слово на $(\frac{1}{\n-1}, \E)${--}граничность.
\parindent 10 mm%					отступ слева первого предложения в абзаце

\hyperlink{contents}{$\upuparrows$}

\paragraph{Немного философии и анализа компьютерных доказательств}

Можно ли считать компьютерное доказательство полноценным?
Для начала поймём, что человеческая проверка так же имеет вероятность ошибки.
Пусть при независимой проверке различными $m<\infty$ группами,
вероятность не заметить ошибку группой не меньше некоторого $p>0$.
Тогда вероятность ошибочности решения не меньше $p^m>0$.

Тогда сравним следующий подход компьютерного решения:

1) Напишем программу проверки компьютерной части и докажем её корректность.
Тогда к доказательству так же относится и это доказательство корректности.

2) Все группы проверки проверяют полное доказательство (т.е. вместе с копьютерной частью).

Тогда вероятность ошибки в полном сведении к комп-й проверке так же не превосходит $p^m>0$.
Только необходимо учитывать вероятность ошибки/сбоя (без обнаружения проверяющими) всех компьютеров во время проверки,
но эта вероятность на порядки меньше и может быть уменьшена на порядки с увеличением числа повторных запусков проверки.

\paragraph{\large Неформальное дополнение.}
Все алгоритмы, их анализ и др.утверждения, представленные в этом разделе, были написаны автором в 2013 г. в ДР2,
но не пререпроверялись (автором планируется редакция в следующих версиях данной работы).
Поэтому могут быть ошибки или недоговорки, но идейная основа (метод решения разбираемых задач) верная
(автором перепроверялись достаточно детально и внимательно).

\hyperlink{contents}{$\upuparrows$}

\subsection {Алгоритм проверки на $(\eps, \E)${--}граничность слова $\w$ за $O(|w|\log{|w|})$}

%\subsection{Алгоритм поиска <<плохих>> факторов в слове $w$ за $O(|w|\log(|w|))$ для проверки условий \ref{l1c2} и \ref{l2c2}}
%
%\hyperlink{contents}{$\upuparrows$}

\subsubsection{Алгоритм}

Здесь мы опишем построенный нами эффективный алгоритм проверки наличия <<запрещённых>> подслов в слове $\w$.
%Не путайте с поиском $\lexp^\eps(\w)$ --- данный алгоритм только проверяет неравенство для $\lexp^\eps(\w)$.

Пусть $\f(\n)>1$ и наборы $\R_0, ..., \R_\r$ и $P_0, ..., P_{\r-1}$ удовлетворяют условиям
\parskip -15pt

\begin{gather} \label{alg1:cond1}
\R_0<\R_1< ...<\R_\r=|\w|, 1\le P_0<P_1< ...<P_\r,\\\label{alg1:cond2}
\R_\i\ge\min\bigg\{\r\in\sN\colon\frac{\r+P_\i+\eps}{\r}\le\f(\n)\bigg\}=\bigg\lceil\frac{P_\i+\eps}{\f(\n)-1}\bigg\rceil
\end{gather}
\parskip -2pt

Напишем сначала вспомогательные процедуры построения массива позиций предыдущей буквы в слове $\w$.
Пусть $\s_\i=|\w|\pmod{P_\i}+1$, $\m_\i=\lceil(1+\R_\i-\s_\i)/P_\i\rceil$, $pt_1(\m)=\s_\i+\m\*P_\i$ и $pt_2(\m)=pt_1(\m)-\R_\i$.
\parindent 0 mm%					отступ слева первого предложения в абзаце
\parskip 10pt

%\begin{multicols}{2}
\tracingtabularx\begin{tabularx}{\linewidth}{>{\setlength{\hsize}{.90\hsize}}X>{\setlength{\hsize}{1.10\hsize}}X}
	
	\textbf{Procedure} Prewious(\w):
	\parindent 5 mm%					отступ слева первого предложения в абзаце
	\parskip 5pt
	
	1\ for($\j=1$; $\j\le|\w|$; $\j\ {+}{+}$)\par
	\parskip 0pt
	2\ \ \ \ \ if($LastPositionOf[\w[\j]]$)\par
	3\ \ \ \ \ \ \ \ \ $prew[\j]=\j-LastPositionOf[\w[\j]]$;\par
	4\ \ \ \ \ else\par
	5\ \ \ \ \ \ \ \ \ $prew[\j]=0$;\par
	6\ \ \ \ \ $LastPositionOf[\w[\j]]=\j$;\par
	\parindent 0 mm%					отступ слева первого предложения в абзаце
	&
	\textbf{Procedure} Start\_Points():
	\parindent 0 mm%				отступ слева первого предложения в абзаце
	\parskip 5pt
	\parindent 5 mm%					отступ слева первого предложения в абзаце
	
	1\ for($\i=0$; $\i<\r$; $\i\ {+}{+}$)\par
	\parskip 0pt
	2\ \ \ \ \ $\w'=\w$;\par
	3\ \ \ \ \ for($\m=\m_\i$; $pt_1(\m)\le|\w'|$; $\m\ {+}{+}$)\par
	4\ \ \ \ \ \ \ \ \ $\w'[pt_2(\m)]=\w'[pt_1(\m)]$;\par
	5\ \ \ \ \ Prewious($\w'$);\par
	6\ \ \ \ \ for($\m=\m_\i$; $pt_1(\m)\le|\w'|$; $\m\ {+}{+}$)\par
	7\ \ \ \ \ \ \ \ \ $points_\i[\m]=pt_2(\m)-prew[pt_2(\m)]$;\par
	8\ \ \ \ \ \ \ \ \ if($prew[pt_2(\m)]=0\ \&\&\ \w[pt_2(\m)]\ne\w[pt_1(\m)]$)\par
	9\ \ \ \ \ \ \ \ \ \ \ \ \ $points_\i[\m]=0$;\par
	%\setcounter{unbalance}{1}\\
	%\end{multicols}
\end{tabularx}
\parskip 0pt

\parindent 0 mm%					отступ слева первого предложения в абзаце
Процедура Start\_Points() вычисляет ближайшую позицию буквы $\w[\s_\i+\m\*P_\i]$ слева от позиции $\s_\i+\m\*P_\i-\R_\i$ или равную ей.
Теперь опишем процедуру поиска максимального повтора среди всех факторов с определённой длиной корня $\r=pt_1-pt_2$, для каждого $\r$ в диапазоне от $\R_\i$ до $\R_{\i+1}-1$.
\parskip 5pt

%\begin{algorithm}
%    \ForAll{$i \in \{\, v \mid v \in V, \exists (\mathrm{current}, v) \in E \,\}$}
%        \ForAll{$ agent \in \mathrm{Agents}(i)$}
%            \State do something with $agent$
%        \EndFor
%    \EndFor
%\end{algorithm}

\begin{figure}[h]%[t]
	\textbf{Procedure} Max\_Repeat($\R_\i$, $\R_{\i+1}$):
	\parindent 10 mm%					отступ слева первого предложения в абзаце
	
	1\ for($pt_1=\s_\i+\m_\i\*P_\i$; $pt_1\ne|\w|+1$; $pt_1\ {+}{=}\ P_\i$)//$pt$ от point\par
	\parskip 0pt
	2\ \ \ \ \ for($pt_2=points_\i[(pt_1-\s_\i)/P_\i]$; $pt_2>pt_1-\R_{\i+1}$ \&\& $pt_2\ge1$; $pt_2\ {-}{=}\ prew[pt_2]$)\par
	3\ \ \ \ \ \ \ \ \ $Lrep=Rrep=1$;\par
	4\ \ \ \ \ \ \ \ \ while$\big(pt_2-Lrep\ge1$\ \ \ \ \&\& $\w[pt_2-Lrep]=\w[pt_1-Lrep]\big)$\par
	5\ \ \ \ \ \ \ \ \ \ \ \ \ $Lrep\ {+}{+}$;\par
	6\ \ \ \ \ \ \ \ \ while$\big(pt_1+Rrep\le|\w|$ \&\& $\w[pt_2+Rrep]=\w[pt_1+Rrep]\big)$\par
	7\ \ \ \ \ \ \ \ \ \ \ \ \ $Rrep\ {+}{+}$;\par
	8\ \ \ \ \ \ \ \ \ $max\_rep[pt_1-pt_2]=\max\big\{max\_rep[pt_1-pt_2], Lrep+Rrep-1\big\};$
\end{figure}
%\newpage
\begin{prop}
	Если в $\w$ существует <<запрещённый>> фактор с периодом $\R_\i\le\r<\R_{\i+1}$, то процедура Max\_Repeat($\R_\i$, $\R_{\i+1}$) найдёт максимальный повтор среди всех подслов с такой длиной корня.
\end{prop}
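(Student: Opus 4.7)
План доказательства таков. Я покажу, что при наличии запрещённого фактора $\x\y\x\subseteq\w$ с периодом $r=|\x\y|\in[\R_\i,\R_{\i+1})$ правый экземпляр $\x$ содержит хотя бы одну позицию вида $pt_1=\s_\i+\m\*P_\i$, причём соответствующая позиция $pt_2=pt_1-r$ левого экземпляра будет посещена внутренним циклом. Для первого наблюдения я воспользуюсь условием~\eqref{alg1:cond2}: из запрещённости $\frac{r+|\x|+\eps}{r}>\f(\n)$ следует $|\x|>r(\f(\n)-1)-\eps$, а из $r\ge\R_\i\ge\lceil(P_\i+\eps)/(\f(\n)-1)\rceil$ получаем $r(\f(\n)-1)-\eps\ge P_\i$, откуда $|\x|>P_\i$. Так как точки $\s_\i+\m\*P_\i$ образуют арифметическую прогрессию с шагом $P_\i$, то интервал длины, строго большей $P_\i$, который занимает правый $\x$, содержит хотя бы одну такую точку; зафиксируем её и обозначим через $pt_1$.

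Вторым шагом я бы обосновал корректность процедуры Start\_Points() и того, что внутренний цикл по $pt_2$ в Max\_Repeat действительно перечисляет все нужные позиции. Подмена $\w'[pt_2(\m)]=\w'[pt_1(\m)]$ и вызов Prewious($\w'$) устроены так, чтобы значение $points_\i[\m]$ равнялось наибольшей позиции $\le pt_1(\m)-\R_\i$ с буквой $\w[pt_1(\m)]$ (с корректной пометкой отсутствия такой позиции в строках 8--9). После этого цикл в Max\_Repeat, используя стандартное свойство массива $prew$, в порядке убывания перебирает все позиции $pt_2$ с $\w[pt_2]=\w[pt_1]$ в диапазоне $(pt_1-\R_{\i+1},\,points_\i[\ldots]]$. Поскольку $\w[pt_1-r]=\w[pt_1]$ (как соответствующие позиции двух экземпляров $\x$) и $pt_1-\R_{\i+1}<pt_1-r\le pt_1-\R_\i$, конкретное значение $pt_2=pt_1-r$ обязательно попадёт в перечисление.

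На заключительном шаге при этом $pt_2$ циклы строк 4--7 вычисляют максимальные левое и правое расширения совпадения $\w[pt_2-k]=\w[pt_1-k]$ и $\w[pt_2+k]=\w[pt_1+k]$, так что величина $Lrep+Rrep-1$ равна длине наибольшего фактора $\x'\y'\x'\subseteq\w$ с периодом $r=pt_1-pt_2$, в правом экземпляре $\x'$ которого содержится позиция $pt_1$. Строка 8 обновляет $max\_rep[r]$ этим числом, после чего остаётся заметить, что всякий максимизирующий повтор с периодом $r$ имеет длину повтора не меньше $|\x|>P_\i$ и потому содержит в правом экземпляре точку вида $\s_\i+\m\*P_\i$; значит, максимум действительно будет достигнут именно на одной из рассматриваемых итераций. Основная техническая трудность, по моей оценке, заключается в аккуратном обосновании корректности Start\_Points() (шаг 2) --- т.е.\ того, что локальная подмена строки $\w\to\w'$ действительно сводит поиск ближайшего слева вхождения буквы $\w[pt_1(\m)]$ в позиции $\le pt_1(\m)-\R_\i$ к обычному вычислению $prew$ на модифицированной строке; остальные шаги опираются на прямые свойства $prew$ и двустороннего расширения совпадения и по сути являются рутинными.
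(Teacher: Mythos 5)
Ваше доказательство корректно и по существу повторяет доказательство из работы: та же ключевая оценка $|\x|>r(\f(\n)-1)-\eps\ge P_\i$, гарантирующая попадание точки сетки $\s_\i+\m\*P_\i$ в правый повтор, и тот же аргумент, что массивы $prew$ и $points_\i$ не позволяют циклу по $pt_2$ пропустить позицию $pt_1-r$. Вы лишь подробнее, чем в оригинале, проговариваете корректность Start\_Points() и шаг с максимальностью найденного повтора, что не меняет сути рассуждения.
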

\begin{proof}
	Пусть $\u\v\u$ такой фактор (см. рис. ниже) что $|\u\v|=\r=pt_1-pt_2$, тогда $\frac{\r+|\u|+\eps}{\r}>\f(\n)$.
	%\newpage
	%\newcounter{aaa}
	%\setcounter{aaa}{5}
	%\psboxit{box .7 setgray fill}{\spbox{E\ \ \ \ \ \ \ \ \ \ \ \ \ \ \ \ \,E}}
	%\begin{boxitpara}{box 0.9 setgray fill}dknfdsksdfmk\end{boxitpara}
	%
	%\begin{floatingfigure}{0.3\textwidth}
	\begin{figure}[h]%[t]
		\centerline{
			\unitlength=1mm%					масштаб
			\begin{picture}(115,24)(7,-10)%изображение(расположение влево, вниз)(то же)
			\put(-12,5.3){\makebox(0,0)[cb]{$\w=$}}% [cb] [lt] [cc]
			\put(-6,5){\line(0,1){2.5}}
			\put(-6,5){\line(1,0){2}}
			\put(7,5){\line(1,0){5}}
			\put(25,5){\line(1,0){100}}%линия:расположение(вправо, вверх);угол(вправо, вверх) {длина}.
			\multiput(30.4,5)(22,0){2}{\line(0,1){2.5}}
			\multiput(98.4,5)(22,0){2}{\line(0,1){2.5}}
			\multiput(41,2)(68,0){2}{\makebox(0,0)[cb]{$\u$}}
			\put(75,2){\makebox(0,0)[cb]{$\v$}}
			\multiput(19,5)(113,0){2}{\makebox(0,0)[cc]{$\cdots$}}
			\put(2,5){\makebox(0,0)[cc]{$\cdots$}}
			\put(140,5){\line(0,1){2.5}}
			\put(140,5){\line(-1,0){2}}
			%\put(29,5){\psboxit{box .7 setgray fill}{\spbox{E\ \ \ \ \ \ \ \ \ \ \ \ \ \ \ \ \,E}}}
			%\put(97,5){\psboxit{box .7 setgray fill}{\spbox{E\ \ \ \ \ \ \ \ \ \ \ \ \ \ \ \ \,E}}}
			%\put(97,5){\textcolor{blue}{Синий}}
			%\put(98.5,6){\fcolorbox{Gray}{Gray}{\fontsize{5}{10}$\ \ \ \ \ \ \ \ \ \ \ \ \ \ \ \ \,\,$}}
			%\put(98.5,6){\fcolorbox[rgb]{0.7, 0.7, 0.7}{0.7, 0.7, 0.7}{\fontsize{5}{10}$\ \ \ \ \ \ \ \ \ \ \ \ \ \ \ \ \ \ \!\!\!\!$}}
			\multiput(30.5,6.3)(68,0){2}{\fcolorbox[rgb]{0.8, 0.8, 0.8}{0.8, 0.8, 0.8}{\fontsize{5}{10}$\ \ \ \ \ \ \ \ \ \ \ \ \ \ \ \!\!\!$}}
			%\put(30,5){\psboxit{box .7 setgray fill}{oooooooooooo}}
			%\put(98,5){\psboxit{box .7 setgray fill}{oooooooooooo}}
			%\put(98,5){\psboxit{5 cartouche}{\spbox{oooooooooooo}}}
			%\put(30,5){\bar{0}{8}}
			%
			\put(80,9){\line(1,0){45}}
			\put(82,9){\line(0,-1){1.5}}
			\put(102,9){\line(0,-1){1.5}}
			\put(122,9){\line(0,-1){1.5}}
			\put(132,9){\makebox(0,0)[cc]{$\cdots$}}
			\multiput(22,9)(20,0){3}{\makebox(0,0)[cc]{$\cdots$}}
			\put(82,10){\makebox(0,0)[cb]{$\m{-}P_\i$}}
			\put(102,10){\makebox(0,0)[cb]{$pt_1{=}\m$}}
			\put(122,10){\makebox(0,0)[cb]{$\m{+}P_\i$}}
			\put(102,7.5){\circle*{1.5}}
			\put(140.5,9){\line(0,-1){1.5}}
			\put(140.5,9){\line(-1,0){1.5}}
			\put(140.5,10){\makebox(0,0)[cb]{$|\w|{+}1$}}
			\put(0,9){\line(0,-1){1.5}}
			\put(0,9){\line(1,0){4}}
			\put(0,10){\makebox(0,0)[cb]{$\s_\i{+}\m_\i\*P_\i$}}
			\put(25,0){\line(1,0){15}}
			\put(49,0){\makebox(0,0)[cc]{$\cdots$}}
			\put(57,0){\line(1,0){15}}
			\put(34,0){\line(0,1){4}}
			\multiput(24.4,5.6)(1,0){3}{\fontsize{5}{10}$\diamond$}
			\multiput(28.4,5.6)(1,0){5}{\fontsize{5}{10}$\diamond$}
			\multiput(34.4,5.6)(1,0){6}{\fontsize{5}{10}$\diamond$}
			\put(7,-1){\line(0,1){2}}
			\put(72,-1){\line(0,1){2}}
			\multiput(67.4,5.6)(1,0){5}{\fontsize{5}{10}$\diamond$}
			\multiput(56.4,5.6)(1,0){3}{\fontsize{5}{10}$\diamond$}
			\multiput(60.4,5.6)(1,0){6}{\fontsize{5}{10}$\diamond$}
			\multiput(7.4,5.6)(1,0){2}{\fontsize{5}{10}$\diamond$}
			\multiput(10.4,5.6)(1,0){2}{\fontsize{5}{10}$\diamond$}
			\put(10,0){\line(0,1){4}}
			\put(28,0){\line(0,1){4}}
			\put(28,5.6){\makebox(0,0)[cb]{\fontsize{6}{10}$\a$}}
			\put(67,5.6){\makebox(0,0)[cb]{\fontsize{6}{10}$\a$}}
			\put(10,5.6){\makebox(0,0)[cb]{\fontsize{6}{10}$\a$}}
			\multiput(34,5.6)(68,0){2}{\makebox(0,0)[cb]{\fontsize{6}{10}$\a$}}
			\put(67,0){\line(0,1){4}}
			\put(60,0){\line(0,1){4}}
			\put(60,5.6){\makebox(0,0)[cb]{\fontsize{6}{10}$\a$}}
			\put(7,0){\line(1,0){5}}
			\put(19,0){\makebox(0,0)[cc]{$\cdots$}}
			%\put(34.1,4){\line(1,-2){2}}
			%\put(34.1,4){\line(-1,-2){2}}
			\put(34,3.5){\circle*{1.5}}
			\put(34,-3){\makebox(0,0)[cb]{\fontsize{5.5}{10}$pt_2$}}
			\put(8,-3){\makebox(0,0)[cb]{\fontsize{5.5}{10}$pt_1{-}\R_{\i+1}$}}
			\put(73,-3){\makebox(0,0)[cb]{\fontsize{5.5}{10}$pt_1{-}\R_\i$}}
			\qbezier[18](67,0)(63.5,2)(60,0)
			\qbezier[21](28,0)(31,2)(34,0)
			\put(64,1.5){\makebox(0,0)[cb]{\fontsize{6}{10}$J_1$}}
			\put(31,1.5){\makebox(0,0)[cb]{\fontsize{6}{10}$J_2$}}
			\qbezier[9](28,0)(26.5,1)(25,1)
			\qbezier[9](60,0)(58.5,1)(57,1)
			\qbezier[18](34,0)(37,1.5)(40,1)
			\qbezier[6](10,0)(11,0.5)(12,0.8)
			%\qbezier[30](72,0)(87,0)(102,0)
			%\qbezier[5](102,5)(102,2.5)(102,0)
			%\thicklines
			%\dashline{3}(72,0)(112,0)
			%\dottedline{1}(72,0)(112,0)
			%\put(72,0){\line(1,0){40}}
			\qbezier[136](34,-5)(68,-15)(102,-5)
			\qbezier[10](34,5)(34,-0.5)(34,-6)
			\qbezier[10](102,5)(102,-0.5)(102,-6)
			\put(68,-9.5){\makebox(0,0)[cb]{\fontsize{8}{10}$|\u\v|$}}
			\qbezier[72](67,-4)(84.5,-9)(102,-4)
			\qbezier[10](67,5)(67,0.5)(67,-5)
			\put(84,-6){\makebox(0,0)[cb]{\fontsize{8}{10}$jump$}}
			%\put(34,-4){\line(0,-1){2}}
			%\put(102,-1){\line(0,-1){5}}
			%\put(65,-16.5){\makebox(0,0)[cb]{Здесь '$\diamond$' - переменный символ из $\A_\n\setminus\{\a\}$, $jump=pt_1{-}points_\i[(pt_1{-}s_\i)/P_\i]$,}}
			%\put(65,-21){\makebox(0,0)[cb]{$\J_1=prew[pt_1{-}jump]$, $\J_2=prew[pt_2]$.}}
			\end{picture}
		}
		\caption{Здесь '$\diamond$' - переменный символ из $\A_\n\setminus\{\a\}$, $jump=pt_1{-}points_\i[(pt_1{-}s_\i)/P_\i]$,
			$\J_1=prew[pt_1{-}jump]$, $\J_2=prew[pt_2]$.}
		%\end{floatingfigure}
	\end{figure}

	Получим $|\u|>\r\*(\f(\n)-1)-\eps\ge\R_\i\*(\f(\n)-1)-\eps\ge P_\i$.
	Значит, переменная $pt_1$ при поиске повтора, нарушающего ограничение для экспоненты, пробегая по всем позициям $pt_1\in\sN$, для которых $pt_1\pmod{P_\i}=\s$ (при любом, заранее выбранном $\s$, от 0 до $P_\i-1$), укажет на некоторую букву $\a$ правого $\u$, в позиции $\m$ слова $\w$.
	Остаётся убедиться, что разница $pt_1-pt_2$ не пропустит значение $|\u\v|$.
	Т.к. $\w[\m-|\u\v|]=\w[\m]=\a$, то достаточно, чтобы $pt_2$ не пропустил ни одной буквы $\a$ в позициях от $pt_1-\R_{\i+1}+1$ до $pt_1-\R_\i$, что обеспечивают массивы $prew$ и $points_\i$.
\end{proof}

\parskip 0pt
Теперь мы можем написать алгоритм проверки существования в слове такого фактора с любым корнем любой длины (в частности с любым периодом).
\parskip 0pt

1\ Выбрать множества $\{\R_0, ..., \R_\r\}$ и $\{P_0, ..., P_{\r-1}\}$, с учётом условий $\eqref{alg1:cond1}$ и $\eqref{alg1:cond2}$;\par
2\ Start\_Points();\par
3\ Prew($\w$);\par
4\ for($\i=1$; $\i<|\w|$; $\i++$)\par
5\ \ \ \ \ $max\_rep[\i]=0$;\par
6\ for($\i=0$; $\i<\r$; $\i++$)\par
7\ \ \ \ \ Max\_Repeat($\R_\i$, $\R_{\i+1}-1$);\par
8\ for($\i=1$; $\i<|\w|$; $\i++$)\par
9\ \ \ \ \ if($max\_rep[\i]+\i+\eps\*\chi_{max\_rep[\i]}>\f(\n)\*\i$)\par
10\,\ \ \ \ \ \ \ print <<false>> \&\& exit;\par
11print <<true>>;
\parskip 5pt

Заметим, что, если в слове нет <<запрещённых>> факторов с некоторым периодом, то предложенный алгоритм может не найти максимальный повтор среди факторов с таким периодом.
Т.е. данный алгоритм не подходит для вычисления $\lexp(\w)$.
\parskip 0pt

\hyperlink{contents}{$\upuparrows$}

\subsubsection {Оценка временной сложности алгоритма}

Добавим к $\b\ce{-}$коду правило:
\begin{equation} \label{bc:rule}% \tag{$bc$}
\P(\w)[\i]\ne
\begin{cases}
"\-"\,\!\!, & \text{если } \P(\w)[\i-\n-2]=\P(\w)[\i-\n]=\P(\w)[\i-2]="\-"\,\!\!, \i>\n\\
"\0"\,\!\!, & \text{если } \P(\w)[\i-\n]="\0"\,\!\!, \i>\n.
\end{cases}
\end{equation}
Не трудно проверить, что код, нарушающий правило~\eqref{bc:rule}, порождает не граничное слово.
\begin{note} \label{bc:rem1}
	Если слово $\w$ получено из $\b\ce{-}$кода с правилом~\eqref{bc:rule}, то все подслова в слове $\mu(1_\n, \w)$ с периодом не более $3\*(\n-1)$ являются $(\eps, \{1\}){-}$г при $\eps\le\frac{1}{\n-1}$.
\end{note}

Пусть $\w\in\{"\0"\,\,\!, "\-"\,\!\!, "\+"\}^*$.
$\w$ назовём целым, если он выражается при помощи $\b\ce{-}$кода.
Т.к. целое слово не заканчивается символом $"\-"$, то
\begin{note} \label{bc:rem2}
	Суффикс длины $\n$ любого слова длины не менее $\n$, код которого - целое слово, состоит из различных букв.
\end{note}

Для начала посчитаем число итераций в процедуре Max\_Repeat.
Цикл в 1-й строке работает $O(|\w|/P_\i)$.
Внутренний цикл в строке 2 в среднем работает $O((\R_{\i+1}-\R_\i)/\n)$.
Пусть $\S_{\w, \r}=\{(\u, \i)\colon\u\v\u=\w[\i, ..., \i+|\u\v\u|-1], |\u\v|=\r\}$, тогда циклы в 4, 5 и 6, 7 строках можно взять равным среднему значению длин повторов $rp=|\u|$ среди всех факторов $\u\v\u$ слова $\w$ с длиной некоторого его корня $|\u\v|=rt$, т.е.
$$
\Eq(|\u|\ |\ (\u, \i)\in\S_{\w, rt})=\frac{1}{|\S_{\w, rt}|}\underset{\fontsize{5}{10}{(\u, \i)}\in\S_{\w, rt}}{\sum}|\u|
$$
и обозначим как $\Eq_{\w, rt}$.
Теперь подсчитаем среднее число итераций, взяв $\Eq_\w=\underset{\R_0\le rt<\R_\r}{\max}\Eq_{\w, rt}$
$$
%\overset{\i=\R_\i}{\underset{\R_{\i+1}}{\Sg}}
O\Bigg(\frac{|\w|}{P_\i}\*\Bigg(\frac{\R_{\i+1}{-}\R_\i}{\n}\*+\sum_{\i=1}^{(\R_{\i+1}{-}\R_\i)/\n}\frac{1}{\n}\sum_{\j=\R_\i{+}\i\*\J}^{\R_\i{+}(\i{+}1)\*\n{-}1}\Eq_{\w, \j}\Bigg)\Bigg)\le O\bigg(\frac{|\w|\*(\R_{\i+1}{-}\R_\i)\*(\Eq_\w+1)}{P_\i\*\n}\bigg)
$$
Оценим теперь временную сложность этого алгоритма.
Строка 1 занимает $O(\r)$.
Процедура в 1-й строке занимает $O(\r|\w|)$, а в 3-й $O(|\w|)$.
Циклы в строках 4, 5 и 8, 9, 10 выполняются за $O(|\w|)$.
Тогда, оценим число итераций цикла в 6, 7 строках, по всем вариантам множества $\{\R_1, ..., \R_{\r-1}\}$, с учётом, что $\R_\i\thicksim\frac{P_\i+\eps}{\f(\n)-1}$ получим
$$
O\Bigg(\frac{|\w|\*(\R_1-\R_0)\*(\Eq_\w+1)}{P_0\*\n}+...+\frac{|\w|\*(\R_\r-\R_{\r-1})\*(\Eq_\w+1)}{P_{\r-1}\*\n}\Bigg)
=O\Bigg(\frac{|\w|\*(\Eq_\w+1)}{\n\*(\f(\n){-}1)}\bigg(\frac{P_1}{P_0}+...+\frac{P_\r}{P_{\r-1}}\bigg)\Bigg)\\
$$
Где $P_\r$ - константа, определённая правилом $\eqref{alg1:cond2}$ по фиксированому $\R_\r$ в $\eqref{alg1:cond1}$.
Пусть $P_\i\ge1$ для $\i=1, ..., \r-1$ вещественны и $\r\in\sN$, тогда вычислим при каких аргументах и их количестве функция $\g(P_1, ..., P_{\r-1})=P_1/P_0+...+P_\r/P_{\r-1}$ достигает минимума.
Сначала фиксируем все эти аргументы кроме $P_\i$, тогда существует частная производная по ней $\frac{\partial\g}{\partial P_\i}(P_1, ..., P_{\r-1})=1/P_{\i-1}-P_{\i+1}/P_\i^2=0$.
Откуда экстремум $\g$ достигается при $P_1/P_0=...=P_\r/P_{\r-1}$ и $P_\r=P_1^\r/P_0^{\r-1}$.

Значит экстремум $\g$ при фиксированом $\r$ равен $\r\*P_1/P_0=\r\*\sqrt[\leftroot{2}\uproot{2}\r]{\m}=\h(\r)$, где $\m=P_\r/P_0$.
Теперь найдём $\r$, при котором $\h(\r)$ минимальна.
$\h'(\r)=\sqrt[\leftroot{2}\uproot{2}\r]{\m}+\r\big({-}\sqrt[\leftroot{2}\uproot{1}\r]{\m}\*\ln(\m)/\r^2\big)=0$, откуда $\r=\ln(\m)$ и $P_1=P_0\*e$.
Т.е. экстремум достигается при верхнем или нижнем округлении полученного $\r$.
Для определённости возмём $\r=\big\lfloor\ln(\frac{P_\r}{P_0})\big\rfloor$.
При этом, экстремум единственен.
Тогда, если он не минимизирует значение $\g$, то при вариации некоторого из аргументов число итераций будет не больше, что неверно.
Значит, экстремум и есть минимум.

%\fussy 
%\begin{spacing}{2}
%\sloppy

Перейдём к целочисленным значениям.
Пусть $P_\i=\lceil P_0\*e^\i\rceil$, для $\i=1, ..., \r-1$, вычислив $\R_\i$ по $\eqref{alg1:cond2}$ получим сложность $O\Big(\frac{|\w|\*(\Eq_\w+1)\*\ln(|\w|)}{\n\*(\f(\n)-1)}\Big)$.
Например, для граничных слов $\f(\n)=\frac{\n}{\n-1}$.
Значит, для проверки слова $\w$ на $(\eps, \E){-}$г получаем число итераций порядка не более чем $O\big(|\w|\*\ln(|\w|)\*(\Eq_\w+1)\big)$.
%\end{spacing}

%\renewcommand{\baselinestretch}{1}

Для граничных слов и $\eps\le\frac{1}{\n-1}$ получим значения $P_0$ и $P_\r$. По $\eqref{alg1:cond2}$ $\R_\i\ge\min\{\r\colon\r\ge(P_\i+\eps)\*(\n-1)\}=P_\i\*(\n-1)+\lceil\eps\*(\n-1)\rceil$, т.е. $P_\i\le\frac{\R_\i-1}{\n-1}$ или $P_\i\le\big\lfloor\frac{\R_\i-1}{\n-1}\big\rfloor$.
По \eqref{alg1:cond1} $P_0\ge1$ и значения $P_\i$ для сокращения числа итераций в цикле 1-й строки, нужно выбирать наибольшие из возможных.
Тогда $\R_0\ge\n$ и $P_\r=\big\lfloor\frac{|\w|-1}{\n-1}\big\rfloor$.
Т.е. алгоритм позволяет проверить слово на существование <<запрещённых>> подслов с периодом не менее $\n$.
Если слово удовлетворяет условию $\b\ce{-}$кода, то с учётом Замечания~\ref{bc:rem1} и $1\in\E$, запрещённых подслов с периодом менее $\n$ нет.
Более того, учитывая $\eqref{bc:rule}$ по Замечанию~$\ref{bc:rem1}$, оно не имеет запрещённых подслов с периодом менее $3\n-2$ при $\n>5$, $\eps\le\frac{1}{\n-1}$ и $1\in\E$.
При таких условиях достаточно проверять подслова с периодом не менее $\R_0\ge3\n-2$, т.е. $P_0=3$.

Таким образом, для проверки слова на $(\eps, \E){-}$г, полученого из $\b\ce{-}$кода, при $\eps\le\frac{1}{\n-1}$ можно взять
$$
\r=\Big\lfloor\ln\tfrac{|\w|-1}{3(\n-1)}\Big\rfloor, P_\i=\lceil3\*e^\i\rceil, \R_\i=P_\i\*(\n-1)+1, \i=0, ..., \r-1 \text{ и } \R_\r=|\w|.
$$

\hyperlink{contents}{$\upuparrows$}

\subsection {Полиномиальная проверка $bc${--}корневых слов на сопряжённость}\label{conjIsPoly_Algo}
Т.к. для любого $\i=1, ..., |\sV|$ слова $\w_1=\v_\i\u_\i$ и $\w_\i=\u_\i\v_\i$ сопряжены при $|\u_\i|>0$, то возмём перестановку $\pi_\i\equiv_\pi\v_\i$ (с учётом условия (uf1.2) получим $\pi_1=1_\n$).
%Найдём перестановки $N$-го($=|\sV|$) числа элементов $\pi_\i$, что $\mu(\pi_\i^{-1}, \w_1^\k)$ и $\mu(\pi_1, \w_\i^\k)$ сопряжены для всех $\i=1, ..., N$ (понятно, что $\pi_1=1_\n$).
Пусть $\pi\equiv_\pi\w_1$ (понятно, что $\pi^\k=1_\n$).
Тогда,\par
$(*)$ $\mu((\pi^\j\*\pi_\i)^{-1}, \w_1^\k)$ и $\mu(1_\n, \w_\i^\k)$ сопряжены, для любых $\i$ и $\j$.\\
$\mu(\pi_1\*\pi_2, \u)=\mu(\pi_2, \mu(\pi_1, \u))$.

\begin{prop}
	$\mu(1_\n, \w_\p^\k)$ сопряжено с $\mu(1_\n, \w_\q^\k)$, если и только, если  существует $\m\in\sZ$, такое, что $\pi_\p=\pi^\m\*\pi_\q$.
\end{prop}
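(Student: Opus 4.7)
Основная идея --- использовать приведённое выше свойство $(*)$ как <<мост>>, сводящий сопряжённость слов $\mu(1_n, \w_p^k)$ и $\mu(1_n, \w_q^k)$ к вопросу о равенстве слов вида $\mu(\sigma, \w_1^k)$ с разными предпосылками $\sigma$ (все эти слова построены на одном и том же $bc${--}коде $\w_1^k$).

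\textbf{Направление <<$\Leftarrow$>>} планируется доказать цепочкой из двух применений $(*)$. Предполагая $\pi_p = \pi^m \pi_q$, применю $(*)$ с $i=p, j=0$: слово $\mu(\pi_p^{-1}, \w_1^k)$ сопряжено с $\mu(1_n, \w_p^k)$. Затем применю $(*)$ с $i=q, j=m$: слово $\mu((\pi^m \pi_q)^{-1}, \w_1^k)$ сопряжено с $\mu(1_n, \w_q^k)$. По условию $(\pi^m \pi_q)^{-1} = \pi_p^{-1}$, откуда по транзитивности сопряжённости получим требуемое.

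\textbf{Направление <<$\Rightarrow$>>} разобью на два шага. Применяя $(*)$ в обратную сторону к обеим частям предположения, сведу задачу к доказательству того, что <<$\mu(\pi_p^{-1}, \w_1^k)$ сопряжено с $\mu(\pi_q^{-1}, \w_1^k)$>> влечёт $\pi_q^{-1} = \pi^m \pi_p^{-1}$ для некоторого $m$. Это выйдет из ключевой леммы: для любых допустимых предпосылок $\sigma_1, \sigma_2$ сопряжённость $\mu(\sigma_1, \w_1^k)$ и $\mu(\sigma_2, \w_1^k)$ возможна тогда и только тогда, когда $\sigma_2 = \pi^m \sigma_1$ для некоторого $m \in \mathbb{Z}$. План её доказательства: циклический сдвиг $\mu(\sigma_1, \w_1^k)$ на $s$ позиций даёт слово, код Пансьё которого есть циклический сдвиг $\w_1^k$ на $s$, с новой предпосылкой, определяемой первыми $n$ буквами сдвинутого слова. По единственности кода Пансьё для слова, равенство с $\mu(\sigma_2, \w_1^k)$ требует (а) чтобы циклический сдвиг $\w_1^k$ совпал с $\w_1^k$ как последовательность и (б) равенства предпосылок.

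\textbf{Главное препятствие} --- установить, что условие (а) выполняется только при $s \equiv 0 \pmod{|\w_1|}$, т.е. что $\w_1$ примитивен как циклическая последовательность. Это вытекает из минимальности $k$ в определении $k$-$bc${--}корневого слова (см.~замечание \ref{l3:nt_bcSuf}): непримитивность $\w_1$ привела бы либо к меньшему допустимому $k$, либо к квадратному подслову в соответствующем $\D_{3,n}^\eps${--}ГС, что противоречит граничности. При $s = j|\w_1|$ прохождение $j$ полных блоков $\w_1$ действует на предпосылку умножением на $\pi^j$; условие (б) даёт $\pi^j \sigma_1 = \sigma_2$, откуда подстановкой $\sigma_1 = \pi_p^{-1}, \sigma_2 = \pi_q^{-1}$ получим $\pi_p = \pi^j \pi_q$ с искомым $m = j$.
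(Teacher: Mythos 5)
Ваше предложение по существу повторяет доказательство из работы: направление <<$\Leftarrow$>> --- те же два применения $(*)$ плюс транзитивность сопряжённости, а направление <<$\Rightarrow$>> опирается на те же два ключевых факта --- примитивность $\w_1$, вынуждающую сопрягающий сдвиг быть кратным $|\w_1|$ (в работе это выводится из уравнения $\z\w_1^{\omega}=\y'\z\w_1^{\omega}$ и того, что $\w_1$ не является степенью; у вас --- из единственности кода Пансьё и совпадения циклического сдвига $\w_1^k$ с самим собой, что по сути один и тот же аргумент), и действие каждого полного блока $\w_1$ на предпосылку перестановкой $\pi$. Единственное замечание: из $\pi^{j}\sigma_1=\sigma_2$ при $\sigma_1=\pi_p^{-1}$, $\sigma_2=\pi_q^{-1}$ формально следует $\pi_p=\pi_q\pi^{j}$, а не $\pi_p=\pi^{j}\pi_q$; это безобидно, поскольку собственное вычисление работы даёт $\pi^{t}\pi_q\pi^{m-t}=\pi_p$ сразу при всех $t$ (так что обе формы равносильны), но последний переход в вашем тексте стоит поправить.
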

\begin{proof}
	Пусть $\mu(1_\n, \w_\p^\k)=\u\v$ и $\mu(1_\n, \w_\q^\k)=\v\u$ сопряжены для некоторых $\p\ne\q$, б.о.о считаем, что $|\v_\p|<|\v_\q|$ (т.е. $\p<\q$).
	Тогда сществуют $\x\y\z=\w_1$, что $\y\z\x=\w_\p$ и $\z\x\y=\w_\q$ (т.е. $\pi_\p\equiv_\pi\x=\v_\p$, $\y\z=\u_\p$, $\pi_\q\equiv_\pi\x\y=\v_\q$ и $\z=\u_\q$).
	Видно, что $\w_\p^\om=\y\z\w_1^\om$ и $\w_\q^\om=\z\w_1^\om$ значит $\mu(1_\n, \y\z\w_1^\om)=\u(\v\u)^\om$ и $\mu(1_\n, \z\w_1^\omega)=(\v\u)^\om$.% или $\mu(\pi_\p^{-1}, \w_1^\om)=\mu(\pi_\p^{-1}, \x)(\u\v)^\om$ и $\mu(\pi_\q^{-1}, \w_1^\om)=\mu(\pi_\q^{-1}, \x\y)\v(\u\v)^\om$.
	%Т.е. $\mu(1_\n, \y\z\x)=\u\v$ и $\mu(1_\n, \z\x\y\z')=\v\u\v$ для некоторого $\z'\in\P(\w_1)$.
	%Откуда $\y\z\x\in\Su(\z\x\y\z')$.
	%Т.е. существует $\w_1'$ и $\w_1''$ из $\P(\w_1^\om)$, что $\mu(1_\n, \z\w_1')=\v\u\v$ и $\mu(1_\n, \y\z\w_1'')=\u\v\u$.
	%$\mu(1_\n, \y\z\w_1')=\u\v\u\v'$ и $\mu(1_\n, \z\w_1'')=\v\u\v\u'$ для некоторых $\v', \u'\in\P(\w_1)$.
	%Если $|\u|\ne\y\pmod{|\w|}$, 
	Тогда $\z\w_1^\om=\y'\z\w_1^\om$ для некоторого $\y'\in\P(\z\x\y)$.
	Откуда $\w_1=\w^{\l/\g}$ для некоторого $\w$, где $\l=|\w_1|$ и $\g=\gcd(\l-|\y'|, \l)$.
	Т.к. $\w_1$ не является степенью, то $\g=\l$.
	Значит $|\u|=\y\pmod{\l}$.
	Тогда для любых $\t\in\sZ$ получим $\mu(\pi_\p^{-1}\*\pi_\q\*\pi^\t\*\pi^{\m-\t}, \z\w_1^\om)=(\v\u)^\om$ и $\mu(1_\n, \z\w_1^\om)=(\v\u)^\om$, где $\m=\lfloor|\u|/\l\rfloor$.
	Значит $\pi_\p^{-1}\*\pi^\t\*\pi_\q\*\pi^{\m-\t}=1_\n$, откуда $\pi^\t\*\pi_\q\*\pi^{\m-\t}=\pi_\p$ в частности $\pi^\m\*\pi_\q=\pi_\p$.
	
	Обратно, пусть $\m$ такое, что $\pi_\p=\pi^\m\*\pi_\q$.
	%Предположим, что $\mu(1_\n, \w_\p^\k)$ и $\mu(1_\n, \w_\q^\k)$ не сопряжены.
	%Т.к. сопряжённые слова образуют класс эквивалентности, то по предположению и $(*)$ получим, что $\mu((\pi^\i\pi_\p)^{-1}, \w_1^\k)$ и $\mu((\pi^\j\pi_\q)^{-1}, \w_1^\k)$ не сопряжённые для некоторых $\i$ и $\j$.
	Тогда $\mu((\pi^0\pi_\p)^{-1}, \w_1^\k)=\mu((\pi^\m\pi_\q)^{-1}, \w_1^\k)$.
	При этом $\mu((\pi^{\t_1}\pi_\i)^{-1}, \w_1^\k)$ и
	\\
	$\mu((\pi^{\t_2}\pi_\i)^{-1}, \w_1^\k)$ сопряжены при любых $\t_1, \t_2\in\sZ$ и $\i=1, ..., |\sV|$.
	Т.к. все соряжённые слова образуют класс эквивалентности,  то $\mu((\pi^\t\pi_\p)^{-1}, \w_1^\k)$ сопряжено с $\mu((\pi^{\m+\s}\pi_\q)^{-1}, \w_1^\k)$ при любых $\s$ и $\t$ из $\sZ$.
	По $(*)$ получим, что $\mu((\pi^\t\pi_\p)^{-1}, \w_1^\k)$ сопряжено с $\mu(1_\n, \w_\p^\k)$, а $\mu((\pi^{\m+\s}\pi_\q)^{-1}, \w_1^\k)$ с $\mu(1_\n, \w_\q^\k)$.
	Опять же, т.к. сопряжённые слова образуют класс эквивалентности, то $\mu(1_\n, \w_\p^\k)$ сопряжено с $\mu(1_\n, \w_\q^\k)$.
	%Б.О.О., взяв $\x\y\z=\w_1$, что $\y\z\x=\w_\p$ и $|\y|=|\u|\pmod{|\w|}$, получим $\z\x\y=\w_\q$.
	%Т.е. можно считать, что $\pi_\p=\x$, $\pi_\q=\x\y$.
	%Откуда $\mu(\pi_\p, )$
	%
	%Тогда существуют $\k_\p$ и $\k_\q$, что $(\pi^{\k_\p}\*\pi_\p)^{-1}\*\pi^{\k_\q}\*\pi_\q=1_\n$.
	%Если предположить, что $\pi^{\k_\p}\*\pi_\p\ne\pi^{\k_\q}\*\pi_\q$, то 
\end{proof}

Теперь введём множества $\C_\i=\{\pi^\m\*\pi_\i\colon\m=1, ..., \k\}$.
Т.е. $\C_\i$ получено умножением слева на $\pi_\i$ каждого элемента группы $\{\pi^\t\colon\t\in\sZ\}$ относительно умножения.
Значит $\pi^\t\pi_\i\in\C_\i$ для любых $\t\in\sZ$.
\parskip 0pt
\begin{note}
	$\C_\p\cap\C_\q\ne\emptyset$, если и только, если $\C_\p=\C_\q$.
\end{note}
\begin{proof}
	Т.к. $\C_\p$ и $\C_\q$ не пусты, то достаточность очевидна.
	Пусть $\C_\p\cap\C_\q\ne\emptyset$, тогда существуют $\m_1$ и $\m_2$, что $\pi^{\m_1}\*\pi_\p=\pi^{\m_2}\*\pi_\p$. По построению множеств получим $\C_\q\ni\pi^{\m_2+\t}\*\pi_\q=\pi^{\m_1+\t}\*\pi_\p\in\C_\p$, для любых $\t\in\sZ$, откуда из конечности и цикличности $\C_\q=\C_\p$.
\end{proof}

Таким образом, для проверки на сопряжённость $bc${--}корневых слов достаточно сравнить по одной перестановке из множеств $\C_\p$ и $\C_\q$, которые нужно выбирать однозначно (каноническое слово, инвариант).
Например по наименьшему в лексикографическом порядке (в дальнейшем минимальным) слову $\u$ длины $\n$, для котрого $\pi^\s\pi_\i\equiv_\pi\u$ для некоторого $\s\in\{1, ..., \k\}$.
Тогда разложим перестановку $\pi$ на циклические перестановки $\pi_1, ..., \pi_\s$.
Возмём множество позиций, в которых буквы меняют свою позицию перестановкой $\pi_\i$ и переставим по $\pi_\j$,
тогда обозначим это множество $\I(\pi_\i, \pi_\j)$.
При этом занумерованы они так, что из двух перестановок $\pi_\i$ и $\pi_\j$ меньший номер имеет та, у которой наименьшее число из $\I(\pi_\i, \pi_\j)$ и $\I(\pi_\j, \pi_\j)$ меньше.
Тогда для эффективного поиска такого $\s$, чтобы слово $\u$ было минимальным можно последовательно вычислять (фиксировать) положения букв в позициях $\I(\pi_1, \pi_\j)$, затем фиксировать буквы в позициях $\I(\pi_2, \pi_\j)$, не нарушая положений фиксированных букв и т.д..
Пусть $\I_\i=|\I(\pi_\i, \pi_\j)|$ (т.е. $\pi_\i^{\I_\i\*\s}=1_\n$ при любых $\s\in\sZ$ и $\lcm(\I_1, ..., \I_\s)=\k$), $\u_0=\u$, $\m_\i=\min\{\m\colon\m\in\I(\pi_\i, \pi_\j)\}$ и $\pi_{c_0}=\pi_\j$, тогда для каждого $\i=1, ..., \s$ выполним процедуру.
\parindent 15pt

i.1 Положить $\r_\i=\lcm(1, \I_1, ..., \I_{\i-1})$ и $\h=\gcd(\I_\i, \r_\i)$\par
%i.1 Положить $\r_\i=\lcm(1, \I_1, ..., \I_{\i-1})$\par
i.2 Найти позицию $\j_\i$ минимального символа находящихся в позициях, попадающих в $\m_\i$ перестановками $\pi^{\h\,\*\m}\*\pi_{c_{\i-1}}$ в слове $\u$ по всем $\m=1, ..., \I_\i/\h$.\par
%i.2 Найти позицию $\j_\i$ минимального символа среди символов, попадающих в позицию $\m_\i$ перестановками $\pi^{\h\,\*\m}\*\pi_{c_{\i-1}}$ в слове $\u$ по всем $\m=1, ..., \I_\i/\h$.\par
i.3 Найти $\m\in\sZ$, при котором позиция $\j_\i$ перестановками $\pi_\i^{\m\,\*\h}\*\pi_{c_{\i-1}}$ переходит в $\m_\i$.\par
i.4 Найти $\t_\i\in\sZ$, при котором $\t_\i\r_\i\pmod{\I_\i}=\m\,\*\h$.\par
%i.3 Найти $\t_\i\in\sZ$, при котором позиция $\j_\i$ перестановкой $\pi_\i^{\t_\i\*\r_\i}\*\pi_{c_{\i-1}}$ переходит в $\m_\i$.\par
i.5 Положить $\pi_{c_\i}=(\pi_{\i+1}^{\t_\i\r_\i\pmod{\I_{\i+1}}}\oplus...\oplus\pi_\s^{\t_\i\r_\i\pmod{\I_\s}})\pi_{c_{\i-1}}$.
\parindent 30pt

i.1 вычисляет минимальные расстояния $\r$ между степенями $\pi$, при которых фиксированные буквы в $\I(\pi_1, \pi_\j)\cup...\cup\I(\pi_{\i-1}, \pi_\j)$ не меняют позиций.
Циклическое расстояние (по модулю $\I_\i$) $\h$ между ближайшими позициями из $\I(\pi_\i, \pi_\j)$, в которые могут попадать буквы из них же, не нарушая позиций фиксированных букв.
i.2 находит позицию буквы, которая должна попасть в $\m_\i$ для вычисления сдвига букв в позициях $\I(\pi_\i, \pi_\j)$ слова $\u$, чтобы они расположились как в минимальном слове.
i.3 находит положение букв в позициях $\I(\pi_\i, \pi_\j)$ как в минимальном слове.
i.4 находит степень $\t_\i\*\r_\i$, при которой буквы в позициях $\I(\pi_\i, \pi_\j)$ слова $\u$, не нарушая позиций фиксированных букв.
i.5 вычисляем новую перестановку $\pi_{c_\i}$ для которой буквы слова $\mu(\pi_{c_\i}, \u)$ в позициях $\I(\pi_1, \pi_\j)\cup...\cup\I(\pi_\i, \pi_\j)$ располагаются как в минимальном слове, <<прокручивая>> буквы в слове $\u$, ровно $\t_1\*\r_1+...+\t_\i\*\r_\i$ раз перестановкой $\pi_\i$  после чего переставив буквы по $\pi_\j$, т.е. $\pi_{c_\i}=\pi^{\t_\i\r_\i}\pi_{c_{\i-1}}=(\pi_1^{\t_\i\r_\i}\oplus...\oplus\pi_\s^{\t_\i\r_\i})\pi_{c_{\i-1}}$.
При этом $\pi_\l^{\t_\i\r_\i}=\pi_\l^{\m\I_\l}\*\pi_\l^{\t_\i\r_\i\pmod{\I_\l}}=\pi_\l^{\t_\i\r_\i\pmod{\I_\l}}$, для некоторого $\m\in\sZ$, т.к. $\t_\i\r_\i$ кратно $\I_1, ..., \I_{\i-1}$, то получим равенство из i.5.
Таким образом, $\mu(\pi_{c_\s}, \u)$ и будет минимальным словом.

Можно, вместо чисел $\I_\i$ и $\r$ хранить их разложение на простые числа с учётом кратности.
Тогда для вычисления $\lcm(\I_1, ..., \I_{\i-1})$ достаточно брать объединение $\I_1, ..., \I_{\i-1}$, где кратность числа суммируется из кратностей этого числа в этих множествах.
Для вычисления $\gcd(\I_\i, \r)$ достаточно брать пересечение множества простых чисел из $\I_\i$ и $\s$, где кратность равна разности кратностей этого числа в этих множествах.
i.2 и i.3 вычисляются за $O(\I_\i/\h)$.
Для вычисления $\t$ достаточно проверить не более $\I_\i/\h$ вариантов.
Таким образом поиск минимального слова, а с ним и проверка на сопряжённость, становится полиномиальной от $n$.% длины его $bc${--}корня.

\hyperlink{contents}{$\upuparrows$}

\section{Выводы}

\subsection{Результаты}
\begin{itemize}
	\item
	В данной работе автором описаны полностью независимые (компьютерные) методы доказательства частных случаев
	экспоненциальной версии граничной теоремы (РРДГСТ) над 5-ти буквенными и более алфавитами.
	\item
	Предложен метод построения циклических граничных слов для частных алфавитов с $5$ буквами и более ---
	% несложно обобщется для версии с экспоненциальным ростом
	построены конкретные конструкции для $\D_{3, n}^\eps${--}ЦГС при $n\ge5$ (требующие компьютерной проверки корректности).
	\item
	Предложен метод построения (а сним и доказательство существования)
	граничных слов с почти единичной экспонентой длинных факторов
	на основе равномерно растущего дерева граничных слов (РРДГС) для любых алфавитов с $2$ буквами и более.
	(планируется редакция доказательства в следующих версиях данной работы).
	% при этом с сохранением <<строго>> экспоненциалного роста
\end{itemize}

\subsection{Планы на будущие версии данной работы}
В следующей версии данного манускрипта автор планирует доработать и обобщить результаты и
перевести на английский (не одновременно).
А так же проверить предложенные конструкции \ref{constructs} на выполнение условия \ref{l2c2} для частных алфавитов.

В неформалных частях (только в русской версии).
Сравнит авторскую редакцию нашей ДР1 с редакцией от нашего НР, опубликованную им в общей с автором статье
(написанной на основе ДР1 и лекций автора на семинаре).
%Будет более подробная история создания автором компьютерного метода доказательства частных случаев СДЭВГТ и её усилений.
И много дилемм и философии по вопросам авторства, наболевших у автора.

\hyperlink{contents}{$\upuparrows$}

\begin{center}%
	{\bfseries Благодарности}%
	
	Спасибо Шуру А.М. за ценные советы и комментарии по текстам дипломных работ автора
	
	(в частности, за более удачные формулировки некоторых утверждений)
	
	и за ценные замечания и дополнения во время лекций автора в марте 2011.
	
	% ??? странно звучит ???
	% Спасибо Шуру А.М. за более удачные формулировки некоторых авторских утверждений
	%
	% и др. абстракций.
	% За советы об оформлении текста.
	% За внимательное прочтение и поправки используемых терминов.
	А так же, за открытие доступа к электронному архиву arxiv.org.% Ему же (Шуру А.М.)
	
	%Спасибо Шуру А.М. за открытие доступа к электронному архиву arxiv.org.
	% хотя, почему-то он не ответил в этот раз, что доступ открыт. Может ответ в спам попал? Не сообразил проверить в течении мсяца.
	% arxiv.org не даёт классификацировать работу как <<Computer Science - Discrete Mathematics>>
	% и как <<Mathematics - Combinatorics>>, что было бы правильнее.
	% Но позволяет выложить как <<Computer Science - Data Structures and Algorithms>>.
\end{center}%

\end{document}